\newlist{assumption}{enumerate}{1}
\setlist[assumption]{label=(\textsc{a}\arabic*)}
\crefname{assumptioni}{Assumption}{Assumptions}
\newlist{hypothesis}{enumerate}{1}
\setlist[hypothesis]{label=(\textsc{h}\arabic*)}
\crefname{hypothesis}{Hypothesis}{Hypotheses}
\newcommand{\R}{\mathbb{R}}
\newcommand{\N}{\mathbb{N}}
\newcommand{\1}{\mathbb{1}}
\DeclareMathOperator{\sign}{\mathrm{sign}}
\DeclareMathOperator{\dive}{\mathrm{div}}
\DeclareMathOperator{\meas}{\mathrm{meas}}
\newcommand{\dx}{\,\mathrm{d}x}
\newcommand{\dH}{\,\mathrm{d}\mathcal{H}}
\newcommand{\Ha}{\mathcal{H}}
\newcommand{\Cu}{\mathcal{C}}
\newcommand{\norm}[1]{\|#1\|}
\def\textcolor{blue}
\renewcommand{\epsilon}{\varepsilon}
\providecommand{\orcid}[1]{\mbox{\scshape\sffamily orcid:}\,\href{https://orcid.org/#1}{\detokenize{#1}}}
\title{Numerical analysis of a nonsmooth quasilinear elliptic control problem: I. Explicit second-order optimality conditions}
\author{Christian Clason\thanks{Faculty of Mathematics, Universit\"at Duisburg-Essen, 45117 Essen, Germany; \emph{current address:} Institute of Mathematics and Scientific Computing, University of Graz, Heinrichstrasse 36, 8010 Graz, Austria (\email{c.clason@uni-graz.at}, \orcid{0000-0002-9948-8426})}
    \and Vu Huu Nhu\thanks{Faculty of Fundamental Sciences, PHENIKAA University, Yen Nghia, Ha Dong, Hanoi 12116, Vietnam (\email{nhu.vuhuu@phenikaa-uni.edu.vn}, \orcid{0000-0003-4279-3937})}
    \and Arnd Rösch\thanks{Faculty of Mathematics, University of Duisburg-Essen, Thea-Leymann-Strasse 9, 45127 Essen, Germany
        (\email{arnd.roesch@uni-due.de})
    }
}
\date{2023-09-12}
\begin{document}

\maketitle

\begin{abstract}
    In this paper, we derive explicit second-order necessary and sufficient optimality conditions of a local minimizer to an optimal control problem for a quasilinear second-order partial differential equation with a piecewise smooth but not differentiable nonlinearity in the leading term.
    The key argument rests on the analysis of level sets of the state. Specifically,
    we show that if a function vanishes on the boundary and its the gradient is different from zero on a level set, then this set decomposes into finitely many closed simple curves.
    Moreover, the level sets depend continuously on the functions defining these sets.
    We also prove the continuity of the integrals on the level sets. In particular, Green's first identity is shown to be applicable on an open set determined by two functions with nonvanishing gradients.
    In the second part to this paper, the explicit sufficient second-order conditions will be used to derive error estimates for a finite-element discretization of the control problem.

    \medskip

    \noindent\textcolor{structure}{Key words}\quad Level set, optimal control, nonsmooth optimization, quasilinear elliptic equation, piecewise differentiable function
\end{abstract}

\section{Introduction}

In this paper, we investigate the nonsmooth quasilinear elliptic optimal control problem
\begin{equation}
    \label{eq:P}
    \tag{P}
    \left\{
        \begin{aligned}
            \min_{u\in L^\infty(\Omega)} & j(u) := \int_\Omega L(x,y_u(x)) \dx + \frac{\nu}{2} \norm{u}_{L^2(\Omega)}^2 \\
            \text{s.t.} \quad &-\dive [(b + a(y_u)) \nabla y_u] = u \quad \text{in } \Omega, \quad y_u = 0 \, \text{on } \partial\Omega, \\
            & \alpha \leq u(x) \leq \beta \quad \text{a.e. } x \in \Omega,
        \end{aligned}
    \right.
\end{equation}
where $\Omega$ is a bounded, convex and polygonal domain $\Omega \subset \R^2$; $L: \Omega \times \R \to\R$ is a Carath\'{e}odory function of class $C^2$ with respect to (w.r.t.) the second variable; $b: \overline \Omega \to \R$ is a Lipschitz continuous function; $a: \R \to \R$ is a nonsmooth function; and $\alpha, \beta, \nu \in \R$ satisfy $\beta > \alpha$ and $\nu > 0$. For the precise hypotheses on the data of \eqref{eq:P}, we refer to \cref{sec:assumption}.

The control problem \eqref{eq:P} is interesting since the corresponding state equation arises, for instance, in models of heat conduction where the coefficient in the divergence term of the state equation is the heat conductivity and depends on the temperature $y$ and on the spatial coordinate $x$; see, e.g. \cite{Bejan2013,ZeldovichRaizer1966}. %
When the data are of class $C^2$, the numerical analysis of the discrete approximation of such optimal control problems was investigated by Casas et al. in \cite{CasasTroltzsch2011,CasasTroltzsch2012} for distributed control and in \cite{CasasDhamo2012} for Neumann control. Here, we only assume that the coefficient $a$ is continuous and piecewise $C^2$, which makes the analysis significantly more challenging.

In \cite{ClasonNhuRosch_os2nd}, we studied so-called no-gap second-order necessary and sufficient optimality conditions for \eqref{eq:P} in terms of a general nonsmooth curvature functional of $j$. In this paper, we will formulate an explicit formula for the curvature functional; see \cref{thm:explicit-curvature-func} below. This explicit derivation is based only on an assumption on the gradient of the state function $y_u$ and on the level set of $y_u$ with respect to points $\bar t$ where the nonlinearity $a$ is not differentiable. The sufficient condition will be used to prove a priori numerical error estimates for the discretization of the nonsmooth optimal control problem using the finite element method in the second part of this work.

Let us comment on related works in \cite{ChristofWachsmuth2018} and \cite{WachsmuthWachsmuth2022} for explicitly computing curvature terms of nonsmooth functionals. In \cite{ChristofWachsmuth2018}, the authors considered bang-bang optimal control problems, where the associated state equation is linear and the controls satisfy the box-constraint $-1 \leq u \leq 1$ a.e. in $\Omega$. They observed that the set $\{u \in L^\infty(\Omega) \mid -1 \leq u \leq 1 \, \text{a.e. in } \Omega \}$ possesses a curvature in the space $\mathcal{M}(\Omega)$, the dual of the space consisting of all continuous functions vanishing on the boundary. Under a structural assumption imposed on the corresponding adjoint state and its level set, they established the curvature of the set $\{u \in L^\infty(\Omega) \mid -1 \leq u \leq 1 \, \text{a.e. in } \Omega \}$ by using a directional Taylor-like expression in the $L^1(\Omega)$-norm.
In \cite{WachsmuthWachsmuth2022},
the authors first considered general optimization problems with a nonuniformly convex and nonsmooth integral functional and then applied the obtained results to bang-off-bang optimal control problems with smooth semilinear elliptic partial differential equations.
In these papers, the authors first proved explicit formulas for the case $\Omega \subset \R$ and then lifted the obtained result to the higher-dimensional situation by using partitions of unity.

Here we construct an explicit formula of the nonsmooth curvature functional through a thorough analysis of level sets. In particular, we study the structure and the continuity in terms of determining functions of the level sets, a Green's formula, and the continuity of integrals over level sets.
To fix ideas, let $y: \overline\Omega \to \R$ be a $C^1$ function with an image $\mathcal{R}(y)$. For any $t \in \mathcal{R}(y)$, the preimage
\[
    \{y=t\} := \{ x \in \overline\Omega \mid y(x) =t \}
\]
is called \emph{level set} of $y$ corresponding to the \emph{level value} $t$. These level sets are compact as a result of the continuity of $y$ and the boundedness of $\bar\Omega$. When $\Omega = \R^2$ and $y$ is of class $C^2$, Sard's theorem (see, e.g. \cite{Sard1942} and Appendix 1 in \cite{Guillemin1974}) states that the set of critical values
\[
    \{t \in \mathcal{R}(y) \mid \nabla y(x) = 0 \, \text{for some } x \in \{y=t\} \}
\]
is of one-dimensional (Lebesgue) measure zero.
The Implicit Function Theorem further yields that for almost all (a.a.) $t \in \mathcal{R}(y)$, the level sets $\{y=t\}$ are simple curves of class $C^2$. When $y$ is Lipschitz continuous, it was shown in \cite{AlbertiBianchiniCrippa2013} that for a.a. $t \in \mathcal{R}(y)$, any connected component of $\{y=t\}$ is either a point or a simple curve with Lipschitz parametrization; see statement (iv) in Theorem 2.5 in \cite{AlbertiBianchiniCrippa2013}.
If $y$ is an element of the Sobolev space $W^{2,1}(\Omega)$, then for a.a. $t \in \mathcal{R}(y)$ the level set $\{y=t\}$ is a union of a finitely many disjoint $C^1$ simple curves; see Theorem 5.1 and Corollary 5.2 in \cite{Bourgain2013}.
However, here we are interested in the structure of the level set $\{y=t\}$ for a given concrete value of $t$ and cannot rely on properties that only hold almost everywhere.

The study of level sets at given level values is also relevant to the analysis of level set methods, first proposed by Osher and Sethian \cite{OsherSethian1988}. These methods rely in part on the theory of curve and surface evolution given in \cite{Sethian1985} and stem from the observation that the level set function $y$ in compressible flows carries information on the stretching of the fluid-structure interface. A key property in the analysis is the continuity of level sets with respect to changes in the function or the level value.
For example, \cite{CottetMaitre2006} proved the continuity at $t=0$ of a functional, mapping $t \in \R$ to a corresponding integral over the level set $\{y=t\}$, provided that the function $y$ belongs to class of $C^2$ and that the gradient $\nabla y$ is different from zero on $\{y=0\}$.

In this work, under the sole assumption that $y$ is $C^1$ and vanishes on the boundary $\partial\Omega$, we apply the Implicit Function Theorem to show that any connected component of the level set on which the gradient of $y$ does not vanish is indeed a $C^1$ closed simple curve. Furthermore, if the gradient of $y$ is different from zero on $\{y=t\}$, then the level set consists of finitely many closed simple curves; see \cref{prop:decomposition-levelset} below.

We then study the continuity of integrals over level sets in term of functions that determine the level sets. Specifically, we shall show that if $y_n \to y$ strongly in $C^1(\overline\Omega)$ and if the gradient of $y$ does not vanish on $\{y=t\}$, then the integrals over $\{y_n=t\}$ tend to the one over $\{y=t\}$; see \cref{prop:continuity-integral-levelsets} below. In order to show this, we first present in \cref{lem:Implicit_Function-Theorem-extended-boundary} an extension of the Implicit Function Theorem applying to a point belonging to $\{y=t\}$, where the implicit functions can be extended to the boundary of a neighborhood of the mentioned point. We then use partitions of unity to show the continuity of integrals on level sets.
In addition, we prove in \cref{prop:levelset-unique-component-continuity} (see also \cref{cor:levelset-continuity-in-functions}) that in any small $\epsilon$-neighborhood of a connected component of $\{y=t\}$, there is one and only one connected component of $\{y_n=t\}$ for $n$ large enough.

We use these properties to establish a version of Green's first identity; see, e.g. identity (2.10) in Chapter II in \cite{Gilbarg_Trudinger} for the classical version. There, the domains of integration, $\{y_1 < t < y_2\} \cup \{y_1 > t > y_2\}$, are determined by two $C^1$ functions $y_1, y_2$ with nonvanishing gradients, and boundaries of these domains are parts of level sets of $y_1$ and $y_2$ at the same level value $t$. As a result, these domains of integration might admit some cusps and then not have Lipschitz boundaries in the sense of Ne\v{c}as, see, e.g. Definition 1.2.1.1 \cite{Grisvard1985}.
In order to validate Green's first identity on these nonsmooth domains,
we show in \cref{lem:decomposition-domains} below that the open set $\{y_1 < t < y_2\}$ can be decomposed into at most countably many disjoint open connected subdomains.
Each of such subdomains is approximated by regular domains with boundaries being curvilinear polygons of class $C^1$. By passing to the limit in Green's first identity applied over approximating regular domains and summing up the obtained identities, we arrive at the Green formula on $\{y_1 < t < y_2\} \cup \{y_1 > t > y_2\}$; see \cref{lem:Green-identity} and \cref{prop:Green-identity-general}.
Together with continuity properties of level sets and of integrals on them,
with the aid of this, we derive an explicit formula for the curvature functional in \cref{thm:explicit-curvature-func} and use this to obtain second-order necessary and sufficient optimality conditions in explicit forms obtained in \cref{thm:2nd-OS-nec-explicit,thm:2nd-OS-suf-explicit}, respectively.

\medskip

The plan of the paper is as follows. We conclude this section with some general notation. In the next section, we investigate the structure and the continuity properties of the level sets as well as derive Green's first identity on open sets determined by two functions and a level value. \cref{sec:nonsmooth-optimal-control} is then devoted to the study of the nonsmooth quasilinear optimal control problem \eqref{eq:P}: In \cref{sec:assumption},
we rigorously state the assumptions for \eqref{eq:P} and provide some preliminary results from \cite{ClasonNhuRosch_os2nd}.
An explicit formula of the curvature functional of the objective functional is derived in \cref{sec:Q2-formulation}.
Finally, in \cref{sec:explicit-2nd-OCs}, explicit second-order necessary and sufficient optimality conditions are presented.

\medskip

\paragraph*{Notation.}
We denote by $B_X(u,\rho)$ and $\overline B_X(u,\rho)$ the open and closed balls in a Banach space $X$ of radius $\rho >0$ centered at $u \in X$, respectively.
For Banach spaces $X$ and $Y$, the notation $X \hookrightarrow (\Subset) Y$ is understood that $X$ is continuously (compactly) embedded in $Y$.
For a given function $g:\overline\Omega\to \R$ and a subset $A \subset \R$, $\{ g \in A \}$ denotes the set of all points $x \in \overline \Omega$ for which $g(x) \in A$. For functions $g_1,g_2$ and subsets $A_1, A_2 \subset \R$, we set $\{ g_1 \in A_1, g_2 \in A_2 \} := \{ g_1 \in A_1 \} \cap \{ g_2 \in A_2\}$.
For any set $\omega \subset \overline \Omega$, we denote by $\1_{\omega}$
the characteristic function
of $\omega$, i.e., $\1_\omega(x) = 1$ if $x \in \omega$ and $\1_\omega(x) =0$ otherwise.
The symbol $\Ha^1$ stands for the one-dimensional Hausdorff measure on $\R^2$ that is scaled as in \cite{Evans1992}, Def.~2.1.
For a measurable two-dimensional subset $A$, by $\meas_{\R^2}(A)$, we denote the two-dimensional Lebesgue measure of $A$.
Finally, we write the symbol $C$ for a generic positive constant, which may be different at different places of occurrence and the notation, e.g. $C_\xi$ for a constant depending only on the parameter $\xi$.

\section{Analysis of level sets}
\label{sec:levelset-facts}
In this section, $\Omega$ is assumed to be a convex and bounded domain in $\R^2$ only.

\subsection{Structure of level sets}
\label{sec:levelset-structure}
We first recall from \cite{GiovanniLeoni2017} that a connected component of a set $V \subset \R^d$, $d \geq 1$,
is any element of the class of connected subsets of $V$ that is maximal with respect to inclusion and that
a simple closed $C^1$ curve in $\R^d$
is a curve that admits a $C^1$ parametrization $\gamma: [h,k] \to \R^d$ which is injective on $[h,k)$ and satisfies $\gamma(h)=\gamma(k)$.

We shall investigate the structure of any connected component of the level sets of $C^1$-functions that vanish on the boundary and have nonvanishing gradients on these components. We begin with the following $C^1$ extension of a smooth function defined over convex domains.

\begin{proposition}
    \label{prop:C1-extension-on-convex-domain}
    Let $G$ be a convex domain in $\R^d$ with $d \geq 1$ and let $f \in C^1(\overline{G})$ be arbitrary. Then there exists a function $\bar f: \R^d \to \R$ with the following properties:
    \begin{enumerate}[label=(\alph*)]
        \item \label{item:C1-continuous} $\bar f$ is of class $C^1$;
        \item \label{item:C1-extension-on-domain} $\bar f = f$ and $\nabla \bar f = \nabla f$ on $\overline{G}$.
    \end{enumerate}
\end{proposition}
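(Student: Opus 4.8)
The plan is to obtain $\bar f$ as a Whitney extension to $\R^d$ of the $1$-jet $(f,\nabla f)$ given on the closed set $A:=\overline G$; convexity of $G$ will enter only through the verification of the first-order compatibility of this jet. Recall the $C^1$-version of the classical Whitney extension theorem: if $A\subset\R^d$ is closed, $f_0\colon A\to\R$ and $f_1\colon A\to\R^d$ are continuous, $f_1$ is uniformly continuous on every compact $K\subset A$, and, setting
\[
    R_K(\delta):=\sup_{\substack{x,y\in K\\ 0<|y-x|\le\delta}}\frac{|f_0(y)-f_0(x)-f_1(x)\cdot(y-x)|}{|y-x|},
\]
one has $R_K(\delta)\to 0$ as $\delta\to 0^+$ for every such $K$, then there exists $F\in C^1(\R^d)$ with $F=f_0$ and $\nabla F=f_1$ on $A$. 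Applying this with $f_0:=f$ and $f_1:=\nabla f$, and calling the resulting function $\bar f$, properties \ref{item:C1-continuous} and \ref{item:C1-extension-on-domain} are exactly what the theorem yields; so the whole task reduces to checking its two hypotheses for this particular jet.

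The uniform-continuity hypothesis is automatic, since $\nabla f$ is continuous on $\overline G$ and therefore uniformly continuous on each compact subset. For the remainder estimate, I would fix a compact $K\subset\overline G$ and enlarge it to $K':=\overline G\cap\overline B_{\R^d}(x_0,r)$, with $x_0\in\overline G$ and $r>0$ chosen so that $K+\overline B_{\R^d}(0,1)\subset\overline B_{\R^d}(x_0,r)$; then $K'$ is compact (as $\overline G$ is closed) and contains the part in $\overline G$ of the closed unit neighbourhood of $K$. Let $\omega$ be a modulus of continuity of $\nabla f$ on $K'$, i.e.\ $\omega(\delta)\to 0$ as $\delta\to 0^+$ and $|\nabla f(a)-\nabla f(b)|\le\omega(|a-b|)$ for all $a,b\in K'$. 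Given $x,y\in K$ with $0<|y-x|\le 1$, convexity of $\overline G$ ensures that the segment $[x,y]$ lies in $\overline G$, and it also lies in $K'$; hence the fundamental theorem of calculus applied to $s\mapsto f\big(x+s(y-x)\big)$ gives
\[
    f(y)-f(x)=\int_0^1 \nabla f\big(x+s(y-x)\big)\cdot(y-x)\ds,
\]
and, using $x+s(y-x)\in K'$ together with $|s(y-x)|\le|y-x|$,
\begin{align*}
    \big|f(y)-f(x)-\nabla f(x)\cdot(y-x)\big|
    &\le |y-x|\int_0^1\big|\nabla f\big(x+s(y-x)\big)-\nabla f(x)\big|\ds\\
    &\le|y-x|\,\omega(|y-x|).
\end{align*}
Dividing by $|y-x|$ and passing to the supremum over $0<|y-x|\le\delta$ (for $\delta\le 1$) yields $R_K(\delta)\le\omega(\delta)\to 0$, which is the remaining hypothesis. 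Whitney's theorem then supplies the desired $\bar f$.

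The only substantive step here is this verification, and it is exactly where convexity is indispensable: on a general domain — for instance one with an outward-pointing cusp — two points of $\overline G$ can be arbitrarily close in $\R^d$ while every path joining them inside $\overline G$ is comparatively long, so the first-order Whitney remainder need not be $o(|y-x|)$ and no $C^1$ extension need exist; convexity removes this obstruction by supplying the connecting segment inside $\overline G$. I would also resist writing $\bar f$ down by an explicit formula: the natural candidate built from the nearest-point projection $\pi\colon\R^d\to\overline G$, namely $x\mapsto f(\pi(x))+\nabla f(\pi(x))\cdot(x-\pi(x))$, is continuous but already fails to be $C^1$ on a half-plane, since differentiating its first-order correction term tangentially along $\partial G$ would require second derivatives of $f$. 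Whitney's theorem, which uses only the $1$-jet and its first-order compatibility, sidesteps all of this.
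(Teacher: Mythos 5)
Your proposal is correct and follows essentially the same route as the paper: both reduce the statement to Whitney's extension theorem for the $1$-jet $(f,\nabla f)$ on $\overline G$ and use convexity to place the connecting segment inside $\overline G$ so that the first-order remainder is controlled by the uniform continuity of $\nabla f$ on compact sets (the paper via the Mean Value Theorem, you via the fundamental theorem of calculus). Your enlargement of $K$ to a compact set containing the connecting segments is a slightly more careful handling of a detail the paper glosses over, but it is not a different argument.
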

\begin{proof}
    We will show that Whitney's Extension Theorem (see, e.g. Chapter 6 in \cite{Evans1992} and Theorem 9.43 in \cite{GiovanniLeoni2017}) can be applied and thus derive the desired extension function. To this end, for any $x_1, x_2 \in \overline{G}$ with $x_1 \neq x_2$, set
    \[
        R(x_2, x_1) := \frac{f(x_2)-f(x_1) - \nabla f(x_1) \cdot (x_2-x_1)}{|x_2-x_1|}.
    \]
    For any compact subset $M \subset \overline{G}$, we define the function $\rho_M : (0,1) \to (0, \infty)$ by
    \[
        \rho_M( \tau) := \sup\{|R(x_2,x_1)| \mid 0 < |x_1 -x_2| < \tau, x_1, x_2 \in M \}, \quad \tau \in (0,1).
    \]
    In light of Whitney's Extension Theorem, in order to derive assertions \ref{item:C1-continuous} and \ref{item:C1-extension-on-domain}, we need to show that
    \begin{equation}
        \label{eq:C1-extension-limit}
        \rho_M(\tau) \to 0 \quad \text{as} \quad \tau \to 0^+.
    \end{equation}
    Indeed, for any compact subset $M \subset \overline{G}$, and any $x_1, x_2 \in M$, $x_1 \neq x_2$, we deduce from the Mean Value Theorem and the convexity of $G$ that
    \begin{align*}
        |R(x_2, x_1)| & = \frac{| \nabla f(x_1 + \theta(x_2-x_1)) \cdot (x_2 -x_1) - \nabla f(x_1) \cdot (x_2 -x_1)|}{|x_1 -x_2|}\\
        & \leq |\nabla f(x_1 + \theta(x_2-x_1))- \nabla f(x_1) |
    \end{align*}
    with some $\theta \in (0,1)$. There then holds
    \[
        \rho_M(\tau) \leq \sup \{|\nabla f(x_2)- \nabla f(x_1) | \mid | x_1 -x_2| < \tau, x_1, x_2 \in M \}
    \]
    for all $\tau \in (0,1)$.
    From this and the uniform continuity of $\nabla f$ on the compact set $M$, we have \eqref{eq:C1-extension-limit}.
\end{proof}

It is well-known from differential geometry (see, e.g. \cite{JohnMilnor1965} and \cite{Guillemin1974}) that every compact one-dimensional manifold without boundary is a union of finitely many disjoint closed curves. As a direct consequence, we obtain in \cref{prop:decomposition-levelset} below the decomposition into finitely many disjoint closed curves of level sets of a $C^1$-function that vanishes on the boundary of a two-dimensional convex domain and has a nonvanishing gradient. For the sake of convenience, we provide its proof based on \cref{prop:C1-extension-on-convex-domain} and the following result on the extension of an implicit function. Besides, this extension result plays an important role in proving the continuity of integrals on level sets in \cref{sec:continuity-integral-levelsets} below.

\begin{lemma}
    \label{lem:Implicit_Function-Theorem-extended-boundary}
    Let $t \in \R$ be arbitrary and let $y \in C^1(\overline\Omega)$ be such that $y_{\mid \partial\Omega} =0$ and that there is a point $x_0 := (x^1_0, x^2_0) \in \{y=t \}$.
    Let $\Cu$ be a connected component of $\{y=t\}$ such that $x_0 \in \Cu$. Assume that there exists a constant $r>0$ satisfying
    \begin{equation}
        \label{eq:radii-condition}
        \{y=t\} \cap \overline{B}_{\R^2}(x_0, r) = \Cu \cap \overline{B}_{\R^2}(x_0, r)
    \end{equation}
    and
    \begin{equation}
        \label{eq:boundary-touch}
        \{y=t \} \cap \partial \overline{B}_{\R^2}(x_0, r) = \{p, q \}, \quad p \neq q.
    \end{equation}
    Assume further that
    \begin{equation}
        \label{eq:nonvanishing-gradient-at-point}
        \left|\frac{\partial y}{\partial{x^2}}(x)\right| \geq \alpha > 0 \quad \text{for all } x \in \overline{B}_{\R^2}(x_0, r) \cap \overline\Omega
    \end{equation}
    with some constant $\alpha>0$. Then the following assertions hold.
    \begin{enumerate}[label=(\alph*)]
        \item \label{item:paramterization-g} There exist constants $h_0 >0, k_0> 0$ and a $C^1$-function $g_0: [x^1_0-h_0,x^1_0+k_0] \to \R$ satisfying
            \begin{equation}
                \label{eq:unique-C1-curve-locally-extension-boundary}
                \left\{
                    \begin{aligned}
                        & g_0(x^1_0) =x^2_0,\\
                        & \overline B_{\R^2}(x_0, r) \cap \{y=t\} = \{ (\tau, g_0(\tau)) \mid \tau \in [x^1_0-h_0,x^1_0+k_0] \}, \\
                        & \{(x^1_0-h_0, g_0(x^1_0-h_0)), (x^1_0+k_0, g_0(x^1_0+k_0))\} = \partial\overline B_{\R^2}(x_0, r) \cap \{y=t\}.
                    \end{aligned}
                \right.
            \end{equation}
        \item \label{item:parameterization-approximation}
            If, in addition, there exists a sequence $\{y_n\} \in C^1(\overline\Omega)$ satisfying that ${y}_{n\mid{\partial\Omega}} =0$ and that
            \begin{equation}
                \label{eq:approximation-func-in-ball}
                \left\{
                    \begin{aligned}
                        & y_n \to y \quad \text{strongly in } C^1(\overline\Omega),\\
                        & \{y_n = t\} \cap \overline B_{\R^2}(x_0,r_0) \, \text{is an arc for all $n$ large enough},\\
                        & \frac{\partial y}{\partial{x^2}}(x)\frac{\partial y_n}{\partial{x^2}}(x) \geq \tilde \alpha >0 \quad \text{for all } x \in \overline{B}_{\R^2}(x_0, r) \cap \overline\Omega \, \text{and for all $n$ sufficiently large,}
                    \end{aligned}
                \right.
            \end{equation}
            then there is an integer $n_0 = n_0(x_0,r)$ such that, for any $n \geq n_0$, constants $h_n, k_n >0$ and a $C^1$ function $g_n: [x^1_0-h_n,x^1_0+k_n] \to \R$ exist and satisfy the following properties:
            \begin{enumerate}[label=(\roman*)]
                \item \label{item:parameterization-gn}
                    $\overline B_{\R^2}(x_0, r) \cap \{y_n=t\} = \{ (\tau, g_n(\tau)) \mid \tau \in [x^1_0-h_n,x^1_0+k_n] \}$;
                \item \label{item:boundary-points}
                    $\{(x^1_0-h_n, g_n(x^1_0-h_n)), (x^1_0+k_n, g_n(x^1_0+k_n))\} = \partial\overline B_{\R^2}(x_0, r) \cap \{y_n=t\}$;
                \item \label{item:limit-hn-kn} $(x^1_0-h_n, g_n(x^1_0-h_n)) \to (x_0^1 - h_0, g_0(x_0^1 -h_0))$ and $(x^1_0+k_n, g_n(x^1_0+k_n)) \to (x^1_0+k_0, g_0(x^1_0+k_0))$;
                \item \label{item:gn-g-Lip}
                    For any $\epsilon >0$ and for all $\tau \in [x^1_0-h_0 + \epsilon,x^1_0+k_0 - \epsilon]$, there hold
                    \begin{equation}
                        \label{eq:gn-g-esti}
                        |g_n(\tau) - g_0(\tau)| \leq C_1 \norm{y_n - y}_{C(\overline\Omega)} \quad \text{ for $n$ large enough }
                    \end{equation}
                    and
                    \begin{equation}
                        \label{eq:gn-g-limits}
                        \left\{
                            \begin{aligned}
                                & g_n'(\tau) \to g_0'(\tau), \\
                                & g_n'(x^1_0-h_n) \to g_0'(x_0^1-h_0), \\
                                & g_n'(x^1_0 + k_n) \to g_0'(x_0^1 + k_0).
                            \end{aligned}
                        \right.
                    \end{equation}
                    Moreover, for all $\tau \in [x^1_0-h_n,x^1_0+k_n]$, one has
                    \begin{equation}
                        \label{eq:gn-deri-bound}
                        |g_n'(\tau)| \leq C_2.
                    \end{equation}
            \end{enumerate}
            Here $\tilde{\alpha}$ and $C_1, C_2$ are constants independent of $n, \epsilon$, and $\tau$.
    \end{enumerate}
\end{lemma}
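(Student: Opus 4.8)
The plan is to derive part \ref{item:paramterization-g} directly from the Implicit Function Theorem applied to $y$ near $x_0$, and then to obtain part \ref{item:parameterization-approximation} by a careful ``uniform in $n$'' version of the same argument together with a compactness/continuity bootstrap. For \ref{item:paramterization-g}, first I would use \cref{prop:C1-extension-on-convex-domain} to replace $y$ by a $C^1$ extension $\bar y$ defined on all of $\R^2$, so that the hypothesis \eqref{eq:nonvanishing-gradient-at-point} becomes $|\partial_{x^2}\bar y| \geq \alpha > 0$ on all of $\overline B_{\R^2}(x_0,r)$ (shrinking $r$ slightly so the extension still has nonvanishing second partial, which is possible by continuity). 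The classical Implicit Function Theorem then gives, for each point of $\{y=t\}\cap\overline B_{\R^2}(x_0,r)$, a local $C^1$ graph $x^2 = g(x^1)$ with slope bounded by $\|\nabla\bar y\|_\infty/\alpha$. Covering the connected component $\Cu\cap\overline B_{\R^2}(x_0,r)$ by such graphs and using \eqref{eq:radii-condition} to exclude other components, one patches these local graphs into a single function $g_0$; the graph is monotone-parametrized in $x^1$ precisely because the uniform slope bound prevents vertical tangents, and the curve must exit the ball exactly at the two points $p,q$ from \eqref{eq:boundary-touch}, which pins down the endpoints $x^1_0 - h_0$ and $x^1_0 + k_0$ and yields \eqref{eq:unique-C1-curve-locally-extension-boundary}. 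A small technical point is to verify that $g_0$ extends $C^1$-continuously up to these two endpoints, which follows since $\partial_{x^2}\bar y \neq 0$ on the closed ball, so the Implicit Function Theorem still applies at $p$ and $q$.

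For part \ref{item:parameterization-approximation}, the idea is to run the identical Implicit Function Theorem construction simultaneously for all $y_n$ with $n$ large. The third line of \eqref{eq:approximation-func-in-ball} guarantees $|\partial_{x^2} y_n| \geq \tilde\alpha/\|\partial_{x^2} y\|_\infty > 0$ uniformly on $\overline B_{\R^2}(x_0,r)\cap\overline\Omega$ for large $n$ (after extending each $y_n$ via \cref{prop:C1-extension-on-convex-domain} and using $C^1$ convergence to keep nonvanishing second partials near $x_0$); combined with the second line, which says $\{y_n=t\}\cap\overline B_{\R^2}(x_0,r_0)$ is an arc, the Implicit Function Theorem produces $g_n$ on an interval $[x^1_0-h_n, x^1_0+k_n]$ satisfying \ref{item:parameterization-gn}--\ref{item:boundary-points}, with slopes bounded by a constant $C_2$ independent of $n$ — that is \eqref{eq:gn-deri-bound}. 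For the estimate \eqref{eq:gn-g-esti} on a compact inner subinterval, fix $\tau \in [x^1_0-h_0+\epsilon, x^1_0+k_0-\epsilon]$; then $y(\tau, g_0(\tau)) = t = y_n(\tau, g_n(\tau))$, so writing $y_n(\tau,g_n(\tau)) - y(\tau, g_0(\tau)) = 0$ and splitting as $[y_n(\tau,g_n(\tau)) - y(\tau,g_n(\tau))] + [y(\tau,g_n(\tau)) - y(\tau,g_0(\tau))]$, the first bracket is bounded by $\|y_n-y\|_{C(\overline\Omega)}$ and the second equals $\partial_{x^2}y(\tau,\xi)(g_n(\tau)-g_0(\tau))$ for some intermediate $\xi$ by the Mean Value Theorem, whose coefficient is bounded below by $\alpha$; rearranging gives \eqref{eq:gn-g-esti} with $C_1 = 1/\alpha$. (One first needs $(\tau, g_n(\tau)) \in \overline B_{\R^2}(x_0,r)$, i.e. that $\tau$ is in the domain of $g_n$ for large $n$; this comes from \ref{item:limit-hn-kn}, proved next.)

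For \ref{item:limit-hn-kn} and the derivative convergences \eqref{eq:gn-g-limits}: the endpoints $(x^1_0\pm k_n, g_n(x^1_0+k_n))$ (resp. $-h_n$) lie on $\partial\overline B_{\R^2}(x_0,r)\cap\{y_n=t\}$. By the uniform slope bound they range in a compact set, so along a subsequence they converge to some point on $\partial\overline B_{\R^2}(x_0,r)$; since $y_n \to y$ uniformly, the limit lies in $\partial\overline B_{\R^2}(x_0,r)\cap\{y=t\} = \{p,q\}$. A continuity/orientation argument (the $+k$ endpoint stays on the side of increasing $x^1$) identifies the limit as $(x^1_0+k_0, g_0(x^1_0+k_0))$, and since every subsequence has this same limit, the full sequence converges — this is \ref{item:limit-hn-kn}. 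For \eqref{eq:gn-g-limits}, the Implicit Function Theorem gives the explicit formula $g_n'(\tau) = -\partial_{x^1}y_n(\tau,g_n(\tau))/\partial_{x^2}y_n(\tau,g_n(\tau))$; since $y_n\to y$ in $C^1$, $g_n(\tau)\to g_0(\tau)$ by \eqref{eq:gn-g-esti}, and $g_n(x^1_0\pm k_n)\to g_0(x^1_0+k_0)$ (resp. $-h_0$) by \ref{item:limit-hn-kn}, each quotient converges to the corresponding quotient for $y$, i.e. to $g_0'$ evaluated at the appropriate point. The main obstacle is the bookkeeping in \ref{item:limit-hn-kn}: one must rule out the endpoints of $g_n$ ``jumping'' to the wrong boundary point $p$ versus $q$, or the arc $\{y_n=t\}\cap\overline B_{\R^2}(x_0,r)$ failing to be a single graph over $x^1$ even though it is an arc — both are handled by the uniform lower bound on $|\partial_{x^2}y_n|$, which forces the whole arc to be a non-vertical graph and forces its two endpoints to sit near those of $g_0$, but making this rigorous requires a careful connectedness argument inside the ball.
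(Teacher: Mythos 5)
Your plan follows essentially the same route as the paper: extend $y$ (and $y_n$) by \cref{prop:C1-extension-on-convex-domain}, apply the Implicit Function Theorem at each point of the arc and patch the local graphs into $g_0$ using \eqref{eq:radii-condition} and the injectivity of the projection onto the $x^1$-axis forced by \eqref{eq:nonvanishing-gradient-at-point}, then repeat the construction uniformly for $y_n$, obtain \ref{item:limit-hn-kn} by compactness of the boundary points, \eqref{eq:gn-g-esti} by exactly the Mean Value Theorem splitting you describe (with $C_1=1/\alpha$ up to a constant), and \eqref{eq:gn-g-limits}, \eqref{eq:gn-deri-bound} from the quotient formula $g_n'=-\partial_{x^1}y_n/\partial_{x^2}y_n$. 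Part \ref{item:paramterization-g} and items \ref{item:gn-g-Lip} are fine as proposed.

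The one genuine gap is the step you defer to a ``careful connectedness argument'': nothing in your construction shows that the graph of $g_n$ is parametrized over an interval of the form $[x_0^1-h_n,x_0^1+k_n]$ with $h_n,k_n>0$, i.e.\ that the arc $\{y_n=t\}\cap\overline B_{\R^2}(x_0,r)$ actually crosses the vertical segment $\{x^1=x_0^1\}\cap B_{\R^2}(x_0,r)$; and your orientation argument in \ref{item:limit-hn-kn} (``the $+k$ endpoint stays on the side of increasing $x^1$'') presupposes precisely this. The paper closes this by a separate anchoring step (its Step 1): the sign condition in \eqref{eq:approximation-func-in-ball} makes $\xi\mapsto y_n(x_0^1,\xi)$ strictly monotone with the same monotonicity as $y(x_0^1,\cdot)$, and a contradiction with the uniform convergence $y_n\to y$ (since $y(x_0^1,\cdot)$ takes values strictly below and above $t$ on that segment) shows that $y_n(x_0^1,\xi_n)=t$ for a unique $\xi_n\in(-r,r)$ when $n$ is large. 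Running the part \ref{item:paramterization-g} construction around $(x_0^1,\xi_n)$ then yields $h_n,k_n>0$ automatically, and the endpoint identification in \ref{item:limit-hn-kn} becomes immediate: the left endpoint has first coordinate $x_0^1-h_n\le x_0^1$ and so, along any subsequence, cannot converge to $q$ (whose first coordinate exceeds $x_0^1$), and symmetrically for the right endpoint; no further connectedness argument is needed. Without some version of this anchoring, the claimed form of the domain of $g_n$ and the convergence $h_n\to h_0$, $k_n\to k_0$ are not established, so you should add this step (it is short, but it is the missing ingredient).
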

\begin{remark}
    The second condition in \eqref{eq:approximation-func-in-ball} is, in fact, fulfilled if the first one in \eqref{eq:approximation-func-in-ball} and the second one in \eqref{eq:unique-C1-curve-locally-extension-boundary} hold. This fact shall be proven in \cref{prop:levelset-unique-component-continuity} below; see also, the proof of that proposition.
\end{remark}
\begin{proof}
    Without loss of generality, assume that $x_0 = (0,0)$, i.e., $x_0^1 := 0$ and $x_0^2 :=0$.
    We first see from the condition \eqref{eq:nonvanishing-gradient-at-point} that the following implication holds
    \begin{equation}
        \label{eq:ordering-x1}
        \begin{cases}
            & x = (x^1,x^2), \tilde{x} = (\tilde{x}^1,\tilde{x}^2) \in \Cu \cap \overline{B}_{\R^2}(0, r),\\
            & x \neq \tilde{x}
        \end{cases}
        \implies x^1 \neq \tilde{x}^1.
    \end{equation}
    Consequently, one has $p^1 \neq q^1$, where $p^1$ and $q^1$ stand, respectively, for the first component of the end points $p$ and $q$.
    We can assume without loss of generality that
    \[
        p^1 < q^1.
    \]
    Let $\Cu_{x^1}$ be the projection of the set $\Cu \cap \overline{B}_{\R^2}(x_0, r)$ onto the $x^1$-axis. Since $\Cu \cap \overline{B}_{\R^2}(x_0, r)$ is connected and compact, $\Cu_{x^1}$ is also connected and compact; see, e.g. Theorem 4.22 in \cite{Rudin}.
    Therefore, $\Cu_{x^1}$ must be a closed bounded interval of $\R$. Combining this with the implication \eqref{eq:ordering-x1} yields
    \begin{equation}
        \label{eq:x1-projection}
        \Cu_{x^1} = [p^1,q^1].
    \end{equation}
    Moreover, since $0 \in \Cu \cap \overline{B}_{\R^2}(x_0, r)$, and $p$ and $q$ are the end points of the arc $\Cu \cap \overline{B}_{\R^2}(x_0, r)$, there holds
    \begin{equation}
        \label{eq:pq-x1-order}
        p^1 < 0 < q^1,
    \end{equation}
    due to the implication \eqref{eq:ordering-x1}.
    We now prove assertions \ref{item:paramterization-g} and \ref{item:parameterization-approximation}.
    \\
    \noindent\emph{Ad \ref{item:paramterization-g}:}
    In light of \cref{prop:C1-extension-on-convex-domain}, $y$ admits a $C^1$ extension, denoted also by $y$, on $\R^2$. By the continuity of $\nabla y$, we can assume that the condition \eqref{eq:nonvanishing-gradient-at-point} is valid
    on an open neighborhood $\mathcal{O}$ of $\overline{B}_{\R^2}(0, r) \cap \{y=t\}$ (possibly with a different positive constant, also denoted by $\alpha$),
    which contains the arc $\Cu \cap \overline{B}_{\R^2}(0, r)$.
    According to the condition \eqref{eq:radii-condition}, we can assume that the open set $\mathcal{O}$ satisfies
    \begin{equation}
        \label{eq:radii-condi2}
        \mathcal{O} \cap \{y=t\} = \mathcal{O} \cap \Cu.
    \end{equation}
    Therefore, one can apply the Implicit Function Theorem in every point $x = (x^1,x^2)$ of $\Cu \cap \overline{B}_{\R^2}(0, r)$.
    We now take $x = (x^1,x^2) \in \Cu \cap \overline{B}_{\R^2}(0, r)$ arbitrarily but fixed. Thanks to the Implicit Function Theorem, there exist constants $h_x, k_x, \rho_x >0$ and a $C^1$-function $g_x: (x^1- h_x, x^1+k_x) \to \R$ satisfying
    \begin{equation}
        \label{eq:gx-func}
        \left\{
            \begin{aligned}
                & B_{\R^2}(x, \rho_x) \subset \mathcal{O}, \quad g_x(x^1) = x^2,\\
                & y(\tau, g_x(\tau)) = t \quad \text{for all } \tau \in (x^1- h_x, x^1+k_x), \\
                & \left((\tau,\xi) \in B_{\R^2}(x, \rho_x) \, \text{and } y(\tau, \xi) = t \right) \quad \implies \quad \xi = g_x(\tau).
            \end{aligned}
        \right.
    \end{equation}
    Since $\Cu \cap \overline{B}_{\R^2}(0, r) \subset \bigcup \{B_{\R^2}(x, \rho_x) \mid x \in \Cu \cap \overline{B}_{\R^2}(0, r) \}$ and $\Cu \cap \overline{B}_{\R^2}(0, r)$ is compact, there exist finitely many points $x_1,x_2, \ldots, x_s \in \Cu \cap \overline{B}_{\R^2}(0, r)$ with $s \in \mathbb N$ and $C^1$-functions $g_i: (x^1_i- h_i, x^1_i+k_i) \to \R$ with $g_i := g_{x_i}$, $h_i := h_{x_i}$, $k_i := k_{x_i}$, $1 \leq i \leq s$, such that
    \[
        \Cu \cap \overline{B}_{\R^2}(0, r) \subset \cup_{i=1}^s B_{\R^2}(x_i, \rho_i)
    \]
    with $\rho_i := \rho_{x_i}$. From this and \eqref{eq:x1-projection}, we have
    \[
        [p^1, q^1] \subset \cup_{i=1}^s (x^1_i- h_i, x^1_i+k_i) =: V.
    \]
    We now construct the function $g_0$ defined over the open set $V$ by setting
    \begin{equation}
        g_0(\tau) = g_i(\tau) \quad \text{if} \quad \tau \in (x^1_i- h_i, x^1_i+k_i) \quad \text{for some } i.
    \end{equation}
    Thanks to the implication in \eqref{eq:gx-func}, the function $g_0$ is actually well-defined and is of class $C^1$ on the open neighborhood $V$ of $[p^1, q^1]$.
    Moreover, for later use, we obtain by simple computation that
    \begin{equation}
        \label{eq:implicit-func-deri}
        g_0'(\tau) = - \frac{\frac{\partial y}{\partial{x^1}}(\tau, g_0(\tau))}{\frac{\partial y}{\partial{x^2}}(\tau, g_0(\tau))}, \quad \tau \in V.
    \end{equation}
    In view of \eqref{eq:nonvanishing-gradient-at-point} and from the continuity of $\nabla y$ over $\overline\Omega$, there holds
    \begin{equation}
        \label{eq:implicit-func-deri-bound}
        |g_0'(\tau)| \leq \frac{\norm{\nabla y}_{C(\overline{\Omega})}}{\alpha} \quad \text{for all } \tau \in V.
    \end{equation}
    Now setting $h_0 := - p^1$ and $k_0 := q^1$ and exploiting \eqref{eq:pq-x1-order}, one has $h_0, p_0 >0$.
    On the other hand, we have \eqref{eq:unique-C1-curve-locally-extension-boundary} from \eqref{eq:gx-func} and the definition of function $g_0$.
    We thus obtain \ref{item:paramterization-g}.

    \noindent\emph{Ad \ref{item:parameterization-approximation}:}
    The proof of assertion \ref{item:parameterization-approximation} is divided into several steps as follows. \\
    \noindent\emph{$\bullet$ Step 1: There is an integer $n_0^1$ such that, for any $n \geq n_0^1$, a number $\xi_n \in (-r_0,r_0)$ uniquely exists and fulfills
        \begin{equation}
            \label{eq:xi-n}
            y_n(0,\xi_n) =t.
        \end{equation}
        In fact, as a consequence of the last condition in \eqref{eq:approximation-func-in-ball}, the functions $y(0,\cdot)$ and $y_n(0,\cdot)$ are both either strictly increasing or strictly decreasing. We now only consider the situation where both functions are strictly increasing, since the other is analyzed analogously. It suffices to prove that \eqref{eq:xi-n}, for each $n$ large enough, admits at least one solution in $(-r_0,r_0)$. To this end, by contradiction assume that
        there exists a subsequence, denoted in the same way, of $\{n\}$ such that \eqref{eq:xi-n} has no solutions in $(-r_0,r_0)$ for all $n \geq 1$. We then split $\{n\}$ into subsequences, also denoted by $\{n\}$, satisfying one of two inequalities
        \begin{multicols}{2}
            \begin{enumerate}[label= (I\arabic*)]
                \item \label{item:bigger-t} $y_n(0,\xi) > t$ for all $n \geq 1$ and $\xi \in (-r_0, r_0)$;
                \item \label{item:less-t} $y_n(0,\xi) < t$ for all $n \geq 1$ and $\xi \in (-r_0, r_0)$.
            \end{enumerate}
        \end{multicols}
        In order to show a contradiction, we now only consider \ref{item:bigger-t} since the case \ref{item:less-t} is similarly analyzed. We have for all $n\geq 1$ that
        \[
            y_n(0,-\frac{r_0}{2}) > t = y(0,0) > y(0,-\frac{r_0}{2}),
        \]
        which yields that
        \[
            \norm{y_n -y}_{C(\overline\Omega)} \geq |y_n(0,-\frac{r_0}{2}) - y(0,-\frac{r_0}{2})| > t - y(0,-\frac{r_0}{2}) \quad \text{for all } n \geq 1.
        \]
        Passing to the limit and exploiting the first condition in \eqref{eq:approximation-func-in-ball} give $0 \geq t - y(0,-\frac{r_0}{2}) > 0$, a contradiction.
    }%
    \noindent\emph{$\bullet$ Step 2: For any $n \geq n_0^1$, there exist constants $h_n, k_n >0$ and a function $g_n \in C^1[x_0^1-h_n,x_0^1+k_n]$, all of which fulfill assertions \ref{item:parameterization-gn} and \ref{item:boundary-points}, where $n_0^1$ is defined in Step 1.} To this end, we first see that $x_{0,n} := (0, \xi_n) = (x_0^1, \xi_n) \in B_{\R^2}(x_0,r_0)$ with $\xi_n \in (-r_0, r_0)$ being a unique solution to \eqref{eq:xi-n}. Using the argument in the proof of assertion \ref{item:paramterization-g} for the point $x_{0,n}$ and the function $y_n$ in the places of $x_0$ and $y$, respectively, Step 2 is proven.
    Moreover, similar to \eqref{eq:implicit-func-deri} and \eqref{eq:implicit-func-deri-bound}, there hold for function $g_n$
    \begin{equation}
        \label{eq:gn-derivative}
        g_n'(\tau) = - \frac{\frac{\partial y_n}{\partial{x^1}}(\tau, g_n(\tau))}{\frac{\partial y_n}{\partial{x^2}}(\tau, g_n(\tau))} \quad \text{and} \quad |g_n'(\tau)| \leq C \norm{\nabla y_n}_{C(\overline{\Omega})}
    \end{equation}
    for all $\tau \in [x_0^1-h_n, x_0^1+k_n]$.
    \\
    \noindent\emph{$\bullet$ Step 3: Verification of assertion \ref{item:limit-hn-kn}.}
    Indeed, we set the points belonging the boundary $\partial B_{\R^2}(x_0,r_0)$ of $\{y=t\}$ and $\{y_n=t\}$ as follows
    \[
        p_0 := (-h_0,g_0(-h_0)), \quad q_0 := (k_0, g_0(k_0)), \quad p_n := (-h_n,g_n(-h_n)), \quad \text{and} \quad q_n := (k_n, g_n(k_n)).
    \]
    Recall that all of constants $h_0, h_n, k_0, k_n$ are positive.
    Thanks to the boundedness of $\{p_n\}$ and the closedness of $\partial B_{\R^2}(x_0,r_0)$, we have from the first condition in \eqref{eq:approximation-func-in-ball} that
    $p_n \to p_0$, i.e.,
    \[
        h_n \to h_0 \quad \text{and} \quad g_n(-h_n) \to g_0(-h_0).
    \]
    Analogously, we have
    \[
        k_n \to k_0 \quad \text{and} \quad g_n(k_n) \to g_0(k_0).
    \]
    We then obtain \ref{item:limit-hn-kn}. \\
    \noindent\emph{$\bullet$ Step 4: Verification of assertion \ref{item:gn-g-Lip}}. Indeed, for any $\epsilon >0$, there exists an integer $n_0^2$ such that
    \[
        [-h_0 + \epsilon, k_0 - \epsilon] \subset [-h_n, k_n] \cap [-h_0, k_0] \quad \text{for all } n \geq n_0^2.
    \]
    For any $\tau \in [-h_0 + \epsilon, k_0 - \epsilon]$, by employing the Mean Value Theorem, we arrive at
    \begin{align*}
        t = y_n(\tau,g_n(\tau)) & = y(\tau,g_n(\tau)) + w_n(\tau,g_n(\tau)) \\
        & = y(\tau,g_0(\tau)) + \frac{\partial y}{\partial x^2}(\tau,g_0(\tau) + \theta_n (g_n(\tau)-g_0(\tau)))(g_n(\tau)-g_0(\tau)) + w_n(\tau,g_n(\tau)) \\
        & = t + \frac{\partial y}{\partial x^2}(\tau, g_0(\tau)+\theta_n (g_n(\tau)-g_0(\tau)))(g_n(\tau)-g_0(\tau)) + w_n(\tau,g_n(\tau))
    \end{align*}
    for some $\theta_n \in (0,1)$ and $w_n := y_n - y$. Obviously, one has $(\tau, g_0(\tau)+\theta_n (g_n(\tau)-g_0(\tau))) \in B_{\R^2}(x_0,r_0) \cap \overline\Omega$ since $(\tau, g_0(\tau))$ and $(\tau, g_n(\tau))$ belong to $B_{\R^2}(x_0,r_0) \cap \overline\Omega$. Combing this with the last condition in \eqref{eq:approximation-func-in-ball} yields the estimate in \eqref{eq:gn-g-esti}. From this and the derivatives of $g_0$ and $g_n$ in \eqref{eq:implicit-func-deri} and \eqref{eq:gn-derivative}, all the limits in \eqref{eq:gn-g-limits} then follows from the first condition in \eqref{eq:approximation-func-in-ball} and from \ref{item:limit-hn-kn}. Finally, the limit in \eqref{eq:approximation-func-in-ball} and the estimate in \eqref{eq:gn-derivative} imply \eqref{eq:gn-deri-bound}.
\end{proof}

\begin{proposition}
    \label{prop:decomposition-levelset}
    Let $t \in \R$ be arbitrary and let $y \in C^1(\overline\Omega)$ be such that $y_{\mid\partial\Omega} =0$ and that $\{y=t \} \neq \emptyset$.
    Assume that $\Cu$ is a connected component of $\{y=t\}$. Then the following properties hold:
    \begin{enumerate}[label=(\alph*)]
        \item \label{item:C1-closedcurve} If
            \begin{equation}
                \label{eq:nonvanishing-gradient-on-component}
                |\nabla y(x)| > 0 \quad \text{for all } x \in \Cu,
            \end{equation}
            then $\Cu$ is a closed $C^1$ simple curve;
        \item \label{item:decomposition-levelset} If
            \begin{equation}
                \label{eq:nonvanishing-gradient-on-levelset}
                |\nabla y(x)| > 0 \quad \text{for all } x \in \{y=t \},
            \end{equation}
            then the level set $\{y=t \}$ decomposes into finitely many disjoint closed $C^1$ simple curves.
    \end{enumerate}
\end{proposition}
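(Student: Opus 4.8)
The plan is to equip $\Cu$ with the structure of a compact, connected, one-dimensional $C^1$ submanifold of $\R^2$ without boundary and then to invoke the classification of compact $1$-manifolds already recalled above (see \cite{JohnMilnor1965,Guillemin1974}): such a manifold is a finite disjoint union of $C^1$ closed simple curves, and connectedness forces it to be a single one, which is assertion \ref{item:C1-closedcurve}. Assertion \ref{item:decomposition-levelset} then follows by applying \ref{item:C1-closedcurve} to each connected component of $\{y=t\}$ — every component inherits \eqref{eq:nonvanishing-gradient-on-component} from \eqref{eq:nonvanishing-gradient-on-levelset} — together with a finiteness argument: since, near every point of $\{y=t\}$, the level set is a single $C^1$ arc (see below), $\{y=t\}$ is locally connected, so it is partitioned into its relatively open connected components, and this partition of the compact set $\{y=t\}$ must be finite.

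To produce the manifold charts I would use \cref{lem:Implicit_Function-Theorem-extended-boundary}\ref{item:paramterization-g}. Fix $x_0 = (x_0^1,x_0^2) \in \Cu$. Since $|\nabla y(x_0)| > 0$, after rotating the coordinate axes (which preserves convexity of $\Omega$, the regularity of $y$, the condition $y_{\mid\partial\Omega}=0$, and $|\nabla y|$) we may assume $\frac{\partial y}{\partial x^2}(x_0) \neq 0$; by continuity of $\nabla y$ on $\overline\Omega$ there is $r' > 0$ with $|\frac{\partial y}{\partial x^2}| \geq \alpha > 0$ on $\overline{B}_{\R^2}(x_0,r') \cap \overline\Omega$, which is \eqref{eq:nonvanishing-gradient-at-point}. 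Applying the Implicit Function Theorem to the $C^1$ extension of $y$ furnished by \cref{prop:C1-extension-on-convex-domain}, the set $\{y=t\}$ near $x_0$ is the graph of a single $C^1$ function of $x^1$ through $x_0$; this arc lies in one connected component and hence in $\Cu$, so \eqref{eq:radii-condition} holds once $r$ is small, and inspecting this graph one checks that a small enough circle $\partial\overline{B}_{\R^2}(x_0,r)$ meets $\{y=t\}$ in exactly two distinct points, which is \eqref{eq:boundary-touch}. With these hypotheses in force, \cref{lem:Implicit_Function-Theorem-extended-boundary}\ref{item:paramterization-g} supplies $h_0,k_0 > 0$ and $g_0 \in C^1([x_0^1-h_0,x_0^1+k_0])$ with $g_0(x_0^1) = x_0^2$ and $\overline{B}_{\R^2}(x_0,r)\cap\{y=t\} = \{(\tau,g_0(\tau)) : \tau \in [x_0^1-h_0,x_0^1+k_0]\}$. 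Thus $\tau \mapsto (\tau,g_0(\tau))$ is a $C^1$ chart of a relative neighborhood of $x_0$ in $\Cu$ onto an interval, with $x_0$ corresponding to the interior parameter value $x_0^1$; so $x_0$ is a manifold point, not a boundary point. As $x_0$ was arbitrary, $\Cu$ is a $C^1$ one-manifold without boundary, and it is compact — being a closed subset of the compact set $\overline\Omega$ — and connected by hypothesis, which completes the scheme above.

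The step I expect to be delicate is the verification of \eqref{eq:radii-condition} and especially \eqref{eq:boundary-touch} — selecting $r$ so that the level set crosses the bounding circle in exactly two points — and, relatedly, the treatment of points $x_0 \in \Cu \cap \partial\Omega$. Such points can occur only if $t = 0$, in which case $\partial\Omega \subseteq \{y=t\}$ and, by connectedness, $\partial\Omega \subseteq \Cu$; here one uses that $y \equiv 0$ on $\partial\Omega$ makes the tangential derivative of $y$ vanish along $\partial\Omega$, so that (together with the convexity of $\Omega$, which is built into \cref{prop:C1-extension-on-convex-domain} and \cref{lem:Implicit_Function-Theorem-extended-boundary}) the nonvanishing of $\nabla y$ on $\Cu$ is consistent with the graph representation and \cref{lem:Implicit_Function-Theorem-extended-boundary} still applies — in particular, a corner of $\partial\Omega$ cannot belong to $\Cu$, since there the local level set would be a $C^1$ graph containing two line segments of distinct slopes through an interior parameter value. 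Once the local graph representation is available at every point of $\Cu$, including boundary points, the remaining bookkeeping — compatibility of charts and the appeal to the classification theorem — is routine.
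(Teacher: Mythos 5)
Your proposal is correct and follows essentially the same route as the paper's proof: local $C^1$ graph representations of $\Cu$ obtained via \cref{prop:C1-extension-on-convex-domain} and \cref{lem:Implicit_Function-Theorem-extended-boundary}\ref{item:paramterization-g}, combined with compactness and connectedness and the classification of compact one-dimensional manifolds without boundary that the paper invokes just before the proposition. The differences are cosmetic—the paper concludes from a finite cover of $\Cu$ (resp.\ of $\{y=t\}$) by parametrized arcs, while you phrase the same conclusion through the manifold classification plus a local-connectedness argument for the finiteness in part \ref{item:decomposition-levelset}—and the delicate points you flag (the choice of $r$ for \eqref{eq:radii-condition}--\eqref{eq:boundary-touch} and the boundary case) are handled at a comparable level of detail in the paper itself.
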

\begin{proof}
    \emph{Ad \ref{item:C1-closedcurve}:}
    We first show that for any $x \in \Cu$, there exist positive constants $r_x >0$ and a $C^1$-function $\gamma_x: [-1,1] \to \R^2$ satisfying
    \begin{equation}
        \label{eq:unique-C1-curve-locally}
        \overline B_{\R^2}(x, r_x) \cap \{y=t\} = \{ \gamma_x(\eta) \mid \eta \in [-1,1] \} \quad \text{and} \quad \gamma_x(\eta_x) =x, \gamma_x(\pm 1) \in \partial B_{\R^2}(x, r_x) \cap \{y=t\}
    \end{equation}
    for some $\eta_x \in (-1,1)$.
    For that purpose, we first take $x:=(x^1,x^2) \in \Cu$ arbitrarily. Without loss of generality, we can assume that $\frac{\partial y}{\partial{x^2}}(x) \neq 0$ and thus $|\frac{\partial y}{\partial{x^2}}(\zeta) | > 0$ for all $\zeta \in \overline{B}_{\R^2}(x, r_x) \cap \overline\Omega$ for some constant $r_x >0$.
    We now can take $r_x$ small enough (if necessary) such that the arc $\Cu \cap \overline{B}_{\R^2}(x, r_x)$ has two different two end points.

    \Cref{lem:Implicit_Function-Theorem-extended-boundary} implies that there exist constants $h_x,s_x >0$ and a $C^1$-function $g_x: [x^1 - h_x, x^1 + s_x] \to \R$ satisfying
    \begin{equation}
        \label{eq:parameterization-local}
        \left\{
            \begin{aligned}
                & g_x(x^1) = x^2,\\
                & \overline B_{\R^2}(x, r_x) \cap \{y=t\} = \{ (\tau, g_x(\tau)) \mid \tau \in [x^1 - h_x, x^1 + s_x] \}, \\
                & \{(x^1-h_x, g_x(x^1-h_x)), (x^1+s_x, g_x(x^1+s_x))\} = \partial\overline B_{\R^2}(x, r_x) \cap \{y=t\}.
            \end{aligned}
        \right.
    \end{equation}
    Setting $\eta := \frac{2}{s_x + h_x} \tau + 1- \frac{2(x^1+s_x)}{s_x + h_x}$ and $\gamma_x(\eta) := (\tau, g_x(\tau))$ and exploiting \eqref{eq:parameterization-local}, we have \eqref{eq:unique-C1-curve-locally}.

    Combining \eqref{eq:unique-C1-curve-locally} with the compactness of $\Cu$, there exist points $x_1,\ldots, x_{m_\Cu} \in \Cu$ and positive constants $r_{i} >0$, and $C^1$-functions $\gamma_i:[-1,1] \to \R^2$, $1 \leq i \leq m_\Cu$, such that
    \begin{equation}
        \label{eq:finite-union}
        \left\{
            \begin{aligned}
                & \Cu \subset \cup_{i=1}^{m_\Cu} B_{\R^2}(x_i, r_i) \cap \{y=t\},\\
                &\overline B_{\R^2}(x_i, r_i) \cap \{y=t\} = \{ \gamma_i(\eta) \mid \eta \in [-1,1] \},\\
                &\gamma_i(\pm 1) \in \partial B_{\R^2}(x_i, r_i) \cap \{y=t\}.
            \end{aligned}
        \right.
    \end{equation}
    From this and the fact that $\Cu$ is a connected component of $\{y=t\}$, we deduce that $\Cu$ is a closed $C^1$ simple curve.

    \noindent\emph{Ad \ref{item:decomposition-levelset}:} Since $\{y=t\}$ is compact, we also have \eqref{eq:finite-union} with $\Cu$ being replaced by $\{y=t\}$. Then the level set $\{y=t\}$ is a union of finitely many closed simple curves.
\end{proof}

\subsection{Green's first identity}
\label{sec:Green-first-identity}

We first state the well-known result on the decomposition of open sets in a finite dimensional Euclidean space into at most countably many disjoint open connected components. Since we could not find its proof in the literature, we provide here the arguments for the sake of completeness.
\begin{proposition}
    \label{prop:countable-decomposition-openset}
    Any nonempty open set $V \subset \R^d$, $d \in \N$, can be expressed as an union of disjoint, open, connected components of $V$, at most countable in number, that is, there exist an index set $I$, which is at most countable, and disjoint, open and connected components $V_i$, $i\in I$, of $V$ such that
    \[
        V = \bigcup_{i \in I} V_i.
    \]%
\end{proposition}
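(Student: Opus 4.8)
The plan is to use the standard fact that connected components of any topological space partition it, and then to exploit the fact that $\R^d$ is second countable (equivalently, separable) to conclude that the resulting partition of an open set $V$ into its connected components is at most countable.

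First I would recall that the relation $x \sim y$ defined by ``there exists a connected subset of $V$ containing both $x$ and $y$'' is an equivalence relation on $V$; its equivalence classes are precisely the connected components $V_i$, $i \in I$, of $V$. By maximality, distinct components are disjoint, and their union is all of $V$. The only nonstandard point to verify is that each $V_i$ is \emph{open}. For this, fix $x \in V_i$. Since $V$ is open, there is a ball $B_{\R^2}(x,\rho) \subset V$ (more precisely $B_{\R^d}(x,\rho)\subset V$); a ball is connected, so $B_{\R^d}(x,\rho)$ lies entirely in the component containing $x$, i.e.\ $B_{\R^d}(x,\rho) \subset V_i$. Hence $V_i$ is open, and incidentally $V$ is \emph{locally connected}, which is the underlying structural reason the argument works.

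Next I would bound the cardinality of $I$. The set $\Q^d$ is countable and dense in $\R^d$, so $\Q^d \cap V$ is a countable dense subset of the open set $V$. Since each $V_i$ is a nonempty open subset of $V$, the intersection $V_i \cap (\Q^d \cap V)$ is nonempty; pick one rational point $x_i$ in it for each $i \in I$. Because the $V_i$ are pairwise disjoint, the map $I \ni i \mapsto x_i \in \Q^d \cap V$ is injective. Therefore $I$ injects into a countable set, so $I$ is at most countable, and we obtain the desired decomposition $V = \bigcup_{i\in I} V_i$.

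I do not anticipate a serious obstacle here: the statement is a classical topological fact, and the only two ingredients are the openness of components (from local connectedness of $\R^d$, established via balls being connected) and the countability of a dense subset (from separability of $\R^d$). The mild subtlety worth stating carefully is the openness of each component, since it is exactly the step that fails for general topological spaces and is what makes the countability argument go through; everything else is the routine theory of connected components.
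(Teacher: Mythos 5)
Your proof is correct, and it differs from the paper's in both of its two key steps, though the underlying topological facts are of course the same. For openness, the paper sidesteps local connectedness by \emph{defining} the candidate component $\mathcal{O}_x$ as the union of all open connected subsets of $V$ containing $x$, so that openness is built in and what remains is to check connectedness and maximality; you instead take the standard connected components (equivalence classes under ``lie in a common connected subset'') and prove they are open by noting that any ball $B_{\R^d}(x,\rho)\subset V$ is connected and hence absorbed into the component of $x$ --- the textbook local-connectedness argument. For countability, the paper invokes Lindel\"of's theorem for the family of pairwise disjoint open components, whereas you inject the index set into $\Q^d$ by choosing a rational point in each component, exploiting separability of $\R^d$. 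Both mechanisms are expressions of second countability, but yours is arguably more elementary and self-contained (no covering theorem is cited, and the choice of rational representatives makes the injectivity and hence countability completely explicit), while the paper's route avoids having to discuss density at all. There is no gap in your argument; the only cosmetic point is the stray $B_{\R^2}$ before you correct it to $B_{\R^d}$.
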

\begin{proof}
    For any $x \in V$, we define the set
    \[
        \mathcal{O}_x =\cup \{ Z \mid Z \subset V, x \in Z, Z \, \text{is open and connected in } \R^d \}.
    \]
    Obviously, $\mathcal{O}_x$ is nonempty and open since $V$ is open. Moreover, $\mathcal{O}_x$ is connected; see, e.g. Chapter VI in \cite{Carothers}. On the other hand, $\mathcal{O}_x$ is maximal with respect to containment and thus is an open component of $V$.
    Therefore, the family $\{\mathcal{O}_x \mid x \in V \}$ is a collection of open connected components of $V$. From this and the fact that two arbitrary connected components of $V$ are either identical or disjoint, we can now apply Lindel\"{o}f's Theorem to deduce the desired decomposition of $V$.
\end{proof}

The following lemma represents the decomposition of an open set given by level sets of two functions into at most countably many disjoint open connected subdomains.
\begin{lemma}
    \label{lem:decomposition-domains}
    Let $y_1, y_2 \in C^1(\overline\Omega)$, let $t \in \R$ be given, and let $\mathcal{O}$ be an open subset in $\overline\Omega$.
    Define the open sets $\mathcal{S}^+$ and $\mathcal{S}^{-}$ by
    \[
        \mathcal{S}^{-} := \{ y_1 < t < y_2 \} \cap \mathcal{O} \quad \text{and} \quad \mathcal{S}^{+} := \{ y_1 > t > y_2 \} \cap \mathcal{O}.
    \]
    Assume that the level set $\{y_j=t \}$ has one and only one closed $C^1$ simple curve $\mathcal{C}_j$ in $\mathcal{O}$ for $j=1,2$.
    Assume further that
    \begin{equation}
        \label{eq:boundary-inclusion}
        \partial \mathcal{S}^{\pm} \subset \Cu_1 \cup \Cu_2
    \end{equation}
    and that
    \begin{equation}
        \label{eq:nonvanishing-gradient-Spm}
        \nabla y_1(x) \cdot \nabla y_2(x) \geq \alpha >0 \quad \text{for all } x \in \mathcal{S}^+ \cup \mathcal{S}^{-}
    \end{equation}
    for some constant $\alpha$.
    Let $\wideparen{My_jN}$ stands for a curve lying in $\{y_j = t \}$, $j=1,2$ with end points $M$ and $N$; see \cref{fig:components-decomposition}.
    Then, the following assertions hold:
    \begin{enumerate}[label=(\alph*)]
        \item \label{item:coincidence} if $\Cu_1 = \Cu_2$, then
            $\mathcal{S}^{+} = \mathcal{S}^{-} = \emptyset$;

        \item \label{item:nonintersection} if $\mathcal{C}_1 \cap \mathcal{C}_2 = \emptyset$, then one of the sets $\mathcal{S}^{\pm}$ is empty and the boundary of the other consists of $\Cu_1$ and $\Cu_2$;
        \item \label{item:intersection} if $\Cu_1 \neq \Cu_2$ and $\mathcal{C}_1 \cap \mathcal{C}_2 \neq \emptyset$,
            then the intersection $\mathcal{C}_1 \cap \mathcal{C}_2$ has at most countably many closed connected components. In other words, there exist index sets $I^+, I^{-}, I^{0}$, which are at most countable (and might be empty), and points $M^{+}_i, N^{+}_i$, $M^{-}_m, N^{-}_m$, and $M^{0}_k, N^{0}_k$ with $i \in I^+$, $ m \in I^{-}$ , $k \in I^{0}$, such that
            \begin{equation} \label{eq:boundary-points}
                \mathcal{C}_1 \cap \mathcal{C}_2 = \bigcup_{i \in I^+, m \in I^{-}, k \in I^0} \{M^+_i, N^{+}_i, M^{-}_m, N^{-}_m, \wideparen{M^{0}_kN^{0}_k} \},
            \end{equation}
            where $\wideparen{M^{0}_kN^{0}_k}, k \in I^0,$ denote the curves lying in $\mathcal{C}_1 \cap \mathcal{C}_2$ with end points $M^{0}_k$ and $N^{0}_k$.
            Furthermore, there hold:
            \begin{enumerate}[label=(\roman*)]
                \item If $\mathcal{S}^+ = \emptyset$, then the index set $I^+$ is empty;
                \item If the set $\mathcal{S}^{+}$ is nonempty, then it has at most countably many open connected components $\mathcal{S}^{+}_i$, $i \in I^+$, such that $\partial \mathcal{S}^{+}_i$ consists of two curves $\wideparen{M^{+}_iy_1N^{+}_i}$ and $\wideparen{M^{+}_iy_2N^{+}_i}$.
            \end{enumerate}
            Analogous assertions hold for the set $\mathcal{S}^{-}$.
    \end{enumerate}
\end{lemma}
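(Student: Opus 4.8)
The plan is to first recast the two regions jointly. Since $\mathcal{S}^+\cup\mathcal{S}^-=\{(y_1-t)(y_2-t)<0\}\cap\mathcal{O}=:W$ is open and neither $y_j$ equals $t$ on $W$, the sets $\{y_1<t\}\cap W$ and $\{y_1>t\}\cap W$ are complementary and relatively open in $W$; hence $\mathcal{S}^-$ and $\mathcal{S}^+$ are relatively clopen in $W$, and by \cref{prop:countable-decomposition-openset} each of $\mathcal{S}^{\pm}$ is a disjoint union of at most countably many open connected components. Moreover, by continuity \eqref{eq:nonvanishing-gradient-Spm} extends to $\overline{\mathcal{S}^+\cup\mathcal{S}^-}$, so there $\nabla y_1$ and $\nabla y_2$ never vanish, and whenever $y_1=t=y_2$ on this closure the two gradients are positive multiples of one another (both being normal to the common tangent of the level curves).

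The core step is a local normal form at each $p\in\partial\mathcal{S}^{\pm}$. By \eqref{eq:boundary-inclusion} $p\in\mathcal{C}_1\cup\mathcal{C}_2$, and since $(y_1(p)-t)(y_2(p)-t)\leq0$ while $\nabla y_j(p)\neq0$, at least one of $y_1(p)=t$, $y_2(p)=t$ holds (otherwise $p$ lies in the interior of $W$). Via the Implicit Function Theorem the set $\{y_j=t\}$ is a $C^1$ arc near $p$ whenever $y_j(p)=t$, and by the uniqueness assumption on the curve in $\mathcal{O}$ this arc coincides locally with $\mathcal{C}_j$. Two cases result: if exactly one of $y_j(p)=t$, then $\mathcal{S}^{\mp}$ is empty near $p$ while $\mathcal{S}^{\pm}$ is locally one side of the single arc $\mathcal{C}_j$, so an entire sub-arc of $\mathcal{C}_j$ around $p$ lies in $\partial\mathcal{S}^{\pm}$; if $y_1(p)=t=y_2(p)$, the two $C^1$ arcs are either transversal (four quadrants, $\mathcal{S}^-$ and $\mathcal{S}^+$ occupying the two opposite mixed-sign quadrants, $p$ an endpoint) or tangent, and in every subcase the portion of the $\{y_2=t\}$-arc lying off $\{y_1=t\}$ is forced into $\partial W\subset\mathcal{C}_1\cup\mathcal{C}_2$, hence into $\mathcal{C}_2$, and accumulates at $p$, so $p\in\mathcal{C}_1\cap\mathcal{C}_2$.

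Given this, \ref{item:coincidence} and \ref{item:nonintersection} follow. For \ref{item:coincidence}: if $\mathcal{C}_1=\mathcal{C}_2$ and, say, $\mathcal{S}^-\neq\emptyset$, any $p\in\partial\mathcal{S}^-$ lies on the common curve with $y_1(p)=t=y_2(p)$ and $\nabla y_1(p),\nabla y_2(p)$ positive multiples, so $\{y_1<t\}$ and $\{y_2<t\}$ agree in a neighborhood of $p$ and $\{y_1<t<y_2\}=\emptyset$ near $p$, contradicting $p\in\overline{\mathcal{S}^-}$. For \ref{item:nonintersection}: when $\mathcal{C}_1\cap\mathcal{C}_2=\emptyset$ the normal form excludes boundary points with $y_1=t=y_2$, so $\partial\mathcal{S}^{\pm}\cap\mathcal{C}_j$ is relatively clopen in the connected curve $\mathcal{C}_j$, hence equals $\emptyset$ or $\mathcal{C}_j$; and whenever $\partial\mathcal{S}^{\pm}$ is a single Jordan curve, $\mathcal{S}^{\pm}$ — being open, bounded and having that boundary — must be its interior, on which $y_1$ or $y_2$ equals $t$ only on the boundary and therefore has an interior critical point, impossible since $\nabla y_j\neq0$ on $\mathcal{S}^{\pm}$. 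Eliminating these configurations leaves exactly one of $\mathcal{S}^{\pm}$ empty and the other with boundary $\mathcal{C}_1\cup\mathcal{C}_2$.

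For \ref{item:intersection} I would argue on $\mathcal{S}^+$ and $\mathcal{S}^-$ separately and symmetrically. If $\mathcal{S}^-=\emptyset$ put $I^-=\emptyset$; otherwise \cref{prop:countable-decomposition-openset} gives at most countably many components $\mathcal{S}^-_m$. For each $\Sigma=\mathcal{S}^-_m$, both $y_1$ and $y_2$ are strictly increasing along integral curves of $\nabla y_1$ on $\overline{\Sigma}$, so from each point of $\Sigma$ one reaches $\mathcal{C}_1$ following $-\nabla y_1$ and $\mathcal{C}_2$ following $+\nabla y_1$; gluing the local normal forms along $\overline{\Sigma}$ shows $\partial\Sigma$ is one sub-arc $\wideparen{M^-_m y_1 N^-_m}$ of $\mathcal{C}_1$ and one sub-arc $\wideparen{M^-_m y_2 N^-_m}$ of $\mathcal{C}_2$ sharing precisely the endpoints $M^-_m,N^-_m\in\mathcal{C}_1\cap\mathcal{C}_2$. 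Collecting all the endpoints $M^{\pm}_i,N^{\pm}_i$ of the components $\mathcal{S}^{\pm}_i$, and noting that any remaining point of $\mathcal{C}_1\cap\mathcal{C}_2$ is one near which $\mathcal{C}_1=\mathcal{C}_2$ (again by the normal form) — a set relatively open in $\mathcal{C}_1$ and hence a countable union of open arcs with closures $\wideparen{M^0_k N^0_k}$ — yields \eqref{eq:boundary-points} with $I^{\pm}$ and $I^0$ at most countable. The main obstacle is exactly this flow-box description of $\mathcal{S}^{\pm}_m$: because $\nabla y_1$ is only continuous one cannot appeal to uniqueness of the gradient flow, so the sweeping of $\Sigma$ from $\mathcal{C}_1$ to $\mathcal{C}_2$ must be realized by patching the Implicit Function normal forms over the compact set $\overline{\mathcal{S}^{\pm}_m}$, while simultaneously keeping track of the two ends so that $\partial\mathcal{S}^{\pm}_m$ closes up into exactly two arcs meeting at two points of $\mathcal{C}_1\cap\mathcal{C}_2$.
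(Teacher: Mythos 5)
Your proposal is correct and, for the central assertion (c), takes essentially the same route as the paper: decompose $\mathcal{S}^{\pm}$ into at most countably many open connected components via \cref{prop:countable-decomposition-openset}, argue that each component is bounded by one sub-arc of $\Cu_1$ and one of $\Cu_2$ meeting at two points of $\Cu_1\cap\Cu_2$, and absorb the remaining portion of $\Cu_1\cap\Cu_2$ into the arcs $\wideparen{M^0_kN^0_k}$. The differences lie in (a) and (b): the paper proves (a) by observing that $y_1$ attains its minimum over the compact set $\overline{\mathcal{S}^{-}}$ at a point which, by \eqref{eq:boundary-inclusion} and $\Cu_1=\Cu_2$, cannot lie on $\partial\mathcal{S}^{-}$, so that $\nabla y_1$ vanishes at an interior point, contradicting \eqref{eq:nonvanishing-gradient-Spm}; it then proves (b) by a case analysis on which of the two disjoint Jordan curves surrounds which region, reusing this extremum argument. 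You instead work from a local implicit-function normal form at boundary points together with a clopen argument on $\Cu_j$, which is equally valid and buys you the open-arc property of $\partial\mathcal{S}^{\pm}\cap\Cu_j$ directly; however, to make ``eliminating these configurations'' airtight in (b) you should apply the interior-extremum (or your critical-point) argument component by component, since a disconnected $\mathcal{S}^{\pm}$ could have total boundary $\Cu_1\cup\Cu_2$ while each component is bounded by a single curve. Finally, the step you honestly flag as the main obstacle --- that $\partial\mathcal{S}^{\pm}_i$ closes up into exactly two arcs sharing two endpoints in $\Cu_1\cap\Cu_2$ --- is asserted without further justification in the paper as well, so your proposal is no less complete than the published argument on that point.
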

\begin{proof}
    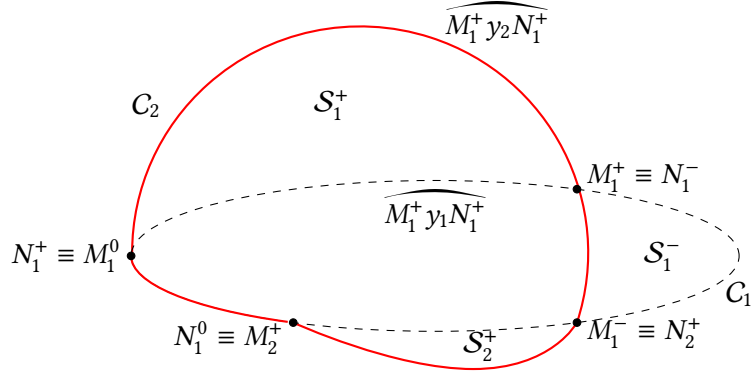
\begin{figure}[t]
        \centering
        \begin{tikzpicture}[>=latex]
            \draw[color=red, thick] (1.8667,-0.8844) arc[radius =3, start angle= -18, end angle= 180];
            \draw[color=red, thick] plot [smooth,tension=1] coordinates { (-1.8667,-0.8844) (0.5,-1.5) (1.8667,-0.8844)};
            \draw[dashed] (0,0) ellipse (4cm and 1cm);
            \draw[color=red,thick] (-4,0) arc(180:241:4cm and 1cm);
            \draw (-1.0,2.0) node[left] {$\mathcal{S}^+_1$};
            \draw (3.0,0.0) node {$\mathcal{S}^{-}_1$};
            \draw (4.0,-0.5) node {$\Cu_1$};
            \draw (-3.8,2.0) node {$\Cu_2$};
            \draw (0.6,-1.2) node {$\mathcal{S}^+_2$};
            \draw (-4,0) node[left]{$N_1^+ \equiv M_1^0$};
            \draw (1.8667,0.8844+ 0.16) node[right]{$M_1^+ \equiv N_1^{-}$};
            \draw (1.8667,-0.8844-0.14) node[right]{$M_1^{-} \equiv N^+_2$};
            \draw (-1.8667,-0.8844-0.2) node[left] {$N_1^0 \equiv M^{+}_2$};
            \draw (0,0.6) node {$\wideparen{M_1^+y_1N_1^+}$};
            \draw (0,3.1) node[right] {$\wideparen{M_1^+y_2N_1^+}$};
            \foreach \Point/\PointLabel in {(-4,0)/, (1.8667,0.8844)/,(1.8667,-0.8844)/,(-1.8667,-0.8844)/}
            \draw[fill=black] \Point circle (0.05) node[above right] {$\PointLabel$};
        \end{tikzpicture}
        \caption{A decomposition into components of the set $\mathcal{{S}}^+ \cup \mathcal{S}^{-}$ where $I^+ = \{1,2\}$, $I^{-} = I^0 = \{1\}$ when $\Cu_1 \cap \Cu_2 \neq \emptyset$. Here $\Cu_1$ and $\Cu_2$ are, respectively, plotted by dashed and solid curves}
        \label{fig:components-decomposition}
    \end{figure}
    \noindent\emph{Ad \ref{item:coincidence}:} By contradiction, assume that $\mathcal{S}^{-} \cup \mathcal{S}^{+} \neq \emptyset$. Assume now that $\mathcal{S}^{-} \neq \emptyset$. There then exists a point $x_0 \in \overline{\mathcal{S}^{-}}$ such that $y_1(x_0) = \min\{ y_1(x) \mid x \in \overline{\mathcal{S}^{-}} \}$. Thanks to \eqref{eq:boundary-inclusion} and the fact that $\Cu_1 = \Cu_2$, one has $x_0 \notin \partial \mathcal{S}^{-}$ and thus $x_0 \in \mathcal{S}^{-}$. Since $\mathcal{S}^{-}$ is open, we have $\nabla y_1(x_0) = 0$, contradicting \eqref{eq:nonvanishing-gradient-Spm}. Hence $\mathcal{S}^{-} = \emptyset$. Similarly, one has $\mathcal{S}^{+} = \emptyset$.
    \noindent\emph{Ad \ref{item:nonintersection}:}
    Without loss of generality, we assume now that $\Omega^1 := \{y_1 <t\} \cap \mathcal{O}$ is surrounded by $\Cu_1$. We then have
    \[
        y_1(x) > t \quad \text{for all } x \in \mathcal{O} \backslash (\Omega^1 \cup \Cu_1).
    \]
    Since $\Cu_1 \cap \Cu_2 = \emptyset$, there are two possibilities:
    \begin{enumerate}
        \item[(I)] \label{item:caseI} one of $\Cu_1$ and $\Cu_2$ is not surrounded by the other; or
        \item[(II)] \label{item:caseII} one of $\Cu_1$ and $\Cu_2$ is surrounded by the other.
    \end{enumerate}
    $\bullet$ For case (I): If $\{y_2 < t\} \cap \mathcal{O}$ is surrounded by $\Cu_2$, then $\mathcal{S}^{-} = \Omega^1$. An argument as in the proof of \ref{item:coincidence} thus gives a contradiction. Otherwise, if $\{ y_2 >t \} \cap \mathcal{O}$ is surrounded by $\Cu_2$, then $\mathcal{S}^{+} = \mathcal{O}\backslash (\overline{\Omega^1} \cup \overline{\{ y_2 >t \} \cap \mathcal{O}})$. Therefore, one has $\Cu_1 \cup \Cu_2 \subsetneq \partial \mathcal{S}^{+}$, which contradicts \eqref{eq:boundary-inclusion}. Thus case (I) is impossible.\\
    $\bullet$ For case (II): We only consider the case where $\Cu_1$ is surrounded by $\Cu_2$. The argument for the other case is similar.
    If $\{y_2 > t\} \cap \mathcal{O}$ is surrounded by $\Cu_2$, then $\mathcal{S}^{-} = \Omega^1$, and we thus have a contradiction by using arguments analogous to ones in the proof of \ref{item:coincidence}. Otherwise, if $\{y_2 < t\} \cap \mathcal{O}$ is surrounded by $\Cu_2$, then $\mathcal{S}^{-} = \emptyset$, and the boundary of $\mathcal{S}^{+}$ consists of $\Cu_1$ and $\Cu_2$.

    \noindent\emph{Ad \ref{item:intersection}:} We first consider the case where $\mathcal{S}^+ \neq \emptyset$.
    Thanks to \cref{prop:countable-decomposition-openset}, the set $\mathcal{{S}}^+$ decomposes into at most countably many disjoint open components.
    There then exist an at most countable index set $I^+$ and disjoint open connected components $\mathcal{S}^+_i$, $i \in I^+$, of $\mathcal{S}^+$ satisfying
    \[
        \mathcal{S}^+ = \cup \{\mathcal{S}^+_i \mid i \in I^+ \}.
    \]
    Moreover, one has $\partial \mathcal{S}^+ = \cup \{ \partial\mathcal{S}^+_i \mid i \in I^+ \} \subset \Cu_1 \cup \Cu_2$. Besides, for any $i \in I^+$, the boundary $\partial \mathcal{S}^+_i$ consists of two arcs $\wideparen{M^{+}_iy_1N^{+}_i}$ and $\wideparen{M^{+}_iy_2N^{+}_i}$ lying in $\Cu_1$ and $\Cu_2$, respectively, for some end points $M_i^+, N_i^+ \in \Cu_1 \cap \Cu_2$. Similarly, if $\mathcal{S}^- \neq \emptyset$, then it is an union of at most countably many disjoint open connected components $\mathcal{S}^-_m$, $m \in I^-$, of $\mathcal{S}^-$ satisfying that
    the boundary $\partial \mathcal{S}_m^{-}$ consists of two arcs $\wideparen{M^{-}_my_1N^{-}_m}$ and $\wideparen{M^{-}_my_2N^{-}_m}$ lying in $\Cu_1$ and $\Cu_2$, respectively, for some end points $M_m^-, N_m^- \in \Cu_1 \cap \Cu_2$. Define the set $\mathcal{A}$ of closed arcs with end points in $\mathcal P := \{M^+_i, N^+_i, M^{-}_m, N^{-}_m: i \in I^+, m \in I^{-} \}$ via
    \[
        \mathcal{A} := \{ \wideparen{MN} \subset \Cu_1 \cap \Cu_2 \mid M, N \in \mathcal P, M \neq N, \wideparen{MN} \cap (\partial \mathcal{S}^+ \cup \partial \mathcal{S}^-) = \{M, N\} \}.
    \]
    Obviously, $\mathcal{A}$ consists of at most countable arcs. By renaming the end points of arcs in $\mathcal{A}$, we obtain an at most countable index set $I^0$ and points $M^0_k, N^0_k \in \mathcal{P}$, $k \in I^0$, that satisfy \eqref{eq:boundary-points}.
\end{proof}

We now prove Green's first identity over nonempty connected components of an open set determined by two functions with nonvanishing gradients.
\begin{lemma}
    \label{lem:Green-identity}
    Assume that $y_1, y_2 \in C^1(\overline\Omega)$ and $t \in \R$ such that $\{y_1 > t > y_2 \} \neq \emptyset$. Let
    $\mathcal{S}$ be a nonempty open connected component of $\{y_1 > t > y_2\}$ satisfying $\partial \mathcal{S} \cap \partial \Omega = \emptyset$. Assume further that there is a positive constant $\alpha$ satisfying
    \begin{equation}
        \label{eq:nonvanishing-gradient}
        \nabla y_1(x) \cdot \nabla y_2(x) \geq \alpha >0 \quad \text{for all } x \in \overline{\mathcal{S}}.
    \end{equation}
    Let $v \in H^1(\Omega) \cap C(\overline\Omega)$ and $\varphi \in W^{2,1}(\Omega) \cap C^1(\overline\Omega)$ be arbitrary. Then
    \begin{equation}
        \label{eq:Green-identity}
        \int_{\mathcal{S}} \nabla v \cdot \nabla \varphi dx = - \int_{\mathcal{S}} v \Delta \varphi dx - \int_{\partial \mathcal{S} \cap \{y_1 =t \} } v \nabla \varphi \cdot \frac{\nabla y_1}{|\nabla y_1|} d\mathcal{H}^1(x) + \int_{\partial \mathcal{S} \cap \{y_2 =t \} } v \nabla \varphi \cdot \frac{\nabla y_2}{|\nabla y_2|} d\mathcal{H}^1(x).
    \end{equation}
\end{lemma}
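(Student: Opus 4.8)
The plan is to exhaust $\mathcal{S}$ from inside by domains with piecewise‑$C^{1}$ boundary, apply the classical Green's first identity on each of them, and pass to the limit.

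\emph{Structure of $\partial\mathcal S$.} Since $\partial\mathcal S$ is compact and disjoint from $\partial\Omega$, $\overline{\mathcal S}$ is a compact subset of $\Omega$, and by continuity together with \eqref{eq:nonvanishing-gradient} the gradients $\nabla y_1,\nabla y_2$ are bounded away from $0$ on a neighbourhood $\mathcal O$ of $\overline{\mathcal S}$. If $x\in\partial\mathcal S$ then $x\in\overline{\{y_1>t>y_2\}}$ while $x\notin\{y_1>t>y_2\}$ (a boundary point of one connected component of an open set cannot lie in the set), so $y_1(x)=t$ or $y_2(x)=t$; thus $\partial\mathcal S\subset\{y_1=t\}\cup\{y_2=t\}$. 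Near a point of $\partial\mathcal S$ at which only $y_j=t$, the implicit function theorem (in the form of \cref{lem:Implicit_Function-Theorem-extended-boundary}) shows that $\{y_j=t\}$ is a $C^1$ graph and that $\mathcal S$ lies locally on one side of it; combining this with \cref{prop:decomposition-levelset} and \cref{lem:decomposition-domains} one obtains that $\partial\mathcal S$ is a curvilinear polygon whose edges are finitely many $C^{1}$ arcs of $\{y_1=t\}$ and of $\{y_2=t\}$ joined at finitely many vertices $\mathcal V:=\partial\mathcal S\cap\{y_1=t=y_2\}$, and that at each vertex the two arcs cross either transversally — an \emph{acute} corner, since $\nabla y_1\cdot\nabla y_2>0$ forces the inward edge‑normals $\nabla y_1/|\nabla y_1|$ and $-\nabla y_2/|\nabla y_2|$ to make an obtuse angle — or tangentially, a cusp. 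In particular $\mathcal H^{1}(\partial\mathcal S)<\infty$, and at $\mathcal H^{1}$‑a.e.\ point the unit outer normal of $\mathcal S$ equals $-\nabla y_1/|\nabla y_1|$ on $\partial\mathcal S\cap\{y_1=t\}$ and $+\nabla y_2/|\nabla y_2|$ on $\partial\mathcal S\cap\{y_2=t\}$, because inside $\mathcal S$ one has $y_1>t$, resp.\ $y_2<t$.

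\emph{Interior approximation.} For small $\epsilon>0$ put $D_\epsilon:=\{x\in\mathcal S:\ y_1(x)>t+\epsilon,\ y_2(x)<t-\epsilon\}$. Then $\overline{D_\epsilon}\subset\mathcal S$, $D_\epsilon$ increases to $\mathcal S$ as $\epsilon\downarrow0$, and $\partial D_\epsilon\subset\{y_1=t+\epsilon\}\cup\{y_2=t-\epsilon\}$ is again a finite union of $C^{1}$ arcs (since $\nabla y_j\neq0$ near $\overline{\mathcal S}$). Near a transversal vertex of $\mathcal S$ the corner of $D_\epsilon$ is again transversal, hence Lipschitz; near a cusp of $\mathcal S$ the shifted curves $\{y_1=t+\epsilon\}$ and $\{y_2=t-\epsilon\}$ no longer touch at the (removed) cusp tip, and $D_\epsilon$ is cut off there by a short arc whose length tends to $0$ with $\epsilon$. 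Choosing $\epsilon=\epsilon_n\downarrow0$ outside the at most countable set of values at which the two shifted curves are tangent at some interior point — rounding off any residual cusps by further small arcs if necessary — one obtains a Lipschitz (indeed curvilinear‑polygonal of class $C^{1}$) domain $\mathcal S_n:=D_{\epsilon_n}$ whose boundary splits as $A_n\cup B_n\cup\Theta_n$ with $A_n\subset\{y_1=t+\epsilon_n\}$, $B_n\subset\{y_2=t-\epsilon_n\}$, and $\mathcal H^{1}(\Theta_n)\to0$; moreover $A_n$ and $B_n$ carry $C^{1}$ parametrizations that, together with their derivatives, converge to $C^{1}$ parametrizations of $\partial\mathcal S\cap\{y_1=t\}$ and of $\partial\mathcal S\cap\{y_2=t\}$, respectively — this is exactly the content of the implicit‑function estimates of \cref{lem:Implicit_Function-Theorem-extended-boundary}\ref{item:parameterization-approximation}, applied with the approximating sequences $y_1-\epsilon_n\to y_1$ and $y_2+\epsilon_n\to y_2$.

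\emph{Passage to the limit.} On each Lipschitz domain $\mathcal S_n$, Green's first identity
\[
    \int_{\mathcal S_n}\nabla v\cdot\nabla\varphi\,dx=-\int_{\mathcal S_n}v\,\Delta\varphi\,dx+\int_{\partial\mathcal S_n}v\,\nabla\varphi\cdot\nu\,d\mathcal H^{1}
\]
holds: for $\varphi\in C^{2}(\overline\Omega)$ it is the divergence theorem for the $H^{1}$‑vector field $v\nabla\varphi$ (whose trace on $\partial\mathcal S_n$ is $v\nabla\varphi$ because $v\in C(\overline\Omega)$), and the general case $\varphi\in W^{2,1}(\Omega)\cap C^{1}(\overline\Omega)$ follows by extending $\varphi$ (using \cref{prop:C1-extension-on-convex-domain} together with a Sobolev extension) and mollifying, since every term depends continuously on $\varphi$ in the $W^{2,1}\cap C^{1}$ topology and on $v$ in the $H^{1}\cap C$ topology. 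On $A_n$ the outer normal of $\mathcal S_n$ is $-\nabla y_1/|\nabla y_1|$ (as $\mathcal S_n\subset\{y_1>t+\epsilon_n\}$) and on $B_n$ it is $+\nabla y_2/|\nabla y_2|$. Letting $n\to\infty$: the two area integrals converge by dominated convergence ($\1_{\mathcal S_n}\to\1_{\mathcal S}$ pointwise, dominated by $|\nabla v\cdot\nabla\varphi|,\,|v\,\Delta\varphi|\in L^{1}(\Omega)$); the boundary integral over $\Theta_n$ vanishes in the limit because $v\,\nabla\varphi\cdot\nu$ is bounded and $\mathcal H^{1}(\Theta_n)\to0$; and the boundary integrals over $A_n$ and $B_n$ — integrals of the \emph{fixed} continuous functions $-v\,\nabla\varphi\cdot\tfrac{\nabla y_1}{|\nabla y_1|}$ and $+v\,\nabla\varphi\cdot\tfrac{\nabla y_2}{|\nabla y_2|}$ over $C^{1}$ arcs converging (with their $\mathcal H^{1}$‑mass) to $\partial\mathcal S\cap\{y_1=t\}$ and $\partial\mathcal S\cap\{y_2=t\}$ — converge to the corresponding two boundary terms in \eqref{eq:Green-identity}. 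Collecting the limits gives \eqref{eq:Green-identity}.

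\emph{Main obstacle.} The delicate step is the interior approximation: producing a $C^{1}$‑polygonal exhaustion of $\mathcal S$ whose boundary genuinely tracks $\partial\mathcal S$ — so that the limiting normals are precisely $\mp\nabla y_j/|\nabla y_j|$ — while the connecting arcs $\Theta_n$ carry vanishing $\mathcal H^{1}$‑mass; this is exactly where the sign condition $\nabla y_1\cdot\nabla y_2\ge\alpha>0$ is essential, since it excludes anti‑tangencies of the two level curves and makes every transversal corner acute, hence Lipschitz. (Alternatively, the exhaustion can be bypassed: by the structure above, $\mathcal S$ has finite perimeter with reduced boundary $\partial^{*}\mathcal S=\partial\mathcal S\setminus\mathcal V$ and the stated outer normal, so the De Giorgi–Federer Gauss–Green theorem, applied to $C^{1}$ approximations of the bounded field $v\nabla\varphi$ whose distributional divergence $\nabla v\cdot\nabla\varphi+v\,\Delta\varphi$ lies in $L^{1}(\Omega)$, yields \eqref{eq:Green-identity} directly.)
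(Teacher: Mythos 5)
Your overall strategy differs from the paper's: you push the whole boundary inward by shifting the level values ($D_\epsilon=\{y_1>t+\epsilon,\ y_2<t-\epsilon\}\cap\mathcal S$), whereas the paper applies Green's identity directly on $\mathcal S$ (whose boundary is a curvilinear $C^1$ polygon away from cusps) and only truncates locally near each cusp by a short chord $N_1N_2$ supplied by \cref{lem:regular-points}, with explicit $O(\epsilon)$ bounds for the discarded boundary pieces. Your route is not wrong in spirit, but as written it has a genuine gap precisely at the point where the paper invests its effort: the regularity of the approximating domains. The corners of $D_\epsilon$ are the points where $\{y_1=t+\epsilon\}$ meets $\{y_2=t-\epsilon\}$; since $\nabla y_1$ and $\nabla y_2$ are nearly parallel near a cusp of $\partial\mathcal S$, these crossings can be tangential, the two shifted curves may even osculate or coincide along arcs, there may be infinitely many such corners, and $D_\epsilon$ may have infinitely many connected components. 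Your claim that tangencies occur only for ``at most countably many'' values of $\epsilon$ has no justification for $C^1$ data: the relevant bad set is the image of $(y_1-y_2)/2$ on $\{\nabla y_1\parallel\nabla y_2\}\cap\{y_1+y_2=2t\}$, and no Sard-type argument is available (even the $C^1$ Sard theorem for the map $(y_1,y_2):\R^2\to\R^2$ only yields a planar null set of critical values, which can still contain the whole segment $\{(t+\epsilon,t-\epsilon):0<\epsilon<\delta\}$). The fallback ``rounding off any residual cusps by further small arcs'' is exactly the nontrivial construction the paper provides via \cref{lem:regular-points}; you would need its analogue for every corner of every component of $D_{\epsilon_n}$, together with a summable control of the total length of the rounding arcs $\Theta_n$, none of which is established. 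So the assertion that $\mathcal S_n=D_{\epsilon_n}$ is a curvilinear polygon of class $C^1$ with $\Ha^1(\Theta_n)\to0$ is not proven.

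A second, smaller gap is the convergence of the boundary integrals over $A_n$ and $B_n$. You cite \cref{lem:Implicit_Function-Theorem-extended-boundary}\ref{item:parameterization-approximation}, but that result concerns whole level-set pieces at a fixed level value for functions vanishing on $\partial\Omega$ (your $y_1-\epsilon_n$ does not), and, more importantly, it does not control the moving corner endpoints of the sub-arcs $A_n$, nor give a uniform bound on $\Ha^1(A_n)$; near a cusp the endpoints of $A_n$ converge to the cusp at an unquantified rate, so you still need an argument (uniform length bounds plus absolute continuity of the limiting arc-length integral) to conclude. The same remark applies to your finite-perimeter alternative: the De Giorgi--Federer identity would indeed shortcut the exhaustion, but it presupposes $\Ha^1(\partial\mathcal S)<\infty$, the identification $\partial^{*}\mathcal S=\partial\mathcal S\setminus\mathcal V$ up to $\Ha^1$-null sets, and the stated measure-theoretic outer normal — facts you assert from the claimed polygonal structure rather than prove. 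By contrast, the paper's chord-truncation needs only the two estimates \eqref{eq:length-curve-N12}--\eqref{eq:length-line-N12} of \cref{lem:regular-points} and the continuity of $v$ and $\nabla\varphi$, which is why it closes without any genericity-in-$\epsilon$ selection.
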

\begin{proof}
    \begin{figure}
        \centering
        \begin{subfigure}[b]{0.495\textwidth}
            \centering
            \resizebox{\textwidth}{!}{%
                \begin{tikzpicture}
                    \draw[thick] (0,0) ellipse (3cm and 1.5cm);
                    \draw[thin] (0,0) ellipse (2.cm and 0.8cm);
                    \node[align=right] at (3.05,1.2) {$\{y_2=t\}$};
                    \node[align=right] at (-1.35,0) {$\{y_1=t\}$};
                    \node[align=right] at (0,1.25) {$\mathcal{S}$};
                    \node[align=right] at (3.8,-0.653834842-0.3) {$\quad \quad\nu_2 = \frac{\nabla y_2}{|\nabla y_2|}$};
                    \node[align=right] at (0.3,-0.8+0.45) {$\nu_1$};
                    \tkzDefPoint(2.7,-0.653834842){C}
                    \tkzDefPoint(2.7+0.6,-0.653834842-0.25){D}
                    \tkzDefPoint(0,-0.8){E}
                    \tkzDefPoint(0,-0.8+0.45){F}
                    \draw[->] (C) to (D);
                    \draw[->] (E) to (F);
                \end{tikzpicture}
            }
            \caption{A component $\mathcal{S}$ when $\{y_1=t\} \cap \{y_2=t\} = \emptyset$}
            \label{fig:component-nocusp}
        \end{subfigure}
        \begin{subfigure}[b]{0.495\textwidth}
            \centering
            \resizebox{\textwidth}{!}{%
                \begin{tikzpicture}[>=latex]
                    \draw[thick] (-1,0) circle (3cm);
                    \draw[thin] (0,0) ellipse (4cm and 1cm);
                    \draw (-1.0,2.0) node[left] {$\mathcal{S}$};
                    \draw (4.5,-0.5) node {$\{y_1 = t\}$};
                    \draw (2.0,-2.3) node {$\{y_2 = t\}$};
                    \draw (-4,0) node[right]{$N$};
                    \draw (0,0.75) node {$\wideparen{My_1N}$};
                    \draw (0.6,2.8) node[right] {$\wideparen{My_2N}$};
                    \draw (-2.8566 + 0.05,0.6) node[right]{$N_1$};
                    \draw (-3.9172,0.7) node[left]{$N_2$};
                    \draw[dashed] (-3.9172,0.7) -- (-2.8566,0.7);
                    \foreach \Point/\PointLabel in {(-4,0)/, (1.8667,0.8844)/M, (-3.9172,0.7)/, (-2.8566,0.7)/}
                    \draw[fill=black] \Point circle (0.05) node[above right] {$\PointLabel$};
                    \draw[->] (-4,0) to (-5,0) node[left] {$\nu_2$};
                \end{tikzpicture}
            }
            \caption{A component $\mathcal{S}$ with a cusp at $N$}
            \label{fig:component-cusp}
        \end{subfigure}
        \caption{A nonempty open component $\mathcal{S}$ of the set $\{y_1 > t > y_2\}$ with the outward normal vectors $\nu_1 =- \frac{\nabla y_1}{|\nabla y_1|}$ on $\{y_1=t\}$ and $\nu_2 = \frac{\nabla y_2}{|\nabla y_2|}$ on $\{y_2=t\}$ where $\{y_1=t\}$ is thin while $\{y_2=t\}$ is thick }
        \label{fig:component-S}
    \end{figure}
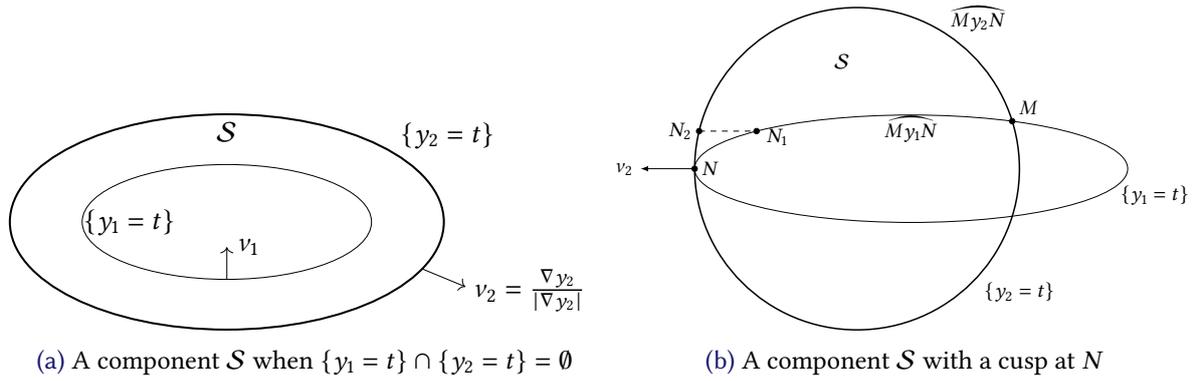
    We first observe that all the integrals in \eqref{eq:Green-identity} are well-defined. Note further that $\partial \mathcal{S} \subset \{y_1 =t\} \cup \{y_2 =t\}$ and that the outward normal vector $\nu(x)$ at point $x \in \partial \mathcal{S}$ is defined as
    \begin{equation}
        \label{eq:outward-normal-vector}
        \nu(x) = \begin{cases}
            -\frac{\nabla y_1(x)}{|\nabla y_1(x)|} & \text{for } x \in \partial \mathcal{S} \cap \{y_1=t\},\\
            \frac{\nabla y_2(x)}{|\nabla y_2(x)|} & \text{for } x \in \partial \mathcal{S} \cap \{y_2=t\}.
        \end{cases}
    \end{equation}
    We now consider the following two cases.\\
    \emph{$\bullet$ Case 1: The boundary $\partial \mathcal{S}$ consists of two closed curves having no common points; see, \cref{fig:component-nocusp}.} In this situation, thanks to \eqref{eq:nonvanishing-gradient} and the fact that $\partial \mathcal{S} \cap \partial \Omega = \emptyset$, we deduce from the Implicit Function Theorem that the domain $\mathcal{S}$ has $C^1$ boundary in the sense of Definition 1.2.1.1 in \cite{Grisvard1985}. Applying the classical Green formula over the domain $\mathcal{S}$ and employing \eqref{eq:outward-normal-vector}, we derive \eqref{eq:Green-identity}.
    \\
    \emph{$\bullet$ Case 2: The boundary $\partial \mathcal{S}$ consists of two arcs $\wideparen{My_1N}$ and $\wideparen{My_2N}$ with end points $M, N \in \{y_1=t\} \cap \{y_2=t\}$ (here $M$ and $N$ might be identical); see, \cref{fig:component-cusp}}. If $\partial \mathcal{S}$ has no cusps, i.e., there are no points where two branches of $\partial \mathcal{S}$ meet and the tangents at which of each branch are coincidental, then the boundary $\partial \mathcal{S}$ is a curvilinear polygon of class $C^1$ in the sense of Definition 1.4.5.1 in \cite{Grisvard1985}, as a result of the combination of the Implicit Function Theorem and \eqref{eq:nonvanishing-gradient}. Similar to Case 1, we also have \eqref{eq:Green-identity}. For the situation where $\partial \mathcal{S}$ has cusps, we shall prove \eqref{eq:Green-identity} via approximating $\mathcal{S}$ by regular domains having boundaries, which are curvilinear polygons of class $C^1$, and via using the fact that $\varphi \in C^1(\overline\Omega)$ and $v \in C(\overline\Omega)$. To this end, without loss of generality, we assume that $\partial \mathcal{S}$ has only one cusp at $N$. For any $\epsilon >0$ small enough,
    by applying \cref{lem:regular-points}, there exist $N_i \in \{y_i = t \} \cap \partial \mathcal{S}$, $i=1,2$, such that
    \[
        l_{\wideparen{NN_1}} + l_{\wideparen{NN_2}} + N_1N_2 < C\epsilon
    \]
    for some positive constant $C$ independent of $\epsilon$,
    where $l_{\wideparen{NN_i}}$ denotes the length of the arc $\wideparen{NN_i}$.
    Moreover, $N_1$ and $N_2$ are not cusps of $\partial\mathcal{S}_\epsilon$,
    where $\mathcal{S}_\epsilon$ denotes a subdomain of $\mathcal{S}$, whose boundary consists of the arcs $\wideparen{My_1N_1}, \wideparen{My_2N_2}$ and the line segment $N_1N_2$.
    Now applying Green's first formula on the domain $\mathcal{S}_\epsilon$ then implies that
    \begin{equation*}
        \int_{\mathcal{S}_\epsilon} \nabla v \cdot \nabla \varphi dx = - \int_{\mathcal{S}_\epsilon} v \Delta \varphi dx + \int_{\partial \mathcal{S}_\epsilon } v \nabla \varphi \cdot \nu_\epsilon d\mathcal{H}^1(x),
    \end{equation*}
    where $\nu_\epsilon$ stands for the outward normal vector on $\partial \mathcal{S}_\epsilon$. Since $\nu_\epsilon = \nu$ on $\partial \mathcal{S} \cap \partial \mathcal{S}_\epsilon$ defined in \eqref{eq:outward-normal-vector}, we can rewrite the above identity as
    \begin{multline}
        \label{eq:Green-identity-app}
        \int_{\mathcal{S}_\epsilon} \nabla v \cdot \nabla \varphi dx = - \int_{\mathcal{S}_\epsilon} v \Delta \varphi dx - \int_{\partial \mathcal{S} \cap \{y_1 =t \} } v \nabla \varphi \cdot \frac{\nabla y_1}{|\nabla y_1|} d\mathcal{H}^1(x) + \int_{\partial \mathcal{S} \cap \{y_2 =t \} } v \nabla \varphi \cdot \frac{\nabla y_2}{|\nabla y_2|} d\mathcal{H}^1(x) \\
        \begin{aligned}
            & + \int_{ \wideparen{N_1y_1N} } v \nabla \varphi \cdot \frac{\nabla y_1}{|\nabla y_1|} d\mathcal{H}^1(x) - \int_{ \wideparen{N_2y_2N} } v \nabla \varphi \cdot \frac{\nabla y_2}{|\nabla y_2|}d\mathcal{H}^1(x) + \int_{N_1N_2} v \nabla \varphi \cdot \nu_\epsilon d\mathcal{H}^1(x).
        \end{aligned}
    \end{multline}
    Using the continuity over $\overline\Omega$ of $v$ and $\nabla \varphi$, it follows from the choice of $N_1$ and $N_2$ that
    \[
        \left|\int_{ \wideparen{N_1y_1N} } v \nabla \varphi \cdot \frac{\nabla y_1}{|\nabla y_1|} d\mathcal{H}^1(x) - \int_{ \wideparen{N_2y_2N} } v \nabla \varphi \cdot \frac{\nabla y_2}{|\nabla y_2|} d\mathcal{H}^1(x)+ \int_{N_1N_2} v \nabla \varphi \cdot \nu_\epsilon d\mathcal{H}^1(x) \right| \leq C_1 \epsilon
    \]
    for some positive constant $C_1$. By letting $\epsilon \to 0^+$ in \eqref{eq:Green-identity-app}, we thus deduce \eqref{eq:Green-identity} from Lebesgue's Dominated Convergence Theorem.
\end{proof}

\begin{remark}
    \label{rem:Green-identity-emptyset}
    Since $\partial \emptyset = \emptyset$,

    it is obvious to have the identity \eqref{eq:Green-identity} when $\mathcal{S} = \emptyset$.
\end{remark}

The following version of Green's first identity over an open set determined via functions with nonvanishing gradients is a consequence of \cref{lem:decomposition-domains,lem:Green-identity}.
\begin{proposition}
    \label{prop:Green-identity-general}
    Let $y_1, y_2 \in C^1(\overline\Omega)$, let $t \in \R$ be given, and let $\mathcal{O}$ be an open subset in $\overline\Omega$ such that
    \begin{equation}
        \label{eq:nonvanishing-gradient-2}
        \nabla y_1(x) \cdot \nabla y_2(x) \geq \alpha >0 \quad \text{for all } x\in \mathcal{O}
    \end{equation}
    for some constant $\alpha$.
    Define the open sets $\mathcal{S}^+$ and $\mathcal{S}^{-}$ by
    \[
        \mathcal{S}^{-} := \{ y_1 < t < y_2 \} \cap \mathcal{O} \quad \text{and} \quad \mathcal{S}^{+} := \{ y_1 > t > y_2 \} \cap \mathcal{O}.
    \]
    Assume that the level set $\{y_j=t \}$ has one and only one closed $C^1$ simple curve $\mathcal{C}_j$ in $\mathcal{O}$ for $j=1,2$. Assume further that $\Cu_j \cap \partial \Omega = \emptyset$, $j=1,2$,
    and that $\partial \mathcal{S}^{\pm} \subset \Cu_1 \cup \Cu_2$.
    Let $v \in H^1(\Omega) \cap C(\overline\Omega)$ and $\varphi \in W^{2,1}(\Omega) \cap C^1(\overline\Omega)$ be arbitrary. Then the following identity holds
    \begin{equation}
        \label{eq:Green-identity-general}
        \int_{\Omega} \left(\1_{\mathcal{S}^+} - \1_{\mathcal{S}^{-}} \right) \nabla v \cdot \nabla \varphi dx = - \int_{\Omega} \left(\1_{\mathcal{S}^+} - \1_{\mathcal{S}^{-}} \right) v \Delta \varphi dx - \int_{\Cu_1} v \nabla \varphi \cdot \frac{\nabla y_1}{|\nabla y_1|} d\mathcal{H}^1(x) + \int_{\Cu_2} v \nabla \varphi \cdot \frac{\nabla y_2}{|\nabla y_2|} d\mathcal{H}^1(x).
    \end{equation}
\end{proposition}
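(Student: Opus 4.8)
The plan is to reduce the general identity \eqref{eq:Green-identity-general} to the component-wise version established in \cref{lem:Green-identity} by decomposing $\mathcal{S}^{+}$ and $\mathcal{S}^{-}$ into connected pieces, applying \eqref{eq:Green-identity} on each piece, and summing. First I would invoke \cref{lem:decomposition-domains}, whose hypotheses are exactly the present ones: the gradient condition \eqref{eq:nonvanishing-gradient-2} implies \eqref{eq:nonvanishing-gradient-Spm} on $\mathcal{S}^{+}\cup\mathcal{S}^{-}$ (since these sets lie in $\mathcal O$), the boundary inclusion $\partial\mathcal{S}^{\pm}\subset\Cu_1\cup\Cu_2$ is assumed, and each $\{y_j=t\}$ has a unique $C^1$ simple closed curve $\Cu_j$ in $\mathcal O$. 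From that lemma, $\mathcal{S}^{+}=\bigcup_{i\in I^{+}}\mathcal{S}^{+}_i$ and $\mathcal{S}^{-}=\bigcup_{m\in I^{-}}\mathcal{S}^{-}_m$ as at most countable disjoint unions of open connected components, with $\partial\mathcal{S}^{+}_i$ consisting of one arc $\wideparen{M^{+}_i y_1 N^{+}_i}\subset\Cu_1$ and one arc $\wideparen{M^{+}_i y_2 N^{+}_i}\subset\Cu_2$, and similarly for $\mathcal{S}^{-}_m$. Since $\Cu_j\cap\partial\Omega=\emptyset$ and $\partial\mathcal{S}^{\pm}_\bullet\subset\Cu_1\cup\Cu_2$, each component satisfies the hypothesis $\partial\mathcal{S}\cap\partial\Omega=\emptyset$ of \cref{lem:Green-identity}, and \eqref{eq:nonvanishing-gradient} holds on each $\overline{\mathcal{S}^{\pm}_\bullet}\subset\overline{\mathcal O}$ (using \eqref{eq:nonvanishing-gradient-2} together with continuity, or directly on $\mathcal O$ and passing to the closure inside $\mathcal O$).

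Next I would apply \cref{lem:Green-identity} (and \cref{rem:Green-identity-emptyset} in the degenerate cases covered by \cref{lem:decomposition-domains}\ref{item:coincidence} and \ref{item:nonintersection}) to each component $\mathcal{S}^{+}_i$ with $v$ and $\varphi$, obtaining
\begin{equation*}
    \int_{\mathcal{S}^{+}_i} \nabla v\cdot\nabla\varphi \dx = -\int_{\mathcal{S}^{+}_i} v\Delta\varphi\dx - \int_{\wideparen{M^{+}_iy_1N^{+}_i}} v\,\nabla\varphi\cdot\tfrac{\nabla y_1}{|\nabla y_1|}\dH^1 + \int_{\wideparen{M^{+}_iy_2N^{+}_i}} v\,\nabla\varphi\cdot\tfrac{\nabla y_2}{|\nabla y_2|}\dH^1,
\end{equation*}
and the analogous identity on each $\mathcal{S}^{-}_m$ with the signs of the boundary integrals reversed because the roles of $y_1,y_2$ with respect to the region $\{y_1<t<y_2\}$ are swapped (the outward normal on $\{y_1=t\}$ is now $+\nabla y_1/|\nabla y_1|$ and on $\{y_2=t\}$ it is $-\nabla y_2/|\nabla y_2|$). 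Summing the $\mathcal{S}^{+}_i$ identities over $i\in I^{+}$ and subtracting the sum of the $\mathcal{S}^{-}_m$ identities over $m\in I^{-}$, and using countable additivity of the Lebesgue integral for the left-hand side and the $v\Delta\varphi$ term (valid since $\nabla v\cdot\nabla\varphi\in L^1(\Omega)$ and $v\Delta\varphi\in L^1(\Omega)$ as $v\in H^1\cap C(\overline\Omega)$, $\varphi\in W^{2,1}\cap C^1(\overline\Omega)$), yields on the left exactly $\int_\Omega(\1_{\mathcal{S}^{+}}-\1_{\mathcal{S}^{-}})\nabla v\cdot\nabla\varphi\dx$ and correspondingly for the $\Delta\varphi$ term.

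The crux is identifying the sum of all the arc integrals with the clean boundary integrals $-\int_{\Cu_1}v\,\nabla\varphi\cdot\frac{\nabla y_1}{|\nabla y_1|}\dH^1 + \int_{\Cu_2}v\,\nabla\varphi\cdot\frac{\nabla y_2}{|\nabla y_2|}\dH^1$; this is where I expect the main obstacle. The point is that \cref{lem:decomposition-domains}\ref{item:intersection} gives the decomposition $\Cu_1\cap\Cu_2 = \bigcup\{M^{+}_i,N^{+}_i,M^{-}_m,N^{-}_m,\wideparen{M^0_kN^0_k}\}$, so $\Cu_1$ is, up to an $\Ha^1$-null set (the isolated points $M^{\pm}_\bullet,N^{\pm}_\bullet$, at most countably many), the disjoint union of the arcs $\wideparen{M^{+}_iy_1N^{+}_i}$, the arcs $\wideparen{M^{-}_my_1N^{-}_m}$, and the shared arcs $\wideparen{M^0_ky_1N^0_k}$ — and on each shared arc $\wideparen{M^0_kN^0_k}$ one has $\Cu_1=\Cu_2$ pointwise, hence $\nabla y_1$ and $\nabla y_2$ are parallel there (both normal to the common curve) and point the same way by \eqref{eq:nonvanishing-gradient-2}, so $\frac{\nabla y_1}{|\nabla y_1|}=\frac{\nabla y_2}{|\nabla y_2|}$ on these arcs and the corresponding contributions to $-\int_{\Cu_1}$ and $+\int_{\Cu_2}$ cancel. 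Thus $-\int_{\Cu_1}v\,\nabla\varphi\cdot\frac{\nabla y_1}{|\nabla y_1|}\dH^1$ splits precisely as $-\sum_{i}\int_{\wideparen{M^{+}_iy_1N^{+}_i}} - \sum_{m}\int_{\wideparen{M^{-}_my_1N^{-}_m}}$ plus the shared-arc part, and likewise for $\Cu_2$; matching term by term against the summed component identities (the $\mathcal{S}^{-}$ arcs carrying the opposite sign exactly compensates the sign flip in the $\mathcal{S}^{-}$ Green identities) and cancelling the shared-arc integrals completes the proof. One should also note the degenerate alternatives from \cref{lem:decomposition-domains}\ref{item:coincidence},\ref{item:nonintersection} are handled trivially: if $\Cu_1=\Cu_2$ then $\mathcal{S}^{\pm}=\emptyset$ and both sides vanish after the shared-arc cancellation (now over all of $\Cu_1=\Cu_2$), and if $\Cu_1\cap\Cu_2=\emptyset$ then exactly one of $\mathcal{S}^{\pm}$ is a single component with smooth boundary $\Cu_1\cup\Cu_2$, to which \cref{lem:Green-identity}, Case 1, applies directly.
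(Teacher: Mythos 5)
Your proposal is correct and follows essentially the same route as the paper's proof: decompose $\mathcal{S}^{\pm}$ via \cref{lem:decomposition-domains}, apply \cref{lem:Green-identity} (with \cref{rem:Green-identity-emptyset}) on each connected component, sum and subtract, and then recover the full boundary integrals over $\Cu_1$ and $\Cu_2$ by noting that on the shared arcs $\wideparen{M^0_kN^0_k}$ one has $\nabla y_1/|\nabla y_1|=\nabla y_2/|\nabla y_2|$ so those contributions cancel, with the cases $\Cu_1=\Cu_2$ and $\Cu_1\cap\Cu_2=\emptyset$ treated separately exactly as in the paper.
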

\begin{proof}
    For the case where $\Cu_1 = \Cu_2$, we have $\mathcal{S}^+ = \mathcal{S}^{-} = \emptyset$ and thus $\1_{\mathcal{S}^+} = \1_{\mathcal{S}^{-}} = 0$. We then obtain \eqref{eq:Green-identity-general}.
    For the case where $\Cu_1 \cap \Cu_2 = \emptyset$, one of the sets $\mathcal{S}^+$ and $\mathcal{S}^{-}$ is empty and the boundary of the other consists of $\Cu_1$ and $\Cu_2$. We then derive \eqref{eq:Green-identity-general} from the classical Green's first formula.
    It remains to consider the situation where $\Cu_1 \cap \Cu_2 \neq \emptyset$; see \cref{fig:components-decomposition}. As a result of \cref{lem:decomposition-domains}, there exist at most countable index sets $I^+, I^-$, $I^0$, points $M_i^+, N_i^+, M_m^-,N_m^-,M_k^0,N_k^0$, and open connected domains $\mathcal{S}^+_i, \mathcal{S}^{-}_m$ with $i \in I^+, m \in I^-, k \in I^0$ satisfying assertion \ref{item:intersection} in \cref{lem:decomposition-domains}. (Note that some of the sets $I^+, I^-, I^0$ might be empty.) For any $i \in I^+$, applying \cref{lem:Green-identity} and \cref{rem:Green-identity-emptyset} to open connected domain $\mathcal{S}_i^+$ yields
    \begin{equation*}
        \int_{\mathcal{S}^+_i} \nabla v \cdot \nabla \varphi dx = - \int_{\mathcal{S}^+_i} v \Delta \varphi dx - \int_{\partial \mathcal{S}_i^+ \cap \Cu_1 } v \nabla \varphi \cdot \frac{\nabla y_1}{|\nabla y_1|} d\mathcal{H}^1(x) + \int_{\partial \mathcal{S}_i^+ \cap \Cu_2 } v \nabla \varphi \cdot \frac{\nabla y_2}{|\nabla y_2|} d\mathcal{H}^1(x).
    \end{equation*}
    Summing up the above identities over all $i \in I^+$ gives
    \begin{equation}
        \label{eq:Green-identity-S-plus}
        \int_{\mathcal{S}^+} \nabla v \cdot \nabla \varphi dx = - \int_{\mathcal{S}^+} v \Delta \varphi dx - \int_{\partial \mathcal{S}^+ \cap \Cu_1 } v \nabla \varphi \cdot \frac{\nabla y_1}{|\nabla y_1|} d\mathcal{H}^1(x) + \int_{\partial \mathcal{S}^+ \cap \Cu_2 } v \nabla \varphi \cdot \frac{\nabla y_2}{|\nabla y_2|} d\mathcal{H}^1(x).
    \end{equation}
    Similarly, there holds
    \begin{equation}
        \label{eq:Green-identity-S-minus}
        \int_{\mathcal{S}^-} \nabla v \cdot \nabla \varphi dx = - \int_{\mathcal{S}^-} v \Delta \varphi dx + \int_{\partial \mathcal{S}^- \cap \Cu_1 } v \nabla \varphi\cdot \frac{\nabla y_1}{|\nabla y_1|} d\mathcal{H}^1(x) - \int_{\partial \mathcal{S}^- \cap \Cu_2 } v \nabla \varphi \cdot \frac{\nabla y_2}{|\nabla y_2|} d\mathcal{H}^1(x).
    \end{equation}
    By subtracting \eqref{eq:Green-identity-S-minus} from \eqref{eq:Green-identity-S-plus}, one has
    \begin{multline}
        \label{eq:Green-identity-lack-boundary}
        \int_{\Omega} \left(\1_{\mathcal{S}^+} - \1_{\mathcal{S}^{-}} \right) \nabla v \cdot \nabla \varphi dx = - \int_{\Omega} \left(\1_{\mathcal{S}^+} - \1_{\mathcal{S}^{-}} \right) v \Delta \varphi dx \\
        - \int_{(\partial \mathcal{S}^+ \cup \partial \mathcal{S}^- ) \cap \Cu_1} v \nabla \varphi \cdot \frac{\nabla y_1}{|\nabla y_1|} d\mathcal{H}^1(x) + \int_{(\partial \mathcal{S}^+ \cup \partial \mathcal{S}^- ) \cap \Cu_2} v \nabla \varphi \cdot \frac{\nabla y_2}{|\nabla y_2|} d\mathcal{H}^1(x).
    \end{multline}
    On the other hand, due to \eqref{eq:boundary-points}, it follows that
    \begin{equation}
        \label{eq:boundary-decom}
        \Cu_j = \left( {(\partial \mathcal{S}^+ \cup \partial \mathcal{S}^- ) \cap \Cu_j}\right) \cup \{ \wideparen{M^{0}_kN^{0}_k} \mid k \in I^0 \} \quad \text{for } j =1,2
    \end{equation}
    with $\wideparen{M^{0}_kN^{0}_k}$ denoting the arcs lying on $\Cu_1 \cap \Cu_2$ with end points $M^0_k$ and $N_k^0$. Obviously, one has
    \[
        \frac{\nabla y_1(x)}{|\nabla y_1(x)|} = \frac{\nabla y_2(x)}{|\nabla y_2(x)|} \quad \text{for all } x \in \wideparen{M^{0}_kN^{0}_k} \, \text{and } k \in I^0.
    \]
    Combing this with \eqref{eq:Green-identity-lack-boundary} and \eqref{eq:boundary-decom} yields \eqref{eq:Green-identity-general}.
\end{proof}

\subsection{Continuity of level sets in term of functions}
\label{sec:continuity-levelset}

This subsection is devoted to the continuity of level sets in term of functions determining these level sets. Namely, we will show that when functions $y_n$ tend to $y$ in $C^1(\overline{\Omega})$, then, in any $\epsilon$-neighborhood (with $\epsilon$ small enough) of an arbitrary connected component of the level set $\{y=t\}$ on which the gradient of $y$ does not vanish, there exists one and only one connected component of $\{y_n=t\}$ for each $n$ sufficiently large, see \cref{prop:levelset-unique-component-continuity} and \cref{cor:levelset-continuity-in-functions} below.

From now on, for any $\epsilon>0$ and any set $V \subset \overline \Omega$, we denote by $V^{\epsilon}$ the open $\epsilon$-neighborhood in $\overline \Omega$ of $V$, that is,
\[
    V^{\epsilon} := \{x \in \overline \Omega \mid \, \text{dist}(x,V) < \epsilon \},
\]
where dist$(x,V)$ is the distance from $x$ to $V$. If $\Cu$ is a closed simple curve in $\overline \Omega$, we define the open sets $\Cu_{\epsilon}^+$ and $\Cu_{\epsilon}^{-}$ (illustrated in \cref{fig:epsilon-neighborhood}) as follows
\[
    \Cu_{\epsilon}^{-} := \{ x \in \overline\Omega \mid \, 0 < \text{dist}(x,\Cu) < \epsilon \, \text{and $x$ is surrounded by } \Cu \} \quad \text{and} \quad \Cu_{\epsilon}^{+} :=\Cu^{\epsilon} \backslash (\Cu_{\epsilon}^{-} \cup \Cu).
\]
\begin{figure}[t]
    \centering
    \resizebox{4.75cm}{2.25cm}{%
        \begin{tikzpicture}[>=latex]
            \draw[dashed] (0,0) circle (2.2cm);
            \draw (1.5,0) node[right] {$\mathcal{C}_\epsilon^{+}$};
            \draw (0,0) circle (1.5cm);
            \draw (1.5,0) node[left] {$\mathcal{C}_\epsilon^{-}$};
            \draw (-1.5,0) node {$\mathcal{C}$};
            \draw[dashed] (0,0) circle (0.8cm);
            \draw[<->] (0,1.5) -- (0,2.2) node[right,midway] {$\epsilon$};
        \end{tikzpicture}
    }
    \caption{sets $\Cu_{\epsilon}^{+}$ and $\Cu_{\epsilon}^{-}$}
    \label{fig:epsilon-neighborhood}
\end{figure}

In the remainder of this section, we shall consider the following general situation.
\begin{hypothesis}
\item \label{hypo:levelsets}
    Let $y_n, y \in C^1(\overline\Omega)$, $n \geq 1$, such that $y_n = y = 0$ on the boundary $\partial\Omega$ and
    \begin{equation}
        \label{eq:levelsets-hypothesis}
        y_n \to y \quad \text{strongly in } C^1(\overline\Omega).
    \end{equation}
\end{hypothesis}
We have the following result on strong positivity of the product of gradients of $y_n$ and $y$ around an $\epsilon$-neighborhood of any connected component in the level set $\{y=t\}$.

\begin{proposition}
    \label{prop:strong-positivity-gradients}
    Let functions $y_n, y$, $n \geq 1$, satisfy hypothesis \ref{hypo:levelsets}. Assume that $\Cu$ is a connected component of $\{y=t\}$ for some $t \in \R$ and fulfills the condition \eqref{eq:nonvanishing-gradient-on-component}. Then there exist constants $\epsilon_0 >0$, $\alpha_\Cu >0$, and $n_0 \in \N$ such that
    \begin{equation}
        \label{eq:nonvanishing-gradient-ball}
        \nabla y(x) \cdot \nabla y_{n}(x') \geq \alpha_\Cu \quad \text{for all } n \geq n_0, x, x' \in \Cu^{\epsilon_0} \, \text{with } |x - x'| \leq 2\epsilon_0.
    \end{equation}
\end{proposition}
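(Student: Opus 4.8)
The proof requires no information about the geometry of $\Cu$ beyond its compactness; it is a pure uniform-continuity argument that I would carry out in three short steps. First, a lower bound for $|\nabla y|$ on a fixed neighborhood of $\Cu$: since $\Cu$ is a closed subset of the compact set $\overline\Omega$, it is compact, and by \eqref{eq:nonvanishing-gradient-on-component} together with the continuity of $\nabla y$ the number $2\delta := \min_{x\in\Cu}|\nabla y(x)|$ is strictly positive. A standard compactness argument—if the claim failed there would be points $x_k$ with $\text{dist}(x_k,\Cu)\to 0$ and $|\nabla y(x_k)|<\delta$, a subsequence of which converges to a point of $\Cu$ where $|\nabla y|\le\delta$, a contradiction—yields a radius $\epsilon_1>0$ such that $|\nabla y(x)|\ge\delta$ for every $x\in\Cu^{\epsilon_1}$. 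I also abbreviate $\Lambda := \norm{\nabla y}_{C(\overline\Omega)}$, and note $\Lambda\ge 2\delta>0$ since $\Cu\neq\emptyset$.

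Next I control the product $\nabla y(x)\cdot\nabla y(x')$. By the uniform continuity of $\nabla y$ on $\overline\Omega$, choose $\eta>0$ with $|\nabla y(x)-\nabla y(x')|\le \delta^2/(2\Lambda)$ whenever $|x-x'|\le\eta$, and set $\epsilon_0 := \min\{\epsilon_1,\eta/2\}$. Then for any $x,x'\in\Cu^{\epsilon_0}$ with $|x-x'|\le 2\epsilon_0\le\eta$, using $x\in\Cu^{\epsilon_0}\subset\Cu^{\epsilon_1}$ for the first term and the Cauchy--Schwarz inequality for the second,
\[
    \nabla y(x)\cdot\nabla y(x') = |\nabla y(x)|^2 + \nabla y(x)\cdot\bigl(\nabla y(x')-\nabla y(x)\bigr) \ge \delta^2 - \Lambda\cdot\frac{\delta^2}{2\Lambda} = \frac{\delta^2}{2}.
\]
I then put $\alpha_\Cu := \delta^2/4$.

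Finally I insert the sequence $\{y_n\}$. For $x,x'$ as above,
\[
    \nabla y(x)\cdot\nabla y_n(x') = \nabla y(x)\cdot\nabla y(x') + \nabla y(x)\cdot\bigl(\nabla y_n(x')-\nabla y(x')\bigr) \ge \frac{\delta^2}{2} - \Lambda\,\norm{\nabla y_n-\nabla y}_{C(\overline\Omega)}.
\]
By \eqref{eq:levelsets-hypothesis} there is $n_0\in\N$ such that $\Lambda\,\norm{\nabla y_n-\nabla y}_{C(\overline\Omega)}\le \delta^2/4=\alpha_\Cu$ for all $n\ge n_0$; combining this with the previous display gives $\nabla y(x)\cdot\nabla y_n(x')\ge\alpha_\Cu$ for all $n\ge n_0$ and all $x,x'\in\Cu^{\epsilon_0}$ with $|x-x'|\le 2\epsilon_0$, which is precisely \eqref{eq:nonvanishing-gradient-ball}.

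I do not expect a serious obstacle. The only step needing genuine care is the first one, namely upgrading the pointwise lower bound for $|\nabla y|$ valid on $\Cu$ to a uniform bound on an honest neighborhood $\Cu^{\epsilon_1}$; this must be settled before anything else, because $\epsilon_0$ then has to be chosen small enough both to remain inside $\Cu^{\epsilon_1}$ and to make the modulus of continuity of $\nabla y$ over pairs at distance $\le 2\epsilon_0$ as small as required.
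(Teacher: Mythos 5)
Your proof is correct, and every step checks out: the compactness of $\Cu$ (a connected component of the closed set $\{y=t\}$ in $\overline\Omega$) gives $2\delta=\min_{\Cu}|\nabla y|>0$, the neighborhood bound $|\nabla y|\ge\delta$ on $\Cu^{\epsilon_1}$ is justified, and the two perturbation estimates (uniform continuity of $\nabla y$, then $\norm{\nabla y_n-\nabla y}_{C(\overline\Omega)}\to 0$) are applied with the right constants. The paper proves the same statement by contradiction: it negates the claim with sequences $\epsilon_k\to 0^+$, $n_k\to\infty$ and points $x_k,x_k'\in\Cu^{\epsilon_k}$ with $|x_k-x_k'|\le 2\epsilon_k$ and $\nabla y(x_k)\cdot\nabla y_{n_k}(x_k')<1/k$, extracts a subsequence converging to a common point $x\in\Cu$, and passes to the limit to get $|\nabla y(x)|^2=0$, contradicting \eqref{eq:nonvanishing-gradient-on-component}. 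The underlying ingredients (compactness of $\Cu$, continuity of $\nabla y$, $C^1$-convergence of $y_n$) are identical; your direct version has the small advantage of producing explicit admissible constants $\epsilon_0$, $\alpha_\Cu=\delta^2/4$, and $n_0$, whereas the paper's indirect argument is shorter but nonconstructive. Either route is acceptable.
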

\begin{proof}
    By contradiction, assume that there exist sequences $\epsilon_k \to 0^+$, $n_k \to \infty$ satisfying
    \begin{equation}
        \label{eq:nonvanishing-gradient-esti}
        x_k, x_k' \in \Cu^{\epsilon_k}, \quad |x_k - x_k'| \leq 2\epsilon_k \quad \text{and} \quad \nabla y(x_k) \cdot \nabla y_{n_k}(x'_k) < \frac{1}{k} \quad \text{for all } k \geq 1.
    \end{equation}
    By extracting a subsequence if necessary, we conclude from the boundedness of $\{x_k\}$ and $\{x_k'\}$ and the first two conditions in \eqref{eq:nonvanishing-gradient-esti} as well as from the continuity of $y$ that $x_k \to x$ and $x_k' \to x$ for some $x \in \Cu$. Passing to the limit the last condition in \eqref{eq:nonvanishing-gradient-esti} and using \eqref{eq:levelsets-hypothesis} yield
    \[
        |\nabla y(x)|^2 = 0,
    \]
    which contradicts \eqref{eq:nonvanishing-gradient-on-component}. The proposition is proven.
\end{proof}

\begin{proposition}
    \label{prop:levelset-unique-component-continuity}
    Let functions $y_n, y$, $n \geq 1$, satisfy hypothesis \ref{hypo:levelsets}. Let $\Cu$ be a connected component of $\{y=t\}$, $t \in \R$ that satisfies \eqref{eq:nonvanishing-gradient-on-component}. Then a constant $\epsilon_0 >0$ exists such that for any $\epsilon \in (0,\epsilon_0)$, there is a positive integer $n_0 = n_0(\epsilon)$ such that $\Cu^\epsilon$ contains one and only one connected component $\Cu_n$ of $\{y_n=t\}$ for all $n \geq n_0$.
\end{proposition}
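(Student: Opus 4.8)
The plan is to reconstruct the part of $\{y_n=t\}$ near $\Cu$ from finitely many local $C^1$ graph‑pieces, glue them into a single closed curve, and then rule out, by a compactness argument, any further component of $\{y_n=t\}$ approaching $\Cu$. First, by part~\ref{item:C1-closedcurve} of \cref{prop:decomposition-levelset}, $\Cu$ is a closed $C^1$ simple curve. Since $|\nabla y|>0$ on the compact set $\Cu$, by continuity there is a bounded open $\mathcal O\supset\Cu$ with $|\nabla y|>0$ on $\overline{\mathcal O}$, and the implicit function theorem makes $\{y=t\}\cap\mathcal O$ a $C^1$ one‑dimensional submanifold; as $\Cu$ is an entire connected component of $\{y=t\}$ contained in $\mathcal O$, it is also a connected component of $\{y=t\}\cap\mathcal O$, hence relatively open and closed in it. Consequently I fix the constant $\epsilon_0>0$ of the proposition so small that $|\nabla y|\ge c_0>0$ on $\overline{\Cu^{\epsilon_0}}$, that $\{y=t\}\cap\overline{\Cu^{\epsilon_0}}=\Cu$, and (by \cref{prop:strong-positivity-gradients}) that \eqref{eq:nonvanishing-gradient-ball} holds on $\Cu^{\epsilon_0}$; then $|\nabla y_n|>0$ on $\overline{\Cu^{\epsilon_0}}$ for $n$ large, by uniform convergence. (If $t=0$ and $\Cu$ meets $\partial\Omega$, then necessarily $\Cu=\partial\Omega$ and the statement is trivial because $y_n\equiv y\equiv 0$ on $\partial\Omega$; I exclude this case.)

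\emph{Local graphs and gluing.} Fix $\epsilon\in(0,\epsilon_0)$ and choose an auxiliary $\epsilon'\in(0,\epsilon)$. I would cover $\Cu$ by finitely many small closed patches $Q_1,\dots,Q_m$, ordered cyclically along $\Cu$, with $\overline{Q_j}\subset\Cu^\epsilon$ and $\Cu^{\epsilon'}\subset\bigcup_j Q_j$, each $Q_j$ being --- in a suitably rotated coordinate frame --- an axis‑parallel rectangle on which one partial derivative of $y$ has a fixed sign while $y<t$ on one of the two edges transverse to that direction and $y>t$ on the opposite edge. For $n$ large the same sign and boundary inequalities hold for $y_n$ (uniform convergence of $y_n$ and $\nabla y_n$, together with strictness), so monotonicity of $y_n$ along the distinguished direction and the intermediate value theorem on the slices show that $\{y_n=t\}\cap Q_j$ is a single $C^1$ graph‑arc $A^n_j$ over the full base of $Q_j$, lying uniformly close to $A^0_j:=\{y=t\}\cap Q_j$. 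Because consecutive arcs $A^0_j$ overlap (they patch together into $\Cu$) and $A^n_j$ stays close to $A^0_j$, consecutive $A^n_j$ overlap as well; hence $\Gamma_n:=\bigcup_j A^n_j$ is a nonempty \emph{connected} subset of $\{y_n=t\}$ contained in $\bigcup_j Q_j\subset\Cu^\epsilon$.

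\emph{No stray zeros and conclusion.} A compactness argument shows that for $n$ large $\{y_n=t\}$ has no point $x$ with $\epsilon'\le\mathrm{dist}(x,\Cu)\le\epsilon$: otherwise a subsequence of such points converges to some $w$ with $\mathrm{dist}(w,\Cu)\in[\epsilon',\epsilon]$ and $y(w)=\lim y_n(x_n)=t$, so $w\in\{y=t\}\cap\overline{\Cu^{\epsilon_0}}=\Cu$, a contradiction. Therefore $\{y_n=t\}\cap\Cu^\epsilon=\{y_n=t\}\cap\Cu^{\epsilon'}\subset\bigcup_j\bigl(\{y_n=t\}\cap Q_j\bigr)=\Gamma_n\subset\{y_n=t\}\cap\Cu^\epsilon$, so all these sets coincide. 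Thus $\{y_n=t\}\cap\Cu^\epsilon=\Gamma_n$ is connected; it is relatively open in $\{y_n=t\}$ and also relatively closed (its closure cannot reach the set $\{\mathrm{dist}(\cdot,\Cu)=\epsilon'\}$, which carries no point of $\{y_n=t\}$), hence a single connected component of $\{y_n=t\}$ --- and, since any component of $\{y_n=t\}$ lying in $\Cu^\epsilon$ is contained in $\{y_n=t\}\cap\Cu^\epsilon=\Gamma_n$, the only one. Taking $n_0(\epsilon)$ larger than all the thresholds above yields the claim. The local statement used here is precisely the middle (``arc'') condition in \eqref{eq:approximation-func-in-ball} announced in the remark after \cref{lem:Implicit_Function-Theorem-extended-boundary}; with the ball‑shaped patches of that lemma one argues in the same way, but additionally chooses the radii so that $\{y=t\}$ crosses the bounding spheres transversally, which forces the two‑point intersection \eqref{eq:boundary-touch}.

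\emph{Main obstacle.} The one genuinely delicate step is this local claim: that for all large $n$ the curve $\{y_n=t\}$ neither splits into several pieces nor doubles back inside a coordinate patch. On rectangular patches it reduces to one‑variable monotonicity and is short; on balls it additionally requires controlling the trace on the bounding sphere. The remaining ingredients --- the finite cyclic covering and the ``no stray zeros'' estimate --- are routine once $\Cu$ has been isolated within $\{y=t\}$, and that isolation in turn rests on the implicit‑function‑theorem description of $\{y=t\}$ near the compact curve $\Cu$.
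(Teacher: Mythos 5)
Your interior-case argument is sound, and it takes a genuinely different route from the paper. The paper isolates $\Cu$, proves existence of a nearby component via a measure argument, and then builds a nearest-point/normal-line map $T:\Cu\to\{y_n=t\}\cap\Cu^\epsilon$ (well-defined and continuous via the Mean Value Theorem and \eqref{eq:nonvanishing-gradient-ball}, surjective via \cref{lem:x1x2-existence}), so that $\{y_n=t\}\cap\Cu^\epsilon=T(\Cu)$ is connected as the continuous image of a connected compact set. You instead cover $\Cu$ by finitely many monotone graph patches, use the intermediate value theorem slice-wise to show $\{y_n=t\}\cap Q_j$ is a single arc, glue the arcs, and kill stray zeros in the annulus $\{\epsilon'\le\mathrm{dist}(\cdot,\Cu)\le\epsilon\}$ by compactness; together with the clopen-in-$\{y_n=t\}$ observation this gives exactly one component in $\Cu^\epsilon$. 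Your isolation step ($\{y=t\}\cap\overline{\Cu^{\epsilon_0}}=\Cu$ via the implicit function theorem and openness of components of a $1$-manifold) is also correctly justified, and the remaining constructions (cyclically overlapping rectangles with strict sign conditions on the transverse edges) are routine.

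The genuine gap is your dismissal of the case $\Cu\cap\partial\Omega\neq\emptyset$, i.e. $t=0$ and $\Cu=\partial\Omega$, as ``trivial because $y_n\equiv y\equiv 0$ on $\partial\Omega$.'' That fact only provides a candidate: it says $\partial\Omega\subset\{y_n=0\}$, but it gives neither that the connected component of $\{y_n=0\}$ containing $\partial\Omega$ is exactly $\partial\Omega$ (and hence contained in $\Cu^\epsilon$), nor that no \emph{other} component of $\{y_n=0\}$ lies inside $\Cu^\epsilon$; both assertions are precisely what the proposition claims and both require ruling out zeros of $y_n$ in the collar $\Cu^\epsilon\setminus\partial\Omega$, which does not follow from the boundary condition alone. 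This case is not vacuous for the paper: it is used later (e.g. Case 1 in \cref{lem:gradient-nonvanish}). The paper handles it by taking any alleged zero $x_2\in\Cu^\epsilon\setminus\partial\Omega$, its nearest boundary point $x_1$ with $x_2-x_1=\kappa\nabla y(x_1)$, and deriving $\nabla y_n(\xi)\cdot\nabla y(x_1)=0$ from the Mean Value Theorem, contradicting \eqref{eq:nonvanishing-gradient-ball}. Your own machinery could be adapted (one-sided half-patches in which $y_n$ vanishes on the boundary edge and is strictly monotone transversally, so the zero set in the patch is the boundary arc, plus your annulus argument), but note that your two-sided sign condition ``$y<t$ on one edge, $y>t$ on the opposite edge'' fails there since the domain lies on one side of $\Cu$ only; as written, the proposal simply does not prove the statement in this case.
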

\begin{proof}
    Since $\Cu$ and $\{y=t\} \backslash \Cu$ are both closed and disjoint, a constant $\epsilon_0 >0$ thus exists and fulfills
    \begin{equation}
        \label{eq:epsilon-zero-choice}
        \Cu^{\epsilon_0} \cap (\{y=t\} \backslash \Cu)^{\epsilon_0} = \emptyset.
    \end{equation}
    Obviously, we can assume that the constant $\epsilon_0$ in \eqref{eq:epsilon-zero-choice} is identical to the one in \cref{prop:strong-positivity-gradients}.
    Fix $\epsilon \in (0, \epsilon_0)$ and, without loss of generality, assume that
    \begin{equation}
        \label{eq:y-positive-negative-levelset}
        y < t \quad \text{on } \Cu^-_{\epsilon_0} \quad \text{and} \quad y > t \quad \text{on } \Cu^+_{\epsilon_0}.
    \end{equation}
    We now consider two possible situations: $\Cu \cap \partial \Omega \neq \emptyset$ and $\Cu \cap \partial \Omega = \emptyset$.

    $\bullet$ For the case $\Cu \cap \partial \Omega \neq \emptyset$, we have $t=0$ and $\Cu = \partial\Omega$. Moreover, there hold $\Cu^+_{\epsilon_0} = \emptyset$ and $\partial \Omega \subset \{y_n =0\}$. Thanks to \eqref{eq:nonvanishing-gradient-ball} and \cref{prop:decomposition-levelset}, $\Cu_n := \partial\Omega$ is a connected component of $\{y_n=0\}$ for all $n \geq 1$, and is a closed simple curve in $\Cu^\epsilon$ for all $\epsilon \in (0, \epsilon_0)$. If there exists another connected component $\Cu_n'$ of $\{y_n =0\}$ such that $\Cu_n' \cap \Cu^\epsilon \neq \emptyset$ for $n$ sufficient large,
    then by picking a point $x_2 = (x_2^1, x_2^2)\in \Cu_n' \cap \Cu^\epsilon$ and taking
    \[
        x_1 \in \mathrm{argmin}\{|x_2 - x| : x \in \Cu \},
    \]
    one has $|x_1 - x_2| \leq \epsilon$ and
    \begin{equation}
        \label{eq:x12-normal-boundary-direction}
        x_2 - x_1 = \kappa_n \nabla y(x_1) \quad \text{for some } \kappa_n \in \R,
    \end{equation}
    as a result of \cref{lem:x1x2-existence}. Since $\Cu_n \cap \Cu_n' = \emptyset$ and $\Cu_n = \partial\Omega = \Cu$, we obtain $x_1 \neq x_2$ and thus
    \begin{equation}
        \label{eq:kappa-n-nonzero}
        \kappa_n \neq 0.
    \end{equation}
    On the other hand, since $x_1 \in \Cu = \Cu_n$ and $x_2 \in \Cu_n'$, $y_n(x_1) = t = y_n(x_2)$.
    We thus deduce from the Mean Value Theorem that
    $\nabla y_n(x_1 + \theta_n(x_2-x_1)) \cdot (x_2 - x_1) = 0$ with $\theta_n \in (0,1)$. Combining this with \eqref{eq:x12-normal-boundary-direction} and \eqref{eq:kappa-n-nonzero} yields
    \[
        \nabla y_n(x_1 + \theta_n(x_2-x_1)) \cdot \nabla y(x_1) = 0,
    \]
    which contradicts \eqref{eq:nonvanishing-gradient-ball} for $n$ large enough.

    \medskip
    $\bullet$ For the case $\Cu \cap \partial \Omega = \emptyset$,
    we have $\Cu_{\epsilon}^{\pm} \neq \emptyset$.
    We now split the proof into several Claims below.\\
    \noindent\emph{Claim 1: There exists an integer $n_0^1$ such that $\{y_n =t\} \cap \Cu^\epsilon \neq \emptyset$ for all $n \geq n_0^1$}.
    In fact, arguing by contradiction, there exists a subsequence of $\{n\}$, denoted in the same way, such that $\{y_n =t\} \cap \Cu^\epsilon = \emptyset$ for all $n\geq 1$. Splitting the sequence $\{n\}$ into subsequences, also denoted by $\{n\}$, that satisfy one of the following conditions for all $n \geq 1$:
    \begin{equation} \label{eq:impossible}
        \Cu^\epsilon \subset \{ y_n >t\} \quad \text{or} \quad
        \Cu^\epsilon \subset \{ y_n <t\}.
    \end{equation}
    For the first case in \eqref{eq:impossible}, we have
    \[
        \Cu_\epsilon^{-} \subset \{y < t < y_n \} \subset \{ 0 < | y - t | \leq \tau_n \}
    \]
    with $\tau_n := \norm{y_n - y}_{C(\overline\Omega)}$. This implies that $\meas_{\R^2}(\Cu_\epsilon^{-}) \leq \meas_{\R^2}(\{ 0 < | y - t | \leq \tau_n \}) \to 0$ as $n\to \infty$, contradicting the fact that $\meas_{\R^2}(\Cu_\epsilon^{-})>0$. The first case in \eqref{eq:impossible} is then impossible. Similarly, the second one is also absurd.\\

    \noindent\emph{Claim 2: There exists an integer $n_0^2$ such that $\{y_n =t\} \cap \partial\Cu^\epsilon = \emptyset$ for all $n \geq n_0^2$}.
    Arguing by contradiction, there are subsequences $n_k \to \infty$ and $\{x_{k}\}$ with $x_k \in \{y_{n_k} = t \} \cap \partial \Cu^\epsilon$ for all $k \geq 1$.
    From the boundedness of $\{x_k\}$ and the closedness of $\partial \Cu^\epsilon$, we can assume that $x_k \to x \in \partial \Cu^\epsilon$ as $k \to \infty$. Thanks to \eqref{eq:levelsets-hypothesis}, there holds $x \in \{ y = t\}$.
    We then have $\{y=t\} \cap \partial \Cu^\epsilon \neq \emptyset$, which contradicts \eqref{eq:epsilon-zero-choice}.

    \medskip

    From Claims 1 and 2, there exists an integer $n_0$ such that $\{y_n=t\}$ admits a connected component $\Cu_n$ being in $\Cu^\epsilon$ for all $n \geq n_0$. \\

    For any $n \geq n_0$, we define a subset (depending on $n$) of $\Cu$ by setting
    \begin{equation}
        \label{eq:Lambda-set}
        \Lambda_n := \left \{ x_1 \in \Cu \mid \exists x_2 \in \{ y_n = t \} \cap \Cu^\epsilon \, \text{satisfying \eqref{eq:x1x2-distance}--\eqref{eq:x1x2-direction}} \right \},
    \end{equation}
    where
    \begin{equation}
        \label{eq:x1x2-distance}
        |x_1 -x_2| \leq \epsilon
    \end{equation}
    and
    \begin{equation}
        \label{eq:x1x2-direction}
        x_2-x_1 = \kappa \nabla y(x_1), \quad \text{for some } \kappa \in \R.
    \end{equation}
    In view of \cref{lem:x1x2-existence}, we have $\Lambda_n \neq \emptyset$.
    It remains to prove the following claims.

    \medskip
    \noindent\emph{Claim 3: For any $n \geq n_0$ and any $x_1 \in \Cu$, there exists at most one point $x_2 \in \{ y_n = t \} \cap \Cu^\epsilon$ such that $x_1$ and $x_2$ satisfy \eqref{eq:x1x2-distance} and \eqref{eq:x1x2-direction}.}
    Suppose this is not the case, i.e., suppose that there were a point $x_1 \in \Cu$ and two different points $p_2, p_2' \in \{ y_n = t \} \cap \Cu^\epsilon$ satisfying \eqref{eq:x1x2-distance} and \eqref{eq:x1x2-direction} in place of $x_2$, respectively, for some $\kappa:= \kappa$ and $\kappa:= \kappa'$. The relation \eqref{eq:x1x2-distance} for both $x_2:= p_2$ and $x_2:= p_2'$ implies that $|p_2 - p_2'| \leq 2\epsilon$.
    Similarly, \eqref{eq:x1x2-direction} indicates that
    \[
        p_2 - p_2' = (\kappa - \kappa') \nabla y(x_1),
    \]
    which, together with the condition that $p_2 \neq p_2'$, yields $(\kappa - \kappa') \neq 0$.
    Since $p_2, p_2' \in \{ y_n = t \} \cap \Cu^\epsilon$, one has
    \[
        y_n(p_2) = y_n(p_2') = t.
    \]
    By the Mean Value Theorem, there holds
    \[
        \nabla y_n(p_2' + \theta(p_2-p_2')) \cdot (p_2 - p_2') = 0
    \]
    for some constant $\theta \in (0,1)$. Combing this with the fact that $(p_2 -p_2') = (\kappa - \kappa') \nabla y(x_1)$ and that $(\kappa - \kappa') \neq 0$ yields $ \nabla y_n(p_2' + \theta(p_2-p_2')) \cdot \nabla y(x_1) =0$, contradicting \eqref{eq:nonvanishing-gradient-ball}.

    \medskip
    \noindent\emph{Claim 4: There exists an integer $\bar n \geq n_0$ such that $\Lambda_n = \Cu$ for all $n \geq \bar n$.}
    Arguing by contradiction, assume that there exist a subsequence $\{k\}$ of $\{n\}$ and a sequence of points $\{x_{1,k}\} \subset \Cu$ such that,
    for each $k \geq 1$, there is no points $x_{2,k}$ satisfying both \eqref{eq:x1x2-distance} and \eqref{eq:x1x2-direction} corresponding to $x_1 := x_{1,k}$ and $x_2 :=x_{2,k}$. This means that
    the line $d_k$ through $x_{1,k}$ with direction vector $\nabla y(x_{1,k})$ does not intersect $\{ y_{n_k} = t \} \cap B_{\R^2}(x_{1,k}, \epsilon)$ for all $k \geq 1$. By Claim 2, we have $\{ y_{n_k} = t \} \cap \partial \Cu^\epsilon = \emptyset$, and thus the sign of $(y_{n_k}-t)$ does not change over the set $A_kB_k := d_k \cap \overline B_{\R^2}(x_{1,k}, \epsilon)$ with $A_k \in \partial \Cu_{\epsilon}^{+}$ and $B_k \in \partial \Cu_{\epsilon}^{-}$. We now split the sequence $\{k\}$ into subsequences, also denoted by $\{k\}$, that satisfy one of the following conditions for all $k \geq 1$:
    \begin{equation}
        \label{eq:sign-yn-not-change}
        y_{n_k} - t > 0 \quad \text{over } A_kB_k \quad \text{or} \quad y_{n_k} - t < 0 \quad \text{over } A_kB_k.
    \end{equation}
    We now consider the first case in \eqref{eq:sign-yn-not-change} only, since the other can be treated similarly. For all $k \geq 1$, according to \eqref{eq:y-positive-negative-levelset}, one has
    \begin{align*}
        y_{n_k}(B_k) - y(B_k) & > t - y(B_k) > t - \sup \{y(x) \mid x \in (\partial \Cu_\epsilon^{-}) \backslash \Cu \} > m_0 > 0
    \end{align*}
    for some constant $m_0$ independent of $k$. This contradicts the limit $y_{n_k} \to y$ in $C^1(\overline\Omega)$ as $k \to \infty$.

    \medskip

    From Claims 3 and 4, we can define for all $n \geq \bar n$ the following map (depending on $n$)
    \[
        \begin{aligned}
            T: \Cu & \to \{y_n =t \} \cap \Cu^\epsilon\\
            x_1 & \mapsto x_2,
        \end{aligned}
    \]
    with $x_1$ and $x_2$ satisfying \eqref{eq:x1x2-distance}--\eqref{eq:x1x2-direction}. Moreover, by \cref{lem:x1x2-existence}, $T$ is surjective.

    \medskip

    \noindent\emph{Claim 5: For all $n \geq \bar n$, $T$ is continuous.}
    In fact, assume that $x_{1,k} \to x_1$ as $k \to \infty$ with $x_{1,k}, x_1 \in \Cu$ and $x_{2,k} = T(x_{1,k})$, $ k \geq 1$. Then there exists $\{\kappa_k\} \subset \R$ such that
    \[
        |x_{1,k} - x_{2,k}| \leq \epsilon \quad \text{and} \quad x_{2,k} - x_{1,k} = \kappa_k \nabla y(x_{1,k})
    \]
    for all $k \geq 1$. Therefore $\{x_{2,k}\}$ is bounded and $\{\kappa_k\}$ is also bounded due to \eqref{eq:nonvanishing-gradient-on-component}.
    Let $x_2$ and $\kappa$ be an arbitrary limit point of $\{x_{2,k}\}$ and $\{\kappa_k\}$, respectively. We easily have
    \[
        |x_1 - x_2| \leq \epsilon \quad \text{and} \quad x_{2} - x_{1} = \kappa \nabla y(x_{1}).
    \]
    Moreover, the closedness of $\{y_n =t\}$ yields $x_2 \in \{y_n =t\} \cap \Cu^\epsilon$. Combing this with Claim 3 yields $x_2 = T(x_1)$.
    Consequently, the full sequence $\{x_{2,k}\}$ converges to $x_2$ and $T$ is then continuous.

    \medskip

    For any $n \geq \bar n$, we have $T(\Cu) = \{y_n =t \} \cap \Cu^\epsilon$ since $T$ is surjective. Moreover, because $\Cu$ is connected and compact, and $T$ is continuous, $T(\Cu)$ is also connected and compact; see, e.g. Theorem 4.22 in \cite{Rudin}. Therefore $\{y_n =t \} \cap \Cu^\epsilon$ is connected. From this and Claim 2, we can conclude that $\{y_n =t \} \cap \Cu^\epsilon$ is the unique connected component of $\{y_n=t\}$ in $\Cu^\epsilon$. The proof is complete.
\end{proof}

The following result is a direct consequence of \cref{prop:decomposition-levelset,prop:levelset-unique-component-continuity}.
\begin{corollary}
    \label{cor:levelset-continuity-in-functions}
    Let functions $y_n, y$, $n \geq 1$, satisfy hypothesis \ref{hypo:levelsets}. Assume that $y$, together with some value $t \in \R$, satisfies \eqref{eq:nonvanishing-gradient-on-levelset}.
    Then
    \[
        \{y=t\} = \cup_{i=1}^m \Cu_i,
    \]
    for some integer $m \geq 1$,
    where $\Cu_i$, $1 \leq i \leq m$, are $C^1$ closed simple curves. Moreover, a constant $\epsilon_0 >0$ exists such that for any $\epsilon \in (0,\epsilon_0)$, there is a positive integer $n_0 = n_0(\epsilon)$ with
    \[
        \{y_n = t\} = \cup_{i=1}^m \Cu_{i,n} \quad \text{and} \quad \Cu_{i,n} \subset \Cu_i^\epsilon \quad \text{for all } n \geq n_0, 1 \leq i \leq m
    \]
    for $C^1$ closed simple curves $\Cu_{i,n}$.
\end{corollary}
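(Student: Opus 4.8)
The plan is to derive the statement by applying \cref{prop:decomposition-levelset} to $y$ and then \cref{prop:levelset-unique-component-continuity} (together with \cref{prop:strong-positivity-gradients}) separately to each connected component; the only genuinely new ingredient will be a Hausdorff-type closeness estimate ensuring that $\{y_n=t\}$ cannot produce components lying outside the prescribed neighborhoods.

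First I would invoke \cref{prop:decomposition-levelset}\,\ref{item:decomposition-levelset}: since $|\nabla y|>0$ on all of $\{y=t\}$ by \eqref{eq:nonvanishing-gradient-on-levelset}, one obtains the decomposition $\{y=t\}=\bigcup_{i=1}^m\Cu_i$ into finitely many disjoint $C^1$ closed simple curves. Each $\Cu_i$ is a connected component of $\{y=t\}$ satisfying \eqref{eq:nonvanishing-gradient-on-component}, so \cref{prop:levelset-unique-component-continuity} provides a radius $\epsilon_{0,i}>0$ such that, for every $\epsilon\in(0,\epsilon_{0,i})$, there is an integer $n_{0,i}(\epsilon)$ for which $\Cu_i^\epsilon$ contains exactly one connected component $\Cu_{i,n}$ of $\{y_n=t\}$ whenever $n\ge n_{0,i}(\epsilon)$. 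Moreover \cref{prop:strong-positivity-gradients}, applied to $\Cu_i$, gives (after shrinking $\epsilon_{0,i}$ if necessary) a constant $\alpha_i>0$ with $\nabla y(x)\cdot\nabla y_n(x')\ge\alpha_i$ for $x,x'\in\Cu_i^{\epsilon_{0,i}}$, $|x-x'|\le 2\epsilon_{0,i}$, and $n$ large; taking $x=x'$ yields $|\nabla y_n|>0$ on $\Cu_i^{\epsilon_{0,i}}$ for all large $n$. I would then set $\epsilon_0:=\min_{1\le i\le m}\epsilon_{0,i}$; by the disjointness \eqref{eq:epsilon-zero-choice} underlying the choice of $\epsilon_0$ in \cref{prop:levelset-unique-component-continuity}, and since $\Cu_j\subset\{y=t\}\setminus\Cu_i$ for $j\ne i$, the neighborhoods $\Cu_1^{\epsilon_0},\dots,\Cu_m^{\epsilon_0}$ are pairwise disjoint.

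Next, fixing $\epsilon\in(0,\epsilon_0)$, the key step is to show that $\{y_n=t\}\subset\bigcup_{i=1}^m\Cu_i^{\epsilon}$ for all $n\ge n_1(\epsilon)$ with some integer $n_1(\epsilon)$: if this failed, there would be $n_k\to\infty$ and $x_k\in\{y_{n_k}=t\}$ with $\mathrm{dist}(x_k,\{y=t\})\ge\epsilon$; extracting a subsequence with $x_k\to x$ by compactness of $\overline\Omega$, and using $|y(x_k)-t|=|y(x_k)-y_{n_k}(x_k)|\le\norm{y-y_{n_k}}_{C(\overline\Omega)}\to 0$, one gets $y(x)=t$, contradicting $\mathrm{dist}(x,\{y=t\})\ge\epsilon$. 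Setting $n_0(\epsilon):=\max\{n_1(\epsilon),n_{0,1}(\epsilon),\dots,n_{0,m}(\epsilon)\}$, then for $n\ge n_0(\epsilon)$ every connected component of $\{y_n=t\}$ is connected and contained in the disjoint union $\bigcup_i\Cu_i^\epsilon$, hence lies in exactly one $\Cu_i^\epsilon$; since each $\Cu_i^\epsilon$ contains precisely the one component $\Cu_{i,n}$, it follows that $\{y_n=t\}=\bigcup_{i=1}^m\Cu_{i,n}$ with $\Cu_{i,n}\subset\Cu_i^\epsilon$ and the $\Cu_{i,n}$ pairwise distinct. Finally, since $|\nabla y_n|>0$ on $\Cu_i^{\epsilon_0}\supset\Cu_{i,n}$ for $n$ large, \cref{prop:decomposition-levelset}\,\ref{item:C1-closedcurve} applied to $y_n$ and the component $\Cu_{i,n}$ shows that $\Cu_{i,n}$ is a $C^1$ closed simple curve, which completes the proof. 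The main (and essentially only) obstacle is the uniform closeness estimate $\{y_n=t\}\subset\bigcup_i\Cu_i^\epsilon$; everything else is bookkeeping plus the cited propositions, and one need only check that the finitely many radii $\epsilon_{0,i}$ and threshold indices $n_{0,i}$ can be combined, which is immediate.
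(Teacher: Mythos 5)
Your proposal is correct and follows exactly the route the paper intends: the paper gives no written proof, stating only that the corollary is a direct consequence of \cref{prop:decomposition-levelset} and \cref{prop:levelset-unique-component-continuity}, which is precisely the combination you carry out. The one detail you add beyond the cited propositions — the compactness argument showing $\{y_n=t\}\subset\bigcup_i\Cu_i^\epsilon$ for large $n$, so that no stray components survive and the inclusion upgrades to the claimed equality $\{y_n=t\}=\bigcup_i\Cu_{i,n}$ — is genuinely needed and correctly executed (it mirrors Claim~2 in the proof of \cref{prop:levelset-unique-component-continuity}).
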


\subsection{Continuity of integrals over level sets}
\label{sec:continuity-integral-levelsets}

The following result stating the continuity of integrals over connected components of a level set is shown under the nonvanishing gradient condition \eqref{eq:nonvanishing-gradient-on-component} by using partitions of unity.

\begin{proposition}
    \label{prop:continuity-integral-levelsets}
    Let functions $y_n, y$, $n \geq 1$, satisfy hypothesis \ref{hypo:levelsets}. Let $\Cu$ be a connected component of $\{y=t\}$, $t \in \R$ that satisfies \eqref{eq:nonvanishing-gradient-on-component}. Assume that $f_n(x) \to f(x)$ for all $x \in \Cu^{\bar \epsilon}$ for some constant $\bar \epsilon >0$. Then
    \begin{equation}
        \label{eq:continuity-integral-levelsets}
        \int_{\{y_n=t \} \cap \Cu^{\bar \epsilon}} f_n(x) \dH^1(x) \to \int_{\Cu} f(x)\dH^1(x) \quad \text{as} \quad n \to \infty.
    \end{equation}
\end{proposition}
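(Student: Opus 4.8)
The plan is to turn both line integrals in \eqref{eq:continuity-integral-levelsets} into ordinary one-variable integrals by means of a finite atlas of graph parametrizations of the level sets together with a subordinate partition of unity, and then to pass to the limit chart by chart. As a first step I would pin down the domain of integration on the left: since $\Cu$ satisfies \eqref{eq:nonvanishing-gradient-on-component}, \cref{prop:decomposition-levelset} shows that $\Cu$ is a $C^1$ closed simple curve, and \cref{prop:strong-positivity-gradients} together with \cref{prop:levelset-unique-component-continuity} yield a constant $\epsilon_0>0$ such that, assuming (as is the only regime in which the stated convergence can hold) that $\bar\epsilon<\epsilon_0$ and that $\Cu^{\bar\epsilon}$ meets no other component of $\{y=t\}$, the set $\Cu_n:=\{y_n=t\}\cap\Cu^{\bar\epsilon}$ is, for all $n$ large, the unique connected component of $\{y_n=t\}$ contained in $\Cu^{\bar\epsilon}$; by \eqref{eq:nonvanishing-gradient-ball} the gradient of $y_n$ does not vanish on $\Cu_n$, so \cref{prop:decomposition-levelset} applied to $y_n$ makes each $\Cu_n$ a $C^1$ closed simple curve as well. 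Hence the left-hand side of \eqref{eq:continuity-integral-levelsets} is $\int_{\Cu_n}f_n\dH^1$, which is well defined.

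Next I would cover the compact curve $\Cu$ by finitely many small open balls $B_j=B_{\R^2}(x_j,r_j)$ with $x_j\in\Cu$, $j=1,\dots,s$, each chosen so that on $\overline{B_j}\cap\overline\Omega$ one partial derivative of $y$ is bounded away from zero, $\{y=t\}\cap\overline{B_j}$ is an arc meeting $\partial B_j$ in exactly two points, and $\bigcup_{j}B_j\supset\Cu^{\epsilon'}$ for some $0<\epsilon'\le\bar\epsilon$. Applying \cref{lem:Implicit_Function-Theorem-extended-boundary} in each $B_j$ --- its hypothesis \eqref{eq:approximation-func-in-ball} being supplied by \ref{hypo:levelsets}, \cref{prop:strong-positivity-gradients}, and the Remark following that lemma --- produces $C^1$ functions $g^j_0$ on $[a_j,b_j]$ and, for $n$ large, $g^j_n$ on $[a^j_n,b^j_n]$ whose graphs describe $\Cu\cap\overline{B_j}$ and $\Cu_n\cap\overline{B_j}$, with $a^j_n\to a_j$ and $b^j_n\to b_j$, with $g^j_n\to g^j_0$ uniformly by \eqref{eq:gn-g-esti}, with $(g^j_n)'\to(g^j_0)'$ pointwise by \eqref{eq:gn-g-limits}, and with the uniform bound $|(g^j_n)'|\le C$ from \eqref{eq:gn-deri-bound}. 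Fixing a $C^\infty$ partition of unity $\{\psi_j\}_{j=1}^s$ with $\mathrm{supp}\,\psi_j\subset B_j$ and $\sum_{j}\psi_j\equiv1$ on a neighbourhood of $\Cu$ containing $\Cu^{\epsilon'}$ --- hence on $\Cu_n$ for $n$ large, since $\Cu_n\subset\Cu^{\epsilon'}$ --- I can split $\int_{\Cu_n}f_n\dH^1=\sum_{j}\int_{\Cu_n\cap B_j}\psi_j f_n\dH^1$, and likewise for $\int_{\Cu}f\dH^1$.

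On each chart the graph parametrization gives
\[
 \int_{\Cu_n\cap B_j}\psi_j f_n\dH^1
 =\int_{a^j_n}^{b^j_n}(\psi_j f_n)\bigl(\tau,g^j_n(\tau)\bigr)\sqrt{1+\bigl((g^j_n)'(\tau)\bigr)^2}\,\mathrm{d}\tau ,
\]
and similarly with $g^j_0$, $a_j$, $b_j$ and $f$. I would then pass to the limit $n\to\infty$ in each of these one-dimensional integrals by dominated convergence: the arc-length factor is uniformly bounded by $\sqrt{1+C^2}$ and converges a.e.\ by \eqref{eq:gn-g-limits}; the mismatch of the endpoints contributes only integrals over intervals of vanishing length with bounded integrand; and for the remaining factor I would write $f_n(\tau,g^j_n(\tau))=(f_n-f)(\tau,g^j_n(\tau))+f(\tau,g^j_n(\tau))$, the first term tending to $0$ by the convergence $f_n\to f$ on $\Cu^{\bar\epsilon}$ and the second to $f(\tau,g^j_0(\tau))$ by continuity of $f$ and the uniform convergence $g^j_n\to g^j_0$. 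The chart integral thus converges to $\int_{\Cu\cap B_j}\psi_j f\dH^1$, and summing over $j=1,\dots,s$ yields \eqref{eq:continuity-integral-levelsets}.

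The step I expect to be the main obstacle is exactly this last limit passage, in which \emph{both} the domain of integration ($\Cu_n$ versus $\Cu$) and the integrand ($f_n$ versus $f$) vary with $n$: taming $f_n$ at the \emph{moving} evaluation points $(\tau,g^j_n(\tau))$ is what really requires the convergence $f_n\to f$ to be uniform on $\overline{\Cu^{\bar\epsilon}}$ and $\{f_n\}$ to be uniformly bounded there (mere pointwise convergence would not suffice, as concentrating bumps show), while the uniform Lipschitz bound \eqref{eq:gn-deri-bound} and the endpoint convergence from \cref{lem:Implicit_Function-Theorem-extended-boundary} are what render the arc-length factor and the mismatch of integration intervals harmless. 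Everything else is routine bookkeeping with the partition of unity, the required analytic input being already available from \cref{lem:Implicit_Function-Theorem-extended-boundary} and \cref{prop:levelset-unique-component-continuity}.
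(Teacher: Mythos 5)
Your proposal is correct and follows essentially the same route as the paper's proof: a finite cover of the compact curve $\Cu$ by balls on which \cref{lem:Implicit_Function-Theorem-extended-boundary} supplies the graph parametrizations $g_0,g_n$ (with endpoint convergence, uniform derivative bounds, and $g_n\to g_0$), a subordinate partition of unity to localize, and a chart-by-chart passage to the limit in the resulting one-dimensional integrals via dominated convergence, with \cref{prop:strong-positivity-gradients,prop:levelset-unique-component-continuity} identifying $\{y_n=t\}\cap\Cu^{\bar\epsilon}$ with the single nearby component $\Cu_n$. Your closing remark that the convergence $f_n\to f$ must in effect be locally uniform (with local boundedness) to handle the moving evaluation points $(\tau,g_n(\tau))$ is the same tacit strengthening the paper uses when it asserts $\phi_n(\tau)\to\phi_0(\tau)$ and $|\phi_n|\le C$, so it does not constitute a divergence from the paper's argument.
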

\begin{proof}
    In light of \cref{prop:strong-positivity-gradients,prop:levelset-unique-component-continuity}, there exist positive constants $\epsilon_0, \alpha_\Cu >0$ and an integer $n_0$ such that
    \begin{equation}
        \label{eq:continuity-integral-claim1}
        \left\{
            \begin{aligned}
                & \nabla y(x) \cdot \nabla y_n(x') \geq \alpha_\Cu \quad \text{for all } n \geq n_0, x,x' \in \Cu^{\epsilon_0} \, \text{with } |x - x'| \leq 2 \epsilon_0, \\
                & \Cu^{\epsilon} \, \text{contains one and only one a connected component $\Cu_n$ of $\{y_n=t \}$ for all $n \geq n_0$ and $\epsilon \in (0, \epsilon_0)$}.
            \end{aligned}
        \right.
    \end{equation}
    Setting $\alpha_{\min} := \min\{ |\nabla y(x)| \mid x \in \Cu \}$ yields $\alpha_{\min} >0$.
    For each $x \in \Cu$, there exists a constant $r_x \in (0, \min\{\epsilon_0, \bar \epsilon \})$ such that one of the following two estimates is valid:
    \begin{equation*}
        \left|\frac{\partial y}{\partial x^i}(z)\right| \geq \frac{\alpha_{\min}}{2} > 0\quad \text{for all } z \in \overline{B}_{\R^2}(x, r_x) \cap \overline\Omega, i =1,2.
    \end{equation*}
    Since $\Cu$ is compact, it is covered by finitely many open balls $B_{\R^2}(x_1, r_{1}), \ldots, B_{\R^2}(x_{m_{\Cu}}, r_{m_{\Cu}})$ with $r_j := r_{x_j}$ for all $1 \leq j \leq m_{\Cu}$. By using partitions of unity, see. e.g. Theorem C.21 and Exercise C.22 in \cite{GiovanniLeoni2017}, there exist nonnegative functions $\psi_j \in C^\infty_c(\Omega)$ such that
    \[
        \textrm{supp}(\psi_j) \subset B_{\R^2}(x_j, r_{j}) \quad \text{and} \quad \sum_{j=1}^{m_{\Cu}} \psi_j(x) = 1 \quad \text{for all } 1 \leq j \leq m_{\Cu}, x \in \Omega.
    \]
    Therefore, in order to show \eqref{eq:continuity-integral-levelsets}, it suffices to prove for all $1 \leq j \leq m_{\Cu}$ that
    \begin{equation}
        \label{eq:continuity-integral-levelsets-ball}
        \int_{\{y_n=t \} \cap \Cu^{\bar \epsilon} \cap B_{\R^2}(x_j, r_j) } f_n(x) \psi_j(x) \dH^1(x) \to \int_{\Cu} f(x)\psi_j(x)\dH^1(x) \quad \text{as} \quad n \to \infty.
    \end{equation}
    To this end, fix $j$, put $x_0 := x_j$, $r_0 := r_j$, and without loss of generality assume that $x_0 = (0,0)$ and that
    \begin{equation*}
        \left|\frac{\partial y}{\partial x^2}(z)\right| \geq \frac{\alpha_{\min}}{2} > 0\quad \text{for all } z \in \overline{B}_{\R^2}(x_0, r_0) \cap \overline\Omega. %
    \end{equation*}
    Applying \cref{lem:Implicit_Function-Theorem-extended-boundary}, exploiting \eqref{eq:continuity-integral-claim1} and the fact that $0< r_j < \min\{\epsilon_0, \bar \epsilon \}$, we conclude that there exist positive constants $h_0, k_0, h_n, k_n$ and $C^1$ functions $g_0, g_n$ satisfying assertions \ref{item:paramterization-g} and \ref{item:parameterization-gn}--\ref{item:gn-g-Lip} in \ref{item:parameterization-approximation} of \cref{lem:Implicit_Function-Theorem-extended-boundary}.
    Hence, the limit \eqref{eq:continuity-integral-levelsets-ball} can be expressed as
    \begin{equation*}
        \int_{-h_n}^{k_n} f_n(\tau, g_n(\tau)) \psi_j(\tau, g_n(\tau)) \sqrt{1+ g_n'(\tau)^2} d\tau \to \int_{-h_0}^{k_0} f(\tau, g_0(\tau))\psi_j(\tau, g_0(\tau)) \sqrt{1+ g_0'(\tau)^2} d\tau \quad \text{as} \quad n \to \infty,
    \end{equation*}
    or, equivalently,
    \begin{equation}
        \label{eq:continuity-integral-levelsets-tau}
        \int_{-h_n}^{k_n} \phi_n(\tau) d\tau - \int_{-h_0}^{k_0} \phi_0(\tau) d\tau \to 0
    \end{equation}
    with
    \[
        \phi_n(\tau) := f_n(\tau, g_n(\tau)) \psi_j(\tau, g_n(\tau)) \sqrt{1+ g_n'(\tau)^2} \quad \text{and} \quad \phi_0(\tau) := f(\tau, g_0(\tau)) \psi_j(\tau, g_0(\tau)) \sqrt{1+ g_0'(\tau)^2}.
    \]
    Taking now $\epsilon >0$ small enough, we rewrite the right-hand side term in \eqref{eq:continuity-integral-levelsets-tau} as
    \begin{multline*}
        \int_{-h_n}^{k_n} \phi_n(\tau) d\tau - \int_{-h_0}^{k_0} \phi_0(\tau) d\tau = \int_{-h_0 + \epsilon}^{k_0 - \epsilon} [\phi_n(\tau) - \phi_0(\tau)] d\tau
        \\
        - \int_{-h_0}^{-h_0+\epsilon} \phi_0(\tau) d\tau -\int_{k_0-\epsilon}^{k_0} \phi_0(\tau) d\tau
        + \int_{-h_n}^{-h_0 + \epsilon } \phi_n(\tau) d\tau + \int_{k_0 - \epsilon}^{k_n} \phi_n (\tau) d\tau.
    \end{multline*}
    In view of assertion \ref{item:gn-g-Lip} in \cref{lem:Implicit_Function-Theorem-extended-boundary}, there holds
    \[
        \phi_n(\tau) \to \phi_0(\tau) \, \text{for all } \tau \in [-h_0+ \epsilon, k_0 - \epsilon] \quad \text{and} \quad
        |\phi_n(\tau)| \leq C \quad \text{for all } \tau \in [-h_n, k_n].
    \]
    Lebesgue's Dominated Convergence Theorem then implies that
    \[
        \lim\limits_{n \to \infty} \left|\int_{-h_n}^{k_n} \phi_n(\tau) d\tau - \int_{-h_0}^{k_0} \phi_0(\tau) d\tau \right| \leq \tilde{C}\epsilon,
    \]
    where we have used the limits $h_n \to h_0$ and $k_n \to k_0$; see assertion \ref{item:limit-hn-kn} in \cref{lem:Implicit_Function-Theorem-extended-boundary}. We thus obtain \eqref{eq:continuity-integral-levelsets-tau}.
\end{proof}

\section{An nonsmooth quasilinear elliptic optimal control problem}
\label{sec:nonsmooth-optimal-control}

\subsection{Main assumptions and preliminary results}\label{sec:assumption}
Let $a$ be a finitely $PC^2$-function of the form
\begin{equation}
    \label{eq:PC1-rep}
    a(t) := \1_{(-\infty, \bar t]}(t) a_0(t) + \1_{(\bar t, \infty)} a_1(t) \quad \text{for all } t \in \R,
\end{equation}
for a given number $\bar t \in \R$ and given functions $a_0 \in C^2((-\infty, \bar t])$ and $a_1 \in C^2([\bar t, \infty))$ with $a_{0}(\bar t) = a_{1}(\bar t)$.
Obviously, the function $a$ is of class $C^2$ over the intervals $(-\infty, \bar t) \cup (\bar t, \infty)$, but not even of class $C^1$ in general. %

\begin{remark}
    \label{rem:PC2-finitely}
    Let us emphasize that the results and the underlying analysis in this paper can be applied to the situation in which the function $a$ is continuous and is twice continuously differentiable on finitely many intervals (i.e., a finitely $PC^2$ function; see \cite{ClasonNhuRosch_os2nd} for a precise definition).
    However, in order to keep the presentation concise and to be able to focus on the main arguments, we restrict the presentation to the simplest such situation given by \eqref{eq:PC1-rep}.
\end{remark}

By $\{a'\}_{\bar t+0}^{\bar t-0}$, we denote the difference between the one-sided derivatives of $a$ at $\bar t$ from left and right, i.e.,
\[
    \{a'\}_{\bar t+0}^{\bar t-0} : = \lim\limits_{t \to \bar t^{-}} a'(t) - \lim\limits_{t \to \bar t^{+}} a'(t) = a'_{0}(\bar t) - a'_{1}(\bar t).
\]
By setting
\begin{equation}
    \label{eq:sigma-i}
    \sigma_0 := |\{a'\}_{\bar t+0}^{\bar t-0}| = |a'_{0}(\bar t) - a'_{1}(\bar t)|,
\end{equation}
we see that this term determines the differentiability of $a$ and plays a crucial part in the second-order optimality conditions for \eqref{eq:P}; see \cite{ClasonNhuRosch_os2nd}.
Moreover, $a$ is directionally differentiable and its directional derivative is given by
\begin{equation} \label{eq:directional-der-a1}
    a'(t; s) = \1_{(-\infty, \bar t)}(t)a_0'(t)s + \1_{( \bar t, \infty)}(t)a_1'(t)s +\1_{\{\bar t\}}(t) [\1_{(0, \infty)}(s) a_{1}'(\bar t)s + \1_{(-\infty, 0)}(s) a_{0}'(\bar t)s], \quad t,s \in \R.
\end{equation}
The following assumptions shall hold throughout the whole paper except in \cref{sec:Q2-formulation}, where we will only require the convexity of $\Omega$ instead of \cref{ass:domain} below.
\begin{assumption}
\item \label{ass:domain}
    $\Omega \subset \R^2$ is an open bounded convex polygonal.
\item \label{ass:b_func}
    The Lipschitz continuous function $b: \overline\Omega \to \R$ satisfies $b(x) \geq \underline{b} > 0$ for all $x \in \overline\Omega$. %
\item \label{ass:PC1-func}
    $a: \R \to \R$ is nonnegative and given by \eqref{eq:PC1-rep}. %

\item \label{ass:cost_func}
    $L: \Omega \times \R \to \R$ is a Carath\'{e}odory function that is of class $C^2$ w.r.t. the second variable with $L(\cdot,0) \in L^1(\Omega)$. Besides, for any $M>0$, there exist $C_M >0$ and $\psi_M \in L^{\bar p}(\Omega)$ ($\bar p > 2$) such that
    $ %
    |\frac{\partial L}{\partial y}(x,y)| \leq \psi_M(x) \, \text{and} \, |\frac{\partial^2 L}{\partial y^2}(x,y)| \leq C_M
    $ %
    for all $y \in \R$ with $|y| \leq M$, and a.e. $x \in \Omega$.
\end{assumption}

From \cref{ass:cost_func}, we deduce by explicit computation, a Taylor expansion, and Lebesgue's Dominated Convergence Theorem that the functional $ L^\infty(\Omega) \ni y \mapsto \int_\Omega L(x,y(x)) dx \in \R$ is of class $C^2$.

\medskip

In the remainder of this subsection, we state some known results for the state equation, the adjoint state equation, and the optimality conditions for \eqref{eq:P}; see, e.g. \cite{ClasonNhuRosch_os2nd}.
Let us first consider the state equation
\begin{equation} \label{eq:state}
    -\dive [(b + a(y) )\nabla y ] = u \, \text{in } \Omega, \quad y =0 \, \text{on } \partial\Omega.
\end{equation}
\begin{theorem}[{cf. \cite{ClasonNhuRosch_os2nd}, Thms.~3.1 and 3.5}]
    \label{thm:control2state-oper}
    Let \crefrange{ass:domain}{ass:PC1-func} hold. Then, the control-to-state operator $S: W^{-1,p}(\Omega) \ni u \mapsto y_u \in W^{1,p}_0(\Omega)$ with $y_u$ being the unique solution to \eqref{eq:state} is of class $C^1$. Moreover, for any $u,v \in W^{-1,p}(\Omega)$ with $p > 2$ and $y_u:= S(u)$, $z_v:= S'(u)v$ is the unique solution to
    \begin{equation} \label{eq:deri}
        -\dive [(b + a(y_u)) \nabla z_v + \1_{\{ y_u \neq \bar t \}}a'(y_u) z_v\nabla y_u ] = v \, \text{in } \Omega, \quad z_v =0 \, \text{on } \partial\Omega.
    \end{equation}
    Moreover, there exists a number $p_* >2$ such that for any $p \in [2,p_*)$ and for any bounded set $U \subset L^p(\Omega)$, there hold $S(u) \in W^{1,p}_0(\Omega) \cap W^{2,p}(\Omega)$ and
    $ %
    \norm{S(u)}_{W^{2,p}(\Omega)} \leq C_U. %
    $ %
\end{theorem}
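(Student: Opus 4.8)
\emph{Strategy.} I would prove the three assertions separately: well-posedness of \eqref{eq:state}, the $C^1$-property of $S$ together with the identification \eqref{eq:deri} of $S'(u)v$, and the $W^{2,p}$-regularity with the uniform bound. For well-posedness, note first that $b\ge\underline b>0$ and $a\ge 0$ make $b+a(\cdot)$ uniformly elliptic, so for any frozen $\bar y\in H^1_0(\Omega)$ the linear problem $-\dive[(b+a(\bar y))\nabla y]=u$ is uniquely solvable by Lax--Milgram. I would then set up the fixed-point map $\mathcal T:\bar y\mapsto y$, show it is continuous and compact on $H^1_0(\Omega)$ (using continuity and local boundedness of $a$ and the compact embedding $H^1_0(\Omega)\hookrightarrow L^2(\Omega)$) and leaves a large ball invariant (energy estimate), and invoke Schauder's theorem to obtain $y_u\in H^1_0(\Omega)$. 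For $u\in W^{-1,p}(\Omega)$, $p>2$, a Stampacchia truncation gives $y_u\in L^\infty(\Omega)$, hence $x\mapsto b(x)+a(y_u(x))$ is continuous (H\"older continuous after one bootstrap with Meyers' higher-integrability estimate), and Calder\'on--Zygmund theory for equations with continuous coefficients upgrades this to $y_u\in W^{1,p}_0(\Omega)$. If moreover $u\in L^p(\Omega)$ with $p$ below the threshold $p_*$ determined by the largest interior angle of the convex polygon $\Omega$, $W^{2,p}$-regularity on convex polygonal domains yields $y_u\in W^{2,p}(\Omega)$; since all constants along the way depend only on $\underline b$, the moduli of continuity of $b$ and $a$ on the range of $y_u$, and $\norm{u}_{L^p}$, tracking them delivers the uniform bound $\norm{S(u)}_{W^{2,p}(\Omega)}\le C_U$ on bounded $U\subset L^p(\Omega)$.

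\emph{Uniqueness.} Given two solutions $y_1,y_2\in W^{1,p}_0(\Omega)\cap L^\infty(\Omega)$, their difference $z:=y_1-y_2$ solves $-\dive[(b+a(y_1))\nabla z]=\dive[(a(y_1)-a(y_2))\nabla y_2]$; writing $a(y_1)-a(y_2)=\tilde a\,z$ with $|\tilde a|\le L$ (local Lipschitz continuity of $a$ on the compact range of the states) and testing with $z$ gives $\underline b\,\norm{\nabla z}_{L^2}^2\le L\int_\Omega|z|\,|\nabla y_2|\,|\nabla z|\dx$. The right-hand side is finite since $\nabla y_2\in L^p(\Omega)$ with $p>2$, but the resulting constant need not beat $\underline b$; I would therefore close the argument by a localization/continuation step (cover $\overline\Omega$ by balls so small that $\norm{\nabla y_2}_{L^p}$ over each is small, use cut-off test functions, and propagate $z\equiv 0$), or, in the special case of constant $b$, by the Kirchhoff substitution $w=\int_0^y(b+a(\tau))\,\mathrm{d}\tau$, which reduces \eqref{eq:state} to $-\Delta w=u$ with $w\in W^{1,p}_0(\Omega)$ and is evidently uniquely solvable.

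\emph{Differentiability and the derivative equation.} I would apply the implicit function theorem to $F(u,y):=-\dive[(b+a(y))\nabla y]-u$ from $W^{-1,p}(\Omega)\times W^{1,p}_0(\Omega)$ to $W^{-1,p}(\Omega)$ at $(u,y_u)$. The subtle point is the $C^1$-smoothness of $y\mapsto(b+a(y))\nabla y$ into $L^p(\Omega)$: although $a$ is not differentiable at $\bar t$, the part of its difference quotient supported on $\{y=\bar t\}$ is multiplied by $\nabla y$, which vanishes a.e.\ there since it is the gradient of a Sobolev function on one of its level sets; hence the Fr\'echet derivative is $h\mapsto(b+a(y))\nabla h+\1_{\{y\neq\bar t\}}a'(y)\,h\,\nabla y$, and its continuity in $y$ follows from uniform continuity of $a$ and $a'$ on compacts, the embedding $W^{1,p}_0(\Omega)\hookrightarrow C(\overline\Omega)$, and a generalized dominated convergence argument. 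Then $\partial_yF(u,y_u)h=-\dive[(b+a(y_u))\nabla h+\1_{\{y_u\neq\bar t\}}a'(y_u)\,h\,\nabla y_u]$ is uniformly elliptic with $L^\infty$ principal part and an $L^p$ ($p>2$) drift, hence a compact perturbation of an isomorphism $W^{1,p}_0(\Omega)\to W^{-1,p}(\Omega)$, and its injectivity I would obtain exactly as in the uniqueness step; the implicit function theorem then gives $S\in C^1$ with $S'(u)=[\partial_yF(u,y_u)]^{-1}$, so $z_v=S'(u)v$ solves \eqref{eq:deri}. The main obstacles are the non-closing energy estimate behind uniqueness (and the same issue for injectivity of the linearization), which forces either a localization argument or a Kirchhoff-type reduction, and the careful verification that the nonsmoothness of $a$ at $\bar t$ is harmless, which rests entirely on the measure-theoretic identity $\nabla y=0$ a.e.\ on $\{y=\bar t\}$.
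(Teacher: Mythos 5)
Your program is far more ambitious than the paper's own proof: the paper simply cites Theorems 3.1 and 3.5 of \cite{ClasonNhuRosch_os2nd} for well-posedness and the $C^1$-property of $S$ (including \eqref{eq:deri}), and its only new content is the $W^{2,p}$-statement, obtained by rewriting the state equation in the Poisson form \eqref{eq:state-transfer} and invoking Grisvard's Theorems 4.4.3.7 and 4.3.2.4 on the convex polygon together with the a priori $H^2\cap W^{1,\infty}$ bounds from the companion paper. Measured against what you set out to prove from scratch, there are two genuine gaps. First, uniqueness (and, by the same token, injectivity of $\partial_y F(u,y_u)$, which you need for the implicit function theorem): you concede the energy estimate does not close, and neither of your two repairs works in the generality of the theorem. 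The Kirchhoff substitution is only offered for constant $b$, but here $b=b(x)$ is merely Lipschitz, and then $\nabla w = (b+a(y))\nabla y + y\nabla b$, so the transformed equation is $-\Delta w = u - \dive(y\nabla b)$, not $-\Delta w=u$, and the resulting estimate again fails to close without a smallness condition on $\norm{\nabla b}_{L^\infty}/\underline{b}$. The localization/continuation idea also fails: testing with $\eta^2 z$ produces Caccioppoli-type terms $\int_{\mathrm{supp}\,\nabla\eta}|z|^2$ that are not absorbed by the locally small $\norm{\nabla y_2}_{L^p}$, and, unlike evolution problems, there is no initial open set where $z\equiv 0$ from which to propagate. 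A workable route (duality with the non-divergence-drift adjoint equation as in \eqref{eq:adjoint-state}, or the Hlav\'a\v{c}ek--K\v{r}\'i\v{z}ek--Mal\'y-type argument used in the cited literature) is genuinely different from what you sketch.

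Second, the $W^{2,p}$ step is not reachable the way you describe. After the Stampacchia/Meyers/Calder\'on--Zygmund stage you only have a H\"older-continuous coefficient $b+a(y_u)$ and $\nabla y_u\in L^p$ for some $p>2$; there is no $W^{2,p}$ theory for divergence-form operators with merely H\"older coefficients, so ``$W^{2,p}$-regularity on convex polygonal domains'' cannot be applied to \eqref{eq:state} directly. One must first differentiate the coefficient, i.e. pass to \eqref{eq:state-transfer}, and this requires (i) the chain rule for $a\circ y_u$ with the nonsmooth $a$, which is legitimate only because $\nabla y_u=0$ a.e.\ on $\{y_u=\bar t\}$ (you use this fact for differentiability of $S$ but not here), and (ii) enough gradient integrability -- in the paper the a priori $W^{1,\infty}$ bound -- to place $u+\nabla b\cdot\nabla y_u+\1_{\{y_u\neq\bar t\}}a'(y_u)|\nabla y_u|^2$ in $L^p$ before Grisvard's convex-polygon result with the threshold $p_*$ can be used; your sketch would at least need an explicit bootstrap (e.g. through $H^2$ and then $W^{1,q}$ for all $q<\infty$) and the uniform bound $\norm{S(u)}_{W^{2,p}}\le C_U$ should be tracked through that $W^{1,\infty}$ estimate. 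Your identification of the derivative of $y\mapsto (b+a(y))\nabla y$ and the role of $\nabla y=0$ a.e.\ on $\{y=\bar t\}$ is sound and is indeed the key point behind \eqref{eq:deri}, but as it stands the well-posedness and regularity parts of your argument do not go through.
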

\begin{proof}
    The wellposedness and the continuous differentiability of $S$ follows from Theorems 3.1 and 3.5 in \cite{ClasonNhuRosch_os2nd}. On the other hand, the $H^2$- and $W^{1,\infty}(\Omega)$-regularity of solutions to \eqref{eq:state}
    was also shown in Theorem 3.1 in \cite{ClasonNhuRosch_os2nd} when the right-hand side $u$ belongs to $L^q(\Omega)$ with $q >2$. Moreover, if $U$ is a bounded subset of $L^q(\Omega)$ with $q>2$, then there holds
    $ %
    \norm{S(u)}_{H^2(\Omega)} + \norm{S(u)}_{W^{1,\infty}(\Omega)} \leq C_U %
    $ %
    for all $u \in U$.
    To show the higher $W^{2,p}$-regularity as well as the corresponding a priori estimate, we observe that \eqref{eq:state} can be rewritten as
    \begin{equation} \label{eq:state-transfer}
        -\Delta y = \frac{1}{ b+ a(y)}[u + \nabla b \cdot \nabla y + \1_{\{ y \neq \bar t \}} a'(y)|\nabla y|^2] \, \text{in } \Omega, \quad
        y =0 \, \text{on } \partial\Omega
    \end{equation}
    (see, e.g. equations (A.1) and (A.3) in \cite{ClasonNhuRosch_os2nd}, Lem.~A.1).
    Since $\Omega$ is assumed to be a convex polygon in $\R^2$, Theorem 4.4.3.7 in \cite{Grisvard1985} shows that there exists a constant $p_* := 2/(2-\min\{\pi \omega_{\max}^{-1},2\}) >2$ depending on the maximal interior angle $\omega_{\max} <\pi$ of the domain $\Omega$ such that any solution $y$ to \eqref{eq:state-transfer} belongs to $W^{2,p}(\Omega)$ provided that $u \in L^p(\Omega)$ for all $p \in (2, p_*)$. Of course, we have $y \in H^2(\Omega)$ when $u \in L^2(\Omega)$ due to the convexity of $\Omega$. Finally, the $W^{2,p}$-estimate of solutions $y$ is derived by applying Theorem 4.3.2.4 in \cite{Grisvard1985} to \eqref{eq:state-transfer} and using the a priori $W^{1,\infty}(\Omega)$-estimates of $y$, \cref{ass:b_func,ass:PC1-func}.
\end{proof}

We now consider the adjoint state equation
\begin{equation} \label{eq:adjoint-state}
    -\dive [(b + a(y_u)) \nabla \varphi] + \1_{\{ y_u \neq \bar t \}}a'(y_u) \nabla y_u \cdot \nabla \varphi = v \, \text{in } \Omega, \quad \varphi =0 \, \text{on } \partial\Omega
\end{equation}
for $u \in W^{-1,p}(\Omega)$, $p >2$, $v\in H^{-1}(\Omega)$, and $y_u:= S(u)$.
\begin{theorem}[{cf. \cite{ClasonNhuRosch_os2nd}, Lem.~4.1}]
    \label{thm:adjoint-equation}
    Let \crefrange{ass:domain}{ass:PC1-func} be satisfied and let $p, q> 2$ be arbitrary. Then, for any $u \in W^{-1,p}(\Omega), v \in H^{-1}(\Omega)$, a unique $\varphi \in H^1_0(\Omega)$ exists and uniquely solves \eqref{eq:adjoint-state}.
    Furthermore, if $U$ is a bounded subset in $L^p(\Omega)$, then for any $u \in U$ and any $v \in L^q(\Omega)$, the solution $\varphi$ of \eqref{eq:adjoint-state} belongs to $H^2(\Omega) \cap W^{1,\infty}(\Omega)$ and there holds
    $ %
    \norm{\varphi}_{H^{2}(\Omega)} + \norm{\varphi}_{W^{1,\infty}(\Omega)} \leq C_{U} \norm{v}_{L^{q}(\Omega)}.
    $ %
    Moreover, if $u \in L^p(\Omega)$ and $v \in L^r(\Omega)$ with $r \in (2, p_*)$, then $\varphi \in W^{2,r}(\Omega)$, where $p_*$ is defined as in \cref{thm:control2state-oper}.
\end{theorem}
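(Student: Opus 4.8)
The plan is to regard the adjoint equation \eqref{eq:adjoint-state} as the transpose of the linearized state equation \eqref{eq:deri} and to proceed in two stages: first the well-posedness in $H^1_0(\Omega)$ together with the $H^2(\Omega)\cap W^{1,\infty}(\Omega)$-estimate, and then the additional $W^{2,r}$-regularity. The first stage is Lem.~4.1 in \cite{ClasonNhuRosch_os2nd}, whose mechanism I would briefly recall. The bilinear form of \eqref{eq:adjoint-state},
\[
  \mathcal B[\varphi,\phi] := \int_\Omega (b+a(y_u))\nabla\varphi\cdot\nabla\phi\dx + \int_\Omega \1_{\{y_u\neq\bar t\}}a'(y_u)(\nabla y_u\cdot\nabla\varphi)\,\phi\dx,
\]
is bounded on $H^1_0(\Omega)\times H^1_0(\Omega)$ — since $y_u\in W^{1,p}_0(\Omega)$ gives $\nabla y_u\in L^p(\Omega)\subset L^2(\Omega)$ and $a'$ is bounded on the range of $y_u$, so $\1_{\{y_u\neq\bar t\}}a'(y_u)\nabla y_u$ is a bounded vector field — and it satisfies a G\r{a}rding inequality because $b+a(y_u)\ge\underline b>0$ by \cref{ass:b_func,ass:PC1-func}. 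The first-order term induces a compact operator $H^1_0(\Omega)\to H^{-1}(\Omega)$ (it factors through $H^1_0(\Omega)\Subset L^2(\Omega)$), so the associated operator is Fredholm of index zero, and bijectivity follows from injectivity; the latter is inherited, by duality and an elliptic-regularity argument, from the bijectivity of the linearized operator established in \cref{thm:control2state-oper}. This yields the unique $\varphi\in H^1_0(\Omega)$, and for $u$ in a bounded subset $U\subset L^p(\Omega)$ and $v\in L^q(\Omega)$ the claimed $H^2\cap W^{1,\infty}$-bound.

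For the second stage I would, in the spirit of the proof of \cref{thm:control2state-oper}, transfer \eqref{eq:adjoint-state} to non-divergence form. For $u\in L^p(\Omega)$, \cref{thm:control2state-oper} gives $y_u\in W^{2,p}(\Omega)\cap W^{1,\infty}(\Omega)\hookrightarrow C^1(\overline\Omega)$, and since $a$ is locally Lipschitz, $a\circ y_u\in W^{1,\infty}(\Omega)$ with $\nabla(a\circ y_u)=\1_{\{y_u\neq\bar t\}}a'(y_u)\nabla y_u$ a.e.\ in $\Omega$. Expanding the leading divergence term and inserting this identity, the two first-order contributions in \eqref{eq:adjoint-state} cancel, so that a weak solution $\varphi$ (which already belongs to $H^2(\Omega)$ by the first stage) satisfies the equivalent equation
\[
  -\Delta\varphi = \frac{1}{b+a(y_u)}\bigl(v + \nabla b\cdot\nabla\varphi\bigr) \quad\text{in }\Omega, \qquad \varphi=0 \quad\text{on }\partial\Omega.
\]
For $u$ ranging over a bounded subset of $L^p(\Omega)$, the coefficients $1/(b+a(y_u))$ and $\nabla b$ are bounded in $L^\infty(\Omega)$; together with $\varphi\in W^{1,\infty}(\Omega)$ from the first stage and $v\in L^r(\Omega)$, the right-hand side lies in $L^r(\Omega)$. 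Theorem 4.4.3.7 in \cite{Grisvard1985} applied on the convex polygon $\Omega$ then gives $\varphi\in W^{2,r}(\Omega)$ for every $r\in(2,p_*)$, and Theorem 4.3.2.4 in \cite{Grisvard1985} combined with the $W^{1,\infty}$-bound of the first stage yields the corresponding a priori estimate.

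I expect the only genuinely delicate point to be the treatment of the kink of $a$ at $\bar t$: one has to justify that $a\circ y_u$ is Lipschitz on $\overline\Omega$ and that its distributional gradient equals $\1_{\{y_u\neq\bar t\}}a'(y_u)\nabla y_u$, which uses the Lipschitz chain rule together with the fact that $\nabla y_u=0$ a.e.\ on $\{y_u=\bar t\}$, so that the level set contributes nothing. This identification is exactly what makes the first-order terms cancel and reduces \eqref{eq:adjoint-state} to a Poisson-type equation to which Grisvard's polygonal regularity theory applies; the remaining bootstrap is routine and runs parallel to the treatment of the state equation.
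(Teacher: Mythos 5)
Your proposal is correct and follows essentially the same route as the paper: the existence, uniqueness and the $H^2\cap W^{1,\infty}$ bound are taken from Lemma~4.1 of \cite{ClasonNhuRosch_os2nd} (your Fredholm sketch just elaborates what that cited lemma provides), and the new $W^{2,r}$-regularity is obtained exactly as the paper does for the state equation, by using the Lipschitz chain rule for $a\circ y_u$ to cancel the first-order terms, reducing \eqref{eq:adjoint-state} to a Poisson-type equation and invoking Grisvard's Theorems 4.4.3.7 and 4.3.2.4 on the convex polygon.
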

\begin{proof}
    Thanks to Lemma 4.1 in \cite{ClasonNhuRosch_os2nd},
    all conclusions except the last one of the theorem are verified. To derive the $W^{2,r}$-regularity of $\varphi$, we argue as in the proof of \cref{thm:control2state-oper}.
\end{proof}

\begin{remark}
    \label{rem:regularity-derivative-S}
    Despite the $W^{2,p}$-regularity of the state and adjoint state, the function $z_v := S'(u)v$ defined in \eqref{eq:deri} is only in $W^{1,p}(\Omega)$
    due to the fact that we cannot apply the chain rule for the divergence acting on the term $\1_{\{ y \neq \bar t \}}a'(y) z\nabla y$ even with $y \in W^{2,p}(\Omega)$ and $z \in W^{1,p}(\Omega)$. Indeed, for any $y \in W^{2,p}(\Omega)$, the vector-valued function $f(x) := \1_{\{ y \neq \bar t \}}(x)a'(y(x)) \nabla y(x)$ in general does not belong to $(W^{1,p}(\Omega))^N$ with $N=2$. For example, considering $N :=1$, $\Omega := (0,2) \subset \R^1$, and $\bar t := 1$, we define functions
    \[
        a(t) = |t- 1| \quad \text{and} \quad y(x) = x^2, \quad x \in (0,2).
    \]
    Easily, we have
    \[
        f(x) = \1_{\{ y \neq 1\}}(x) a'(y(x)) \nabla y(x) = 2x \1_{\{ x \neq 1 \}}(x) \sign(x^2-1), \quad x \in (0,2)
    \]
    and thus $f \notin W^{1,1}(0,2)$.
\end{remark}

The optimal control problem \eqref{eq:P} can be expressed in the form
\begin{equation}\label{eq:P2}
    \tag{P}
    \min_{u\in \mathcal{U}_{ad}} j(u) = \int_\Omega L(x,S(u)(x))\dx + \frac\nu2\norm{u}_{L^2(\Omega)}^2 %
\end{equation}
with
\[
    \mathcal{U}_{ad} := \{u \in L^\infty(\Omega) \mid \alpha \leq u(x) \leq \beta \qquad \text{for a.e. } x \in \Omega \}.
\]
Under \crefrange{ass:domain}{ass:cost_func}, the cost functional $j: L^2(\Omega) \to \R$ is of class $C^1$. Moreover, there holds
\begin{equation} \label{eq:obje-deri}
    j'(u)v = \int_{\Omega} (\varphi_u + \nu u )v \dx \quad \text{for } u, v \in L^2(\Omega)
\end{equation}
with $\varphi_u \in H^{1}_0(\Omega)$ solving \eqref{eq:adjoint-state} corresponding to the right-hand side term $v$ substituted by $\frac{\partial L}{\partial y}(\cdot, S(u))$; see \cite{ClasonNhuRosch_os2nd}, Thm.~4.2. We have the following first-order necessary optimality conditions
from Theorem 4.3 in \cite{ClasonNhuRosch_os2nd} and thus derive the regularity of the optimal control as well as the corresponding state and adjoint state from \cref{thm:control2state-oper,thm:adjoint-equation} and from Sobolev embeddings.
\begin{theorem}[{\cite{ClasonNhuRosch_os2nd}, Thm.~4.3}]
    \label{thm:1st-OC}
    Assume that \crefrange{ass:domain}{ass:cost_func} are satisfied.
    Then there exists at least one minimizer $\bar u$ of \eqref{eq:P2}. Moreover, there exists an adjoint state $\bar\varphi \in H^1_0(\Omega)$ such that for $\bar y:=S(\bar u)$,
    \begin{subequations}
        \label{eq:1st-OS}
        \begin{align}
            &
            -\dive [(b + a(\bar y)) \nabla \bar y] = \bar u \quad \text{in } \Omega, \quad \bar y = 0 \, \text{on } \partial\Omega, \label{eq:state_OS} \\
            &-\dive [(b +a(\bar y) ) \nabla \bar\varphi ] + \1_{\{ \bar y \neq \bar t \} } a'(\bar y) \nabla \bar y \cdot \nabla \bar \varphi = \frac{\partial L}{\partial y}(x, \bar y) \, \text{in } \Omega, \quad \bar \varphi =0 \, \text{on } \partial\Omega,
            \label{eq:adjoint_OS} \\
            &\int_\Omega (\bar \varphi + \nu \bar u )( u - \bar u ) \dx \geq 0 \quad \text{for all } u \in \mathcal{U}_{ad}. \label{eq:normal_OS}
        \end{align}
    \end{subequations}
    Furthermore, $\bar y \in W^{2,p}(\Omega)$ and $\bar \varphi \in W^{2,r}(\Omega)$ for any $p, r \in (2, p_*)$ and $r \leq \bar p$ with $\bar p$ and $p_*$, respectively, defined in \cref{ass:cost_func} and \cref{thm:control2state-oper}. Therefore, $\bar y$ and $\bar \varphi$ belong to $C^1(\overline\Omega)$ and $\bar u$ is Lipschitz continuous on $\overline\Omega$.
\end{theorem}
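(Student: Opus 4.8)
The assertions on existence of a minimizer and on the first-order system \eqref{eq:1st-OS} are precisely the content of \cite[Thm.~4.3]{ClasonNhuRosch_os2nd}, combined with the gradient representation \eqref{eq:obje-deri}: \eqref{eq:state_OS} and \eqref{eq:adjoint_OS} are just the defining equations of $\bar y = S(\bar u)$ and of the adjoint state $\bar\varphi$, while \eqref{eq:normal_OS} is the variational inequality $j'(\bar u)(u-\bar u)\ge 0$ rewritten by means of \eqref{eq:obje-deri}. Hence only the regularity statements have to be argued, and the plan is a short bootstrap resting on \cref{thm:control2state-oper,thm:adjoint-equation}.

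First I would exploit that $\bar u \in \mathcal{U}_{ad}$ is bounded in $L^\infty(\Omega)$, hence in $L^p(\Omega)$ for every $p$. Applying \cref{thm:control2state-oper} to the (bounded) singleton $U = \{\bar u\}$ yields $\bar y = S(\bar u) \in W^{1,p}_0(\Omega)\cap W^{2,p}(\Omega)$ for all $p \in (2,p_*)$ together with the a priori estimate; in particular $\bar y \in W^{1,\infty}(\Omega)$, so that $\norm{\bar y}_{L^\infty(\Omega)} \le M$ for some $M>0$. By \cref{ass:cost_func} this gives $|\frac{\partial L}{\partial y}(x,\bar y(x))| \le \psi_M(x)$ for a.e.\ $x$, whence $\frac{\partial L}{\partial y}(\cdot,\bar y) \in L^{\bar p}(\Omega)$ with $\bar p>2$. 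Fixing now any $r$ with $2 < r < p_*$ and $r \le \bar p$ — possible since $p_*>2$ and $\bar p>2$ — the last assertion of \cref{thm:adjoint-equation}, applied with right-hand side $v = \frac{\partial L}{\partial y}(\cdot,\bar y) \in L^r(\Omega)$, shows that $\bar\varphi \in W^{2,r}(\Omega)$.

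Since $\Omega \subset \R^2$ is a bounded polygon and the exponents $p,r$ exceed the space dimension $2$, the Sobolev embedding $W^{2,s}(\Omega) \hookrightarrow C^1(\overline\Omega)$ (indeed into $C^{1,1-2/s}(\overline\Omega)$) for $s>2$ yields $\bar y,\bar\varphi \in C^1(\overline\Omega)$. For the Lipschitz regularity of $\bar u$, I would argue pointwise from \eqref{eq:normal_OS}: testing with competitors in $\mathcal{U}_{ad}$ that differ from $\bar u$ only on sets of small measure, the variational inequality is equivalent to the projection formula $\bar u(x) = \Proj_{[\alpha,\beta]}\!\left(-\frac{1}{\nu}\bar\varphi(x)\right)$ for a.e.\ $x\in\Omega$. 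Because $\overline\Omega$ is convex, the function $\bar\varphi \in C^1(\overline\Omega)$ is Lipschitz on $\overline\Omega$ (the segment joining any two points of $\overline\Omega$ stays in $\overline\Omega$), and $\Proj_{[\alpha,\beta]}:\R\to\R$ is $1$-Lipschitz; hence the right-hand side of the projection formula is a Lipschitz function on $\overline\Omega$, with which $\bar u$ coincides up to a null set.

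The only point needing a little care is the matching of exponents — one has to verify that the $W^{2,p}$-theory threshold $p_*$ from \cref{thm:control2state-oper} and the integrability $\bar p$ of the dominating function $\psi_M$ from \cref{ass:cost_func} leave room for a common admissible value $r>2$; this is automatic here, since both are strictly larger than $2$. Beyond correctly invoking the cited wellposedness and regularity results, no genuinely hard step is involved.
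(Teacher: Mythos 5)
Your proposal is correct and follows essentially the same route as the paper, which likewise cites \cite{ClasonNhuRosch_os2nd}, Thm.~4.3 for existence and the system \eqref{eq:1st-OS} and obtains the $W^{2,p}$-, $W^{2,r}$- and $C^1$-regularity from \cref{thm:control2state-oper,thm:adjoint-equation} together with Sobolev embeddings, with the Lipschitz continuity of $\bar u$ coming from the projection formula $\bar u = \Proj_{[\alpha,\beta]}(-\bar\varphi/\nu)$ implied by \eqref{eq:normal_OS}. Your exponent-matching remark ($2<r<p_*$, $r\le\bar p$) and the use of convexity of $\overline\Omega$ for the Lipschitz bound are exactly the points the paper leaves implicit.
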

Assume that $\bar\varphi \in H^1_0(\Omega)$ satisfies \eqref{eq:1st-OS}.
The \emph{critical cone} of the problem \eqref{eq:P2} at $\bar u$ is defined as
\begin{equation} \label{eq:critical-cone}
    \mathcal{C}({\mathcal{U}_{ad};\bar u}) := \{ v \in L^2(\Omega) \mid v \geq 0 \, \text{if } \bar u = \alpha, v \leq 0 \, \text{if } \bar u = \beta, v = 0 \, \text{if } \bar \varphi + \nu \bar u \neq 0 \ \text{a.e. in } \Omega
    \}.
\end{equation}

In the rest of this subsection, we shall provide second-order necessary and sufficient
optimality conditions for \eqref{eq:P}. To this end, the curvature functional of $j$ is first introduced and can be separated into three contributions. For any $(u,y,\varphi)\in L^2(\Omega)\times H^1(\Omega)\times W^{1,\infty}(\Omega)$, the smooth part and the first-order nonsmooth part of the curvature in direction $(v_1,v_2)\in L^2(\Omega)^2$ are given by
\begin{align*}%
    & Q_s(u,y,\varphi; v_1,v_2) := \frac{1}{2} \int_\Omega \frac{\partial^2 L}{\partial y^2}(\cdot,y)z_{v_1} z_{v_2} \dx + \frac{\nu}{2} \int_\Omega v_1v_2 \dx
    - \frac{1}{2} \int_\Omega\1_{ \{ y \neq \bar t \}} a''(y)z_{v_1}z_{v_2} \nabla y \cdot \nabla \varphi \dx, \\
    &Q_1(u,y,\varphi; v_1,v_2) :=
    - \frac{1}{2} \int_\Omega [a'(y; z_{v_1}) \nabla z_{v_2} +a'(y; z_{v_2})\nabla z_{v_1} ]\cdot \nabla\varphi \dx,
\end{align*}
for $z_{v_i} := S'(u)v_i$, $i=1,2$, respectively.
The critical part for our analysis is of course the second-order nonsmooth part, which requires some additional notation.
For ease of exposition, we use the following notation in the remainder.
For any $y \in C(\overline\Omega)$ and any $\tau_1, \tau_2 \in \R$, we define the set
$ %
\Omega_{y}^{[\tau_1, \tau_2]} := \{ y \in [\bar t + \tau_1, \bar t + \tau_2 ] \};
$ %
similar sets such as $\Omega_{y}^{[\tau_1, \tau_2)}$ are defined in the same way. Let $\delta >0$ be arbitrary but fixed.
For any $y, \hat y \in C(\overline\Omega)$, we set
\begin{equation}
    \label{eq:Omega-123-sets}
    \left\{
        \begin{aligned}
            \Omega_{y, \hat y }^{2} & := \Omega_{\hat y }^{(0, \delta)} \cap \Omega_{y }^{(-\delta, 0]} = \{\hat y \in (\bar t, \bar t + \delta), y \in (\bar t-\delta, \bar t] \},\\
            \Omega_{y, \hat y }^{3} & := \Omega_{\hat y}^{(-\delta, 0)} \cap \Omega_{y }^{[0,\delta)} = \{ \hat y \in (\bar t- \delta, \bar t), y \in [\bar t,\bar t + \delta) \}.
        \end{aligned}
    \right.
\end{equation}
For any $s \in \R$, $u, v \in L^2(\Omega)$, $y \in C(\overline\Omega) \cap H^1(\Omega)$, and $\varphi \in W^{1,\infty}(\Omega)$, we set
\begin{equation} \label{eq:zeta-func}
    \left\{
        \begin{aligned}
            \zeta_0(u,y;s,v) &:= - \{a'\}_{\bar t+0}^{\bar t-0} (\bar t - S(u+sv)) \1_{\Omega_{S(u+sv), y }^{3}}, \quad \zeta_1(u,y;s,v) := \{a'\}_{\bar t+0}^{\bar t-0} (\bar t - S(u+sv)) \1_{\Omega_{S(u+sv), y }^{2}}, \\
            \zeta(u,y;s,v)& := \zeta_0(u,y;s,v) + \zeta_1(u,y;s,v) = \{a'\}_{\bar t+0}^{\bar t-0} (\bar t - S(u+sv))\left[ \1_{\Omega_{S(u+sv), y }^{2}} - \1_{\Omega_{S(u+sv), y }^{3}} \right]
        \end{aligned}
    \right.
\end{equation}
with $ \{a'\}_{\bar t+0}^{\bar t-0}$ defined as in \eqref{eq:sigma-i}.
We then define for any $\{s_n\} \in c_0^+:= \{ \{s_n\}\subset (0,\infty) \mid s_n \to 0 \}$ and $v \in L^2(\Omega)$
\begin{multline}
    \label{eq:key-term-sn}
    \tilde{Q}(u,y,\varphi;\{s_n\}, v) := \liminf\limits_{n \to \infty} \frac{1}{s_n^2} \int_\Omega \sum_{i =0 }^1 \zeta_i(u,y;s_n,v) \nabla y \cdot \nabla \varphi \dx \\
    = \{a'\}_{\bar t+0}^{\bar t-0} \liminf\limits_{n \to \infty} \frac{1}{s_n^2} \int_\Omega (\bar t - S(u+s_nv)) \left[\1_{\Omega_{S(u+s_n v), y }^{2}} -\1_{\Omega_{S(u+s_nv), y }^{3}}\right] \nabla y \cdot \nabla \varphi \dx.
\end{multline}
The second-order nonsmooth part of the curvature in direction $v \in L^2(\Omega)$ is then given by
\begin{equation}
    \label{eq:key-term}
    Q_2(u,y,\varphi; v) := \inf \{ \tilde{Q}(u,y,\varphi; \{s_n\}, v) \mid \{s_n\} \in c_0^+ \},
\end{equation}
and finally the total curvature in direction $v$ is
\begin{equation}
    \label{eq:curvature}
    Q(u,y,\varphi;v) := Q_s(u,y, \varphi; v,v) + Q_1(u,y, \varphi; v,v) +Q_2(u,y, \varphi; v).
\end{equation}
\begin{remark}
    \label{rem:curvature-term-simplification}
    The definitions of the sets $\Omega_{y, \hat y }^{2}$ and $\Omega_{y, \hat y }^{3}$ in \eqref{eq:Omega-123-sets} are identical to the ones for $\Omega_{y, \hat y }^{1,2}$ and $\Omega_{y, \hat y }^{0,3}$ in Lemma 3.3 in \cite{ClasonNhuRosch_os2nd} for the case where $K:=1$, $t_0 := -\infty$, $t_1 := \bar t$, and $t_2 := \infty$. Similarly, the definitions of the functionals $Q_s$, $Q_1$, and $Q_2$ in this subsection can be derived from the associated ones in \S\,5.1 in \cite{ClasonNhuRosch_os2nd}.
\end{remark}

According to Proposition 5.6 and Lemma 5.7 in \cite{ClasonNhuRosch_os2nd}, $Q_2$ is weakly lower semicontinuous in the last variable and satisfies
\begin{equation} \label{eq:sigma-bounded}
    |Q_2(u,S(u),\varphi; v) | \leq \Sigma(S(u)) \norm{\nabla \varphi}_{L^\infty(\Omega)} \norm{S'(u)v}_{L^\infty(\Omega)}^2
    \quad\text{for all }u, v \in L^2(\Omega)\text{ and } \varphi \in W^{1,\infty}(\Omega),
\end{equation}
with the \emph{jump functional}
\begin{equation}
    \label{eq:E-functional}
    \begin{aligned}[b]
        \Sigma(y) & := \sigma_0 \limsup\limits_{r \to 0^+} \frac{1}{r} \sum_{m= 1}^{2} \int_\Omega [ \1_{\{ 0 < |y - \bar t | \leq r \}} | \partial_{x_m} y | ]\dx, \quad y \in W^{1,1}(\Omega) \cap C(\overline\Omega)
    \end{aligned}
\end{equation}
for $\sigma_0$ defined in \eqref{eq:sigma-i}.
Also, from Corollary 5.5 in \cite{ClasonNhuRosch_os2nd}, it holds for any $u \in L^2(\Omega)$, $\{s_n\} \in c_0^+$ and $v_n \rightharpoonup v$ in $L^2(\Omega)$ that
\begin{equation} \label{eq:sigma-tilde}
    \liminf\limits_{n \to \infty} \frac{1}{s_n^2} \int_\Omega \sum_{i =0}^1 \zeta_i(u,S(u);s_n,v_n) \nabla S(u) \cdot \nabla \varphi \dx = \tilde{Q}(u,S(u),\varphi;\{s_n\}, v) \geq Q_2(u,S(u),\varphi;v),
\end{equation}
provided that $\Sigma(S(u)) < \infty$.

We are now ready to state the second-order necessary and sufficient optimality conditions for \eqref{eq:P} in general, for which the
difference
between these conditions is only in the strictness of the inequality (``no-gap'').
\begin{theorem}[second-order necessary optimality conditions, {\cite{ClasonNhuRosch_os2nd}, Thm.~5.9}]
    \label{thm:2nd-OS-nec}
    Let \crefrange{ass:domain}{ass:cost_func} be fulfilled. Assume that $\bar u$ is a local minimizer to \eqref{eq:P2} such that $\Sigma(\bar y) < \infty$ for $\bar y:=S(\bar u)$. Then, there is a $\bar\varphi \in W^{1,\bar p}_0(\Omega) \cap W^{1,\infty}(\Omega)$, with $\bar p$ defined in \cref{ass:cost_func}, that together with $\bar u, \bar y$ satisfies \eqref{eq:1st-OS} and
    \begin{equation}
        \label{eq:2nd-OS-nec}
        Q(\bar u, \bar y, \bar \varphi; v) \geq 0
        \quad\text{for all } v\in \mathcal{C}({\mathcal{U}_{ad};\bar u})
    \end{equation}
    with $Q$ defined in \eqref{eq:curvature}.
\end{theorem}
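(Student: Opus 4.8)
Since the assertion coincides with Theorem~5.9 in \cite{ClasonNhuRosch_os2nd}, the plan is merely to recall its proof. The existence of a minimizer $\bar u$, of an adjoint state $\bar\varphi\in W^{1,\bar p}_0(\Omega)\cap W^{1,\infty}(\Omega)$ fulfilling \eqref{eq:1st-OS}, and the regularity $\bar y,\bar\varphi\in C^1(\overline\Omega)$ are already furnished by \cref{thm:1st-OC}; hence it only remains to show that $Q(\bar u,\bar y,\bar\varphi;v)\ge0$ for an arbitrary critical direction $v\in\mathcal{C}(\mathcal{U}_{ad};\bar u)$.

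Fix such a $v$ and, crucially, fix an \emph{arbitrary} null sequence $\{s_n\}\in c_0^+$. First I would construct admissible perturbations in the usual way: set $v_n:=v$ on $\{\alpha\le\bar u+s_nv\le\beta\}$ and $v_n:=0$ elsewhere, so that $\bar u+s_nv_n\in\mathcal{U}_{ad}$ for every $n$. Since the definition \eqref{eq:critical-cone} of the critical cone gives $v\ge0$ a.e.\ on $\{\bar u=\alpha\}$ and $v\le0$ a.e.\ on $\{\bar u=\beta\}$, one checks that $v_n(x)\to v(x)$ for a.a.\ $x\in\Omega$; as $|v_n|\le|v|\in L^2(\Omega)$, Lebesgue's dominated convergence theorem yields $v_n\to v$ in $L^2(\Omega)$, whence, by \cref{thm:control2state-oper} and elliptic regularity, $z_{v_n}:=S'(\bar u)v_n\to z_v:=S'(\bar u)v$ strongly in $W^{1,p}(\Omega)$ for every $p<\infty$ (in particular in $C(\overline\Omega)$). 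Moreover \eqref{eq:critical-cone} forces $v=0$, hence $v_n=0$, a.e.\ on $\{\bar\varphi+\nu\bar u\neq0\}$, so that $j'(\bar u)v_n=\int_\Omega(\bar\varphi+\nu\bar u)v_n\dx=0$ by \eqref{eq:obje-deri}.

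The next ingredient is the second-order Taylor-type expansion of $j$ along $\bar u+s_nv_n$ established in \cite{ClasonNhuRosch_os2nd} — obtained by expanding the $C^2$ part of $j$ and isolating the contribution of the nonsmooth term of the state equation through the directional derivative \eqref{eq:directional-der-a1} — namely
\begin{multline*}
    j(\bar u+s_nv_n)=j(\bar u)+s_nj'(\bar u)v_n+s_n^2\bigl(Q_s(\bar u,\bar y,\bar\varphi;v_n,v_n)+Q_1(\bar u,\bar y,\bar\varphi;v_n,v_n)\bigr)\\
    +\int_\Omega\sum_{i=0}^{1}\zeta_i(\bar u,\bar y;s_n,v_n)\,\nabla\bar y\cdot\nabla\bar\varphi\dx+o(s_n^2),
\end{multline*}
with remainder $o(s_n^2)$ because $\{v_n\}$ is bounded in $L^2(\Omega)$ and $z_{v_n}\to z_v$ in $W^{1,p}(\Omega)$. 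Inserting $j'(\bar u)v_n=0$ and the local optimality $j(\bar u+s_nv_n)\ge j(\bar u)$ (valid for $n$ large, since $\|s_nv_n\|_{L^2(\Omega)}\to0$), dividing by $s_n^2$, and passing to $\liminf$ gives
\begin{multline*}
    0\le\lim_{n\to\infty}\bigl(Q_s(\bar u,\bar y,\bar\varphi;v_n,v_n)+Q_1(\bar u,\bar y,\bar\varphi;v_n,v_n)\bigr)\\
    +\liminf_{n\to\infty}\frac{1}{s_n^2}\int_\Omega\sum_{i=0}^{1}\zeta_i(\bar u,\bar y;s_n,v_n)\,\nabla\bar y\cdot\nabla\bar\varphi\dx.
\end{multline*}
The strong convergence $z_{v_n}\to z_v$ (with $\bar y,\bar\varphi$ fixed in $C^1(\overline\Omega)$) identifies the first limit as $Q_s(\bar u,\bar y,\bar\varphi;v,v)+Q_1(\bar u,\bar y,\bar\varphi;v,v)$; and since $\Sigma(\bar y)<\infty$ and $v_n\rightharpoonup v$ in $L^2(\Omega)$, relation \eqref{eq:sigma-tilde} shows that the second $\liminf$ equals $\tilde{Q}(\bar u,\bar y,\bar\varphi;\{s_n\},v)$. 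Thus $0\le Q_s(\bar u,\bar y,\bar\varphi;v,v)+Q_1(\bar u,\bar y,\bar\varphi;v,v)+\tilde{Q}(\bar u,\bar y,\bar\varphi;\{s_n\},v)$ for this particular sequence. Since $\{s_n\}\in c_0^+$ was arbitrary, taking the infimum over $c_0^+$ and recalling the definitions \eqref{eq:key-term} and \eqref{eq:curvature} yields $Q(\bar u,\bar y,\bar\varphi;v)\ge0$, as claimed.

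The hard part is not the Taylor expansion itself but the analysis of the nonsmooth correction $\int_\Omega\sum_i\zeta_i(\bar u,\bar y;s_n,v_n)\,\nabla\bar y\cdot\nabla\bar\varphi\dx$: one must show that this quantity is $O(s_n^2)$ and that, after rescaling, its $\liminf$ is controlled precisely by the level-set functional $Q_2$. This is exactly what the hypothesis $\Sigma(\bar y)<\infty$ delivers, together with the bound \eqref{eq:sigma-bounded} and the identity \eqref{eq:sigma-tilde}; in the present paper these facts are used as black boxes from \cite{ClasonNhuRosch_os2nd}, and the rest of the paper is devoted to turning the still abstract $Q_2$ into an explicit expression.
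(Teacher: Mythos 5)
The paper gives no proof of this theorem at all: it is imported verbatim from \cite{ClasonNhuRosch_os2nd}, Thm.~5.9 (with the regularity of $\bar\varphi$ supplied by \cref{thm:1st-OC}), so there is nothing in the present text for your argument to diverge from. Your reconstruction is the standard one and is consistent with the facts the paper does quote — the bound \eqref{eq:sigma-bounded}, the identity \eqref{eq:sigma-tilde}, and the definitions \eqref{eq:key-term-sn}--\eqref{eq:curvature} — with the second-order Taylor-type expansion itself used, exactly as the paper uses the whole theorem, as a black box from the companion work.
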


\begin{theorem}[second-order sufficient optimality conditions, {\cite{ClasonNhuRosch_os2nd}, Thm.~5.10}]
    \label{thm:2nd-OS-suf}
    Let \crefrange{ass:domain}{ass:cost_func} be valid. Assume that $\bar u$ is a feasible point of \eqref{eq:P2} such that $\Sigma(\bar y) < \infty$ for $\bar y:=S(\bar u)$. Assume further that there is a $\bar\varphi \in W^{1,\bar p}_0(\Omega) \cap W^{1,\infty}(\Omega)$, with $\bar p$ defined in \cref{ass:cost_func}, that together with $\bar u, \bar y$ satisfies \eqref{eq:1st-OS} and
    \begin{equation}
        \label{eq:2nd-OS-suff}
        Q(\bar u, \bar y, \bar \varphi; v) > 0
        \quad\text{for all } v\in \mathcal{C}({\mathcal{U}_{ad};\bar u}) \setminus \{0\}
    \end{equation}
    with $Q$ defined in \eqref{eq:curvature}.
    Then there exist constants $c_0, \rho_0 >0$ satisfying
    \begin{equation*}
        j(\bar u) + c_0 \norm{u - \bar u}_{L^2(\Omega)}^2 \leq j(u) %
        \qquad\text{for all } u \in \mathcal{U}_{ad} \cap \overline B_{L^2(\Omega)}(\bar u, \rho_0).
    \end{equation*}
\end{theorem}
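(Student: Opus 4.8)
The plan is to argue by contradiction, in the spirit of the standard proof of second-order sufficient conditions adapted to the present nonsmooth setting. Suppose the asserted quadratic growth fails. Then for each $k\in\N$ there is a $u_k\in\mathcal{U}_{ad}$ with $u_k\neq\bar u$, $\rho_k:=\norm{u_k-\bar u}_{L^2(\Omega)}\to 0$, and $j(u_k)<j(\bar u)+\frac1k\rho_k^2$. Setting $v_k:=(u_k-\bar u)/\rho_k$, so that $\norm{v_k}_{L^2(\Omega)}=1$, I would pass to a subsequence with $v_k\weakto v$ in $L^2(\Omega)$. By \cref{thm:control2state-oper} the linearized states $z_{v_k}:=S'(\bar u)v_k$ are bounded in $W^{1,p}(\Omega)$ for some $p>2$, hence in $C(\overline\Omega)$ by compact embedding in two dimensions; moreover $v_k\weakto v$ in $L^2(\Omega)$ forces $v_k\to v$ in $W^{-1,p}(\Omega)$, so $z_{v_k}\to z_v:=S'(\bar u)v$ strongly in $C(\overline\Omega)$. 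Thus $S'(\bar u)$ acts compactly from $L^2(\Omega)$ into $C(\overline\Omega)$, which will be used repeatedly.

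Next I would show $v\in\mathcal{C}(\mathcal{U}_{ad};\bar u)$. From $\alpha\le\bar u+\rho_kv_k\le\beta$ a.e.\ we get $v_k\ge0$ a.e.\ on $\{\bar u=\alpha\}$ and $v_k\le0$ a.e.\ on $\{\bar u=\beta\}$, and these sign conditions are preserved under the weak limit, since the corresponding sets are convex and closed in $L^2(\Omega)$. For the complementarity condition, dividing $j(u_k)<j(\bar u)+\frac1k\rho_k^2$ by $\rho_k$ and using that $j$ is $C^1$ on $L^2(\Omega)$ gives $\limsup_k j'(\bar u)v_k\le 0$, while \eqref{eq:normal_OS} yields $j'(\bar u)(u_k-\bar u)=\int_\Omega(\bar\varphi+\nu\bar u)(u_k-\bar u)\dx\ge0$, i.e.\ $j'(\bar u)v_k\ge0$; hence $j'(\bar u)v_k\to0$ and, by $v_k\weakto v$, $\int_\Omega(\bar\varphi+\nu\bar u)v\dx=0$. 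Because the active-set structure dictated by \eqref{eq:normal_OS} makes the integrand $(\bar\varphi+\nu\bar u)v$ nonnegative a.e.\ (where $\bar\varphi+\nu\bar u>0$ one has $\bar u=\alpha$ and $v\ge0$, and symmetrically), it vanishes a.e., so $v=0$ a.e.\ on $\{\bar\varphi+\nu\bar u\neq0\}$. Therefore $v\in\mathcal{C}(\mathcal{U}_{ad};\bar u)$.

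Then I would invoke the sharp second-order expansion of $j$ along $u_k=\bar u+\rho_kv_k$ underlying the no-gap theory in \cite{ClasonNhuRosch_os2nd}: combining the smooth Taylor expansions of the $L$-term, the $a(\cdot)$-term and the $\nu$-term with the identity \eqref{eq:sigma-tilde} for the nonsmooth contribution (applicable since $\Sigma(\bar y)<\infty$) and the weak lower semicontinuity of $Q_2$, one obtains
\[
    \liminf_{k\to\infty}\frac{j(u_k)-j(\bar u)-j'(\bar u)(u_k-\bar u)}{\rho_k^2}
    \ \ge\ \liminf_{k\to\infty}\big[Q_s(\bar u,\bar y,\bar\varphi;v_k,v_k)+Q_1(\bar u,\bar y,\bar\varphi;v_k,v_k)\big]+Q_2(\bar u,\bar y,\bar\varphi;v).
\]
On the other hand, $j'(\bar u)(u_k-\bar u)\ge0$ together with $j(u_k)-j(\bar u)<\frac1k\rho_k^2$ makes the left-hand side $\le0$. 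If $v\neq0$, then $z_{v_k}\to z_v$ in $C(\overline\Omega)$ makes the non-$\nu$ smooth parts and $Q_1(\bar u,\bar y,\bar\varphi;v_k,v_k)$ converge to their $v$-values and $\liminf_k\frac{\nu}{2}\norm{v_k}_{L^2(\Omega)}^2\ge\frac{\nu}{2}\norm{v}_{L^2(\Omega)}^2$, so the right-hand side is $\ge Q(\bar u,\bar y,\bar\varphi;v)>0$, a contradiction. If $v=0$, then $z_{v_k}\to0$ in $C(\overline\Omega)$ gives $Q_1(\bar u,\bar y,\bar\varphi;v_k,v_k)\to0$ (using \eqref{eq:directional-der-a1} and boundedness of $\nabla z_{v_k}$ in $L^2(\Omega)$), the $L$- and $a''$-terms of $Q_s(\bar u,\bar y,\bar\varphi;v_k,v_k)$ vanish in the limit, $Q_2(\bar u,\bar y,\bar\varphi;0)=0$ by \eqref{eq:sigma-bounded}, while $\frac{\nu}{2}\norm{v_k}_{L^2(\Omega)}^2=\frac{\nu}{2}$; so the right-hand side is $\ge\frac{\nu}{2}>0$, again a contradiction. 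In either case we reach a contradiction, which yields the existence of $c_0,\rho_0>0$.

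The main obstacle is the second-order expansion quoted in the displayed inequality: since $j$ is not of class $C^2$ — by \cref{rem:regularity-derivative-S} the map $z_v=S'(\bar u)v$ is only $W^{1,p}$-regular and the nonsmooth divergence term cannot be differentiated — the increment of $j$ beyond its first-order part does not converge but only admits a sharp lower bound through the $\zeta$-functionals of \eqref{eq:zeta-func} and the functional $Q_2$; establishing this uniformly along $u_k=\bar u+\rho_kv_k$, in particular verifying \eqref{eq:sigma-tilde} with $v_n:=v_k$, is the technical heart of the matter and is carried out in \cite{ClasonNhuRosch_os2nd}. The treatment of the degenerate case $v=0$ additionally hinges on the compactness of $S'(\bar u)\colon L^2(\Omega)\to C(\overline\Omega)$, which annihilates both nonsmooth curvature contributions and isolates the coercive term $\frac{\nu}{2}\norm{v_k}_{L^2(\Omega)}^2$.
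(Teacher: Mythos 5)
You should first be aware that the paper offers no proof of this theorem at all: it is imported verbatim from the companion work \cite{ClasonNhuRosch_os2nd}, Thm.~5.10, where the whole no-gap second-order theory (the expansion of $j$ in terms of $Q_s$, $Q_1$ and the $\zeta$-functionals of \eqref{eq:zeta-func}) is developed. So there is no in-paper argument to compare against, only the citation. Your outline is the standard contradiction/quadratic-growth scheme and is very plausibly the route taken in that reference: the normalization $v_k=(u_k-\bar u)/\rho_k\weakto v$, the verification $v\in\mathcal{C}(\mathcal{U}_{ad};\bar u)$ through the weakly closed sign constraints and the vanishing of $\int_\Omega(\bar\varphi+\nu\bar u)v\dx$, the compactness of $S'(\bar u)\colon L^2(\Omega)\to C(\overline\Omega)$ via \cref{thm:control2state-oper}, the case split $v\neq0$ versus $v=0$ with the Tikhonov term $\tfrac{\nu}{2}$ rescuing the degenerate case, and the use of \eqref{eq:sigma-tilde}, \eqref{eq:sigma-bounded} and the weak lower semicontinuity of $Q_2$ are all correct and correctly deployed.

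The genuine gap --- which you flag yourself --- is the displayed lower estimate for the second-order remainder, i.e.\ the nonsmooth Taylor-type statement asserting that $j(u_k)-j(\bar u)-j'(\bar u)(u_k-\bar u)$, divided by $\rho_k^2$, is asymptotically bounded below by $Q_s(\bar u,\bar y,\bar\varphi;v_k,v_k)+Q_1(\bar u,\bar y,\bar\varphi;v_k,v_k)$ plus the $\zeta$-contribution that produces $Q_2(\bar u,\bar y,\bar\varphi;v)$. Nothing in the present paper supplies this; \eqref{eq:sigma-tilde} only handles the limit of the $\zeta$-integrals once the expansion is in hand, and by \cref{rem:regularity-derivative-S} the functional $j$ is not $C^2$, so the expansion cannot be obtained by routine Taylor arguments --- one must split the increment of the $a$-nonlinearity across the level set $\{\bar y=\bar t\}$ (this is exactly where the sets $\Omega^{2}_{y,\hat y}$, $\Omega^{3}_{y,\hat y}$ of \eqref{eq:Omega-123-sets} originate) and control the remainder to order $o(\rho_k^2)$ in $W^{1,p}$. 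Your proposal invokes this as a black box from \cite{ClasonNhuRosch_os2nd}. Since the theorem itself is cited from that reference, this is a defensible attribution rather than an error, but as a self-contained proof the proposal is incomplete: without that expansion the final contradiction has no foundation, while everything surrounding it checks out.
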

It is noted that the term $2Q(\bar u,\bar y,\bar \varphi;v)$ can be seen as a second-order generalized derivative of $j$ at $\bar u$ in the direction $v$; see, e.g. the proof of Theorem 5.9 in \cite{ClasonNhuRosch_os2nd} and Remark 5.1 in \cite{Nhu2021}).

\subsection{An explicit formula for the curvature functional} \label{sec:Q2-formulation}
In this subsection, we assume that the domain $\Omega$ is open, bounded, and convex in $\R^2$ only.
We shall establish an explicit formula for the curvature term $Q(u,y,\varphi;v)$, defined in \eqref{eq:curvature}, for two situations:
\begin{enumerate}[label=(\roman*)]
    \item \label{item:curvature-nonvanishing-gradient} The gradient of $y$ does not vanish on connected components of $\{y= \bar t\}$;
    \item \label{item:curvature-vanishing-gradient} The gradient of $y$ vanishes on connected components of $\{y= \bar t\}$.
\end{enumerate}
For the first situation, we will apply the results shown in \cref{sec:levelset-facts}.
For the latter situation, we need the following notion.
\begin{definition} \label{def:locally-convex-concave}
    A function $y: \overline\Omega \to \R$ is called
    \emph{uniformly locally convex-concave} on a set $V \subset \overline\Omega$ if an $\epsilon >0$ exists such that for any $x \in V$, $y$ is either convex or concave on $B_{\R^2}(x,\epsilon) \cap \overline \Omega$.
\end{definition}
\begin{proposition}
    \label{prop:gradient-vanish}
    Let $t \in \R$ be arbitrary and let $y \in C^1(\overline\Omega)$ be uniformly locally convex-concave on the level set $\{y=t\}$. Assume that $\Cu$ is a {connected component} of $\{y=t\}$. If $\nabla y$ vanishes at some point $x_0 \in \Cu$, then $\nabla y(x) = 0$ for all $x \in \Cu$.
\end{proposition}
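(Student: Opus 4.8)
The plan is a connectedness argument. Put $Z := \{ x \in \Cu \mid \nabla y(x) = 0 \}$. Since $y \in C^1(\overline\Omega)$, the zero set of $\nabla y$ is closed in $\overline\Omega$, so $Z$ is relatively closed in $\Cu$, and $Z$ is nonempty because $x_0 \in Z$. As $\Cu$ is connected, it suffices to prove that $Z$ is also relatively open in $\Cu$; then $Z = \Cu$, which is the claim.

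To prove openness, fix $\tilde x \in Z$. By the uniform local convex--concavity of $y$ on $\{y=t\}$ there is an $\epsilon>0$ such that $y$ is convex or concave on $B_{\R^2}(\tilde x,\epsilon) \cap \overline\Omega$; replacing $y$ by $-y$ and $t$ by $-t$ if necessary --- which alters neither $\{y=t\}$ nor $\Cu$ nor $Z$ --- we may assume $y$ is convex there. Since $\Omega$ is convex, the set $K := B_{\R^2}(\tilde x,\epsilon) \cap \overline\Omega$ is convex, and since $\nabla y(\tilde x)=0$ the first-order optimality condition for a convex $C^1$ function over a convex set gives $y \ge y(\tilde x) = t$ on $K$. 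Thus $\tilde x$ minimizes $y$ over $K$, the minimizer set $A := \{ x \in K \mid y(x) = t \}$ is convex, and $\Cu \cap B_{\R^2}(\tilde x,\epsilon) \subseteq A$.

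Next, let $x \in \Cu$ with $|x - \tilde x| < \epsilon$. Then $y(x)=t$ and $y \ge y(x)$ on a relative neighborhood of $x$ in $\overline\Omega$, so $x$ is a local minimizer of $y$ over $\overline\Omega$, whence $\nabla y(x)\cdot w \ge 0$ for every admissible direction $w$ at $x$ (i.e.\ every $w$ with $x + \tau w \in \overline\Omega$ for all small $\tau>0$). If $x \in \Omega$, all directions are admissible and $\nabla y(x)=0$ follows immediately. If $x \in \partial\Omega$, I would moreover use the flatness of $y$ at $\tilde x$: from $[\tilde x,x]\subseteq A$ we get $\nabla y(x)\cdot(x-\tilde x)=0$, and combining $\nabla y(\tilde x)=0$ with the subgradient inequality of $y$ at $x$ gives $\nabla y(x)\cdot w \le 0$ for every direction $w$ admissible at $\tilde x$. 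Since $\Omega$ is a convex polygon, these admissible-direction cones are two-dimensional, and --- up to a short case distinction on whether $[\tilde x,x]$ meets $\Omega$ or stays in $\partial\Omega$, in the latter case comparing the cones at $\tilde x$ and at $x$ --- the two inequalities together force $\nabla y(x)=0$. In all cases $x\in Z$, so $\Cu \cap B_{\R^2}(\tilde x,\epsilon)\subseteq Z$, i.e.\ $Z$ is open in $\Cu$.

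The routine ingredients are that a nonempty subset of a connected set which is both relatively open and relatively closed is the whole set, together with standard properties of convex $C^1$ functions (first-order optimality, convexity of the minimizer set, the subgradient inequality). The step I expect to be the main obstacle is the boundary case $x\in\partial\Omega$: one must rule out a gradient that points into $\Omega$ normally to $\partial\Omega$, and this is exactly where the global convexity of $\Omega$ enters, rather than merely the local convexity of $y$. If the proposition is only ever applied to components $\Cu$ disjoint from $\partial\Omega$, this difficulty disappears and the interior argument of the third paragraph suffices on its own.
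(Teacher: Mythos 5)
Your argument is correct and is essentially the paper's own proof in expanded form: the paper's (very terse) proof likewise observes that, by uniform local convex--concavity and $\nabla y(x_0)=0$, the level value $t$ is a local extremal value, so every point of $\Cu$ near a critical point is a local extremum of $y$ and hence critical, and then invokes connectedness of $\Cu$ --- exactly your relatively open-and-closed argument, with your version being more careful about points of $\Cu\cap\partial\Omega$, which the paper glosses over. One caveat: in \cref{sec:Q2-formulation} the domain is only assumed convex, not polygonal, but your boundary case closes without polygonality --- writing $g:=\nabla y(x)$, the two inequalities $g\cdot w\ge 0$ for $w$ admissible at $x$ and $g\cdot w\le 0$ for $w$ admissible at $\tilde x$ force $\overline\Omega\subseteq\{z: g\cdot(z-x)\ge 0\}\cap\{z: g\cdot(z-\tilde x)\le 0\}$, and since $g\cdot(x-\tilde x)=0$ these two half-planes share the same boundary line, so $g\neq 0$ would squeeze the open set $\Omega$ into a line, a contradiction.
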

\begin{proof}
    There is an $\epsilon>0$ such that, for any $x \in \{y=t\}$, the restriction $y\mid_{B_{\R^2}(x,\epsilon) \cap \overline\Omega}$ is either convex or concave. Since $\nabla y(x_0) =0$, then $x_0$ is a local extremal point of $y(x)$ and so is every point in $B_{\R^2}(x,\epsilon) \cap \Cu$. From this and the connection property of $\Cu$, we have $\nabla y = 0$ on $\Cu$.
\end{proof}
The following result is a direct consequence of \cref{prop:gradient-vanish}.
\begin{corollary}
    \label{cor:gradient-vanish}
    Let $t \in \R$ be arbitrary and let $y \in C^1(\overline\Omega)$ be uniformly locally convex-concave on the level set $\{y=t\}$. Assume that $\Cu$ is a connected component of $\{y=t\}$.
    If $\nabla y(x_0) \neq 0$ for some point $x_0 \in \Cu$, then $\nabla y(x) \neq 0$ for all $x \in \Cu$.
\end{corollary}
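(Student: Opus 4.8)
The plan is to observe that this corollary is nothing more than the contrapositive of \cref{prop:gradient-vanish}, read along the connected component $\Cu$. So I would argue by contradiction. Suppose, contrary to the claim, that $\nabla y$ vanishes at some point $x_1 \in \Cu$. Since the hypotheses of the corollary guarantee that $y \in C^1(\overline\Omega)$ is uniformly locally convex-concave on the level set $\{y=t\}$ and that $\Cu$ is a connected component of $\{y=t\}$ with $x_1 \in \Cu$, all assumptions of \cref{prop:gradient-vanish} are met (with $x_1$ playing the role of $x_0$ there). Hence \cref{prop:gradient-vanish} yields $\nabla y(x) = 0$ for every $x \in \Cu$.

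In particular this would force $\nabla y(x_0) = 0$, contradicting the standing assumption $\nabla y(x_0) \neq 0$. Therefore no such point $x_1$ can exist, that is, $\nabla y(x) \neq 0$ for all $x \in \Cu$, which is the assertion. There is no genuine technical obstacle here: the entire argument consists of recognizing that the two statements are logically equivalent and invoking \cref{prop:gradient-vanish} once; the only point worth verifying — that the regularity, the uniform local convexity-concavity, and the component structure carry over unchanged — is immediate from the way the corollary is phrased.
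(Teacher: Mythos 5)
Your argument is correct and matches the paper's intent exactly: the paper states the corollary as a direct consequence of \cref{prop:gradient-vanish}, and your contrapositive reading of that proposition is precisely the intended (one-line) deduction.
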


The following result will play an important role in establishing an explicit formula of the curvature functional \eqref{eq:curvature}. Its proof rests on the following several lemmas.
\begin{theorem}
    \label{thm:key-formula}
    Let $\{s_n\} \in c_0^+$, $\bar\varphi \in C^1(\overline\Omega) \cap W^{2,1}(\Omega)$ and $\bar y, y_n \in C^1(\overline\Omega)$ such that $y_n =\bar y = 0$ on $\partial\Omega$, $y_n \to \bar y$ in $C^1(\overline\Omega)$ and $(y_n - \bar y)/s_n \to w$ in $W^{1,p}_0(\Omega)$ for some $p >2$ and $w \in W^{1,p}_0(\Omega)$. Let $\Cu$ be a closed connected component of $\{\bar y = \bar t \}$. Assume that one of the following conditions is satisfied:
    \begin{enumerate}[label=(C\arabic*)]
        \item \label{ass:nonvanishing-gradient-An} The gradient of $\bar y$ does not vanish on $\Cu$, i.e.,
            \begin{equation}
                \label{eq:main-hypothesis-y-novanish}
                \min\{|\nabla \bar y(x)|: x \in \Cu\} > 0;
            \end{equation}
        \item \label{ass:vanishing-gradient-An-convex-concave} There holds that
            \begin{equation} \label{eq:main-hypothesis-y-vanish}
                \left\{
                    \begin{aligned}
                        & \text{$\bar y$ is uniformly locally convex-concave on $\Cu$},\\
                        & \nabla \bar y = 0 \, \text{on } \Cu,\\
                        & \Ha^{N-1}(\{\bar y=t \} \cap \Cu^{\tilde\epsilon}) \leq C_0, \quad \text{f.a.a. } t \in (\bar t-r_0,\bar t+r_0), N=2,
                    \end{aligned}
                \right.
            \end{equation}
            for some constants $\tilde \epsilon, r_0, C_0>0$;
        \item \label{ass:vanishing-gradient-structure}
            There holds that
            \begin{equation} \label{eq:main-hypothesis-y-vanish-structure}
                \left\{
                    \begin{aligned}
                        & \nabla \bar y = 0 \, \text{on } \Cu,\\
                        & \meas_{\R^2}( \{0 < | \bar y - \bar t| < r \} \cap \Cu^{\tilde\epsilon}) \leq c_s r, \quad \text{for all } r \in (0, r_0),
                    \end{aligned}
                \right.
            \end{equation}
            for some constants $\tilde \epsilon, r_0, c_s>0$.
    \end{enumerate}
    Then there exists an $\epsilon_0 = \epsilon_0(\Cu) \in(0, \tilde \epsilon)$ such that for any $\epsilon \in (0, \epsilon_0)$,
    \begin{equation}
        \label{lim:key}
        \frac{A_n(\Cu,\epsilon)}{s_n^2} \to \frac{1}{2} \int_{\Cu} \1_{\{|\nabla \bar y| > 0\}} w^2 \frac{\nabla \bar y \cdot \nabla \bar \varphi}{|\nabla \bar y|} \dH^{1}(x) \quad \text{as} \quad n \to \infty,
    \end{equation}
    where
    \begin{equation}
        \label{eq:An-term}
        A_n(\Cu,\epsilon) := \int_{\Cu^\epsilon} (\bar t - y_n) [\1_{ \Omega_{y_n, \bar y }^{2} } -\1_{ \Omega_{y_n, \bar y }^{3}}] \nabla \bar y \cdot \nabla \bar \varphi \dx
    \end{equation}
    with $ \Omega_{y_n, \bar y }^{2}$ and $\Omega_{y_n, \bar y }^{3}$ defined in \eqref{eq:Omega-123-sets}.
\end{theorem}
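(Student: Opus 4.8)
The plan is to localise the integral in \eqref{eq:An-term} to a thin tubular neighbourhood of $\Cu$ and then treat the two regimes separately: in case \ref{ass:nonvanishing-gradient-An} by an explicit change of variables adapted to the level curves, and in cases \ref{ass:vanishing-gradient-An-convex-concave} and \ref{ass:vanishing-gradient-structure} by showing that $A_n(\Cu,\epsilon)/s_n^2\to 0$, which agrees with the right-hand side of \eqref{lim:key} since $\nabla\bar y=0$ on $\Cu$ forces $\1_{\{|\nabla\bar y|>0\}}\equiv 0$ there. (We may assume $\Cu\cap\partial\Omega=\emptyset$; the case $\Cu=\partial\Omega$, which forces $\bar t=0$, is handled similarly using $w|_{\partial\Omega}=0$.) As $p>2$, the embedding $W^{1,p}_0(\Omega)\hookrightarrow C(\overline\Omega)$ yields $w_n:=(y_n-\bar y)/s_n\to w$ uniformly on $\overline\Omega$, in particular $\norm{y_n-\bar y}_{C(\overline\Omega)}\le Cs_n$. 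Since $\Cu$ and $\{\bar y=\bar t\}\setminus\Cu$ are closed and disjoint, we fix $\epsilon_0\in(0,\tilde\epsilon)$ with $\{\bar y=\bar t\}\cap\Cu^{\epsilon_0}=\Cu$, small enough in case \ref{ass:nonvanishing-gradient-An} that \cref{prop:strong-positivity-gradients,prop:levelset-unique-component-continuity,cor:levelset-continuity-in-functions} apply on $\Cu^{\epsilon_0}$. On $\Omega_{y_n,\bar y}^2$ one has $\bar y\le\bar t<y_n$, so $0\le\bar t-\bar y\le y_n-\bar y$ and $0<y_n-\bar t\le y_n-\bar y$; with the symmetric estimates on $\Omega_{y_n,\bar y}^3$ this shows that, on $\Cu^\epsilon$, the integrand of \eqref{eq:An-term} is supported up to a Lebesgue-null set in $\{0<|\bar y-\bar t|\le Cs_n\}$ and there $|\bar t-y_n|\le\norm{y_n-\bar y}_{C(\overline\Omega)}\le Cs_n$. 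Finally, $\overline{\Cu^\epsilon\setminus\Cu^\rho}$ is compact and disjoint from $\{\bar y=\bar t\}$ for every $0<\rho<\epsilon<\epsilon_0$, so $|\bar y-\bar t|\ge m_\rho>0$ there and for $n$ large the support above lies in $\Cu^\rho$.

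\emph{Case \ref{ass:nonvanishing-gradient-An}.} By assertion \ref{item:C1-closedcurve} of \cref{prop:decomposition-levelset}, $\Cu$ is a $C^1$ closed simple curve, and by continuity $|\nabla\bar y|\ge\alpha_\Cu>0$ on $\Cu^{\epsilon_0}$, while \cref{cor:levelset-continuity-in-functions} gives, for $n$ large, a single $C^1$ closed simple curve $\{y_n=\bar t\}\cap\Cu^\epsilon$. On $\Cu^{\epsilon_0}$ the coarea disintegration $\dx=|\nabla\bar y|^{-1}\dH^1\,dt'$ along the level curves of $\bar y$ is valid; applying it to the integrand of \eqref{eq:An-term} and using the preliminary bounds, $A_n(\Cu,\epsilon)$ reduces to an integral over $t'\in(\bar t-Cs_n,\bar t+Cs_n)$. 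Parametrising $\{\bar y=t'\}\cap\Cu^\epsilon$ and $\{y_n=\bar t\}\cap\Cu^\epsilon$ by the implicit-function graphs from \cref{lem:Implicit_Function-Theorem-extended-boundary} and invoking the uniform estimates \eqref{eq:gn-g-esti}--\eqref{eq:gn-deri-bound} together with $w_n\to w$ uniformly, the inner $t'$-integration can be performed: over each level curve the $t'$-interval contributing to $\Omega_{y_n,\bar y}^2\cup\Omega_{y_n,\bar y}^3$ is one-sided of length $\approx s_n|w|/|\nabla\bar y|$, and on it the weight $\bar t-y_n$ is affine and vanishes at one endpoint, so the $t'$-integration, weighted by the coarea factor $|\nabla\bar y|^{-1}$, produces along $\Cu$ the density $\frac12 w^2(\nabla\bar y\cdot\nabla\bar\varphi)/|\nabla\bar y|$ up to an $o(s_n^2)$ error that is uniform along $\Cu$. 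Dividing by $s_n^2$, passing to the limit with the help of \cref{prop:continuity-integral-levelsets} and Lebesgue's dominated convergence theorem, and then letting $\rho,\epsilon\to 0$, yields \eqref{lim:key}.

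\emph{Cases \ref{ass:vanishing-gradient-An-convex-concave} and \ref{ass:vanishing-gradient-structure}.} Here $\nabla\bar y=0$ on $\Cu$, the right-hand side of \eqref{lim:key} is $0$, and by the preliminaries, for $n$ large,
\[
|A_n(\Cu,\epsilon)|\le Cs_n\norm{\nabla\bar\varphi}_{C(\overline\Omega)}\int_{\Cu^\rho\cap\{0<|\bar y-\bar t|\le Cs_n\}}|\nabla\bar y|\dx .
\]
In case \ref{ass:vanishing-gradient-structure} we bound $|\nabla\bar y|$ by $\eta(\rho):=\sup_{\Cu^\rho}|\nabla\bar y|$, with $\eta(\rho)\to 0$ as $\rho\to 0$ by continuity of $\nabla\bar y$ and $\nabla\bar y|_\Cu=0$, and use $\meas_{\R^2}(\{0<|\bar y-\bar t|\le Cs_n\}\cap\Cu^{\tilde\epsilon})\le c_sCs_n$ to obtain $|A_n(\Cu,\epsilon)|/s_n^2\le Cc_s\norm{\nabla\bar\varphi}_{C(\overline\Omega)}\eta(\rho)$, and then let $\rho\to 0$. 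In case \ref{ass:vanishing-gradient-An-convex-concave} the sandwich $\{0<|\bar y-\bar t|\le Cs_n\}$ need not have measure $O(s_n)$, so the crude bound only gives boundedness; instead we use the convex-concave geometry: near each $x_0\in\Cu$ the function $\bar y$ has an extremum at $x_0$ with value $\bar t$, hence $\bar y-\bar t$ has a fixed sign on $B_{\R^2}(x_0,\epsilon)$ and one of $\Omega_{y_n,\bar y}^2$, $\Omega_{y_n,\bar y}^3$ meets this ball only in $\Cu$; on the other, the region where $y_n$ lies on the far side of $\bar t$ is, to leading order, a tube symmetric about $\Cu$ on which $(\bar t-y_n)\nabla\bar y$ is odd in the transverse direction, so its leading-order contribution cancels and the remainder is $o(s_n^2)$ after the coarea formula and the bound $\Ha^1(\{\bar y=t'\}\cap\Cu^{\tilde\epsilon})\le C_0$. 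In both cases $A_n(\Cu,\epsilon)/s_n^2\to 0$.

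\emph{Main obstacle.} The delicate step is case \ref{ass:nonvanishing-gradient-An}: to turn the heuristic change of variables into a proof one must control simultaneously the two families of implicit parametrisations --- the level curves of $\bar y$ at values $t'$ near $\bar t$ and the level curve of $y_n$ at $\bar t$ --- on a common interval and with uniform bounds on the graphs and their derivatives, which is exactly what \cref{lem:Implicit_Function-Theorem-extended-boundary} and \cref{prop:continuity-integral-levelsets} are designed to supply. The cancellation argument in case \ref{ass:vanishing-gradient-An-convex-concave}, where a modulus-of-continuity estimate for $\nabla\bar y$ near $\Cu$ alone is insufficient and the convex-concave structure must be exploited, is the second genuinely nontrivial point.
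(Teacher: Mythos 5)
Your cases (C3) and (C1) are essentially sound in strategy, but your case (C2) has a genuine gap. The ``odd in the transverse direction'' cancellation presupposes that $\Cu$ is a simple closed curve with two infinitesimally adjacent sides, on which $w$ and $\nabla\bar\varphi$ take asymptotically equal values while $\nabla\bar y$ points in opposite directions. Hypothesis (C2) grants nothing of the sort: this is exactly the regime in which $\Cu$ need not be a curve and may even have nonempty interior (see \cref{fig:vanish}). For such a component, say in the convex case where $\bar y\ge\bar t$ on $\Cu^\epsilon$, the relevant set $\Omega^{2}_{y_n,\bar y}\cap\Cu^\epsilon=\{y_n<\bar t<\bar y\}\cap\Cu^\epsilon$ is a \emph{one-sided} collar attached to a portion of $\partial\Cu$, and there is no opposite-side contribution to cancel against: a transverse substitution $u=\bar y-\bar t$ shows each boundary arc of $\Cu$ contributes, per unit length, $\tfrac12 s_n^2 (w^-)^2\,\nu\cdot\nabla\bar\varphi+o(s_n^2)$ with $\nu$ the outward limit of $\nabla\bar y/|\nabla\bar y|$, an $O(s_n^2)$ quantity with no sign reason to vanish; your coarea bound $\Ha^{1}(\{\bar y=t'\}\cap\Cu^{\tilde\epsilon})\le C_0$ only gives $|A_n(\Cu,\epsilon)|=O(s_n^2)$, not $o(s_n^2)$. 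Even when $\Cu$ is a curve, the integrand is not odd across $\Cu$ (the profiles of $\bar y-\bar t$ on the two sides may be completely different, e.g.\ quadratic on one side and quartic on the other); what equalizes the two one-sided integrals is the substitution $u=\bar y-\bar t$, and promoting ``leading order cancels'' to an $o(s_n^2)$ statement needs a quantitative transverse pairing that you do not construct. This is precisely where the paper does something entirely different: using convexity it shows $\Omega^{3}_{y_n,\bar y}\cap\Cu^\epsilon=\emptyset$, picks by Morse--Sard regular values $\bar t_n\le\bar t$ of $y_n$ with $\bar t-\bar t_n=o(s_n^2)$, controls the strip $\{\bar t_n\le y_n\le\bar t\}$ via the coarea formula and the $\Ha^1$ bound, and treats $\{y_n<\bar t_n\}$ by integration by parts with the weight $(y_n-\bar t_n)^2$, whose boundary trace on $\{y_n=\bar t_n\}$ vanishes; no symmetry is invoked anywhere (see the proof of \cref{lem:gradient-vanish}).

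For the record on the other two cases: your (C3) argument (two parameters $\rho$, $n$, with $\sup_{\Cu^\rho}|\nabla\bar y|\to0$ and the measure bound) is correct and in fact slightly more direct than the paper's, which first trades $\nabla\bar y$ for $\nabla y_n$ through the $B_n$/$C_n$ splitting. Your (C1) route is genuinely different from the paper's: you disintegrate along level curves of $\bar y$ and integrate the ramp $(\bar t-y_n)$ transversally, whereas \cref{lem:gradient-nonvanish} writes $(\bar t-y_n)\nabla y_n=-\tfrac12\nabla(y_n-\bar t)^2$ and applies the Green identity of \cref{prop:Green-identity-general} on $\{y_n<\bar t<\bar y\}\cup\{y_n>\bar t>\bar y\}$, so that the $\{y_n=\bar t\}$-boundary term vanishes identically and the $\Cu$-term is exactly $\tfrac12\int_{\Cu}(y_n-\bar y)^2\nabla\bar\varphi\cdot\nabla\bar y/|\nabla\bar y|\dH^1$, after which only $\norm{y_n-\bar y}_{C(\overline\Omega)}/s_n\to \norm{\,\cdot\,}$-uniform convergence of $w_n$ is needed. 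Your version can be made rigorous (and would exhibit the $(w^-)^2$/$(w^+)^2$ split between $\Omega^2$ and $\Omega^3$), but as written the decisive step (``the $t'$-integration produces the density $\tfrac12 w^2\nabla\bar y\cdot\nabla\bar\varphi/|\nabla\bar y|$ up to a uniform $o(s_n^2)$'') is asserted rather than proved, and your bookkeeping conflates the spatial width $\approx s_n|w|/|\nabla\bar y|$ with the $t'$-interval of length $\approx s_n|w|$ — applying the coarea weight $|\nabla\bar y|^{-1}$ on top of the former would double count. These (C1) issues are repairable with the tools you cite; the (C2) cancellation is not, and needs to be replaced by an argument of the Morse--Sard/integration-by-parts type used in the paper.
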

\begin{remark}
    \label{rem:oscillation}
    Note that \eqref{eq:main-hypothesis-y-vanish} does not require that the level set $\{\bar y= \bar t\}$ has measure zero.
    When $N=1$, the last condition in \eqref{eq:main-hypothesis-y-vanish} means that
    for a.e. $t$ in a neighborhood of $\bar t$, the level sets $\{\bar y= t\}$ consist of finitely many points; in other words, the function $\bar y$ oscillates around the values $\bar t$ only finitely many times. In \cite{ClasonNhuRosch_os2nd}, Exam.~5.3, this condition was shown to be equivalent to the finiteness of the jump functional $\Sigma(\bar y)$ introduced in \eqref{eq:E-functional} for the case $N=1$.
\end{remark}

\begin{remark}
    \label{rem:An-term}
    According to the definition of the sets $\Omega_{y, \hat y}^{j}$ with $j = 2,3$ in \eqref{eq:Omega-123-sets}, the sets $\Omega_{y_n, \bar y }^{2}$ and $\Omega_{y_n, \bar y }^{3}$ can be expressed as
    \begin{equation}
        \label{eq:Omega23-rewritten}
        \Omega_{y_n, \bar y }^{2} = \tilde \Omega_{y_n, \bar y }^{2} \cup \{ \bar y \in (\bar t, \bar t + \delta), y_n = \bar t \} \quad \text{and} \quad \Omega_{y_n, \bar y }^{3} = \tilde \Omega_{y_n, \bar y }^{3} \cup \{ \bar y \in (\bar t - \delta, \bar t), y_n = \bar t \}
    \end{equation}
    with
    \begin{equation}
        \label{eq:Omega23-interior}
        \left\{
            \begin{aligned}
                \tilde \Omega_{y, \hat y }^{2} & := \{ \hat y \in (\bar t, \bar t +\delta), y \in (\bar t - \delta, \bar t) \},\\
                \tilde \Omega_{y, \hat y }^{3} & := \{ \hat y \in (\bar t - \delta, \bar t), y \in (\bar t, \bar t + \delta) \}
            \end{aligned}
        \right.
    \end{equation}
    for $y, \hat y \in C(\overline\Omega)$.
    Obviously, one has
    \begin{equation}
        \label{eq:Omega23-interior-symmetric}
        \tilde \Omega_{y, \hat y }^{2} = \tilde \Omega_{ \hat y, y }^{3}.
    \end{equation}
    Moreover, thanks to \eqref{eq:An-term}, the term $A_n(\Cu,\epsilon)$ can be rewritten as
    \begin{equation}
        \label{eq:An-term-rewritten}
        A_n(\Cu,\epsilon) := \int_{\Cu^\epsilon} (\bar t - y_n) [\1_{ \tilde \Omega_{y_n, \bar y }^{2} } -\1_{ \tilde \Omega_{y_n, \bar y }^{3}}] \nabla \bar y \cdot \nabla \bar \varphi \dx.
    \end{equation}
\end{remark}

\begin{lemma}
    \label{lem:gradient-nonvanish}
    Under assumption \ref{ass:nonvanishing-gradient-An} in \cref{thm:key-formula}, then there exists an $\epsilon_0 = \epsilon_0(\Cu) >0$ such that
    \begin{equation}
        \label{lim:key-nonvanish}
        \frac{A_n(\Cu,\epsilon)}{s_n^2} \to \frac{1}{2} \int_{\Cu} w^2 \frac{\nabla \bar y \cdot \nabla \bar \varphi}{|\nabla \bar y|} \dH^{1}(x) \quad \text{as} \quad n \to \infty \quad \text{for all } \epsilon \in (0, \epsilon_0).
    \end{equation}
\end{lemma}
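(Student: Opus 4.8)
The plan is to convert the volume integral $A_n(\Cu,\epsilon)$ into a boundary integral over $\Cu$ by means of the Green's identity in \cref{prop:Green-identity-general}, using that near $\Cu$ the integrand is — up to an error $o(s_n^2)$ — an exact gradient that vanishes on $\{y_n=\bar t\}$. First I would set up the geometry: by part \ref{item:C1-closedcurve} of \cref{prop:decomposition-levelset} together with \ref{ass:nonvanishing-gradient-An}, $\Cu$ is a $C^1$ closed simple curve. Assuming first $\Cu\cap\partial\Omega=\emptyset$, I choose $\epsilon_0\in(0,\tilde\epsilon)$ so small that $\Cu^{\epsilon_0}\subset\Omega$, that $\Cu$ is the only connected component of $\{\bar y=\bar t\}$ meeting $\Cu^{\epsilon_0}$, that $\sup_{\Cu^{\epsilon_0}}|\bar y-\bar t|<\delta/2$, and that \cref{prop:strong-positivity-gradients,prop:levelset-unique-component-continuity} apply with this $\epsilon_0$. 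Then for every $\epsilon\in(0,\epsilon_0)$ and $n\ge n_0(\epsilon)$ there is $\alpha_\Cu>0$ with $\nabla\bar y\cdot\nabla y_n\ge\alpha_\Cu$ on $\Cu^{\epsilon_0}$, the level set $\{y_n=\bar t\}$ has a unique connected component $\Cu_n$ inside $\Cu^\epsilon$ which (again by \cref{prop:decomposition-levelset}) is a $C^1$ closed simple curve, and $\sup_{\Cu^\epsilon}|y_n-\bar t|<\delta$ for $n$ large. With $y_1:=y_n$, $y_2:=\bar y$, $t:=\bar t$, $\mathcal O:=\Cu^\epsilon$, the sets $\mathcal S^{-}=\{y_1<t<y_2\}\cap\mathcal O$ and $\mathcal S^{+}=\{y_1>t>y_2\}\cap\mathcal O$ of \cref{lem:decomposition-domains} satisfy $\mathcal S^{-}=\tilde\Omega^{2}_{y_n,\bar y}\cap\Cu^\epsilon$ and $\mathcal S^{+}=\tilde\Omega^{3}_{y_n,\bar y}\cap\Cu^\epsilon$ with the sets from \eqref{eq:Omega23-interior}, so in the representation \eqref{eq:An-term-rewritten} the bracket $\1_{\tilde\Omega^{2}_{y_n,\bar y}}-\1_{\tilde\Omega^{3}_{y_n,\bar y}}$ equals $\1_{\mathcal S^-}-\1_{\mathcal S^+}$ on $\Cu^\epsilon$.

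The central step is to replace $\nabla\bar y$ by $\nabla y_n$ and then recognize an exact gradient. On $\mathcal S^+\cup\mathcal S^-$ the value $\bar t$ lies between $\bar y(x)$ and $y_n(x)$, hence $|\bar t-y_n|$ and $|\bar y-\bar t|$ are both $\le|y_n-\bar y|=s_n|w_n|$ there; together with $|\nabla\bar y|\ge\sqrt{\alpha_\Cu}$ on $\Cu^{\epsilon_0}$ and the coarea formula this gives $\meas_{\R^2}(\mathcal S^+\cup\mathcal S^-)\le C s_n\norm{w_n}_{C(\overline\Omega)}$. Since $\norm{\nabla y_n-\nabla\bar y}_{C(\overline\Omega)}\to0$, the difference between $A_n(\Cu,\epsilon)$ and $\int_{\Cu^\epsilon}(\bar t-y_n)(\1_{\tilde\Omega^{2}_{y_n,\bar y}}-\1_{\tilde\Omega^{3}_{y_n,\bar y}})\nabla y_n\cdot\nabla\bar\varphi\dx$ is bounded by $s_n\norm{w_n}_{C(\overline\Omega)}\norm{\nabla y_n-\nabla\bar y}_{C(\overline\Omega)}\norm{\nabla\bar\varphi}_{C(\overline\Omega)}\meas_{\R^2}(\mathcal S^+\cup\mathcal S^-)=o(s_n^2)$. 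Setting $v:=\tfrac12(y_n-\bar t)^2\in H^1(\Omega)\cap C(\overline\Omega)$, so that $\nabla v=(y_n-\bar t)\nabla y_n$, that remaining integral equals $\int_\Omega(\1_{\mathcal S^+}-\1_{\mathcal S^-})\nabla v\cdot\nabla\bar\varphi\dx$, using $\mathcal S^\pm\subset\Cu^\epsilon$.

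Next I would apply \cref{prop:Green-identity-general} with the data above; its hypotheses are met ($v\in H^1(\Omega)\cap C(\overline\Omega)$, $\bar\varphi\in W^{2,1}(\Omega)\cap C^1(\overline\Omega)$, $\nabla y_1\cdot\nabla y_2\ge\alpha_\Cu$ on $\mathcal O$, each of $\{y_n=\bar t\}$ and $\{\bar y=\bar t\}$ has its unique $C^1$ closed simple curve $\Cu_n$, resp.\ $\Cu$, in $\mathcal O$, both disjoint from $\partial\Omega$, and $\overline{\mathcal S^\pm}\subset\{|\bar y-\bar t|\le s_n\norm{w_n}_{C(\overline\Omega)}\}\cap\Cu^\epsilon\subset\Cu^{\epsilon/2}$, whence $\partial\mathcal S^\pm\subset\Cu_n\cup\Cu$). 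Since $v\equiv0$ on $\Cu_n=\{y_n=\bar t\}$ and $v=\tfrac12(y_n-\bar y)^2=\tfrac12 s_n^2w_n^2$ on $\Cu=\{\bar y=\bar t\}$, the boundary term over $\Cu_n$ drops out and, combined with the previous step, one obtains
\[
    A_n(\Cu,\epsilon)=-\int_\Omega(\1_{\mathcal S^+}-\1_{\mathcal S^-})\,v\,\Delta\bar\varphi\dx+\frac{s_n^2}{2}\int_{\Cu}w_n^2\,\frac{\nabla\bar y\cdot\nabla\bar\varphi}{|\nabla\bar y|}\dH^{1}(x)+o(s_n^2).
\]
Dividing by $s_n^2$ and letting $n\to\infty$ concludes the proof: the first term is $\le\tfrac12\norm{w_n}_{C(\overline\Omega)}^2\int_{\mathcal S^+\cup\mathcal S^-}|\Delta\bar\varphi|\dx\to0$ by absolute continuity of the integral of $\Delta\bar\varphi\in L^1(\Omega)$ (as $\meas_{\R^2}(\mathcal S^+\cup\mathcal S^-)\to0$), while the second converges to $\tfrac12\int_\Cu w^2\,\nabla\bar y\cdot\nabla\bar\varphi/|\nabla\bar y|\dH^{1}(x)$ because $w_n\to w$ uniformly on $\overline\Omega$ (from $W^{1,p}_0(\Omega)\hookrightarrow C(\overline\Omega)$, $p>2$) and $\Ha^1(\Cu)<\infty$ — this last convergence is also a special case of \cref{prop:continuity-integral-levelsets} with $y_n\equiv\bar y$. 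Since the limit does not depend on $\epsilon\in(0,\epsilon_0)$, this is exactly \eqref{lim:key-nonvanish}.

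I expect the main obstacle to be making \cref{prop:Green-identity-general} applicable: one must guarantee that inside $\Cu^\epsilon$ the level sets of both $y_n$ and $\bar y$ reduce to single $C^1$ closed simple curves that avoid $\partial\Omega$, and that the (possibly cusped) sets $\mathcal S^\pm$ together with their boundaries are correctly located — this is precisely the role of \cref{prop:strong-positivity-gradients,prop:levelset-unique-component-continuity} and part \ref{item:C1-closedcurve} of \cref{prop:decomposition-levelset}. One must also dispose of the degenerate case $\Cu=\partial\Omega$, which forces $\bar t=0$ and hence $\partial\Omega\subset\{y_n=0\}$ as well; there the same computation is carried out on the one-sided neighborhood $\Cu^-_\epsilon$, the boundary integral over $\partial\Omega=\Cu_n$ again vanishing because $v=0$ there, so no new difficulty arises. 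The only quantitative inputs are the bound on $\meas_{\R^2}(\mathcal S^+\cup\mathcal S^-)$ and the bound on $v$ over $\mathcal S^\pm$, both of which follow from the elementary fact that on $\mathcal S^\pm$ one has $|\bar y-y_n|\le s_n\norm{w_n}_{C(\overline\Omega)}$.
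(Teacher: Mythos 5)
Your argument is correct and is essentially the paper's own proof: after discarding the replacement of $\nabla \bar y$ by $\nabla y_n$ as an $o(s_n^2)$ error, you recognize the exact gradient of $\frac{1}{2}(y_n-\bar t)^2$, apply \cref{prop:Green-identity-general} on $\mathcal S^\pm\subset\Cu^\epsilon$ (the boundary term over $\Cu_n$ vanishes since $y_n=\bar t$ there, while the one over $\Cu$ yields $\frac{s_n^2}{2}\int_\Cu w_n^2\,\nabla\bar y\cdot\nabla\bar\varphi/|\nabla\bar y|\dH^1(x)$), and remove the volume term using $\Delta\bar\varphi\in L^1(\Omega)$, exactly as the paper does. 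The only minor deviations are that the paper bounds the replacement error via H\"older's inequality with $\norm{\nabla(y_n-\bar y)}_{L^p(\Omega)}=O(s_n)$ and dominated convergence, so it never needs your rate $\meas_{\R^2}(\mathcal S^+\cup\mathcal S^-)\leq C s_n$ (which is nonetheless valid under \ref{ass:nonvanishing-gradient-An}, cf.\ the use of Lemma~3.2 of \cite{DeckelnichHinze2012} in \cref{prop:E-func-finite}), and that in the degenerate case $\Cu=\partial\Omega$ the paper simply checks that both sets $\mathcal S^\pm$ are empty for $n$ large, so $A_n(\Cu,\epsilon)=0$ and both sides of \eqref{lim:key-nonvanish} vanish because $w=0$ on $\partial\Omega$ --- a bit cleaner than rerunning the Green identity there, since \cref{prop:Green-identity-general} formally requires the curves to avoid $\partial\Omega$.
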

\begin{proof}
    By \cref{prop:decomposition-levelset}, $\Cu$ is a closed $C^1$ simple curve in $\overline\Omega$.
    Thanks to \cref{prop:strong-positivity-gradients},
    there are constants $\epsilon_1, \alpha >0$ such that, for $n$ large enough,
    \begin{equation}
        \label{eq:gradient-positive}
        \nabla y_n (x)\cdot \nabla \bar y(x') \geq \alpha >0 \quad \text{for all} \quad x, x' \in \Cu^{\epsilon_1} \, \text{with } |x - x'| \leq 2 \epsilon_1.
    \end{equation}
    Since $\Cu$ is a closed component in $\{\bar y=\bar t\}$, there holds $\Cu^{\epsilon_2} \cap (\{\bar y=\bar t\}\backslash \Cu)^{\epsilon_2} = \emptyset$ for some constant $\epsilon_2>0$.
    Moreover, the sign of $(\bar y-\bar t)$ in $\Cu^-_{\epsilon_2}$ is opposite to the one in $\Cu^+_{\epsilon_2}$. Without loss of generality, we can thus assume that \begin{equation}
        \label{eq:y-positive-negative}
        \bar y < \bar t \quad \text{on } \Cu^-_{\epsilon_2} \quad \text{and} \quad \bar y > \bar t \quad \text{on } \Cu^+_{\epsilon_2}.
    \end{equation}
    Set $\epsilon_0 := \min\{\epsilon_1, \epsilon_2\}$, $\tau_n := \norm{y_n-\bar y}_{C(\overline\Omega)}$, fix any $\epsilon \in (0,\epsilon_0)$, and define the sets (depending on $\epsilon$)
    \begin{equation}
        \label{eq:Omega23-local-def}
        \Omega_n^2 := \tilde \Omega_{y_n, \bar y }^{2} \cap \Cu^{\epsilon} \quad \text{and} \quad \Omega_n^3 := \tilde \Omega_{y_n, \bar y }^{3} \cap \Cu^{\epsilon}.
    \end{equation}
    Here the sets $\tilde \Omega_{y_n, \bar y }^{2}$ and $\tilde \Omega_{y_n, \bar y }^{3}$ are defined in \eqref{eq:Omega23-interior}.
    Obviously, for $n$ large enough
    such that $\tau_n < \delta/2$, we have
    \begin{subequations}
        \label{eq:sets-local}
        \begin{align}
            \Omega_n^2 & = \{ \bar y \in (\bar t, \bar t +\delta), y_n \in (\bar t-\delta, \bar t) \} \cap \Cu^{\epsilon} = \{ y_n < \bar t < \bar y \} \cap \Cu^{\epsilon} \subset \Cu^+_{\epsilon}, \\
            \Omega_n^3 & = \{ \bar y \in (\bar t- \delta, \bar t), y_n \in (\bar t, \bar t+\delta) \} \cap \Cu^{\epsilon} = \{ \bar y < \bar t < y_n \} \cap \Cu^{\epsilon} \subset \Cu^-_{\epsilon}.
        \end{align}
    \end{subequations}
    As a result of \cref{prop:levelset-unique-component-continuity}, for $n$ large enough, $\Cu^{\epsilon}$ contains one and only one connected component $\Cu_n$ of $\{y_n = \bar t\}$. By \cref{prop:decomposition-levelset},
    the set $\Cu_n = \{y_n = \bar t\} \cap \Cu^{\epsilon}$ is a closed $C^1$ simple curve in $\R^2$. We now consider two cases.

    \noindent\emph{Case 1: $\Cu = \partial\Omega$.} In this case, we have $\bar t=0$ and thus $\Cu_n = \partial\Omega$ for sufficiently large $n$. Moreover, one has $\Cu^+_{\epsilon} = \emptyset$, $\Cu^\epsilon = \Cu \cup \Cu^{-}_{\epsilon}$ and then $\Omega_n^2 = \emptyset$ for $n$ large enough.
    On the other hand, since $\Cu_n = \{y_n = \bar t\} \cap \Cu^{\epsilon}$ is a closed simple curve in $\R^2$ for $n$ large enough and $y_{n} \to \bar y$ in $C(\overline\Omega)$, we deduce from \eqref{eq:y-positive-negative} that $y_n \leq \bar t$ on $\Cu^\epsilon$ and therefore $\Omega_n^3 = \emptyset$ for sufficiently large $n$.
    From this and the definition of $A_n(\Cu,\epsilon)$, we have \eqref{lim:key-nonvanish} because of the vanishing on $\partial\Omega$ of $w$.

    \noindent\emph{Case 2: $\Cu \neq \partial\Omega$.} In this case, both $\Cu_{\epsilon}^{+}$ and $\Cu_{\epsilon}^{-}$ are nonempty.
    To estimate $A_n(\Cu,\epsilon)$,
    we use the expression \eqref{eq:An-term-rewritten} of $A_n(\Cu,\epsilon)$ and split it into two terms as follows:
    \begin{multline} \label{eq:An-local}
        A_n(\Cu,\epsilon) := \int_{\Cu^\epsilon} (\bar t - y_n) [\1_{ \tilde{\Omega}^{2}_{y_n, \bar y} } -\1_{ \tilde{\Omega}^{3}_{y_n, \bar y}}] \nabla (\bar y-y_n) \cdot \nabla \bar \varphi \dx \\
        + \int_{\Cu^\epsilon} (\bar t - y_n) [\1_{ \tilde{\Omega}^{i,2}_{y_n, \bar y} } -\1_{ \tilde{\Omega}^{3}_{y_n, \bar y}}] \nabla y_n \cdot \nabla \bar \varphi \dx =B_n + C_n
    \end{multline}
    with
    \begin{equation}
        \label{eq:Bn-Cn-term}
        B_n := \int_{\Cu^\epsilon} (\bar t - y_n) [\1_{ \tilde{\Omega}^{2}_{y_n, \bar y} } -\1_{ \tilde{\Omega}^{3}_{y_n, \bar y}}] \nabla (\bar y-y_n) \cdot \nabla \bar \varphi \dx \quad \text{and} \quad C_n := \int_{\Cu^\epsilon} (\bar t - y_n) [\1_{ \tilde{\Omega}^{2}_{y_n, \bar y} } -\1_{ \tilde{\Omega}^{3}_{y_n, \bar y}}] \nabla y_n \cdot \nabla \bar \varphi \dx.
    \end{equation}
    Moreover, we deduce from the definition of sets $\Omega_n^2$ and $\Omega_n^3$ that
    \[
        B_n = \int_{\Cu^\epsilon} (\bar t - y_n) [\1_{ \Omega_n^2 } -\1_{ \Omega_n^3}] \nabla (\bar y-y_n) \cdot \nabla \bar \varphi \dx \quad \text{and} \quad C_n = \int_{\Cu^\epsilon} (\bar t - y_n) [\1_{ \Omega_n^2 } -\1_{ \Omega_n^3}] \nabla y_n \cdot \nabla \bar \varphi \dx.
    \]
    From the facts
    \begin{equation}
        \label{eq:structure-esti}
        \left\{
            \begin{aligned}
                &|\bar t - \bar y|, |\bar t - y_n| \leq |\bar y-y_n| \quad \text{on} \quad \Omega_n^2 \cup \Omega_n^3,\\
                & \Omega_n^2 \cup \Omega_n^3 \subset \{0 < |\bar y-\bar t| \leq \tau_n\},
            \end{aligned}
        \right.
    \end{equation}
    we deduce from H\"{o}lder's inequality for $p':= \tfrac{p}{p-1}$ that
    \begin{equation}
        \label{eq:infinitesimal-value}
        |B_n| \leq \norm{y_n -\bar y}_{C(\overline\Omega)} \norm{\nabla( y_n-\bar y)}_{L^p(\Omega)} \norm{\nabla \bar \varphi \1_{\{0 < |\bar y-\bar t| \leq \tau_n\}}}_{L^{p'}(\Omega)} = o(s^2_n).
    \end{equation}
    From this and \eqref{eq:An-local}, there holds
    \begin{equation}
        \label{eq:An-esti}
        A_n(\Cu,\epsilon) = o(s_n^2) + C_n.
    \end{equation}
    We now estimate $C_n$. To this end, we shall employ \cref{prop:Green-identity-general}.
    We first rewrite $C_n$ as
    \begin{equation} \label{eq:Cn-term}
        C_n = - \frac{1}{2} \int_{\Cu^\epsilon} [\1_{ \Omega_n^2 } -\1_{ \Omega_n^3}] \nabla (y_n-\bar t)^2 \cdot \nabla \bar \varphi \dx.
    \end{equation}
    From \eqref{eq:sets-local} and the limit $y_n \to \bar y$ in $C^1(\overline\Omega)$ that, for $n$ large enough, there holds
    \begin{equation}
        \label{eq:boundary-inclusion-Omega23}
        \partial \Omega_n^2 \cup \partial \Omega_n^3 \subset \Cu \cup \Cu_n.
    \end{equation}

    Applying \cref{prop:Green-identity-general} for functions $y_1 := \bar y, y_2 := y_n$ and $v:= (y_n-\bar t)^2$, $ \varphi := \bar \varphi$, and open sets $\mathcal{S}^+ := \Omega_n^2$, $\mathcal{S}^- := \Omega_n^3$ (see \eqref{eq:sets-local} for the definition of the sets $ \Omega_n^2, \Omega_n^3$) yields
    \begin{multline*}
        C_n = \frac{1}{2} \int_{\Cu^\epsilon} [\1_{ \Omega_n^2 } -\1_{ \Omega_n^3}] (y_n-\bar t)^2 \Delta\bar \varphi dx \\
        + \frac{1}{2} \int_{ \Cu } (y_n-\bar t)^2 \nabla\bar \varphi \cdot \frac{\nabla \bar y}{|\nabla \bar y|} \dH^{1}(x) - \frac{1}{2} \int_{ \Cu_n } (y_n-\bar t)^2 \nabla\bar \varphi \cdot \frac{\nabla y_n}{|\nabla y_n|} \dH^{1}(x),
    \end{multline*}
    or equivalently,
    \begin{equation}
        \label{eq:Cn-esti}
        C_n = \frac{1}{2} \int_{\Cu^\epsilon} [\1_{ \Omega_n^2 } -\1_{ \Omega_n^3}] (y_n-\bar t)^2 \Delta\bar \varphi dx+ \frac{1}{2} \int_{ \Cu } (y_n-\bar y)^2 \nabla\bar \varphi \cdot \frac{\nabla \bar y}{|\nabla \bar y|} \dH^{1}(x).
    \end{equation}
    In view of \eqref{eq:structure-esti},
    there then holds
    \begin{equation}
        \label{eq:Cn-first-term-esti}
        \left|\int_{\Cu^\epsilon} [\1_{ \Omega_n^2 } -\1_{ \Omega_n^3}] (y_n-\bar t)^2 \Delta\bar \varphi dx \right| \leq \norm{y_n-\bar y}_{C(\overline\Omega)}^2 \int_{\{0 < |\bar y-\bar t| \leq \tau_n \}} |\Delta \bar \varphi| \dx = o(s_n^2),
    \end{equation}
    as a result of Lebesgue's Dominated Convergence Theorem.
    Combining this with \eqref{eq:Cn-esti} gives
    \begin{equation}
        \label{eq:Cn-computation}
        C_n = o(s_n^2) +\frac{1}{2} \int_{\Cu} (y_n-\bar y)^2 \nabla \bar \varphi \cdot \frac{\nabla \bar y}{|\nabla \bar y|} \dH^{1}(x).
    \end{equation}
    The combination of \eqref{eq:Cn-computation} with \eqref{eq:An-esti} and the limits $(y_n-\bar y)/s_n \to w$ in $W^{1,p}_0(\Omega)$ yields \eqref{lim:key-nonvanish}.
\end{proof}

\begin{lemma}
    \label{lem:gradient-vanish}
    Under assumption \ref{ass:vanishing-gradient-An-convex-concave},
    there is an $\epsilon_0 = \epsilon_0(\Cu) >0$ such that
    \begin{equation}
        \label{lim:key-vanish}
        \frac{A_n(\Cu,\epsilon)}{s_n^2} \to 0 \quad \text{as} \quad n \to \infty \quad \text{for all } \epsilon \in (0, \epsilon_0).
    \end{equation}
\end{lemma}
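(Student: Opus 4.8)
The plan is to mirror the structure of the proof of \cref{lem:gradient-nonvanish}, but to \emph{kill} the boundary contribution that there produced the limit $\tfrac12\int_{\Cu}w^2\frac{\nabla\bar y\cdot\nabla\bar\varphi}{|\nabla\bar y|}\dH^1(x)$: since $\nabla\bar y=0$ on $\Cu$ this boundary term should disappear, and the point of hypothesis \ref{ass:vanishing-gradient-An-convex-concave} is to make that rigorous. First I would use \cref{cor:gradient-vanish} together with a covering/partition-of-unity argument on the compact set $\Cu$: each $x\in\Cu$ has a ball $B(x,\epsilon_x)\cap\overline\Omega$ on which $\bar y$ is convex or concave and (since $\nabla\bar y=0$ on $\Cu$) on which $\bar y-\bar t$ therefore has one sign, so after shrinking $\epsilon_0$ and splitting $A_n(\Cu,\epsilon)$ by a fixed finite partition of unity I may assume, on each piece, that $\bar y\ge\bar t$ (the case $\bar y\le\bar t$ and the trivial case $\bar y\equiv\bar t$ being analogous or empty). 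On such a piece $\Omega_{y_n,\bar y}^{3}\cap\Cu^\epsilon=\emptyset$ for $n$ large, and, using \eqref{eq:Omega23-rewritten}, $A_n(\Cu,\epsilon)$ reduces to an integral over the thin set $\Omega_n^2:=\widetilde\Omega_{y_n,\bar y}^{2}\cap\Cu^\epsilon=\{y_n<\bar t<\bar y\}\cap\Cu^\epsilon$.

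\textbf{Main decomposition.} Exactly as in \cref{lem:gradient-nonvanish} I would write $\nabla\bar y=\nabla(\bar y-y_n)+\nabla y_n$ and split $A_n(\Cu,\epsilon)=B_n+C_n$. Using $|\bar t-y_n|\le|\bar y-y_n|\le\tau_n:=\norm{y_n-\bar y}_{C(\overline\Omega)}$ on $\Omega_n^2$, the Sobolev embedding $W^{1,p}_0(\Omega)\hookrightarrow C(\overline\Omega)$ (so that $\tau_n=O(s_n)$) and $\meas_{\R^2}(\Omega_n^2)\to0$ (continuity of measure, since $\Omega_n^2\subset\{0<|\bar y-\bar t|\le\tau_n\}\cap\Cu^\epsilon$), Hölder's inequality with $p'=p/(p-1)$ gives $B_n=o(s_n^2)$. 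For $C_n$ I would use, as in \eqref{eq:Cn-term}, that $C_n=-\tfrac12\int_{\Omega_n^2}\nabla[(y_n-\bar t)^2]\cdot\nabla\bar\varphi\dx$, and then apply Green's first identity. The difference to \cref{lem:gradient-nonvanish} is that \cref{prop:Green-identity-general} is not available here because $\nabla\bar y=0$ on $\Cu$; instead I would decompose $\Omega_n^2$ into its at most countably many connected components (\cref{prop:countable-decomposition-openset}), apply Green's identity on each component via the level‑set machinery of \cref{sec:Green-first-identity} (the part of $\partial\Omega_n^2$ in $\{y_n=\bar t\}$ has nonvanishing $\nabla y_n$ for $n$ large and is handled as in \cref{lem:Green-identity}; the part in $\Cu\subset\{\bar y=\bar t\}$ is approximated by regular domains as in the proof of \cref{lem:Green-identity}), and sum.

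\textbf{Why the boundary term vanishes.} On each component of $\Omega_n^2$ the boundary splits into arcs lying in $\{y_n=\bar t\}$, which carry no contribution because the integrand factor $(y_n-\bar t)^2$ vanishes there, and arcs lying in $\Cu$. Here the key observation is that, because $\bar y\ge\bar t$ on a full neighbourhood of $\Cu$ while $\bar y=\bar t$ on $\Cu$, the open set $\Omega_n^2$ accumulates on $\Cu$ from both sides; hence each $\Cu$-arc occurring in $\partial\Omega_n^2$ is shared by two components with opposite outward co-normals, so the two corresponding boundary integrals $\pm\tfrac12\int_{\text{arc}}(y_n-\bar t)^2\,\nabla\bar\varphi\cdot\nu\dH^1$ cancel. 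After this cancellation only the volume term survives, $C_n=\tfrac12\int_{\Omega_n^2}(y_n-\bar t)^2\Delta\bar\varphi\dx+o(s_n^2)$, and since $|y_n-\bar t|\le|\bar y-y_n|+|\bar y-\bar t|\le2\tau_n=O(s_n)$ on $\Omega_n^2$ and $\int_{\{0<|\bar y-\bar t|\le\tau_n\}\cap\Cu^\epsilon}|\Delta\bar\varphi|\dx\to0$ by absolute continuity of the Lebesgue integral ($\Delta\bar\varphi\in L^1(\Omega)$, $\meas_{\R^2}(\Omega_n^2)\to0$), we obtain $C_n=o(s_n^2)$. Combining with $B_n=o(s_n^2)$ and summing over the finitely many pieces of the partition of unity yields \eqref{lim:key-vanish}.

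\textbf{Main obstacle.} The delicate point is the last step: justifying Green's first identity on the irregular components of $\Omega_n^2$, whose boundaries touch $\Cu$ where $\nabla\bar y$ vanishes (so no $C^1$-regularity is available there), and making the pairwise cancellation of the $\Cu$-arc boundary integrals precise; this is where the bound $\Ha^1(\{\bar y=t\}\cap\Cu^{\tilde\epsilon})\le C_0$ for a.a.\ $t$ enters, ensuring that the total length of the $\Cu$-arcs involved stays bounded so that the approximation by regular domains (and the ensuing limit passage, as in the proof of \cref{lem:Green-identity} and \cref{prop:Green-identity-general}) can be carried out. The uniform convex–concave hypothesis is exactly what guarantees the "two-sided accumulation" of $\Omega_n^2$ on $\Cu$ that drives the cancellation.
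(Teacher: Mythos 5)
Your reduction to the case $\bar y\ge\bar t$ near $\Cu$, the identity $\Omega_n^3=\emptyset$, and the estimate $B_n=o(s_n^2)$ all match the paper's argument. The gap is in your treatment of $C_n$. You integrate by parts on $\Omega_n^2$ itself and then discard the boundary integrals over the arcs of $\partial\Omega_n^2$ lying in $\Cu$ by a claimed pairwise cancellation ("$\Omega_n^2$ accumulates on $\Cu$ from both sides, adjacent components have opposite co-normals"). This is not true: by definition $\Omega_n^2\subset\{\bar y>\bar t\}$, so $\Omega_n^2$ never lies on the side of such an arc occupied by $\Cu$ (which under \ref{ass:vanishing-gradient-An-convex-concave} may even be a plateau of positive measure, cf. \cref{fig:vanish}); each $\Cu$-arc of $\partial\Omega_n^2$ is adjacent to exactly one component, and no cancellation occurs. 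These terms cannot be ignored: on $\Cu$ one has $(y_n-\bar t)^2=(y_n-\bar y)^2=O(s_n^2)$, so after dividing by $s_n^2$ they are of order one — precisely the size of the term that produces the nonzero limit in \cref{lem:gradient-nonvanish}. Moreover, the Green identities of \cref{sec:Green-first-identity} (\cref{lem:Green-identity,prop:Green-identity-general}) require nonvanishing gradients of \emph{both} defining functions on the closure of the domain, which fails on $\Cu$ where $\nabla\bar y=0$, so even the validity of the integration by parts on components whose boundary meets $\Cu$ is not covered; and the bound $\Ha^{1}(\{\bar y=t\}\cap\Cu^{\tilde\epsilon})\le C_0$ in \eqref{eq:main-hypothesis-y-vanish} holds only for a.a.\ $t$, hence cannot be invoked at $t=\bar t$ to control the length of those $\Cu$-arcs.

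The paper avoids the degenerate boundary altogether: writing $C_n=\int_{\Omega_n^2}(\bar t-y_n)\nabla y_n\cdot\nabla\bar\varphi\dx$, it uses the Morse--Sard theorem in Sobolev spaces to pick a regular value $\bar t_n$ of $y_n$ with $\bar t_n\le\bar t$, $\bar t-\bar t_n=o(s_n^2)$ and $\{y_n=\bar t_n\}$ a finite union of $C^1$ curves with $\nabla y_n\neq0$ there. The strip $\{\bar t_n\le y_n\le\bar t\}\cap\Cu^\epsilon$ is estimated by the coarea formula together with the a.e.\ $\Ha^1$ bound of \eqref{eq:main-hypothesis-y-vanish} (this is where that hypothesis legitimately enters, over the range $(\bar t_n-\tau_n,\bar t+\tau_n)$, not at $t=\bar t$), while on $\{y_n<\bar t_n\}\cap\Cu^\epsilon$ integration by parts is performed against $\nabla(y_n-\bar t_n)^2$, whose boundary term vanishes identically because $(y_n-\bar t_n)^2=0$ on the regular level set $\{y_n=\bar t_n\}$; the remaining volume terms are $o(s_n^2)$ by the choice of $\bar t_n$ and dominated convergence. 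To repair your proof you would need to replace the cancellation step by such a regular-value (or comparable) device; as written, the argument does not establish \eqref{lim:key-vanish}.
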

\begin{proof}
    Without loss of generality, %
    assume that there thus exists an $\epsilon_3>0$ such that $\bar y$ is convex on $B(x,\epsilon_3) \cap \overline\Omega$ for all $x \in \Cu$; see \cref{fig:vanish}.
    We first set $\epsilon_0 := \min\{\tilde\epsilon,\epsilon_2, \epsilon_3\}$ with constants $\tilde\epsilon$ given in \eqref{eq:main-hypothesis-y-vanish} and $\epsilon_2$ defined as in the proof of \cref{lem:gradient-nonvanish}. Let us take $\epsilon \in (0,\epsilon_0)$ arbitrarily but fixed and reuse all symbols defined in the proof of
    \cref{lem:gradient-nonvanish}.
    Moreover, the relations \eqref{eq:sets-local}--\eqref{eq:Cn-term} are still valid.
    The convexity of $\bar y$ and the fact that $\nabla \bar y =0$ on $\Cu$ imply that $\bar y > \bar t$ on $\Cu^\epsilon \backslash \Cu$.
    This and \eqref{eq:sets-local} guarantee that
    \begin{equation} \label{eq:Omega3}
        \Omega_n^2 = \{y_n < \bar t \} \cap (\Cu^{\epsilon} \backslash \Cu) \quad \text{and}\quad \Omega_n^3 =\emptyset \quad \text{for all $n \geq n_0$ for some integer $n_0 \in \N$}
    \end{equation}
    and thus
    \begin{equation}
        \label{eq:Cn-term-vanish-ass}
        C_n = \int_{\Omega_n^2}(\bar t-y_n)\nabla y_n \cdot \nabla \bar \varphi \dx.
    \end{equation}
    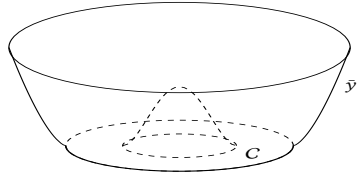
\begin{figure}
        \centering
        \resizebox{4.75cm}{2.25cm}{%
            \begin{tikzpicture}
                \tkzDefPoint(0,0){A}
                \tkzDefPoint(-2,0){B}
                \tkzDefPoint(2,0){C}
                \tkzDefPoint(-1,0){D}
                \tkzDefPoint(1.1,-0.2){E}
                \draw [thin, domain=-3:-2] plot (\x,{2.5*(abs(\x+2))^(1.5)});
                \draw [dashed, domain=-1:1] plot (\x,{1.5*((\x)^2-1)^2});
                \draw [thin, domain=2:3] plot (\x,{2.5*(\x-2)^(1.5))});
                \draw[dashed] (A) ellipse (2.cm and 0.65cm);
                \draw[dashed] (A) ellipse (1cm and 0.3cm);
                \draw[thick] (B) arc(-180:0:2.cm and 0.65cm);
                \draw[thin] (0,2.5) ellipse (3cm and 1.15cm);
                \draw (E) node[align=right]{$\quad\mathcal{C}$};
                \draw (3,1.5) node[align=right]{$\bar y$};
            \end{tikzpicture}
        }
        \caption{a closed component $\Cu$ of the level set $\{\bar y=\bar t\}$ with positive measure $\meas_{\R^2}(\Cu)$}
        \label{fig:vanish}
    \end{figure}
    We now split the sequence $\{n\}$ into subsequences, also denoted by $\{n\}$, that satisfy one of the following conditions:
    \begin{multicols}{2}
        \begin{enumerate}[label= (\alph*)]
            \item \label{item:nonempty2-vanish} $\Omega_n^2 \neq \emptyset$ for all $n \geq n_0$;
            \item \label{item:empty2-vanish} $\Omega_n^2 = \emptyset$ for all $n \geq n_0$.
        \end{enumerate}
    \end{multicols}
    For \ref{item:empty2-vanish},
    we have from \eqref{eq:An-local} that $A_n(\Cu,\epsilon) =0$ for all $n \geq n_0$.
    The limit \eqref{lim:key-vanish} thus follows.
    It remains to consider \ref{item:nonempty2-vanish}.
    To this end, by the Morse--Sard Theorem in Sobolev spaces (see; e.g. Theorem 5 in \cite{Figalli2008} and Corollary 5.2 in \cite{Bourgain2013}), for a.e. $t \in \R$, the level set $\{y_n=t\}$ is a finite disjoint family of $C^1$ simple curves in $\R^2$ and $\nabla y_n$ does not vanish on $\{y_n =t\}$. Therefore, for any $n \geq n_0$, there exists a $\bar t_n \in \R$ such that
    \begin{equation}
        \label{eq:tni-choice}
        \left\{
            \begin{aligned}
                & \bar t \geq \bar t_n \neq 0, \\
                & \bar t -\bar t_n = o(s_n^2), \\
                & \{y_n=\bar t_n\} \, \text{consists of finite disjoint closed $C^1$ simple curves},\\
                & |\nabla y_n(x)| > 0 \quad \text{for all } x \in \{ y_n = \bar t_n\}.
            \end{aligned}
        \right.
    \end{equation}
    Note that any closed curve in $\{y_n = \bar t_n\}$ does not intersect with the boundary $\partial\Omega$ since $\bar t_n \neq 0$.
    From the expression of $C_n$ in \eqref{eq:Cn-term} and the identities in \eqref{eq:Omega3}, one has
    \begin{equation*}
        \begin{aligned}
            C_n = \int_{\Omega_n^2}(\bar t-y_n)\nabla y_n \cdot \nabla \bar \varphi \dx &= \int_{\{\bar t_n \leq y_n \leq \bar t \} \cap \Cu^{\epsilon}}(\bar t-y_n)\nabla y_n \cdot \nabla \bar \varphi \dx\\
            & \quad + \int_{\{y_n < \bar t_n \} \cap \Cu^{\epsilon}}(\bar t-y_n)\nabla y_n \cdot \nabla \bar \varphi \dx =: D_n^1 + D_n^2.
        \end{aligned}
    \end{equation*}
    Obviously, one has from the choice of $\bar t_n$ in \eqref{eq:tni-choice} and the fact $\norm{y_n -\bar y}_{W^{1,p}_0(\Omega)} = O(s_n)$ that
    \begin{equation*}
        \begin{aligned}
            |D_n^1| &\leq \int_{\R^2} \1_{\{\bar t_n \leq y_n \leq \bar t \} \cap \Cu^{\epsilon}}|\bar t-y_n||\nabla y_n| | \nabla \bar \varphi| \dx \\
            & \leq \int_{\R^2} \1_{\{\bar t_n \leq y_n \leq \bar t \} \cap \Cu^{\epsilon}}|\bar t-y_n||\nabla (y_n-\bar y)| | \nabla \bar \varphi| \dx
            + \int_{\R^2} \1_{\{\bar t_n \leq y_n \leq \bar t \} \cap \Cu^{\epsilon}}|\bar t-y_n||\nabla \bar y| | \nabla\bar \varphi| \dx \\
            & \leq |\bar t - \bar t_n| \int_{\R^2} \1_{\{\bar t_n \leq y_n \leq \bar t \} \cap \Cu^{\epsilon}} |\nabla (y_n-\bar y)|| \nabla \bar \varphi| \dx
            + \int_{\R^2} \1_{\{\bar t_n \leq y_n \leq \bar t \} \cap \Cu^{\epsilon}}|\bar t-y_n||\nabla \bar y| | \nabla\bar \varphi| \dx \\
            & =o(s_n^2) + \int_{\R^2} \1_{\{\bar t_n \leq y_n \leq \bar t \} \cap \Cu^{\epsilon}}|\bar t-y_n||\nabla \bar y| | \nabla \bar \varphi| \dx.
        \end{aligned}
    \end{equation*}
    From the inclusion $\{\bar t_n \leq y_n \leq \bar t \} \subset \{\bar t_n - \tau_n \leq \bar y \leq \bar t+\tau_n \}$ and the coarea formula for Lipschitz mappings; see, e.g. \cite{Evans1992}, Thm.~2, p.~117 and \cite{AlbertiBianchiniCrippa2013}, Sec.~2.7, we have
    \begin{equation*}
        \begin{aligned}
            |D_n^1| &\leq o(s_n^2) + \int_{\R} \left[\int_{\{\bar y =t\}} \1_{\{\bar t_n \leq y_n \leq \bar t \} \cap \Cu^{\epsilon}}|\bar t-y_n| | \nabla \bar \varphi| \dH^{1}(x)\right]dt\\
            & = o(s_n^2) + \int_{\bar t_n -\tau_n}^{\bar t+\tau_n} \left[\int_{\{\bar y =t\}} \1_{\{\bar t_n \leq y_n \leq \bar t \} \cap \Cu^{\epsilon}}|\bar t-y_n| | \nabla \bar \varphi| \dH^{1}(x)\right]dt\\
            & \leq o(s_n^2) + C_0(\bar t - \bar t_n+2 \tau_n)(\bar t-\bar t_n) \norm{\nabla \bar \varphi}_{L^\infty(\Omega)} = o(s_n^2),
        \end{aligned}
    \end{equation*}
    due to the choice of $\bar t_n$ in \eqref{eq:tni-choice} and conditions in \eqref{eq:main-hypothesis-y-vanish}.
    For $D_n^2$, we see from \eqref{eq:tni-choice} that $\{y_n = \bar t_n \} \cap \Cu^\epsilon \neq \emptyset$ for $n$ large enough. If
    $\{y_n < \bar t_n\} \cap \Cu^\epsilon =\emptyset$, then $D_n^2=0$. Otherwise, let $\Gamma_n$ be the boundary of $\{y_n < \bar t_n\} \cap \Cu^\epsilon$. There are two possibilities in principle: either an infinite subsequence $\{k\}$ of $\{n\}$ exists and satisfies $\Gamma_k \cap \partial \Cu^\epsilon \neq \emptyset$, or there is no such an subsequence.
    Let us see that the first possibility is not actually a correct assumption. Indeed, if $\Gamma_k \cap \partial \Cu^\epsilon \neq \emptyset$, then $\norm{y_k - \bar y}_{C(\overline\Omega)} \geq \bar y(x) - \bar t >0$ for all $x \in \partial\Cu^\epsilon$. This contradicts the limit $\norm{y_k - \bar y}_{C(\overline\Omega)} \to 0$ as $k \to \infty$.
    Therefore, the second possibility always holds. It then must be true that $\Gamma_n \cap \partial \Cu^\epsilon = \emptyset$ and so $\Gamma_n = \{y_n =\bar t_n\} \cap \Cu^\epsilon$ for $n$ large enough.
    Combining this with the last two conditions in \eqref{eq:tni-choice} and the Implicit Function Theorem,
    we deduce that the open set $\{y_n < \bar t_n\} \cap \Cu^\epsilon$ decomposes into subdomains with Lipschitz boundaries.
    By rewriting $D_n^2$ and then using integration by parts, we have
    \begin{equation*}
        \begin{aligned}
            D_n^2 & = \int_{\{y_n < \bar t_n \} \cap \Cu^{\epsilon}}(\bar t-y_n)\nabla y_n \cdot \nabla \bar \varphi \dx \\
            & = \int_{\{y_n < \bar t_n \} \cap \Cu^{\epsilon}}(\bar t-\bar t_n)\nabla y_n \cdot \nabla \bar \varphi \dx
            + \int_{\{y_n < \bar t_n \} \cap \Cu^{\epsilon}}(\bar t_n-y_n)\nabla y_n \cdot \nabla \bar \varphi \dx \\
            &= \int_{\{y_n < \bar t_n \} \cap \Cu^{\epsilon}}(\bar t- \bar t_n)\nabla y_n \cdot \nabla \bar \varphi \dx - \frac{1}{2} \int_{\{y_n < \bar t_n \} \cap \Cu^{\epsilon}}\nabla (y_n -\bar t_n)^2\cdot \nabla\bar \varphi \dx \\
            & = \int_{\{y_n < \bar t_n \} \cap \Cu^{\epsilon}}(\bar t-\bar t_n)\nabla y_n \cdot \nabla \bar \varphi \dx + \frac{1}{2} \int_{\{y_n < \bar t_n \} \cap \Cu^{\epsilon}} (y_n -\bar t_n)^2\Delta\bar \varphi \dx - \frac{1}{2} \int_{\Gamma_n} (y_n-\bar t_n)^2 \nabla\bar \varphi \cdot \nu_n \dH^{1}(x) \\
            & =\int_{\{y_n < \bar t_n \} \cap \Cu^{\epsilon}}(\bar t-\bar t_n)\nabla y_n \cdot \nabla \bar \varphi \dx + \frac{1}{2} \int_{\{y_n < \bar t_n \} \cap \Cu^{\epsilon}} (y_n -\bar t_n)^2\Delta\bar \varphi \dx,
        \end{aligned}
    \end{equation*}
    where $\nu_n$ stands for the outward unit normal vector to $\Gamma_n$. From this, the first two conditions in \eqref{eq:tni-choice}, and the fact that $\bar y(x) \geq \bar t$ for all $x \in \Cu^\epsilon$, we have
    \begin{align*}
        |D_n^2| & \leq (\bar t -\bar t_n) \norm{\nabla y_n}_{L^\infty(\Omega)} \norm{\nabla \bar \varphi}_{L^\infty(\Omega)}\norm{\1_{\{y_n < \bar t_n \} \cap \Cu^{\epsilon}}}_{L^1(\Omega)} +\frac{1}{2} \int_{\{ y_n < \bar t_n \} \cap \Cu^{\epsilon}} (y_n -\bar y)^2 |\Delta \bar \varphi| \dx \\
        & = o(s_n^2) + \frac{1}{2} \norm{y_n - \bar y}_{C(\overline\Omega)}^2 \int_{\{ y_n < \bar t_n \} \cap \Cu^{\epsilon}} |\Delta \bar \varphi| \dx\\
        & = o(s_n^2) + O(s_n^2) \int_\Omega \1_{\{ y_n < \bar t_n \} \cap \Cu^{\epsilon}} |\Delta \bar \varphi| \dx.
    \end{align*}
    Since $\bar \varphi \in W^{2,1}(\Omega)$ and $0 \leq \1_{\{ y_n < \bar t_n \} \cap \Cu^{\epsilon}} \leq \1_{\{0< |y_n - \bar y| \leq \tau_n \} \cap \Cu^{\epsilon}} \to 0$ a.e. in $\Omega$ as $n \to \infty$, we deduce from Lebesgue's Dominated Convergence Theorem that
    \[
        \int_\Omega \1_{\{ y_n < \bar t_n \} \cap \Cu^{\epsilon}} |\Delta \bar \varphi| \dx \to 0 \quad \text{as } n \to \infty.
    \]
    We thus have $D_n^2 = o(s_n^2)$.
    In conclusion, we derive $D_n^1 = o(s_n^2)$, $D_n^2 = o(s_n^2)$ and thus $C_n = D_n^1 + D_n^2 = o(s_n^2)$.
    We then deduce from \eqref{eq:An-esti} that $A_n(\Cu,\epsilon) = o(s_n^2)$. Consequently, the desired conclusion of the lemma follows.
\end{proof}

\begin{lemma}
    \label{lem:gradient-vanish-structure}
    Under assumption \ref{ass:vanishing-gradient-structure},
    there is an $\epsilon_0 = \epsilon_0(\Cu) >0$ such that
    \begin{equation}
        \label{lim:key-vanish-structure}
        \frac{A_n(\Cu,\epsilon)}{s_n^2} \to 0 \quad \text{as} \quad n \to \infty \quad \text{for all } \epsilon \in (0, \epsilon_0).
    \end{equation}
\end{lemma}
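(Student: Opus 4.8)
The plan is to estimate $A_n(\Cu,\epsilon)$ directly, using the representation \eqref{eq:An-term-rewritten} in which only $\nabla\bar y$ — and not $\nabla y_n$ — occurs; this is the crucial point, since assumption \ref{ass:vanishing-gradient-structure} gives no control on the level sets of $\bar y$ (which need not be $C^1$ curves), so no Green-type identity as in \cref{lem:gradient-nonvanish} is available. First I would fix constants exactly as in the proof of \cref{lem:gradient-nonvanish}: since $\Cu$ and $\{\bar y=\bar t\}\setminus\Cu$ are closed and disjoint, there is $\epsilon_2>0$ with $\Cu^{\epsilon_2}\cap(\{\bar y=\bar t\}\setminus\Cu)^{\epsilon_2}=\emptyset$, and I set $\epsilon_0:=\min\{\tilde\epsilon,\epsilon_2\}$ with $\tilde\epsilon$ from \eqref{eq:main-hypothesis-y-vanish-structure}. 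This choice yields, for every $\epsilon\in(0,\epsilon_0)$,
\[
    \{\bar y=\bar t\}\cap\overline{\Cu^\epsilon}=\Cu,
\]
because any $x\in\overline{\Cu^\epsilon}$ with $\bar y(x)=\bar t$ has $\mathrm{dist}(x,\Cu)\le\epsilon<\epsilon_2$, so $x\notin\Cu$ would force $x\in\Cu^{\epsilon_2}\cap(\{\bar y=\bar t\}\setminus\Cu)^{\epsilon_2}$. I would also record that $\tau_n:=\norm{y_n-\bar y}_{C(\overline\Omega)}$ satisfies $\tau_n=O(s_n)$ (and, WLOG, $\tau_n>0$), since $(y_n-\bar y)/s_n\to w$ in $W^{1,p}_0(\Omega)\hookrightarrow C(\overline\Omega)$ for $p>2$; fix $M$ with $\tau_n^2\le M s_n^2$ for $n$ large.

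Now fix $\epsilon\in(0,\epsilon_0)$. On $\tilde\Omega_{y_n,\bar y}^2\cup\tilde\Omega_{y_n,\bar y}^3$ one has $|\bar t-y_n|\le|\bar y-y_n|\le\tau_n$, and this set is contained in $\{0<|\bar y-\bar t|\le\tau_n\}$ (cf. \eqref{eq:structure-esti}), so \eqref{eq:An-term-rewritten} together with $\bar\varphi\in C^1(\overline\Omega)$ gives
\[
    |A_n(\Cu,\epsilon)|\le\tau_n\,\norm{\nabla\bar\varphi}_{L^\infty(\Omega)}\int_{\{0<|\bar y-\bar t|\le\tau_n\}\cap\Cu^\epsilon}|\nabla\bar y|\dx .
\]
Taken by itself this only yields $O(s_n^2)$; the gain to $o(s_n^2)$ comes from the fact that the integration region collapses onto $\Cu$, where $\nabla\bar y=0$. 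Concretely, given $\eta>0$, continuity of $\nabla\bar y$ and $\nabla\bar y=0$ on the compact set $\Cu$ provide $\epsilon'=\epsilon'(\eta)>0$ with $|\nabla\bar y|<\eta$ on $\Cu^{\epsilon'}$, and I claim $\{|\bar y-\bar t|\le\tau_n\}\cap\Cu^\epsilon\subset\Cu^{\epsilon'}$ for all $n$ large: otherwise a sequence $x_k\in\Cu^\epsilon$ with $|\bar y(x_k)-\bar t|\le\tau_{n_k}\to0$ and $\mathrm{dist}(x_k,\Cu)\ge\epsilon'$ would subconverge to a point of $\{\bar y=\bar t\}\cap\overline{\Cu^\epsilon}\setminus\Cu=\emptyset$. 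Hence, using $\Cu^\epsilon\subset\Cu^{\tilde\epsilon}$ and \eqref{eq:main-hypothesis-y-vanish-structure}, for $n$ large (so also $2\tau_n<r_0$),
\[
    \int_{\{0<|\bar y-\bar t|\le\tau_n\}\cap\Cu^\epsilon}|\nabla\bar y|\dx\le\eta\,\meas_{\R^2}\bigl(\{0<|\bar y-\bar t|<2\tau_n\}\cap\Cu^{\tilde\epsilon}\bigr)\le 2c_s\eta\,\tau_n,
\]
so that $|A_n(\Cu,\epsilon)|/s_n^2\le 2c_sM\norm{\nabla\bar\varphi}_{L^\infty(\Omega)}\,\eta$ for $n$ large. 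Letting $\eta\to0^+$ gives $\limsup_{n\to\infty}|A_n(\Cu,\epsilon)|/s_n^2=0$, which is \eqref{lim:key-vanish-structure}.

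The Hölder-type estimate and the invocation of \eqref{eq:main-hypothesis-y-vanish-structure} are routine; the one genuinely delicate step is the localization claim $\{|\bar y-\bar t|\le\tau_n\}\cap\Cu^\epsilon\subset\Cu^{\epsilon'}$, i.e. upgrading the (for a fixed $\epsilon$, not small) bound on $|\nabla\bar y|$ over $\Cu^\epsilon$ to a bound that is as small as desired on the actual, shrinking, domain of integration. This is exactly where the identity $\{\bar y=\bar t\}\cap\overline{\Cu^\epsilon}=\Cu$ — and hence the hypothesis that $\Cu$ is a connected component of the level set — is used.
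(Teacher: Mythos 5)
Your proof is correct and takes essentially the same approach as the paper: both arguments combine the measure bound in \eqref{eq:main-hypothesis-y-vanish-structure} with the fact that the integration region $\{0<|\bar y-\bar t|\le\tau_n\}\cap\Cu^{\epsilon}$ collapses onto $\Cu$, where $\nabla\bar y=0$, to obtain $|A_n(\Cu,\epsilon)|\le C\,\eta\,\tau_n^2=o(s_n^2)$. The only difference is cosmetic: the paper first splits $A_n=B_n+C_n$ as in \cref{lem:gradient-nonvanish} and estimates the term carrying $\nabla y_n$ by writing $\nabla y_n=(\nabla y_n-\nabla\bar y)+\nabla\bar y$, whereas you estimate $A_n$ directly from the representation \eqref{eq:An-term-rewritten} involving $\nabla\bar y$ and make explicit the localization step (the identity $\{\bar y=\bar t\}\cap\overline{\Cu^{\epsilon}}=\Cu$ and the resulting inclusion into $\Cu^{\epsilon'}$) that the paper leaves implicit.
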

\begin{proof}
    We first set $\epsilon_0 := \min\{\tilde\epsilon,\epsilon_2\}$ with constants $\tilde\epsilon$ given in \eqref{eq:main-hypothesis-y-vanish-structure} and $\epsilon_2$ defined as in the proof of \cref{lem:gradient-nonvanish}. Let us take $\epsilon \in (0,\epsilon_0)$ arbitrarily but fixed and reuse all symbols defined in the proof of \cref{lem:gradient-nonvanish}.
    Moreover, the relations \eqref{eq:sets-local}--\eqref{eq:An-esti} are still valid. From the definition of $C_n$ in \eqref{eq:Bn-Cn-term}, we deduce from \eqref{eq:structure-esti} that
    \begin{align*}
        \frac{1}{s_n^2}|C_n| & \leq \norm{\nabla y_n}_{L^\infty(\{ 0 < | \bar y - \bar t | \leq \tau_n \} \cap \Cu^\epsilon)}\norm{\nabla \bar \varphi}_{L^\infty(\Omega)} \frac{1}{s_n^2}\int_{\Cu^\epsilon} |\bar y - \bar y_n| \1_{\{ 0 < | \bar y - \bar t | \leq \tau_n \}} \dx \\
        & \leq \norm{\nabla y_n}_{L^\infty(\{ 0 < | \bar y - \bar t | \leq \tau_n \} \cap \Cu^\epsilon)}\norm{\nabla \bar \varphi}_{L^\infty(\Omega)} \frac{\tau_n}{s_n^2} \meas_{\R^2}(\{ 0 < | \bar y - \bar t | \leq \tau_n \} \cap \Cu^\epsilon) \\
        & \leq c_s \norm{\nabla y_n - \nabla \bar y}_{L^\infty(\Omega)}\norm{\nabla \bar \varphi}_{L^\infty(\Omega)} \frac{\tau_n^2}{s_n^2} + c_s \norm{\nabla \bar y}_{L^\infty(\{ 0 < | \bar y - \bar t | \leq \tau_n \} \cap \Cu^\epsilon)}\norm{\nabla \bar \varphi}_{L^\infty(\Omega)} \frac{\tau_n^2}{s_n^2}
    \end{align*}
    for all $n$ large enough, where we have exploited the last estimate in \eqref{eq:main-hypothesis-y-vanish-structure}. Letting $n \to \infty$ and using the identity in \eqref{eq:main-hypothesis-y-vanish-structure}
    yields
    \[
        C_n = o(s_n^2),
    \]
    which along with \eqref{eq:An-esti} gives \eqref{lim:key-vanish-structure}.
\end{proof}

Similar to \cref{lem:gradient-nonvanish,lem:gradient-vanish-structure}, we have the following result.
\begin{proposition}
    \label{prop:key-formular-2}
    Assume that either \ref{ass:nonvanishing-gradient-An} or \ref{ass:vanishing-gradient-structure} is fulfilled.
    Then there exists an $\epsilon_0 = \epsilon_0(\Cu) >0$ such that, for any $\epsilon \in (0, \epsilon_0)$,
    \begin{equation}
        \label{lim:key-for-y}
        \frac{\tilde A_n(\Cu,\epsilon)}{s_n^2} \to -\frac{1}{2} \int_{\Cu} \1_{\{|\nabla \bar y| >0 \}} w^2 \frac{\nabla \bar y \cdot \nabla \bar \varphi}{|\nabla \bar y|} \dH^{1}(x) \quad \text{as} \quad n \to \infty,
    \end{equation}
    where
    \begin{equation}
        \label{eq:An-term-for-y}
        \tilde A_n(\Cu,\epsilon) := \int_{\Cu^\epsilon} (\bar t - \bar y) [\1_{ \Omega_{y_n, \bar y }^{2} } -\1_{ \Omega_{y_n, \bar y }^{3}}] \nabla \bar y \cdot \nabla \bar \varphi \dx.
    \end{equation}
\end{proposition}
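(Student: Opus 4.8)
The plan is to transcribe, with the single modification that the factor $(\bar t - y_n)$ is replaced by $(\bar t - \bar y)$, the proof of \cref{lem:gradient-nonvanish} under assumption \ref{ass:nonvanishing-gradient-An} and the proof of \cref{lem:gradient-vanish-structure} under assumption \ref{ass:vanishing-gradient-structure}. I would reuse verbatim the setup of \cref{lem:gradient-nonvanish}: pick $\epsilon_0:=\min\{\epsilon_1,\epsilon_2\}$ small enough that \eqref{eq:gradient-positive} holds for $n$ large, that $\Cu^{\epsilon_0}$ is separated from the other components of $\{\bar y=\bar t\}$, and that $\Cu^{\epsilon_0}\cap\partial\Omega=\emptyset$ when $\Cu\neq\partial\Omega$; fix $\epsilon\in(0,\epsilon_0)$; invoke \cref{prop:levelset-unique-component-continuity} and \cref{prop:decomposition-levelset} to obtain the unique closed $C^1$ simple component $\Cu_n=\{y_n=\bar t\}\cap\Cu^\epsilon$; and use the sets $\Omega_n^2,\Omega_n^3$ of \eqref{eq:Omega23-local-def} together with \eqref{eq:sets-local} and the boundary inclusion \eqref{eq:boundary-inclusion-Omega23}. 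By \cref{rem:An-term}, $\tilde A_n(\Cu,\epsilon)=\int_{\Cu^\epsilon}(\bar t-\bar y)[\1_{\Omega_n^2}-\1_{\Omega_n^3}]\nabla\bar y\cdot\nabla\bar\varphi\dx$. The case $\Cu=\partial\Omega$ is trivial exactly as in \cref{lem:gradient-nonvanish}: $\Omega_n^2=\Omega_n^3=\emptyset$ for $n$ large, so $\tilde A_n=0$, while the right-hand side of \eqref{lim:key-for-y} vanishes because $w=0$ on $\partial\Omega$.

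For the main case $\Cu\neq\partial\Omega$ under \ref{ass:nonvanishing-gradient-An}, I would use $(\bar t-\bar y)\nabla\bar y=-\tfrac12\nabla(\bar y-\bar t)^2$ to write $\tilde A_n=-\tfrac12\int_{\Cu^\epsilon}[\1_{\Omega_n^2}-\1_{\Omega_n^3}]\nabla(\bar y-\bar t)^2\cdot\nabla\bar\varphi\dx$ and then apply \cref{prop:Green-identity-general} with $y_1:=\bar y$, $y_2:=y_n$, $v:=(\bar y-\bar t)^2\in H^1(\Omega)\cap C(\overline\Omega)$, $\varphi:=\bar\varphi$, $\mathcal{O}:=\Cu^\epsilon$, $\mathcal{S}^+:=\Omega_n^2$, $\mathcal{S}^-:=\Omega_n^3$; its hypotheses are precisely the ones already verified in \cref{lem:gradient-nonvanish}. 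The decisive point is that, since $\bar y=\bar t$ on $\Cu$, the boundary term $\int_{\Cu}v\,\nabla\bar\varphi\cdot\tfrac{\nabla\bar y}{|\nabla\bar y|}\dH^1(x)$ now \emph{vanishes} (this is exactly what produces the opposite sign compared with \eqref{lim:key-nonvanish}), so that
\[
\tilde A_n=\tfrac12\int_{\Cu^\epsilon}[\1_{\Omega_n^2}-\1_{\Omega_n^3}](\bar y-\bar t)^2\Delta\bar\varphi\dx-\tfrac12\int_{\Cu_n}(\bar y-\bar t)^2\,\nabla\bar\varphi\cdot\tfrac{\nabla y_n}{|\nabla y_n|}\dH^1(x).
\]
The first term is $o(s_n^2)$ by \eqref{eq:structure-esti}, $\bar\varphi\in W^{2,1}(\Omega)$, Lebesgue's dominated convergence theorem and $\tau_n=O(s_n)$, exactly as in \eqref{eq:Cn-first-term-esti}. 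For the second term, on $\Cu_n=\{y_n=\bar t\}$ one has $\bar y-\bar t=\bar y-y_n$; since $W^{1,p}_0(\Omega)\hookrightarrow C(\overline\Omega)$ for $p>2$, $(\bar y-y_n)/s_n\to-w$ uniformly, while $\nabla y_n\to\nabla\bar y$ in $C(\overline\Omega)$ with $|\nabla\bar y|>0$ on $\Cu^{\epsilon}$ (letting $n\to\infty$ in \eqref{eq:gradient-positive} along $x'=x$), so that $f_n:=\big((\bar y-y_n)/s_n\big)^2\nabla\bar\varphi\cdot\tfrac{\nabla y_n}{|\nabla y_n|}\to w^2\,\nabla\bar\varphi\cdot\tfrac{\nabla\bar y}{|\nabla\bar y|}$ pointwise on $\Cu^{\epsilon}$. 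Then \cref{prop:continuity-integral-levelsets} gives $\tfrac12\int_{\Cu_n}f_n\dH^1(x)\to\tfrac12\int_{\Cu}w^2\tfrac{\nabla\bar y\cdot\nabla\bar\varphi}{|\nabla\bar y|}\dH^1(x)$, which equals $\tfrac12\int_{\Cu}\1_{\{|\nabla\bar y|>0\}}w^2\tfrac{\nabla\bar y\cdot\nabla\bar\varphi}{|\nabla\bar y|}\dH^1(x)$ since $|\nabla\bar y|>0$ on $\Cu$. Dividing the displayed identity by $s_n^2$ and passing to the limit yields \eqref{lim:key-for-y}.

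Under \ref{ass:vanishing-gradient-structure} the argument follows \cref{lem:gradient-vanish-structure}: here $\1_{\{|\nabla\bar y|>0\}}\equiv0$ on $\Cu$, so it suffices to prove $\tilde A_n/s_n^2\to0$. With the same $\epsilon_0:=\min\{\tilde\epsilon,\epsilon_2\}$ and the relations \eqref{eq:sets-local}, \eqref{eq:structure-esti} still valid, and using $\Omega_n^2\cup\Omega_n^3\subset\{0<|\bar y-\bar t|\le\tau_n\}\cap\Cu^\epsilon$, I would estimate
\[
|\tilde A_n|\le\tau_n\,\norm{\nabla\bar\varphi}_{L^\infty(\Omega)}\,\norm{\nabla\bar y}_{L^\infty(\{0<|\bar y-\bar t|\le\tau_n\}\cap\Cu^\epsilon)}\,\meas_{\R^2}\!\big(\{0<|\bar y-\bar t|\le\tau_n\}\cap\Cu^\epsilon\big),
\]
bound the measure by $2c_s\tau_n$ for $n$ large via the last line of \eqref{eq:main-hypothesis-y-vanish-structure} (here $\epsilon<\tilde\epsilon$), and observe that the set $\{0<|\bar y-\bar t|\le\tau_n\}\cap\Cu^\epsilon$ collapses onto $\Cu$ as $n\to\infty$ (the choice $\epsilon<\epsilon_2$ prevents it from reaching the other components of $\{\bar y=\bar t\}$), whence $\norm{\nabla\bar y}_{L^\infty(\{0<|\bar y-\bar t|\le\tau_n\}\cap\Cu^\epsilon)}\to0$ by uniform continuity of $\nabla\bar y$ on $\overline\Omega$ and $\nabla\bar y=0$ on $\Cu$. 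Since $\tau_n/s_n$ is bounded, $|\tilde A_n|/s_n^2\to0$, which is \eqref{lim:key-for-y} in this case.

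The one genuinely new observation — everything else being bookkeeping already carried out in \cref{lem:gradient-nonvanish} — is that replacing $(\bar t-y_n)$ by $(\bar t-\bar y)$ shifts the surviving boundary term in Green's formula from $\Cu$, where $(y_n-\bar t)^2=(y_n-\bar y)^2$ on $\Cu$, to $\Cu_n$, where $(\bar y-\bar t)^2=(\bar y-y_n)^2$ on $\Cu_n$; both have the same limit $s_n^2 w^2$ but enter Green's identity with opposite signs, hence the minus sign in \eqref{lim:key-for-y}. I do not anticipate any serious obstacle: the verification of the hypotheses of \cref{prop:Green-identity-general} and \cref{prop:continuity-integral-levelsets} is identical to that in \cref{lem:gradient-nonvanish}, and the error estimates are the same as there and in \cref{lem:gradient-vanish-structure}.
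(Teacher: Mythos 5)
Your overall route is the paper's: drop the portions of $\Omega^{2}_{y_n,\bar y},\Omega^{3}_{y_n,\bar y}$ lying in $\{y_n=\bar t\}$, write $(\bar t-\bar y)\nabla\bar y=-\frac{1}{2}\nabla(\bar y-\bar t)^2$, apply \cref{prop:Green-identity-general} (the paper takes $y_1:=y_n$, $y_2:=\bar y$, you take the reversed roles, which is immaterial), observe that the boundary term over $\Cu$ vanishes because $\bar y=\bar t$ there, treat the volume term as $o(s_n^2)$ as in \eqref{eq:Cn-first-term-esti}, and pass to the limit in the $\Cu_n$-term via \cref{prop:continuity-integral-levelsets}; under \ref{ass:vanishing-gradient-structure} you estimate directly as in \cref{lem:gradient-vanish-structure}. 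The sign mechanism you single out (the surviving boundary term moving from $\Cu$ to $\Cu_n$) is exactly what the paper exploits.

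The one flawed step is the very first reduction: you justify $\tilde A_n(\Cu,\epsilon)=\int_{\Cu^\epsilon}(\bar t-\bar y)[\1_{\Omega_n^2}-\1_{\Omega_n^3}]\nabla\bar y\cdot\nabla\bar\varphi\dx$ by \cref{rem:An-term}, but that remark rests on the factor $(\bar t-y_n)$ vanishing identically on the discarded sets $\{\bar y\in(\bar t,\bar t+\delta),\,y_n=\bar t\}$ and $\{\bar y\in(\bar t-\delta,\bar t),\,y_n=\bar t\}$; in $\tilde A_n$ the factor is $(\bar t-\bar y)$, which does not vanish there, so this step needs a genuine argument. This is precisely why the paper splits $\tilde A_n=\tilde B_n-\tilde C_n$ and proves $\tilde B_n=o(s_n^2)$, using that $\nabla y_n=0$ a.e.\ on $\{y_n=\bar t\}$ (so $\nabla\bar y=\nabla(\bar y-y_n)$ a.e.\ there) together with a H\"older estimate as in \eqref{eq:infinitesimal-value}. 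Your reduction can be repaired without that detour: under \ref{ass:nonvanishing-gradient-An}, \eqref{eq:gradient-positive} gives $\nabla y_n\neq 0$ on $\Cu^{\epsilon}$ for $n$ large, so $\{y_n=\bar t\}\cap\Cu^{\epsilon}$ is a $C^1$ curve (or $\partial\Omega$ in Case 1) of two-dimensional Lebesgue measure zero and the discarded sets do not contribute; under \ref{ass:vanishing-gradient-structure}, however, $\{y_n=\bar t\}\cap\Cu^{\epsilon}$ may have positive measure and the reduction is not automatic — but your final bound only uses $|\bar t-\bar y|\leq|\bar y-y_n|\leq\tau_n$ and the inclusion of the domains of integration in $\{0<|\bar y-\bar t|\leq\tau_n\}\cap\Cu^{\epsilon}$, both of which hold for the full sets $\Omega^{2}_{y_n,\bar y},\Omega^{3}_{y_n,\bar y}$, so you should simply run the estimate on the unreduced $\tilde A_n$. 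With this justification supplied, the proof is complete and coincides in substance with the paper's.
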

\begin{proof}
    We first note from \eqref{eq:Omega23-rewritten} and \eqref{eq:Omega23-interior-symmetric} that
    \begin{equation*}
        \left\{
            \begin{aligned}
                \Omega_{y_n, \bar y }^{2} & = \tilde \Omega_{y_n, \bar y }^{2} \cup \{ \bar y \in (\bar t, \bar t +\delta), y_n =\bar t \} = \tilde{\Omega}_{\bar y, y_n}^{3} \cup \{\bar y \in (\bar t, \bar t+\delta), y_n =\bar t \},\\
                \Omega_{y_n, \bar y }^{3} & = \tilde{\Omega}_{y_n, \bar y}^{3} \cup \{\bar y \in (\bar t-\delta, \bar t), y_n = \bar t \} = \tilde{\Omega}_{\bar y, y_n}^{2} \cup \{ \bar y \in (\bar t-\delta, \bar t), y_n = \bar t \},
            \end{aligned}
        \right.
    \end{equation*}
    where the sets $\tilde{\Omega}_{y, \hat y}^{2}$ and $\tilde{\Omega}_{y, \hat y}^{3}$ are defined as in \eqref{eq:Omega23-interior}. Obviously, one has
    \[
        \tilde{\Omega}_{\bar y, y_n}^{2} \cap \{\bar y \in (\bar t-\delta, \bar t), y_n = \bar t \} = \emptyset = \tilde{\Omega}_{\bar y, y_n}^{3} \cap \{ \bar y \in (\bar t, \bar t +\delta), y_n = \bar t \}.
    \]
    Combining these with the definition of $\tilde A_n(\Cu,\epsilon)$ in \eqref{eq:An-term-for-y} yields
    \begin{multline*}
        \tilde A_n(\Cu,\epsilon) = \int_{\Cu^\epsilon} (\bar t - \bar y) [1_{\{\bar y \in (\bar t, \bar t +\delta), y_n =\bar t \} } -\1_{ \{\bar y \in (\bar t-\delta, \bar t), y_n =\bar t \}}] \nabla \bar y \cdot \nabla \bar \varphi \dx \\
        - \int_{\Cu^\epsilon} (\bar t - \bar y) [\1_{ \tilde \Omega_{\bar y, y_n}^{2} } -\1_{ \tilde \Omega_{\bar y, y_n }^{3}}] \nabla \bar y \cdot \nabla \bar \varphi \dx =: \tilde{B}_n -\tilde{C}_n.
    \end{multline*}
    Since $\nabla y_n = 0$ a.e. on $\{y_n =\bar t\}$, $\tilde{B}_n$ can be rewritten as follows
    \begin{equation*}
        \tilde B_n = \int_{\Cu^\epsilon} (y_n - \bar y) [\1_{\{\bar y \in (\bar t, \bar t +\delta), y_n = \bar t \} } -\1_{ \{\bar y \in (\bar t-\delta, \bar t), y_n =\bar t \}}] \nabla (\bar y-y_n) \cdot \nabla \bar \varphi \dx.
    \end{equation*}
    Analogous to \eqref{eq:infinitesimal-value}, one has $\tilde{B}_n = o(s_n^2)$. This implies that
    \begin{equation}
        \label{eq:An-for-y-esti}
        \tilde A_n(\Cu,\epsilon) = o(s_n^2) - \tilde C_n.
    \end{equation}
    From the definitions of $C_n$ in \eqref{eq:Bn-Cn-term} and of $\tilde C_n$, we can get $\tilde C_n$ by interchanging $\bar y$ and $y_n$ in the integrand of $C_n$.
    For the situation where \ref{ass:vanishing-gradient-structure} holds, by using the argument as in the proof of \cref{lem:gradient-vanish-structure}, we have
    \[
        \tilde C_n = o(s_n^2),
    \]
    which together with \eqref{eq:An-for-y-esti} gives \eqref{lim:key-for-y}.
    It remains to consider the situation where \ref{ass:nonvanishing-gradient-An} is satisfied.
    We shall estimate $\tilde C_n$ similarly to the estimate of $C_n$ in the proof of \cref{lem:gradient-nonvanish}. To this end,
    by \eqref{eq:Omega23-interior}, we have for $n$ large enough that
    \begin{equation}
        \label{eq:Omega23-tilde-interior-local}
        \left\{
            \begin{aligned}
                & \tilde \Omega_{\bar y, y_n}^{2} \cap \Cu^\epsilon = \{ y_n \in (\bar t, \bar t + \delta), \bar y \in (\bar t -\delta, \bar t) \} \cap \Cu^\epsilon = \{\bar y < \bar t < y_n \} \cap \Cu^\epsilon =: \tilde{\Omega}_n^2 ,\\
                & \tilde \Omega_{\bar y, y_n}^{3} \cap \Cu^\epsilon = \{ \bar y \in (\bar t, \bar t + \delta), y_n \in (\bar t -\delta, \bar t) \} \cap \Cu^\epsilon = \{\bar y > \bar t > y_n \} \cap \Cu^\epsilon =: \tilde{\Omega}_n^3.
            \end{aligned}
        \right.
    \end{equation}
    In comparison with \eqref{eq:sets-local}, there also hold
    \[
        \tilde{\Omega}_n^2 = \Omega_n^3 \quad \text{and} \quad \tilde{\Omega}_n^3 = \Omega_n^2.
    \]
    We now rewrite $\tilde{C}_n$ as
    \begin{equation}
        \label{eq:Cn-tilde-term}
        \tilde{C}_n = - \frac{1}{2}\int_{\Cu^\epsilon} [\1_{ \tilde \Omega_n^{2} } -\1_{ \tilde \Omega_n^{3}}] \nabla (\bar y - \bar t)^2 \cdot \nabla \bar \varphi \dx.
    \end{equation}
    Now we consider the following two cases. \\
    \noindent$\bullet$ \emph{Case 1: $\Cu = \partial\Omega$}. Then by using the same argument as in Case 1 in the proof of \cref{lem:gradient-nonvanish}, we have $\tilde{C}_n = 0$.\\
    \noindent\emph{$\bullet$ Case 2: $\Cu \neq \partial \Omega$}.
    For this case, one has $\Cu \cap \partial\Omega = \emptyset$.
    Using \eqref{eq:boundary-inclusion-Omega23} and

    applying \cref{prop:Green-identity-general} for functions $y_1:= y_n$, $y_2 := \bar y$, $v:= (\bar y - \bar t)^2$, $\varphi := \bar \varphi$ and the sets $\mathcal{S}^{+} := \tilde \Omega_n^{2}$, $\mathcal{S}^{-} := \tilde \Omega_n^{3}$, we deduce from \eqref{eq:Cn-tilde-term} and \eqref{eq:Omega23-tilde-interior-local} that
    \begin{align*}
        \tilde{C}_n = \frac{1}{2}\int_{\Cu^\epsilon} [\1_{ \tilde \Omega_n^{2} } -\1_{ \tilde \Omega_n^{3}}] (\bar y - \bar t)^2 \Delta \bar \varphi \dx +\frac{1}{2} \int_{\Cu_n }(\bar y - \bar t)^2 \nabla \bar \varphi \cdot \frac{\nabla y_n}{|\nabla y_n|} \dH^1(x) - \frac{1}{2}\int_{\Cu} (\bar y - \bar t)^2 \nabla \bar \varphi \cdot \frac{\nabla \bar y}{|\nabla \bar y|} \dH^1(x)
    \end{align*}
    with $\Cu_n := \{y_n = \bar t\} \cap \Cu^{\epsilon}$ being a $C^1$ closed simple curve for $n$ large enough; see \cref{prop:levelset-unique-component-continuity}.
    Consequently, one has
    \begin{equation}
        \label{eq:Cn-tilde-esti}
        \tilde{C}_n = \frac{1}{2}\int_{\Cu^\epsilon} [\1_{ \tilde \Omega_n^{2} } -\1_{ \tilde \Omega_n^{3}}] (\bar y - \bar t)^2 \Delta \bar \varphi \dx +\frac{1}{2} \int_{\Cu_n }(\bar y - y_n)^2 \nabla \bar \varphi \cdot \frac{\nabla y_n}{|\nabla y_n|} \dH^1(x),
    \end{equation}
    which is similar to that in \eqref{eq:Cn-esti}. Moreover, analogous to \eqref{eq:Cn-first-term-esti}, the first term in the right-hand side of \eqref{eq:Cn-tilde-esti} is $o(s_n^2)$. This implies that
    \begin{equation}
        \label{eq:Cn-tilde-computation}
        \tilde{C}_n = o(s_n^2) + \frac{1}{2} \int_{\{y_n =\bar t \} \cap \Cu^\epsilon} (\bar y-y_n)^2 \nabla\bar \varphi \cdot \frac{\nabla y_n}{|\nabla y_n|} \dH^{1}(x).
    \end{equation}
    This expression is analogous to \eqref{eq:Cn-computation} and also valid to the case $\Cu := \partial\Omega$.
    From \eqref{eq:Cn-tilde-computation} and \eqref{eq:An-for-y-esti}, we derive \eqref{lim:key-for-y}
    by using \cref{prop:continuity-integral-levelsets} together with the limits $y_n \to \bar y$ in $C^1(\overline\Omega)$ and $(y_n-\bar y)/s_n \to w$ in $W^{1,p}_0(\Omega)$ and so in $C(\overline{\Omega})$.
\end{proof}

As a consequence of \cref{thm:key-formula} and \cref{prop:key-formular-2}, we have an explicit formula for the crucial term $\tilde{Q}$ in \eqref{eq:key-term-sn},
and an important limit that will play a significant role in establishing the error estimates for the numerical approximations of \eqref{eq:P}.
\begin{theorem}
    \label{thm:explicit-formular-Q2}
    Assume that the level set $\{\bar y=\bar t\}$ with $\bar y:= S(\bar u)$ decomposes into finitely many connected components. Let $v \in L^2(\Omega)$ and
    $\bar \varphi \in C^1(\overline\Omega) \cap W^{2,1}(\Omega)$.
    Then, for any $\{s_n\} \in c_0^+$ and $\{v_n\} \subset L^2(\Omega)$ such that $v_n \rightharpoonup v$ in $L^2(\Omega)$, $y_n:= S(\bar u + s_n v_n) \to \bar y$ in $C^1(\overline\Omega)$, and $(y_n - \bar y)/s_n \to w$ in $W^{1,p}_0(\Omega)$ for some $p>2$, the following assertions are valid:
    \begin{enumerate}[label=(\alph*)]
        \item \label{item:Q2-explicit}
            If, for any connected component $\Cu$ of $\{\bar y=\bar t\}$, $\bar y$ fulfills either
            \eqref{eq:main-hypothesis-y-novanish}, \eqref{eq:main-hypothesis-y-vanish}, or \eqref{eq:main-hypothesis-y-vanish-structure}, then
            \begin{equation}
                \label{eq:explicit-formular}
                \tilde Q(\bar u,\bar y,\bar \varphi;\{s_n\}, v) = \frac{1}{2} \{a'\}_{\bar t+0}^{\bar t-0}\int_{\{\bar y=\bar t\}}\1_{\{|\nabla \bar y| >0 \}} w^2 \frac{\nabla \bar y \cdot \nabla\bar \varphi}{|\nabla \bar y|} \dH^{1}(x).
            \end{equation}
        \item \label{item:Pn-auxi}
            If, for any connected component $\Cu$ of $\{\bar y=\bar t\}$, $\bar y$ fulfills
            either \eqref{eq:main-hypothesis-y-novanish} or \eqref{eq:main-hypothesis-y-vanish-structure}, then
            \begin{equation}
                \label{eq:Pn-auxi}
                \lim\limits_{n \to \infty} \frac{1}{s_n^2} \int_\Omega (2 \bar t - \bar y - y_n) [\1_{ \Omega_{y_n, \bar y }^{2} } -\1_{ \Omega_{y_n, \bar y }^{3}}] \nabla \bar y \cdot \nabla \bar \varphi \dx = 0.
            \end{equation}
    \end{enumerate}
\end{theorem}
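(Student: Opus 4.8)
The plan is to reduce both assertions, by a localization argument, to the single-component limit results \cref{thm:key-formula} and \cref{prop:key-formular-2}. By hypothesis $\{\bar y=\bar t\}=\Cu_1\cup\dots\cup\Cu_L$ has finitely many connected components, each of which is compact; since a connected component of a closed set is closed, every $\Cu_\ell$ is a closed connected component in the sense required by \cref{thm:key-formula}. For each $\ell$ that result (or \cref{prop:key-formular-2}) supplies a radius $\epsilon_0(\Cu_\ell)>0$; I would fix a single $\bar\epsilon>0$ smaller than all the $\epsilon_0(\Cu_\ell)$ and smaller than half the minimal distance between distinct components, so that the open neighborhoods $\Cu_1^{\bar\epsilon},\dots,\Cu_L^{\bar\epsilon}$ are pairwise disjoint. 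All standing hypotheses of \cref{thm:key-formula} and \cref{prop:key-formular-2} are met by the present data: $\bar y,y_n\in C^1(\overline\Omega)$ with $y_n=\bar y=0$ on $\partial\Omega$, $y_n\to\bar y$ in $C^1(\overline\Omega)$, $(y_n-\bar y)/s_n\to w$ in $W^{1,p}_0(\Omega)$, $\bar\varphi\in C^1(\overline\Omega)\cap W^{2,1}(\Omega)$, and the relevant one of \ref{ass:nonvanishing-gradient-An}--\ref{ass:vanishing-gradient-structure} on each $\Cu_\ell$.

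The core step is localization. Put $\tau_n:=\norm{y_n-\bar y}_{C(\overline\Omega)}\to 0$. From \eqref{eq:Omega-123-sets} one reads off that on $\Omega_{y_n,\bar y}^2$ one has $y_n\le\bar t<\bar y$, hence $0<\bar y-\bar t\le\bar y-y_n\le\tau_n$, and symmetrically $0<\bar t-\bar y\le\tau_n$ on $\Omega_{y_n,\bar y}^3$; thus $\Omega_{y_n,\bar y}^2\cup\Omega_{y_n,\bar y}^3\subset\{0<|\bar y-\bar t|\le\tau_n\}$ irrespective of the fixed width $\delta$. A compactness argument — any sequence $x_k\in\overline\Omega$ with $\bar y(x_k)\to\bar t$ has a subsequence converging to a point of $\{\bar y=\bar t\}$ — shows that $\{0<|\bar y-\bar t|\le\rho\}\subset\bigcup_{\ell=1}^L\Cu_\ell^{\bar\epsilon}$ for $\rho>0$ small, so for $n$ large the set $\Omega_{y_n,\bar y}^2\cup\Omega_{y_n,\bar y}^3$, which carries the supports of all the integrands below, lies in the disjoint union $\bigcup_\ell\Cu_\ell^{\bar\epsilon}$. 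Recalling the definitions \eqref{eq:An-term} and \eqref{eq:An-term-for-y}, for such $n$ one has $\int_\Omega(\bar t-y_n)[\1_{\Omega_{y_n,\bar y}^2}-\1_{\Omega_{y_n,\bar y}^3}]\nabla\bar y\cdot\nabla\bar\varphi\dx=\sum_{\ell=1}^L A_n(\Cu_\ell,\bar\epsilon)$, and, splitting $2\bar t-\bar y-y_n=(\bar t-y_n)+(\bar t-\bar y)$, the integral in \eqref{eq:Pn-auxi} equals $\sum_{\ell=1}^L(A_n(\Cu_\ell,\bar\epsilon)+\tilde A_n(\Cu_\ell,\bar\epsilon))$.

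For \ref{item:Q2-explicit} I would divide the first identity by $s_n^2$, apply \cref{thm:key-formula} to each of the finitely many terms $A_n(\Cu_\ell,\bar\epsilon)/s_n^2$, and sum, obtaining $s_n^{-2}\int_\Omega(\bar t-y_n)[\1_{\Omega_{y_n,\bar y}^2}-\1_{\Omega_{y_n,\bar y}^3}]\nabla\bar y\cdot\nabla\bar\varphi\dx\to\tfrac12\int_{\{\bar y=\bar t\}}\1_{\{|\nabla\bar y|>0\}}w^2\tfrac{\nabla\bar y\cdot\nabla\bar\varphi}{|\nabla\bar y|}\dH^1(x)$. After multiplying by $\{a'\}_{\bar t+0}^{\bar t-0}$, the left-hand side is exactly $s_n^{-2}\int_\Omega\sum_{i=0}^1\zeta_i(\bar u,\bar y;s_n,v_n)\nabla\bar y\cdot\nabla\bar\varphi\dx$; since $v_n\rightharpoonup v$ and the jump functional $\Sigma(\bar y)$ is finite — which I would check from the coarea formula for Lipschitz maps, using \cref{prop:levelset-unique-component-continuity} and \cref{prop:continuity-integral-levelsets} on the components obeying \eqref{eq:main-hypothesis-y-novanish} and the last lines of \eqref{eq:main-hypothesis-y-vanish} and \eqref{eq:main-hypothesis-y-vanish-structure} on the others — \eqref{eq:sigma-tilde} identifies the $\liminf$ of this quantity with $\tilde Q(\bar u,\bar y,\bar\varphi;\{s_n\},v)$; as the limit exists, this yields \eqref{eq:explicit-formular}. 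For \ref{item:Pn-auxi}, under \ref{ass:nonvanishing-gradient-An} or \ref{ass:vanishing-gradient-structure} on every component, \cref{prop:key-formular-2} gives $\tilde A_n(\Cu_\ell,\bar\epsilon)/s_n^2\to-\tfrac12\int_{\Cu_\ell}\1_{\{|\nabla\bar y|>0\}}w^2\tfrac{\nabla\bar y\cdot\nabla\bar\varphi}{|\nabla\bar y|}\dH^1(x)$ while \cref{thm:key-formula} gives $A_n(\Cu_\ell,\bar\epsilon)/s_n^2\to+\tfrac12\int_{\Cu_\ell}\1_{\{|\nabla\bar y|>0\}}w^2\tfrac{\nabla\bar y\cdot\nabla\bar\varphi}{|\nabla\bar y|}\dH^1(x)$; adding and summing over the finitely many $\ell$, the two contributions cancel term by term, so $s_n^{-2}$ times the integral in \eqref{eq:Pn-auxi} tends to $0$.

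I expect the localization step to be the only genuine obstacle: one must verify carefully that the sets $\Omega_{y_n,\bar y}^2,\Omega_{y_n,\bar y}^3$, whose definitions are anchored to the fixed width $\delta$ rather than to $\tau_n$, nonetheless collapse into the chosen \emph{disjoint} neighborhoods, so that the global integrals become genuinely \emph{finite} sums over the components, to which the single-component lemmas apply directly; the remaining content is entirely carried by \cref{thm:key-formula} and \cref{prop:key-formular-2}. A lesser, routine point is the verification that $\Sigma(\bar y)<\infty$, which legitimizes the use of \eqref{eq:sigma-tilde} in \ref{item:Q2-explicit}.
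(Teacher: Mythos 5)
Your proposal is correct and follows essentially the same route as the paper: decompose $\{\bar y=\bar t\}$ into its finitely many components, choose a common $\epsilon$ making their neighborhoods disjoint, observe that $\Omega_{y_n,\bar y}^{2}\cup\Omega_{y_n,\bar y}^{3}\subset\{0<|\bar y-\bar t|\le\tau_n\}\subset\bigcup_\ell\Cu_\ell^{\epsilon}$ for large $n$, apply \cref{thm:key-formula} and \cref{prop:key-formular-2} componentwise, and pass from the $v_n$-quantity to $\tilde Q(\bar u,\bar y,\bar\varphi;\{s_n\},v)$ via \eqref{eq:sigma-tilde}. Your explicit flagging of the hypothesis $\Sigma(\bar y)<\infty$ needed for \eqref{eq:sigma-tilde} is apt; the paper discharges it separately in \cref{prop:E-func-finite}.
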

\begin{proof}
    Assume that
    $\{\bar y=\bar t\} = \bigcup_{k=1}^{m} \Cu_{k}$ with $\Cu_k$ being connected components of $\{\bar y=\bar t\}$.
    Let $\epsilon_0 >0$ be such that $(\Cu_{k_1})^{\epsilon_0} \cap (\Cu_{k_2})^{\epsilon_0} =\emptyset$ for all $k_1 \neq k_2$. By \cref{thm:key-formula}, for any $1 \leq k \leq m$, there exists $\epsilon_k >0$ satisfying the claims of
    \cref{thm:key-formula} in place of $\epsilon_0$. We now set $\epsilon_* := \min\{\epsilon_0, \epsilon_k \mid 1 \leq k \leq m\} >0$ and fix $\epsilon \in (0,\epsilon_*)$ arbitrarily.
    We first prove \eqref{eq:explicit-formular}.
    In fact, all assumptions required in
    \cref{thm:key-formula} are fulfilled.
    From \eqref{eq:Omega-123-sets}, we have
    \begin{equation*}
        \Omega_{y_n,\bar y}^{2} \cup \Omega_{y_n,\bar y}^{3} \subset \{0 < |\bar y-\bar t| \leq \tau_n \} \subset \bigcup_{k=1}^{m} (\Cu_k)^\epsilon
    \end{equation*}
    for $n$ large enough, where $\tau_n := \norm{y_n - \bar y}_{C(\overline\Omega)}$. From the definition of $\tilde{Q} :=\tilde Q(\bar u,\bar y,\bar \varphi;\{s_n\}, v) $ in \eqref{eq:key-term-sn} and of $A_n(\Cu,\epsilon)$ in
    \cref{thm:key-formula}, we then deduce from \eqref{eq:sigma-tilde} that
    \begin{equation*}
        \tilde Q = \liminf\limits_{n \to \infty} \frac{1}{s_n^2} \{a'\}_{\bar t+0}^{\bar t-0} \sum_{ k=1}^{m} A_n(\Cu_{k}, \epsilon) = \{a'\}_{\bar t+0}^{\bar t-0} \sum_{ k=1}^{m} \frac{1}{2} \int_{\Cu_k}\1_{\{|\nabla \bar y| >0 \}} w^2\frac{\nabla \bar y \cdot \nabla \bar \varphi}{|\nabla \bar y|} \dH^{1}(x),
    \end{equation*}
    which, together with the disjoint decomposition $\{\bar y=\bar t\} = \bigcup_{k=1}^{m} \Cu_{k}$ yields the identity in \eqref{eq:explicit-formular}.
    Assertion \ref{item:Q2-explicit} is then proven.

    For assertion \ref{item:Pn-auxi}, by using the definitions of $A_n(\Cu,\epsilon)$ and of $\tilde A_n(\Cu,\epsilon)$, respectively, in
    \cref{thm:key-formula} and \cref{prop:key-formular-2}, there holds
    \[
        \int_\Omega (2\bar t - \bar y - y_n) [\1_{ \Omega_{y_n, \bar y }^{2} } -\1_{ \Omega_{y_n, \bar y }^{3}}] \nabla \bar y \cdot \nabla \bar \varphi \dx = \sum_{ k=1}^{m} [A_n(\Cu_{k}, \epsilon) + \tilde A_n(\Cu_{k}, \epsilon) ].
    \]
    We thus derive \eqref{eq:Pn-auxi} by
    \cref{thm:key-formula} and \cref{prop:key-formular-2}.
\end{proof}

As the last preparatory step, the following result shows the finiteness of the jump functional $\Sigma(\bar y)$ determined in \eqref{eq:E-functional} under one of three assumptions \ref{ass:nonvanishing-gradient-An}, \ref{ass:vanishing-gradient-An-convex-concave}, and \ref{ass:vanishing-gradient-structure}.
\begin{proposition}
    \label{prop:E-func-finite}
    Let $\bar u\in L^2(\Omega)$ be such that $\bar y:=S(\bar u)$ is Lipschitz continuous on $\overline\Omega$.
    Assume further that the level set $\{\bar y=\bar t\}$ decomposes into finitely many connected components and that, on each such connected component $\Cu$, either \ref{ass:nonvanishing-gradient-An}, \ref{ass:vanishing-gradient-An-convex-concave}, or \ref{ass:vanishing-gradient-structure} in \cref{thm:key-formula} is fulfilled.
    Then $\Sigma(\bar y) < \infty$.
\end{proposition}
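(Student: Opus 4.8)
The plan is to localize $\Sigma(\bar y)$ to a neighbourhood of the level set $\{\bar y=\bar t\}$, to split that set into its finitely many connected components, and to bound the contribution of each component separately: by the coarea formula in cases \ref{ass:nonvanishing-gradient-An} and \ref{ass:vanishing-gradient-An-convex-concave}, and directly from the area estimate in case \ref{ass:vanishing-gradient-structure}. Concretely, I would write $\{\bar y=\bar t\}=\bigcup_{k=1}^m \Cu_k$ with $\Cu_k$ the (compact) connected components, and fix $\epsilon>0$ small enough that the neighbourhoods $\Cu_k^\epsilon$, $1\le k\le m$, are pairwise disjoint, that $\Cu_k^\epsilon\subset\Cu_k^{\tilde\epsilon}$ whenever \ref{ass:vanishing-gradient-An-convex-concave} or \ref{ass:vanishing-gradient-structure} holds on $\Cu_k$ (with $\tilde\epsilon$ the associated constant), and that $|\nabla\bar y|\ge c_k>0$ on $\Cu_k^\epsilon$ whenever \ref{ass:nonvanishing-gradient-An} holds on $\Cu_k$. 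Since $\overline\Omega\setminus\bigcup_k\Cu_k^\epsilon$ is compact and $\bar y\ne\bar t$ there, there is a $\delta_0>0$ with $|\bar y-\bar t|\ge\delta_0$ off $\bigcup_k\Cu_k^\epsilon$, hence $\{0<|\bar y-\bar t|\le r\}\subset\bigcup_k\Cu_k^\epsilon$ for all $r\in(0,\delta_0)$. Using $\sum_{m=1}^2|\partial_{x_m}\bar y|\le\sqrt2\,|\nabla\bar y|$ and subadditivity of indicators, it then suffices to show that for each $k$ the quantity $\tfrac1r\int_{\Cu_k^\epsilon}\1_{\{0<|\bar y-\bar t|\le r\}}|\nabla\bar y|\dx$ stays bounded as $r\to0^+$.

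Fix $k$. If \ref{ass:nonvanishing-gradient-An} holds on $\Cu_k$, I would argue that around each point of $\Cu_k$ some partial derivative of $\bar y$ is bounded away from zero, so by the Implicit Function Theorem the level set $\{\bar y=t\}$ is, inside a small ball about that point, the graph of a $C^1$ function over a coordinate axis whose derivative is bounded by $\norm{\nabla\bar y}_{C(\overline\Omega)}/c_k$ \emph{independently of} $t$; hence its $\Ha^1$-measure inside that ball is bounded by a constant. Covering the compact set $\Cu_k$ by finitely many such balls whose union contains $\Cu_k^\epsilon$ and summing gives a constant $C_k$ with $\Ha^1(\{\bar y=t\}\cap\Cu_k^\epsilon)\le C_k$ for every $t$ (this uniform length bound can also be read off from the local graph parametrizations of \cref{lem:Implicit_Function-Theorem-extended-boundary}). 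The coarea formula for the Lipschitz function $\bar y$ then yields $\int_{\Cu_k^\epsilon}\1_{\{0<|\bar y-\bar t|\le r\}}|\nabla\bar y|\dx=\int_{0<|t-\bar t|\le r}\Ha^1(\{\bar y=t\}\cap\Cu_k^\epsilon)\,dt\le 2r\,C_k$, so the summand is $\le 2C_k$.

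If \ref{ass:vanishing-gradient-An-convex-concave} holds on $\Cu_k$, the coarea formula applies verbatim and the third line of \eqref{eq:main-hypothesis-y-vanish} gives $\Ha^1(\{\bar y=t\}\cap\Cu_k^\epsilon)\le\Ha^1(\{\bar y=t\}\cap\Cu_k^{\tilde\epsilon})\le C_0$ for a.a.\ $t\in(\bar t-r_0,\bar t+r_0)$, whence the summand is $\le 2C_0$ for $r<r_0$. If \ref{ass:vanishing-gradient-structure} holds on $\Cu_k$, I would instead estimate $\tfrac1r\int_{\Cu_k^\epsilon}\1_{\{0<|\bar y-\bar t|\le r\}}|\nabla\bar y|\dx\le\tfrac{\norm{\nabla\bar y}_{L^\infty(\Omega)}}{r}\meas_{\R^2}(\{0<|\bar y-\bar t|<2r\}\cap\Cu_k^{\tilde\epsilon})\le 2c_s\norm{\nabla\bar y}_{L^\infty(\Omega)}$ for $2r<r_0$, using the second line of \eqref{eq:main-hypothesis-y-vanish-structure}. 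Summing the three kinds of bounds over $k$ shows $\limsup_{r\to0^+}\tfrac1r\sum_{m=1}^2\int_\Omega\1_{\{0<|\bar y-\bar t|\le r\}}|\partial_{x_m}\bar y|\dx<\infty$, and therefore $\Sigma(\bar y)<\infty$ by \eqref{eq:E-functional}. The only genuinely non-routine point is the uniform bound on the lengths of the nearby level sets in case \ref{ass:nonvanishing-gradient-An}; the remaining steps are direct applications of the coarea formula together with the structural hypotheses in \ref{ass:vanishing-gradient-An-convex-concave} and \ref{ass:vanishing-gradient-structure}.
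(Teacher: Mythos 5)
Your proposal is correct and follows essentially the same route as the paper: decompose $\{\bar y=\bar t\}$ into its finitely many components, localize the integral in \eqref{eq:E-functional} to pairwise disjoint $\epsilon$-neighbourhoods, and bound each contribution by $O(r)$ using the respective case hypothesis (coarea formula for \ref{ass:vanishing-gradient-An-convex-concave}, the area estimate for \ref{ass:vanishing-gradient-structure}). The only divergence is in case \ref{ass:nonvanishing-gradient-An}, where the paper bounds $|\nabla\bar y|$ by its supremum and cites an external lemma for the measure estimate $\meas_{\R^2}(\{0<|\bar y-\bar t|<r\}\cap\Cu_k^{\epsilon_0})\leq C r$, whereas you re-derive the equivalent bound yourself via a uniform length estimate for the nearby level sets combined with the coarea formula; both arguments are valid.
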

\begin{proof}
    Assume that
    $\{\bar y=\bar t\} = \bigcup_{k=1}^{m} \Cu_{k}$ with $\Cu_k$ being connected components of $\{\bar y=\bar t\}$.
    Let $\epsilon_0 \in (0, \tilde{\epsilon})$ be such that $(\Cu_{k_1})^{\epsilon_0} \cap (\Cu_{k_2})^{\epsilon_0} =\emptyset$ for all $k_1 \neq k_2$, where $\tilde{\epsilon}$ is given in \cref{thm:key-formula}. For $r>0$ small enough, one has
    \begin{equation}
        \label{eq:Sigma-decom}
        \int_{\Omega} \1_{\{0 < |\bar y-\bar t| <r\}} |\nabla \bar y| dx = \sum_{k= 1}^m \int_{\Cu_k^{\epsilon_0}} \1_{\{0 < |\bar y-\bar t| <r\}} |\nabla \bar y| \dx.
    \end{equation}
    If either assumption \ref{ass:nonvanishing-gradient-An} or \ref{ass:vanishing-gradient-structure} is verified on $\Cu_k$ for some $1 \leq k \leq m$, then
    we deduce for any $r$ small enough that
    \begin{equation*}
        \int_{ \Cu_k^{\epsilon_0}} \1_{\{0 < |\bar y-\bar t| <r\}} |\nabla \bar y| \dx = C_k \int_{ \Cu_k^{\epsilon_0}} \1_{\{0 < |\bar y-\bar t| <r\}} \dx = C_k \meas_{\R^2}(\{0 < |\bar y-\bar t| <r\} \cap \Cu_k^{\epsilon_0}) dx \leq C_kr,
    \end{equation*}
    where we have used Lemma 3.2 in \cite{DeckelnichHinze2012} to obtain the last estimate.
    Here $C_k := \max\{|\nabla \bar y(x)| : x \in \overline{\Cu_k^{\epsilon_0}}\}$ for $1 \leq k \leq m$.
    If assumption \ref{ass:vanishing-gradient-An-convex-concave} holds on $\Cu_k$ for some $1 \leq k \leq m$, then by
    applying the coarea formula for Lipschitz mappings (see, e.g. \cite{Evans1992}, Thm.~2, p.~117 and \cite{AlbertiBianchiniCrippa2013}, Sec.~2.7), we have for any $r \in (0, r_0)$ that
    \begin{equation*}
        \int_{\Cu_k^{\epsilon_0}} \1_{\{0 < |\bar y-\bar t| <r\}} |\nabla \bar y| dx = \int_{\R} \int_{ \{\bar y=t \}} \1_{\{0 < |\bar y-\bar t| <r\} \cap \Cu_k^{\epsilon_0} }\dH^{1}(x) dt = \int_{\bar t-r}^{\bar t+r}\int_{ \{\bar y=t \}} \1_{\Cu_k^{\epsilon_0}} \dH^{1}(x) dt \leq 2rC_0.
    \end{equation*}
    From these estimates, \eqref{eq:Sigma-decom}, and the definition of $\Sigma(\bar y)$ in \eqref{eq:E-functional}, we conclude that $\Sigma(\bar y) < \infty$.
\end{proof}

The following theorem now formulates an explicit formula of the nonsmooth curvature functional $Q$ defined in \eqref{eq:curvature}.
\begin{theorem}
    \label{thm:explicit-curvature-func}
    Let \crefrange{ass:domain}{ass:cost_func} be fulfilled. Assume that $\bar u \in \mathcal{U}_{ad}$ and that $v \in \mathrm{cl}[\mathrm{cone}(\mathcal{U}_{ad} -\bar u)]$.
    Assume further that the level set $\{\bar y=\bar t\}$ with $\bar y:= S(\bar u)$ decomposes into finitely many connected components and that, on each such connected component $\Cu$, either \ref{ass:nonvanishing-gradient-An}, \ref{ass:vanishing-gradient-An-convex-concave}, or \ref{ass:vanishing-gradient-structure} in \cref{thm:key-formula} is fulfilled.
    Then, for any $\varphi \in C^1(\overline\Omega) \cap W^{2,1}(\Omega)$, there holds
    \begin{multline} \label{eq:explicit-curvature-func}
        Q(\bar u, \bar y, \varphi; v) = \frac{1}{2} \int_\Omega \frac{\partial^2 L}{\partial y^2}(\cdot, \bar y)z_{v}^2 \dx + \frac{\nu}{2} \int_\Omega v^2 \dx
        - \frac{1}{2} \int_\Omega\1_{ \{ y \neq \bar t \}} a''(\bar y)z_{v}^2 \nabla \bar y \cdot \nabla \varphi \dx \\
        - \int_\Omega a'(\bar y; z_{v}) \nabla z_{v} \cdot \nabla \varphi \dx + \frac{1}{2} [a'_{0}(\bar t) - a'_{1}(\bar t)] \int_{\{\bar y=\bar t\}}\1_{\{|\nabla \bar y| >0 \}} z_v^2 \frac{\nabla \bar y \cdot \nabla \varphi}{|\nabla \bar y|} \dH^{1}(x)
    \end{multline}
    with $z_v := S(\bar u)v$. Here $\mathrm{cl}[\mathrm{cone}(\mathcal{U}_{ad} -\bar u)]$ denotes the closure in $L^2(\Omega)$ of the cone generated by $(\mathcal{U}_{ad} - \bar u)$.
\end{theorem}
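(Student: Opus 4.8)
The plan is to split $Q(\bar u,\bar y,\varphi;v)=Q_s(\bar u,\bar y,\varphi;v,v)+Q_1(\bar u,\bar y,\varphi;v,v)+Q_2(\bar u,\bar y,\varphi;v)$ according to \eqref{eq:curvature} and to observe that the first two summands are already in explicit form. Indeed, with $z_v:=S'(\bar u)v$ (the solution of \eqref{eq:deri}, which lies in $W^{1,p}_0(\Omega)\hookrightarrow C(\overline\Omega)$ for $p>2$ by \cref{thm:control2state-oper}), specializing the definitions of $Q_s$ and $Q_1$ to $v_1=v_2=v$ — so that the two summands in $Q_1$ coincide — produces exactly the first four integrals on the right-hand side of \eqref{eq:explicit-curvature-func} (with $\1_{\{\bar y\neq\bar t\}}$ in the third one). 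Hence the entire assertion reduces to the identity
\[
    Q_2(\bar u,\bar y,\varphi;v)=\tfrac12\bigl[a_0'(\bar t)-a_1'(\bar t)\bigr]\int_{\{\bar y=\bar t\}}\1_{\{|\nabla\bar y|>0\}}\,z_v^2\,\frac{\nabla\bar y\cdot\nabla\varphi}{|\nabla\bar y|}\dH^1(x).
\]

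To establish this, I first record that $\bar u\in\mathcal{U}_{ad}\subset L^\infty(\Omega)$ gives $\bar y=S(\bar u)\in W^{2,p}(\Omega)\hookrightarrow C^1(\overline\Omega)$ for $p\in(2,p_*)$ by \cref{thm:control2state-oper}, so $\bar y$ is Lipschitz on $\overline\Omega$; combined with the hypothesis that $\{\bar y=\bar t\}$ has finitely many connected components, each satisfying one of \ref{ass:nonvanishing-gradient-An}, \ref{ass:vanishing-gradient-An-convex-concave} or \ref{ass:vanishing-gradient-structure}, \cref{prop:E-func-finite} yields $\Sigma(\bar y)<\infty$, which legitimizes \eqref{eq:sigma-tilde} and the very definition of $Q_2$ as an infimum of $\tilde Q$ over $c_0^+$. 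By \eqref{eq:key-term} it therefore suffices to evaluate $\tilde Q(\bar u,\bar y,\varphi;\{s_n\},v)$ for an arbitrary $\{s_n\}\in c_0^+$ and to check that the value does not depend on the sequence.

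I would carry this out by applying assertion \ref{item:Q2-explicit} of \cref{thm:explicit-formular-Q2} with the constant choice $v_n:=v$ (trivially $v_n\rightharpoonup v$ in $L^2(\Omega)$) and $y_n:=S(\bar u+s_nv)$ — the state that actually appears in the definition \eqref{eq:key-term-sn} of $\tilde Q(\bar u,\bar y,\varphi;\{s_n\},v)$. Since $\bar u+s_nv\to\bar u$ in $L^2(\Omega)\hookrightarrow W^{-1,p}(\Omega)$ and $S\colon W^{-1,p}(\Omega)\to W^{1,p}_0(\Omega)$ is of class $C^1$, Fréchet differentiability gives $(y_n-\bar y)/s_n\to S'(\bar u)v=z_v$ in $W^{1,p}_0(\Omega)$, hence $w=z_v$; moreover $\{\bar u+s_nv\}$ is bounded in $L^p(\Omega)$, so $\{y_n\}$ is bounded in $W^{2,p}(\Omega)\Subset C^1(\overline\Omega)$, and a subsequence argument — the only possible $C^1$-limit being the already-known $W^{1,p}_0$-limit $\bar y$ — upgrades $y_n\to\bar y$ to convergence in $C^1(\overline\Omega)$. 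Thus all hypotheses of \cref{thm:explicit-formular-Q2}\,\ref{item:Q2-explicit} are met, and \eqref{eq:explicit-formular} together with $\{a'\}_{\bar t+0}^{\bar t-0}=a_0'(\bar t)-a_1'(\bar t)$ shows that $\tilde Q(\bar u,\bar y,\varphi;\{s_n\},v)$ equals the right-hand side of the displayed identity above — in particular it is independent of $\{s_n\}$. Taking the infimum over $\{s_n\}\in c_0^+$ in \eqref{eq:key-term} gives the asserted value of $Q_2$, and adding the three contributions yields \eqref{eq:explicit-curvature-func}.

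The one genuinely delicate point is the $C^1(\overline\Omega)$-convergence $y_n\to\bar y$ required to invoke \cref{thm:explicit-formular-Q2}: it is \emph{not} a mere consequence of continuity of $S$ on $W^{-1,p}(\Omega)$, but rests on the higher $W^{2,p}$-regularity of the states from \cref{thm:control2state-oper}, the compact embedding $W^{2,p}(\Omega)\Subset C^1(\overline\Omega)$ valid for $p>2$ in two dimensions, and the uniqueness-of-limits argument identifying the $C^1$-limit with the $W^{1,p}_0$-limit. Everything else is routine bookkeeping with the definitions of $Q_s$, $Q_1$ and $Q_2$.
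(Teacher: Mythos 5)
Your reduction of the theorem to the identity for $Q_2$, and the bookkeeping for $Q_s$ and $Q_1$, match the paper. The gap is exactly at the point you call ``delicate'': for a general $v\in \mathrm{cl}[\mathrm{cone}(\mathcal{U}_{ad}-\bar u)]$ the closure is taken in $L^2(\Omega)$, so $v$ is merely an $L^2$-function (if $\alpha<\bar u<\beta$ on a set of positive measure, the closed cone contains unbounded $L^2$-functions). Hence your claim that $\{\bar u+s_nv\}$ is bounded in $L^p(\Omega)$ for some $p>2$ is unjustified, and with it the $W^{2,p}$-bound from \cref{thm:control2state-oper} and the compactness step $W^{2,p}(\Omega)\Subset C^1(\overline\Omega)$. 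With $v$ only in $L^2$ you get at best a uniform $H^2$-bound, and $H^2(\Omega)\not\hookrightarrow C^1(\overline\Omega)$ in two dimensions, so the hypothesis $y_n\to\bar y$ in $C^1(\overline\Omega)$ of \cref{thm:explicit-formular-Q2}\,\ref{item:Q2-explicit} (equivalently of \cref{thm:key-formula}) cannot be verified for the constant choice $v_n:=v$. Your argument is therefore only valid for directions $v\in\mathrm{cone}(\mathcal{U}_{ad}-\bar u)$, i.e.\ before taking the $L^2$-closure.

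The paper closes precisely this gap by a different choice of the approximating sequence: it writes $v_m=(u_m-\bar u)/\lambda_m\to v$ in $L^2(\Omega)$ with $u_m\in\mathcal{U}_{ad}$, extracts a subsequence $\{s_{n_k}\}$ of $\{s_n\}$ realizing the $\liminf$ in \eqref{eq:key-term-sn}, and chooses $r_m:=s_{n_{k(m)}}<\lambda_m$ so that $\bar u+r_mv_m\in\mathcal{U}_{ad}\subset L^\infty(\Omega)$; then the states $y_m=S(\bar u+r_mv_m)$ are bounded in $W^{2,p}(\Omega)$ and the $C^1(\overline\Omega)$-convergence follows as you envisaged. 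The price is that the integrand now involves $v_m$ rather than $v$, and this is repaired by \eqref{eq:sigma-tilde} (Corollary 5.5 of \cite{ClasonNhuRosch_os2nd}), which identifies the $\liminf$ along weakly convergent directions $v_m\rightharpoonup v$ with $\tilde Q(\bar u,\bar y,\varphi;\{r_m\},v)$ once $\Sigma(\bar y)<\infty$. To fix your proof you must either restrict to bounded directions and then pass to the $L^2$-closure by such a continuity result, or adopt the paper's $v_m$-approximation; the constant-sequence shortcut does not work.
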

\begin{proof}
    We first observe that under the stated assumptions, $\Sigma(\bar y)$ is finite by \cref{prop:E-func-finite}.
    As a result of \eqref{eq:sigma-bounded}, $Q_2(\bar u, \bar y, \varphi; \cdot)$ and thus $Q(\bar u, \bar y, \varphi; \cdot)$ (defined, respectively, in \eqref{eq:key-term} and \eqref{eq:curvature}) are well-defined on $L^2(\Omega)$.
    Let $\{s_n\} \in c_0^+$ be arbitrary. Taking $v \in \mathrm{cl}[\mathrm{cone}(\mathcal{U}_{ad} -\bar u)]$ arbitrarily, then there are sequences $\{v_m\} \subset L^2(\Omega)$, $\{u_m\} \subset \mathcal{U}_{ad}$, and $\{\lambda_m \} \subset (0,\infty)$ such that
    \begin{equation*}
        v_m \to v \quad \text{in } L^2(\Omega) \quad \text{and} \quad v_m = \frac{u_m - \bar u}{\lambda_m} \quad \text{for all } m \in\N.
    \end{equation*}
    By the definition of $\tilde Q(\bar u,\bar y,\bar \varphi;\{s_n\}, v)$ in \eqref{eq:key-term-sn}, there holds
    \begin{equation}
        \label{eq:key-term-sn-subsequence}
        \tilde Q(\bar u,\bar y,\bar \varphi;\{s_n\}, v) = \lim\limits_{k \to \infty} \frac{1}{s_{n_k}^2} \int_\Omega \sum_{i =0 }^1 \zeta_i(\bar u,\bar y;s_{n_k},v) \nabla \bar y \cdot \nabla \bar \varphi \dx = \tilde Q(\bar u,\bar y,\bar \varphi;\{s_{n_k}\}, v)
    \end{equation}
    for some subsequence $\{s_{n_k}\}$ of $\{s_n\}$.
    Since $s_{n_k} \to 0^+$ as $k \to \infty$,
    for any $m \geq 1$, there exists an integer $k(m)$ such that $r_m := s_{n_{k(m)}} \in (0, \lambda_m)$ and thus
    \begin{equation*}
        \bar u + r_m v_m = \left(1 - \frac{r_m}{\lambda_m} \right)\bar u + \frac{r_m}{\lambda_m} u_m \in \mathcal{U}_{ad}.
    \end{equation*}
    Setting $y_m := S(\bar u + r_m v_m)$ yields $(y_m - \bar y)/r_m \to S'(\bar u)v$ in $W^{1,p}_0(\Omega)$ and $y_m \in W^{2,p}(\Omega)$ for some $p >2$, according to \cref{thm:control2state-oper}. From this and the compact embedding $W^{2,p}(\Omega) \Subset C^1(\overline\Omega)$, one has $y_m \to \bar y$ in $C^1(\overline\Omega)$. Using now the limit $v_m \to v$ in $L^2(\Omega)$, \eqref{eq:key-term-sn-subsequence}, as well as \eqref{eq:sigma-tilde}, and then applying \cref{thm:key-formula}, one has
    \begin{align*}
        \tilde Q(\bar u,\bar y,\bar \varphi;\{s_n\}, v) & = \tilde Q(\bar u,\bar y,\bar \varphi;\{r_m\}, v) \\
        & = \lim\limits_{m \to \infty} \frac{1}{r_m^2} \int_\Omega \sum_{i =0 }^1 \zeta_i(\bar u,\bar y;r_m,v_m) \nabla \bar y \cdot \nabla \bar \varphi \dx \\
        & = \frac{1}{2} \{a'\}_{\bar t+0}^{\bar t-0}\int_{\{\bar y=\bar t\}}\1_{\{|\nabla \bar y| >0 \}} (S'(\bar u)v)^2 \frac{\nabla \bar y \cdot \nabla \varphi}{|\nabla \bar y|} \dH^{1}(x).
    \end{align*}
    Since $\{s_n\}$ is taken arbitrarily, we then deduce from the definition of $Q_2$ that
    \[
        Q_2(\bar u, \bar y,\bar \varphi;v) = \tilde Q(\bar u,\bar y,\bar \varphi;\{s_n\}, v) = \frac{1}{2} \{a'\}_{\bar t+0}^{\bar t-0}\int_{\{\bar y=\bar t\}}\1_{\{|\nabla \bar y| >0 \}} (S'(\bar u)v)^2 \frac{\nabla \bar y \cdot \nabla \varphi}{|\nabla \bar y|} \dH^{1}(x).
    \]
    Combining this with the definition of $Q$ in \eqref{eq:curvature} yields \eqref{eq:explicit-curvature-func}.
\end{proof}

\subsection{Explicit second-order optimality conditions}
\label{sec:explicit-2nd-OCs}
The following explicit second-order necessary and sufficient optimality conditions for \eqref{eq:P} are consequences of \cref{thm:2nd-OS-nec,thm:2nd-OS-suf,thm:explicit-formular-Q2}.

\begin{theorem}[explicit second-order necessary optimality conditions]
    \label{thm:2nd-OS-nec-explicit}
    Let \crefrange{ass:domain}{ass:cost_func} be fulfilled. Assume that $\bar u$ is a local minimizer to \eqref{eq:P2}.
    Assume further that the level set $\{\bar y=\bar t\}$ with $\bar y:= S(\bar u)$ decomposes into finitely many connected components and that, on each such connected component $\Cu$, either \ref{ass:nonvanishing-gradient-An}, \ref{ass:vanishing-gradient-An-convex-concave}, or \ref{ass:vanishing-gradient-structure} in \cref{thm:key-formula} is fulfilled.
    Then, there is a $\bar\varphi \in W^{1,\bar p}_0(\Omega) \cap W^{1,\infty}(\Omega)$, with $\bar p$ defined in \cref{ass:cost_func}, that together with $\bar u, \bar y$ satisfies \eqref{eq:1st-OS} and
    \begin{multline} \label{eq:2nd-OC-nec-explicit}
        Q(\bar u, \bar y, \bar \varphi; v) = \frac{1}{2} \int_\Omega \frac{\partial^2 L}{\partial y^2}(\cdot, \bar y)z_{v}^2 \dx + \frac{\nu}{2} \int_\Omega v^2 \dx
        - \frac{1}{2} \int_\Omega\1_{ \{ y \neq \bar t \}} a''(\bar y)z_{v}^2 \nabla \bar y \cdot \nabla \bar \varphi \dx \\
        - \int_\Omega a'(\bar y; z_{v}) \nabla z_{v} \cdot \nabla \bar \varphi \dx + \frac{1}{2} [a'_{0}(\bar t) - a'_{1}(\bar t)] \int_{\{\bar y=\bar t\}}\1_{\{|\nabla \bar y| >0 \}} z_v^2 \frac{\nabla \bar y \cdot \nabla\bar \varphi}{|\nabla \bar y|} \dH^{1}(x) \geq 0
    \end{multline}
    for all $v\in \mathcal{C}({\mathcal{U}_{ad};\bar u})$ with $z_v := S(\bar u)v$.
\end{theorem}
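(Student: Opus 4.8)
\emph{Proof proposal.}
The statement is obtained by specializing the abstract no-gap necessary condition of \cref{thm:2nd-OS-nec} and then inserting the explicit curvature formula of \cref{thm:explicit-curvature-func}. The plan is as follows. First I would check that the standing assumption $\Sigma(\bar y)<\infty$ required by \cref{thm:2nd-OS-nec} is automatic here: since $\bar u$ is a local minimizer, \cref{thm:1st-OC} gives $\bar y = S(\bar u)\in W^{2,p}(\Omega)\hookrightarrow C^1(\overline\Omega)$, so $\bar y$ is Lipschitz on $\overline\Omega$, and by hypothesis $\{\bar y=\bar t\}$ decomposes into finitely many connected components on each of which one of \ref{ass:nonvanishing-gradient-An}, \ref{ass:vanishing-gradient-An-convex-concave}, \ref{ass:vanishing-gradient-structure} holds; hence \cref{prop:E-func-finite} yields $\Sigma(\bar y)<\infty$.

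Next I would invoke \cref{thm:2nd-OS-nec}: there exists $\bar\varphi\in W^{1,\bar p}_0(\Omega)\cap W^{1,\infty}(\Omega)$ satisfying the first-order system \eqref{eq:1st-OS} together with the curvature inequality $Q(\bar u,\bar y,\bar\varphi;v)\ge 0$ for all $v\in\mathcal C(\mathcal U_{ad};\bar u)$, with $Q$ as in \eqref{eq:curvature}. The second step is a regularity upgrade of this adjoint state: because $\bar y\in W^{2,p}(\Omega)$ and $\partial L/\partial y(\cdot,\bar y)\in L^{\bar p}(\Omega)$ by \cref{ass:cost_func}, \cref{thm:1st-OC} (equivalently \cref{thm:adjoint-equation}) already provides $\bar\varphi\in W^{2,r}(\Omega)$ for $r\in(2,p_*)$ with $r\le\bar p$, so in particular $\bar\varphi\in C^1(\overline\Omega)\cap W^{2,1}(\Omega)$. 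This is precisely the regularity class in which \cref{thm:explicit-curvature-func} is formulated.

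I would then apply \cref{thm:explicit-curvature-func} with the choice $\varphi:=\bar\varphi$. To do so I note that the critical cone satisfies $\mathcal C(\mathcal U_{ad};\bar u)\subset\mathrm{cl}[\mathrm{cone}(\mathcal U_{ad}-\bar u)]$: indeed $\mathcal U_{ad}$ is convex, so $\mathrm{cl}[\mathrm{cone}(\mathcal U_{ad}-\bar u)]$ is exactly the set of $v\in L^2(\Omega)$ with $v\ge 0$ where $\bar u=\alpha$ and $v\le 0$ where $\bar u=\beta$, and the critical cone only adds the extra sign constraint $v=0$ on $\{\bar\varphi+\nu\bar u\neq0\}$. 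Hence for every $v\in\mathcal C(\mathcal U_{ad};\bar u)$ the hypotheses of \cref{thm:explicit-curvature-func} are met and $Q(\bar u,\bar y,\bar\varphi;v)$ equals the right-hand side of \eqref{eq:explicit-curvature-func} with $z_v:=S'(\bar u)v$. Combining this identity with the inequality $Q(\bar u,\bar y,\bar\varphi;v)\ge 0$ from \cref{thm:2nd-OS-nec} yields \eqref{eq:2nd-OC-nec-explicit}, completing the proof.

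The only genuinely delicate point is the regularity bookkeeping for $\bar\varphi$: one must make sure the adjoint state furnished by the abstract necessary-condition theorem is the \emph{same} one (up to the standard uniqueness for \eqref{eq:adjoint-state}) that enjoys the $W^{2,r}$-regularity, so that it legitimately lies in $C^1(\overline\Omega)\cap W^{2,1}(\Omega)$ and the boundary/level-set integral in \eqref{eq:explicit-curvature-func} is well defined. Everything else is a direct citation of earlier results.
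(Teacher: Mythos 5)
Your proposal is correct and follows essentially the same route as the paper: finiteness of $\Sigma(\bar y)$ via \cref{prop:E-func-finite}, existence of $\bar\varphi$ and the inequality $Q(\bar u,\bar y,\bar\varphi;v)\ge 0$ from \cref{thm:2nd-OS-nec}, the regularity $\bar\varphi\in W^{2,r}(\Omega)\hookrightarrow C^1(\overline\Omega)$ from \cref{thm:1st-OC}, the inclusion $\mathcal{C}(\mathcal{U}_{ad};\bar u)\subset\mathrm{cl}[\mathrm{cone}(\mathcal{U}_{ad}-\bar u)]$, and then \cref{thm:explicit-curvature-func}. Your extra remark about identifying the adjoint state via uniqueness of solutions to \eqref{eq:adjoint-state} is a sound way to justify the step the paper handles implicitly by citing \cref{thm:1st-OC}.
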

\begin{proof}
    By \cref{prop:E-func-finite}, $\Sigma(\bar y)$ is finite.
    In view of \cref{thm:2nd-OS-nec}, a function $\bar\varphi$ exists and satisfies \eqref{eq:1st-OS}. By \cref{thm:1st-OC}, $\bar \varphi \in W^{2,r}(\Omega) \hookrightarrow C^1(\overline\Omega)$ for some $r>2=N$.
    Moreover, there holds $\mathcal{C}({\mathcal{U}_{ad};\bar u}) \subset \mathrm{cl}[\mathrm{cone}(\mathcal{U}_{ad} -\bar u)]$.
    Applying \cref{thm:2nd-OS-nec,thm:explicit-curvature-func} yields \eqref{eq:2nd-OC-nec-explicit}.
\end{proof}

\begin{theorem}[explicit second-order sufficient optimality conditions]
    \label{thm:2nd-OS-suf-explicit}
    Let \crefrange{ass:domain}{ass:cost_func} be valid. Assume that $\bar u$ is a feasible point of \eqref{eq:P2}.
    Assume that the level set $\{\bar y=\bar t\}$ with $\bar y:= S(\bar u)$ decomposes into finitely many connected components and that, on each such connected component $\Cu$, either \ref{ass:nonvanishing-gradient-An}, \ref{ass:vanishing-gradient-An-convex-concave}, or \ref{ass:vanishing-gradient-structure} in \cref{thm:key-formula} is fulfilled.
    Assume further that there is a $\bar\varphi \in W^{1,\bar p}_0(\Omega) \cap W^{1,\infty}(\Omega)$, with $\bar p$ defined in \cref{ass:cost_func}, that together with $\bar u, \bar y$ satisfies \eqref{eq:1st-OS}
    and
    \begin{multline*} %
        Q(\bar u, \bar y, \bar \varphi; v) = \frac{1}{2} \int_\Omega \frac{\partial^2 L}{\partial y^2}(\cdot, \bar y)z_{v}^2 \dx + \frac{\nu}{2} \int_\Omega v^2 \dx
        - \frac{1}{2} \int_\Omega\1_{ \{ y \neq \bar t \}} a''(\bar y)z_{v}^2 \nabla \bar y \cdot \nabla \bar \varphi \dx \\
        - \int_\Omega a'(\bar y; z_{v}) \nabla z_{v} \cdot \nabla \bar \varphi \dx + \frac{1}{2} [a'_{0}(\bar t) - a'_{1}(\bar t)] \int_{\{\bar y=\bar t\}}\1_{\{|\nabla \bar y| >0 \}} z_v^2 \frac{\nabla \bar y \cdot \nabla\bar \varphi}{|\nabla \bar y|} \dH^{1}(x) > 0
    \end{multline*}
    with $z_v := S'(\bar u)v$
    for all $v\in \mathcal{C}({\mathcal{U}_{ad};\bar u}) \setminus \{0\}$.
    Then there exist constants $c_0, \rho_0 >0$ satisfying
    \begin{equation*}
        j(\bar u) + c_0 \norm{u - \bar u}_{L^2(\Omega)}^2 \leq j(u) %
        \qquad\text{for all } u \in \mathcal{U}_{ad} \cap \overline B_{L^2(\Omega)}(\bar u, \rho_0).
    \end{equation*}
\end{theorem}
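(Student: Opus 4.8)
The plan is to obtain \cref{thm:2nd-OS-suf-explicit} as a direct consequence of the abstract sufficient condition \cref{thm:2nd-OS-suf} combined with the explicit representation of the curvature functional in \cref{thm:explicit-curvature-func}. Accordingly, the only points to verify are that the hypotheses of both of those results are met under the present assumptions; there is no genuinely new difficulty, and the content lies entirely in the preceding sections.

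First I would check finiteness of the jump functional. Since $\bar u \in \mathcal{U}_{ad} \subset L^\infty(\Omega)$, \cref{thm:control2state-oper} gives $\bar y = S(\bar u) \in W^{1,\infty}(\Omega)$, hence Lipschitz on $\overline\Omega$. As the level set $\{\bar y=\bar t\}$ decomposes into finitely many connected components, on each of which one of \ref{ass:nonvanishing-gradient-An}, \ref{ass:vanishing-gradient-An-convex-concave}, or \ref{ass:vanishing-gradient-structure} holds, \cref{prop:E-func-finite} applies and yields $\Sigma(\bar y) < \infty$. This is exactly the standing regularity hypothesis on $\bar y$ demanded by \cref{thm:2nd-OS-suf}.

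Next I would upgrade the regularity of the adjoint state so that \cref{thm:explicit-curvature-func} is applicable, which requires $\bar\varphi \in C^1(\overline\Omega) \cap W^{2,1}(\Omega)$. From the variational inequality \eqref{eq:normal_OS} together with $\nu>0$ one has $\bar u = \Proj_{[\alpha,\beta]}(-\nu^{-1}\bar\varphi)$, and in any event $\bar u \in L^\infty(\Omega)$; since the right-hand side $\tfrac{\partial L}{\partial y}(\cdot,\bar y)$ of the adjoint equation \eqref{eq:adjoint_OS} lies in $L^{\bar p}(\Omega)$ with $\bar p>2$ by \cref{ass:cost_func}, \cref{thm:adjoint-equation} gives $\bar\varphi \in W^{2,r}(\Omega)$ for some $r \in (2,p_*)$ with $r \le \bar p$. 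The Sobolev embedding $W^{2,r}(\Omega) \hookrightarrow C^1(\overline\Omega)$ (valid since $r>2=N$) together with the trivial inclusion $W^{2,r}(\Omega) \subset W^{2,1}(\Omega)$ then yields $\bar\varphi \in C^1(\overline\Omega) \cap W^{2,1}(\Omega)$.

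Finally I would invoke \cref{thm:explicit-curvature-func}. Because $\mathcal{C}(\mathcal{U}_{ad};\bar u) \subset \mathrm{cl}[\mathrm{cone}(\mathcal{U}_{ad}-\bar u)]$, for every $v$ in the critical cone the curvature $Q(\bar u,\bar y,\bar\varphi;v)$ equals the right-hand side of \eqref{eq:explicit-curvature-func} with $z_v = S'(\bar u)v$. Hence the strict positivity displayed in the statement of the theorem is precisely the abstract condition \eqref{eq:2nd-OS-suff}, and \cref{thm:2nd-OS-suf} delivers constants $c_0,\rho_0>0$ with $j(\bar u) + c_0\norm{u-\bar u}_{L^2(\Omega)}^2 \le j(u)$ for all $u \in \mathcal{U}_{ad} \cap \overline B_{L^2(\Omega)}(\bar u,\rho_0)$, which is the assertion. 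The only steps requiring care are the finiteness of $\Sigma(\bar y)$ and the $C^1 \cap W^{2,1}$ regularity of $\bar\varphi$, both of which have just been reduced to results already established.
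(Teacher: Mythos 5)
Your proposal is correct and follows essentially the same route as the paper: check $\Sigma(\bar y)<\infty$ via \cref{prop:E-func-finite}, obtain $\bar\varphi\in W^{2,r}(\Omega)\hookrightarrow C^1(\overline\Omega)$ (so also $\bar\varphi\in W^{2,1}(\Omega)$), use $\mathcal{C}(\mathcal{U}_{ad};\bar u)\subset \mathrm{cl}[\mathrm{cone}(\mathcal{U}_{ad}-\bar u)]$ to apply \cref{thm:explicit-curvature-func}, and conclude with \cref{thm:2nd-OS-suf}. Your choice to derive the adjoint regularity directly from \cref{thm:adjoint-equation} rather than from \cref{thm:1st-OC} is a sensible (indeed slightly more careful) variant, since here $\bar u$ is only assumed feasible, but it does not change the argument.
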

\begin{proof}
    Similar to the proof of \cref{thm:2nd-OS-nec-explicit}, the desired conclusion follows from \cref{thm:2nd-OS-suf,thm:explicit-curvature-func}.
\end{proof}

\section{Conclusions}
We have derived necessary and sufficient second-order optimality conditions for a nonsmooth quasilinear elliptic optimal control problems in terms of an explicit curvature functional. The main technical tools are a detailed study of the properties of level sets of $C^1$ functions with nonvanishing gradients, including continuity properties and a Green's identity. These results may be of independent interest in, e.g., the analysis of level set methods or in mathematical imaging.
In the second part of this work, the sufficient second-order conditions will be applied to derive error estimates for a finite element discretization of the nonsmooth optimal control problem.

\appendix
\section{Existence of regular points} \label{sec:app}
\begin{lemma} \label{lem:regular-points}
    Let $y_1, y_2 \in C^1(\overline\Omega)$ and $t \in \R$ be such that $\{y_1 > t >y_2\} \neq \emptyset$. Let $\mathcal{S}$ be a nonempty open connected component of $\{y_1 > t >y_2\}$ that satisfies $\partial \mathcal{S} \cap \partial \Omega = \emptyset$ and \eqref{eq:nonvanishing-gradient} in \cref{lem:Green-identity}. Assume that the boundary of $\mathcal{S}$ has a cusp at $N$; see, \cref{fig:component-cusp}. Then there is a constant $\epsilon_0>0$ such that, for each $0 < \epsilon < \epsilon_0$, points $N_i \in \partial \mathcal{S} \cap \{y_i = t \}$, $i=1,2$, exist that satisfy
    \begin{equation} \label{eq:length-curve-N12}
        l_{\wideparen{NN_i}} < C\epsilon
    \end{equation}
    and
    \begin{equation} \label{eq:length-line-N12}
        N_1N_2 < C \epsilon
    \end{equation}
    for some constant $C>0$, where $l_{\wideparen{NN_i}}$ denotes the length of the arc $\wideparen{NN_i}$.
    Furthermore, $N_1$ and $N_2$ are not cusps of $\partial\mathcal{S}_\epsilon$, where $\mathcal{S}_\epsilon$ is the subdomain of $\mathcal{S}$ obtained by splitting $\mathcal{S}$ via the line $N_1N_2$, and $N \notin \mathcal{S}_\epsilon$.
\end{lemma}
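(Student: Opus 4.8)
The plan is to exploit the fact that near the cusp $N$, both boundary branches $\wideparen{My_1N}$ and $\wideparen{My_2N}$ are graphs over a common direction (after a rotation), with the two graph functions sharing the value and the derivative at $N$ (that is precisely what "cusp" means). First I would rotate coordinates so that the common tangent line of the two branches at $N$ is horizontal, write $N = (x_N^1, x_N^2)$, and use \eqref{eq:nonvanishing-gradient} together with the Implicit Function Theorem to parametrize a neighborhood of $N$ in $\partial\mathcal{S}$: there is $\delta>0$ and $C^1$ functions $g_1, g_2$ defined on $[x_N^1-\delta, x_N^1]$ (say the cusp opens to the left) with $\wideparen{My_iN}\cap\overline B_{\R^2}(N,\delta) = \{(\tau, g_i(\tau))\}$, $g_1(x_N^1) = g_2(x_N^1) = x_N^2$, and $g_1'(x_N^1) = g_2'(x_N^1)$. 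This is the same localization argument used in \cref{lem:Implicit_Function-Theorem-extended-boundary} and in Case 2 of \cref{lem:Green-identity}, so I would quote it rather than reprove it. Since $\overline B_{\R^2}(N,\delta)$ meets $\partial\mathcal{S}$ in exactly these two arcs, shrinking $\delta$ is harmless.

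Next I would choose, for each small $\epsilon>0$, the point on each branch at parameter $\tau_\epsilon := x_N^1-\epsilon$: set $N_1 := (\tau_\epsilon, g_1(\tau_\epsilon))$ and $N_2 := (\tau_\epsilon, g_2(\tau_\epsilon))$. The arc-length bound \eqref{eq:length-curve-N12} is immediate from $l_{\wideparen{NN_i}} = \int_{\tau_\epsilon}^{x_N^1}\sqrt{1+g_i'(\tau)^2}\,d\tau \le \epsilon\sqrt{1+\|g_i'\|_\infty^2}$, where $\|g_i'\|_\infty$ is controlled on $[x_N^1-\delta,x_N^1]$ by \eqref{eq:nonvanishing-gradient} exactly as in \eqref{eq:implicit-func-deri-bound}; this gives the constant $C$. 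The segment bound \eqref{eq:length-line-N12} follows because $N_1N_2 = |g_1(\tau_\epsilon) - g_2(\tau_\epsilon)|$, and since $g_1 - g_2$ vanishes to first order at $x_N^1$ (both value and derivative agree there), the Mean Value Theorem gives $|g_1(\tau_\epsilon)-g_2(\tau_\epsilon)| = |(g_1-g_2)'(\xi)|\,\epsilon$ for some $\xi\in(\tau_\epsilon,x_N^1)$, and $(g_1-g_2)'(\xi)\to 0$ as $\epsilon\to 0$ by continuity of $g_1'-g_2'$; in particular it is bounded by the same $C$ after shrinking $\epsilon_0$. The fact that $N_i$ is not a cusp of $\partial\mathcal{S}_\epsilon$ is because at $N_i$ the boundary of $\mathcal{S}_\epsilon$ consists of the $C^1$ arc $\wideparen{My_iN_i}$ meeting the straight segment $N_1N_2$; their tangent directions there are the tangent of $\{y_i=t\}$ (a nonzero vector orthogonal to $\nabla y_i(N_i)\ne 0$) and the vertical direction of the segment, and for $\epsilon$ small these are not parallel since at $\epsilon = 0$ the branch tangent is horizontal. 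Finally $N\notin\mathcal{S}_\epsilon$ holds by construction, since $\mathcal{S}_\epsilon$ is the component of $\mathcal{S}\setminus\overline{N_1N_2}$ \emph{not} containing a neighborhood of $N$; equivalently $N$ lies on the boundary of the discarded lens-shaped region between the two short arcs and the segment.

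The main obstacle is making the normalization "the cusp tangent is horizontal and the branches are graphs to one side" fully rigorous and uniform: one must verify that for $\delta$ small the set $\partial\mathcal{S}\cap\overline B_{\R^2}(N,\delta)$ really is just those two arcs (no other component of $\{y_1=t\}\cup\{y_2=t\}$ intrudes), that the segment $N_1N_2$ stays inside $\mathcal{S}\cup\{N_1,N_2\}$ so that splitting along it yields an honest subdomain, and that the resulting $\partial\mathcal{S}_\epsilon$ is a curvilinear polygon of class $C^1$ in the sense of Definition 1.4.5.1 in \cite{Grisvard1985}. The first point follows from the assumed local structure of $\partial\mathcal S$ in \cref{lem:Green-identity} (Case 2) plus compactness; the second from the first-order contact $g_1 - g_2 = o(\tau - x_N^1)$, which forces the vertical slice at $\tau_\epsilon$ between $g_2(\tau_\epsilon)$ and $g_1(\tau_\epsilon)$ to lie strictly between the two branches and hence inside $\mathcal{S}$; the third from the non-tangency at $N_1$ and $N_2$ established above together with the $C^1$ regularity of the arcs already known from \eqref{eq:nonvanishing-gradient} and the Implicit Function Theorem. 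Everything else is a routine estimate on the $C^1$ functions $g_1, g_2$.
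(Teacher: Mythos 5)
Your proposal follows essentially the same route as the paper's proof: normalize coordinates so the common tangent at $N$ is a coordinate axis, use the Implicit Function Theorem (legitimate because \eqref{eq:nonvanishing-gradient} forces both gradients at $N$ to be nonzero and normal to that tangent) to write the two boundary branches as $C^1$ graphs over the tangent direction, take $N_1,N_2$ at parameter distance of order $\epsilon$ from $N$, obtain \eqref{eq:length-curve-N12} from the uniform bound on the graph derivatives, obtain \eqref{eq:length-line-N12} from the agreement of the two graphs at the cusp (the paper uses the cruder triangle inequality $|\kappa_1(\xi_1)-\kappa_1(0)|+|\kappa_2(\xi_1)-\kappa_2(0)|\le 2c\xi_1$, while your first-order-contact argument even gives $o(\epsilon)$), and rule out cusps at $N_1,N_2$ by noting that the chord is normal to the common tangent while the branch tangents are not parallel to it (the paper does this by computing $\cos\omega_1<1$ directly from the nonvanishing tangential derivative). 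These differences are cosmetic.

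The one step your sketch under-justifies is the claim that the slice between the two graphs at parameter $\tau_\epsilon$ lies \emph{in} $\mathcal{S}$, so that splitting along $N_1N_2$ really produces a subdomain $\mathcal{S}_\epsilon\subset\mathcal{S}$ with $N\notin\mathcal{S}_\epsilon$ and with boundary consisting of the two long arcs and the segment. First-order contact of $g_1,g_2$ only shows that the slice lies geometrically between the two branches; it does not by itself exclude that $\mathcal{S}$ is locally the \emph{outer} region of the cusp rather than the thin region between the branches, in which case the segment would not lie in $\mathcal{S}$. The paper settles this point with a sign/monotonicity analysis: since \eqref{eq:nonvanishing-gradient} forces the derivatives of $y_1$ and $y_2$ in the direction normal to the tangent to have the same sign near $N$, exactly one of the two orderings in \eqref{eq:g1-less-g2} is compatible with $\mathcal{S}\subset\{y_1>t>y_2\}$ having $N$ on its boundary, and this yields the local sandwich \eqref{eq:S-inclusion-locally}, i.e.\ $\mathcal{S}$ coincides locally with the thin region between the two branches. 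Adding that argument closes your sketch; everything else matches the paper's proof.
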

\begin{proof}
    Without loss of generality, assume that $N = (0,0)\in \R^2$ and that the positive $x^2$-axis ray is the common tangent line of two arcs of $\partial \mathcal{S}$ at $N$. Then the tangent vector of $\mathcal{S}$ at $N$ is $\nu_N = (-1,0)$.
    Since $\nu_N = \kappa \nabla y_1(N) = \kappa \nabla y_2(N)$ for some $\kappa \in \R$, we have $\frac{\partial y_1}{\partial x^2}(N) =\frac{\partial y_2}{\partial x^2}(N) = 0$ and
    \begin{equation}
        \label{eq:positive-deri-x2}
        \frac{\partial y_1}{\partial x^1}(x) \cdot \frac{\partial y_2}{\partial x^1}(x) \geq \alpha' >0 \quad \text{for all } x \in \overline{ \mathcal{S}} \cap B_{\R^2}(0,r)
    \end{equation}
    for some constants $r >0$ and $\alpha'$, due to \eqref{eq:nonvanishing-gradient}.
    Applying the Implicit Function Theorem at the point $N$ yields that there exist constants $h, k >0$, $\rho \in (0,r)$ and $C^1$-functions $\kappa_i: (-h,k) \to \R$, $i=1,2$, such that
    \begin{equation}
        \label{eq:gi-func-app}
        \left\{
            \begin{aligned}
                &\quad \kappa_i(0) = 0,\\
                & y_i( \kappa_i(\xi),\xi) = t \quad \text{for all } \xi \in (-h, k), \\
                & \left((\tau,\xi) \in B_{\R^2}(0, \rho) \, \text{and } y_i(\kappa_i(\xi), \xi) = t \right) \quad \implies \quad \tau = \kappa_i(\xi).
            \end{aligned}
        \right.
    \end{equation}
    Moreover, similar to \eqref{eq:implicit-func-deri-bound}, one has
    \begin{equation}
        \label{eq:kappa-deri-bound}
        |\kappa_i'(\xi)| \leq c \quad \text{for all } \xi \in (-h,k)
    \end{equation}
    and for some constant $c>0$.
    Since $\mathcal{S} \cap B_{\R^2}(N, r)$ is in the half space $\{x^2 \geq 0\} := \{(x^1,x^2) \in \R^2 \mid x^2 \geq 0 \}$, we therefore have
    \begin{align}
        \{y_i =t\} \cap \partial\mathcal{S} \cap B_{\R^2}(0, \rho_0) & = \{(\tau,\xi) \in B_{\R^2}(0, \rho_0) \mid \xi \geq 0, y_i(\tau, \xi) = t \} \notag
        \\
        &= \{(\tau,\xi) \in B_{\R^2}(0, \rho_0) \mid \xi \in [0, k_i), \tau = \kappa_i(\xi) \} \label{eq:arc-para-gi}
    \end{align}
    for some constants $\rho_0 \in (0, \rho]$ and $k_i \in (0, k]$, $i=1,2$.
    Moreover, the equation $\kappa_1(\xi) = \kappa_2(\xi)$ has a unique solution $\xi =0$ in the interval $(-h,k)$ because the arcs $\{y_1 =t\} \cap B_{\R^2}(0, \rho)$ and $\{y_2 =t\} \cap B_{\R^2}(0, \rho)$ has only one common point $N$.
    Consequently, one and only one of the following cases is valid:
    \begin{equation}
        \label{eq:g1-less-g2}
        \kappa_1(\xi) > \kappa_2(\xi) \quad \text{for all } \xi \in (0, k) \quad \text{or} \quad \kappa_1(\xi) < \kappa_2(\xi) \quad \text{for all } \xi \in (0, k).
    \end{equation}
    We now consider the first situation in \eqref{eq:g1-less-g2} only since the other is analyzed analogously.
    Thanks to \eqref{eq:positive-deri-x2}, one has two possibilities: For small number $x^2$, in the neighborhood of the origin in $\R$,
    \begin{enumerate}
        \item[(I)] both $y_1(\cdot, x^2)$ and $y_2(\cdot, x^2)$ are strictly increasing; or
        \item[(II)] both $y_1(\cdot, x^2)$ and $y_2(\cdot, x^2)$ are strictly decreasing.
    \end{enumerate}
    For the case (I), one has for some constant $0 < \rho_1 \leq \rho_0$ that
    \[
        \{ y_1 > t\} \cap B_{\R^2}(0, \rho_1) \cap \{\xi \geq 0\} \subset \{(\tau, \xi) \in \R^2 \mid \xi \in [0, k_1), \tau > \kappa_1(\xi) \}
    \]
    and
    \[
        \{ y_2 < t\}\cap B_{\R^2}(0, \rho_1) \cap \{\xi \geq 0\} \subset \{(\tau, \xi) \in \R^2 \mid \xi \in [0, k_2), \tau < \kappa_2(\xi) \}.
    \]
    Combining these with the first situation in \eqref{eq:g1-less-g2} yields a contradiction. Therefore, the case (II) must be true. Thanks to (II) and \eqref{eq:g1-less-g2}, there holds
    \begin{equation}
        \label{eq:S-inclusion-locally}
        \mathcal{S} \cap B_{\R^2}(0, \underline \rho) \subset \{(\tau, \xi) \in \R^2 \mid \xi \in [0, k_*), \kappa_2(\xi) < \tau < \kappa_1(\xi) \} \subset \mathcal{S} \cap B_{\R^2}(0, \overline \rho)
    \end{equation}
    for some positive constants $0 <\underline{\rho} < \overline{\rho} \leq \rho_0$, and $0 <k_* \leq \min\{k_1, k_2 \}$. Setting $\epsilon_0 := \underline{\rho}$ and taking $\epsilon \in (0, \epsilon_0)$ arbitrarily, we now fix $\xi_1 \in (0, \epsilon)$ and set
    $
    N_1 := (\kappa_1(\xi_1), \xi_1)
    $
    and
    $
    N_2 := (\kappa_2(\xi_1), \xi_1).
    $
    Obviously, there holds $N_i \in \{y_i =t \} \cap \partial \mathcal{S}$, $i=1,2$.
    Moreover, using the inequalities \eqref{eq:g1-less-g2} and \eqref{eq:kappa-deri-bound} yields
    \begin{align*}
        N_1N_2 & = |\kappa_2(\xi_1) - \kappa_1(\xi_1)| = \kappa_1(\xi_1) - \kappa_2(\xi_1) \\
        & \leq |\kappa_1(\xi_1) - \kappa_1(0)| + | \kappa_2(\xi_1) - \kappa_2(0)|\\
        & \leq 2c\xi_1.
    \end{align*}
    Thus, \eqref{eq:length-line-N12} is verified. On the other hand, since $\{(\kappa_i(\xi),\xi): 0 \leq \xi \leq \xi_1 \} = \wideparen{NN_i}$, $i=1,2$, we conclude from \eqref{eq:kappa-deri-bound} that
    \begin{align*}
        l_{\wideparen{NN_i}} & = \int_0^{\xi_1} \sqrt{1 + (\kappa_i'(\xi))^2} d\xi \leq \sqrt{1+c^2} \xi_1 \leq \epsilon\sqrt{1+c^2},
    \end{align*}
    which gives \eqref{eq:length-curve-N12}.

    It remains to prove that $N_1$ and $N_2$ are not cusps of $\partial\mathcal{S}_\epsilon$.
    To do this, we first consider the point $N_1(\kappa_1(\xi_1),\xi_1)$. Let $\omega_1 \in [0,\pi/2]$ be the angle between the line $N_1N_2$ and the tangent vector to $\{y_1=t\}$ at $N_1$. We then have
    \begin{align*}
        \cos \omega_1 & = \left| \frac{\nabla y_1}{|\nabla y_1|}(N_1) \cdot (0, 1) \right| = \left| \frac{\frac{\partial y_1}{\partial x^2} }{\sqrt{(\frac{\partial y_1}{\partial x^1})^2 + (\frac{\partial y_1}{\partial x^2})^2 }}(N_1) \right| < 1,
    \end{align*}
    according to \eqref{eq:positive-deri-x2}. Hence $\omega_1 >0$ and thus $N_1$ is not a cusp of $\partial\mathcal{S}_\epsilon$. Similarly, $N_2$ is also not a cusp of $\partial\mathcal{S}_\epsilon$.
\end{proof}

\begin{lemma}
    \label{lem:x1x2-existence}
    Let $y \in C^1(\overline\Omega)$ and $ t \in \R$ be such that $\{y=t \} \neq \emptyset$ and $y = 0$ on $\partial\Omega$.
    Let $\Cu$ be a connected component of $\{y=t\}$ that satisfies \eqref{eq:nonvanishing-gradient-on-component}. Then, for any $\epsilon >0$ and $x_2 \in \Cu^\epsilon$, there holds
    \[
        \mathrm{Arg}(x_2) :=\mathrm{argmin} \{ |x_2 -x| : x \in \Cu \} \neq \emptyset.
    \]
    Moreover, if $x_1 \in \mathrm{Arg}(x_2)$, then
    \begin{equation}
        \label{eq:x1x2-distance-app}
        |x_1 - x_2| \leq \epsilon
    \end{equation}
    and
    \begin{equation}
        \label{eq:x1x2-direction-app}
        x_2 - x_1 = \kappa \nabla y(x_1)
    \end{equation}
    for some constant $\kappa \in \R$.
\end{lemma}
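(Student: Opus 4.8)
The plan is to treat the three claims in order. Nonemptiness of $\mathrm{Arg}(x_2)$ and the distance bound \eqref{eq:x1x2-distance-app} are soft, whereas the collinearity relation \eqref{eq:x1x2-direction-app} is the point of substance: it amounts to the first-order (Lagrange-multiplier) optimality condition for minimizing the distance to the $C^1$ curve $\Cu$.

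First I would note that $\Cu$ is compact, being closed (a connected component of the closed set $\{y=t\}$, closed since $y\in C^1(\overline\Omega)$ is continuous and $\overline\Omega$ is closed, and components of closed sets are closed) and bounded (contained in $\overline\Omega$). Hence the continuous map $x\mapsto |x_2-x|$ attains its minimum on $\Cu$, so $\mathrm{Arg}(x_2)\neq\emptyset$. For any $x_1\in\mathrm{Arg}(x_2)$ one has $|x_1-x_2|=\operatorname{dist}(x_2,\Cu)$, and since $x_2\in\Cu^\epsilon$ is by definition equivalent to $\operatorname{dist}(x_2,\Cu)<\epsilon$, this yields \eqref{eq:x1x2-distance-app}.

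For \eqref{eq:x1x2-direction-app}, I would fix $x_1\in\mathrm{Arg}(x_2)$ and exploit that $\nabla y(x_1)\neq 0$ by \eqref{eq:nonvanishing-gradient-on-component}. Using \cref{prop:C1-extension-on-convex-domain} to extend $y$ to a $C^1$ function on $\R^2$ and then the Implicit Function Theorem (equivalently, assertion \ref{item:C1-closedcurve} in \cref{prop:decomposition-levelset} together with its proof), near $x_1$ the set $\Cu$ is the graph of a $C^1$ function; hence there are $r>0$ and a $C^1$ map $\gamma\colon(-r,r)\to\R^2$ with $\gamma(0)=x_1$, $\gamma(s)\in\Cu$ for all $s\in(-r,r)$, and $\gamma'(0)\neq 0$. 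Two relations then follow: since $x_1$ minimizes $x\mapsto|x_2-x|^2$ over $\Cu$, the scalar function $s\mapsto|x_2-\gamma(s)|^2$ has a local minimum at $s=0$, and vanishing of its derivative there reads $(x_2-x_1)\cdot\gamma'(0)=0$; differentiating the identity $y(\gamma(s))\equiv t$ at $s=0$ gives $\nabla y(x_1)\cdot\gamma'(0)=0$. Working in $\R^2$, the orthogonal complement of the nonzero vector $\gamma'(0)$ is one-dimensional and contains both $x_2-x_1$ and the nonzero vector $\nabla y(x_1)$; therefore $x_2-x_1=\kappa\,\nabla y(x_1)$ for some $\kappa\in\R$, which is \eqref{eq:x1x2-direction-app}.

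The only step demanding a little care — and the closest thing to an obstacle — is the construction of a two-sided $C^1$ parametrization $\gamma$ of $\Cu$ through $x_1$ when $x_1$ happens to lie on $\partial\Omega$; but under the present hypotheses this forces $t=0$ and $\Cu=\partial\Omega$, and then $\Cu$ is still a closed $C^1$ simple curve by \cref{prop:decomposition-levelset}, which, having no endpoints, admits such a $\gamma$ at each of its points, so the argument goes through unchanged. I do not anticipate any deeper difficulty, the lemma being essentially the Lagrange optimality condition for the projection onto a regular curve.
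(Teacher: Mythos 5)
Your proposal is correct and follows essentially the same route as the paper's proof: compactness of $\Cu$ for nonemptiness, the definition of $\Cu^\epsilon$ for \eqref{eq:x1x2-distance-app}, and the $C^1$ extension plus the Implicit Function Theorem combined with the first-order optimality condition for \eqref{eq:x1x2-direction-app}. The only cosmetic difference is that the paper computes $f'(0)=0$ explicitly with the graph function $g$ and reads off $\kappa$, whereas you conclude collinearity via the one-dimensionality of the orthogonal complement of the tangent vector in $\R^2$ — the same argument in slightly more abstract form.
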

\begin{proof}
    By \cref{prop:decomposition-levelset}, $\Cu$ is a closed $C^1$ simple curve. Moreover, for any $x_2 = (x_2^1, x_2^2) \in \Cu^\epsilon$, we obviously have $\mathrm{Arg}(x_2) \neq \emptyset$ since $\Cu$ is compact.
    Taking $x_1 \in \mathrm{Arg}(x_2)$ arbitrarily, the definition of the set $\Cu^\epsilon$ implies \eqref{eq:x1x2-distance-app}.
    It remains to prove \eqref{eq:x1x2-direction-app}. To do this,
    we first extend $y$ by a $C^1$ extension, denoted by the same name, on $\R^2$ in the case where $\Cu = \partial\Omega$, as a result of \cref{prop:C1-extension-on-convex-domain}. Due to \eqref{eq:nonvanishing-gradient-on-component}, without lost of generality, we now assume that
    \[
        \left|\frac{\partial y}{\partial x^2}(x) \right| \geq \alpha >0 \quad \text{for all } x \in B_{\R^2}(x_1,r)
    \]
    for some constant $r>0$.
    Applying now the Implicit Function Theorem in the point $x_1 = (x_1^1,x_1^2) \in \Cu$ yields that there exist constants $h, k >0$, $\rho \in (0,r)$, and a $C^1$-function $g: (x_1^1-h,x_1^1+k) \to \R$ satisfying
    \begin{equation}
        \label{eq:g-func-x1}
        \left\{
            \begin{aligned}
                & \quad g(x^1_1) = x^2_1,\\
                & y(\tau, g(\tau)) = t \quad \text{for all } \tau \in (x^1_1- h, x^1_1+k), \\
                & \left((\tau,\xi) \in B_{\R^2}(x_1, \rho) \, \text{and } y(\tau, \xi) = t \right) \quad \implies \quad \xi = g(\tau).
            \end{aligned}
        \right.
    \end{equation}
    Moreover, similar to \eqref{eq:implicit-func-deri}, there holds
    \begin{equation}
        \label{eq:g-x1-der}
        g'(\tau) = - \frac{\frac{\partial y}{\partial{x^1}}(\tau, g(\tau))}{\frac{\partial y}{\partial{x^2}}(\tau, g(\tau))}, \quad \tau \in (x^1_1- h, x^1_1+k).
    \end{equation}
    For any $s \in (-h, k)$, one has $x_s := (x_1^1 +s, g(x_1^1+s)) \in \Cu$ and the function
    \[
        f(s) := |x_2 -x_s|^2
    \]
    attains its minimum at $s=0$, according to the definition of $x_1$. There thus holds $f'(0) =0$.
    A simply computation gives
    \begin{align*}
        f(s) & = (x_2^1-x_1^1-s)^2 + (x_2^2 - g(x_1^1 +s))^2
    \end{align*}
    and
    \[
        f'(s) = -2(x_2^1-x_1^1 -s) -2(x_2^2 - g(x_1^1 +s))g'(x_1^1+s).
    \]
    This implies that
    \begin{align*}
        0 = f'(0) & = -2(x_2^1-x_1^1) -2(x_2^2 - g(x_1^1))g'(x_1^1) \\
        & = -2[(x_2^1-x_1^1) + (x_2^2 - g(x_1^1))g'(x_1^1)] \\
        & = -2[(x_2^1-x_1^1) - (x_2^2 - x_1^2)\frac{\frac{\partial y}{\partial{x^1}}(x_1)}{\frac{\partial y}{\partial{x^2}}(x_1)}],
    \end{align*}
    where we have employed the first equation in \eqref{eq:g-func-x1} and the identity \eqref{eq:g-x1-der} to derive the last equation.
    We then obtain
    \[
        (x_2^1-x_1^1) = (x_2^2 - x_1^2)\frac{\frac{\partial y}{\partial{x^1}}(x_1)}{\frac{\partial y}{\partial{x^2}}(x_1)},
    \]
    which yields
    \[
        x_2 -x_1 = \frac{x_2^2-x_1^2}{\frac{\partial y}{\partial{x^2}}(x_1)} \nabla y(x_1).
    \]
    Setting $\kappa := \frac{x_2^2-x_1^2}{\frac{\partial y}{\partial{x^2}}(x_1)}$ finally yields \eqref{eq:x1x2-direction-app}.
\end{proof}

\section*{Acknowledgments}
This work was supported by the DFG under the grants CL 487/2-1 and RO 2462/6-1, both within the priority programme SPP 1962 ``Non-smooth and Complementarity-based Distributed Parameter Systems: Simulation and Hierarchical Optimization''.
The second author was funded by Vietnam National Foundation for Science and Technology Development (NAFOSTED) under grant number 101.01--2019.308.
The authors would like to thank the anonymous referees for their useful suggestions and comments which improved the manuscript greatly.

\bibliographystyle{jnsao}
\bibliography{QuasilinearFEM_I}

\end{document}